\newtheorem{theorem}{Theorem}[section]
\newtheorem{lemma}[theorem]{Lemma}
\newtheorem{proposition}[theorem]{Proposition}
\theoremstyle{definition}
\newtheorem{definition}[theorem]{Definition}
\newtheorem{remark}[theorem]{Remark}
\newtheorem{example}[theorem]{Example}
\theoremstyle{remark}
\numberwithin{equation}{section}
\begin{document}

\title[Smooth geometry of skew PBW extensions I]{Smooth geometry of skew PBW extensions over commutative polynomial rings I}


\author{Andr\'es Rubiano}
\address{Universidad Nacional de Colombia - Sede Bogot\'a}
\curraddr{Campus Universitario}
\email{arubianos@unal.edu.co}
\address{Universidad ECCI}
\curraddr{Campus Universitario}
\email{arubianos@ecci.edu.co}
\thanks{}


\author{Armando Reyes}
\address{Universidad Nacional de Colombia - Sede Bogot\'a}
\curraddr{Campus Universitario}
\email{mareyesv@unal.edu.co}

\thanks{}

\subjclass[2020]{16E45, 16S30, 16S32, 16S36, 16S38, 16S99, 16W20, 16T05, 58B34}

\keywords{Differentially smooth algebra, differential calculus, integrable calculus, Ore extension, skew PBW extension}

\date{}

\dedicatory{Dedicated to Professor Oswaldo Lezama on the Occasion of His 68th Birthday}

\begin{abstract} 

In this paper, we investigate the differential smoothness of skew PBW extensions over commutative polynomial rings on one and two indeterminates.

\end{abstract}

\maketitle


\section{Introduction}

Ore \cite{Ore1931, Ore1933} introduced a kind of noncommutative polynomial rings which has become one of most basic and useful constructions in ring theory and noncommutative algebra. For an associative and unital ring $R$, an endomorphism $\sigma$ of $R$ and a $\sigma$-derivation $\delta$ of $R$, the {\em Ore extension} or {\em skew polynomial ring} of $R$ is obtained by adding a single generator $x$ to $R$ subject to the relation $xr = \sigma(r) x + \delta(r)$ for all $r\in R$. This Ore extension of $R$ is denoted by $R[x; \sigma, \delta]$. As one can appreciate in the literature, a lot of papers and books have been published concerning ring-theoretical, homological, geometrical properties and applications of these extensions (e.g. \cite{BrownGoodearl2002, BuesoTorrecillasVerschoren2003, Fajardoetal2020, Fajardoetal2024, GoodearlLetzter1994, GoodearlWarfield2004, McConnellRobson2001, Li2002, Rosenberg1995, SeilerBook2010} and references therein). 

On the other hand, Bell and Goodearl \cite{BellGoodearl1988} defined the {\em Poincar\'e-Birkhoff-Witt} ({\em PBW} for short) {\em extensions} with the aim of cover several families of generalized operator rings as the enveloping algebra of a finite-dimensional Lie algebra, Weyl algebras, differential operators over Lie algebras, the twisted or smash product differential operator rings and universal enveloping rings \cite[Section 5]{BellGoodearl1988}. Different properties of PBW extensions have been studied by some researchers \cite{AkalanMarubayashi2016, Giesbrechtetal2002, Giesbrechtetal2014, Goodearl1990, MarubayashiZhang1996, Matczuk1988, SeilerBook2010, Tumwesigyeetal2020}.

With the aim of generalizing Ore extensions of injective type (that is, $R[x; \sigma, \delta]$ with $\sigma$ an injective map) and PBW extensions, Gallego and Lezama \cite{GallegoLezama2010} introduced the notion of {\em skew PBW} ({\em SPBW}) {\em extension}. Over the years several authors have shown that SPBW extensions also generalize families of noncommutative algebras such as {\em 3-dimensional skew polynomial algebras} introduced by Bell and Smith \cite{BellSmith1990}, {\em diffusion algebras} defined by Isaev et al. \cite{IsaevPyatovRittenberg2001, PyatovTwarock2002}, {\em ambiskew polynomial rings} introduced by Jordan \cite{Jordan2000, JordanWells1996}, {\em solvable polynomial rings} introduced by Kandri-Rody and Weispfenning  \cite{KandryWeispfenning1990}, {\em almost normalizing extensions} defined by McConnell and Robson \cite{McConnellRobson2001}, and {\em skew bi-quadratic algebras with PBW basis} introduced by Bavula \cite{Bavula2023}. As expected, there are different relations between SPBW extensions and other noncommutative algebras having PBW bases defined in the literature (e.g. \cite{Apel1988, Bavula2020, BuesoTorrecillasVerschoren2003, GomezTorrecillas2014, Levandovskyy2005, Li2002, McConnellRobson2001, Rosenberg1995, SeilerBook2010}). 

In this paper we are interested in the notion of {\em differential smoothnness} of algebras defined by Brzezi{\'n}ski and Sitarz \cite{BrzezinskiSitarz2017}. Before saying some words about it, we recall key aspects of {\em connections} and {\em differential calculi} in noncommutative geometry.

The {\em theory of connections} in noncommutative geometry is well-known (for more details, see the beautiful treatments presented by Connes \cite{Connes1994} or Giachetta et al. \cite{Giachettaetal2005}). Briefly, one considers a differential graded algebra $\Omega A = \bigoplus\limits_{n = 0} \Omega^{n} A$ over a $\Bbbk$-algebra $A = \Omega^{0} A$ with $\Bbbk$ a field, and then defines a {\em connection} in a left $A$-module $M$ as a linear map $\nabla^{0} : M \to \Omega^{1} A \otimes_A M$ that satisfies the Leibniz's rule $\nabla^0(am) = da \otimes_A m + a\nabla^{0}(m)$ for all $m\in M$ and $a \in A$. As it can be seen, this is a noncommutative definition obtained by a replacement of commutative algebras of functions on a manifold $X$, and their modules of sections of a vector bundle over $X$ (in the classical definition of a connection), by noncommutative algebras and their general one-sided modules. Just as Brzezi{\'n}ski said, \textquotedblleft this captures very well the classical context in which connections appear and brings it successfully to the realm of noncommutative geometry\textquotedblright\ \cite[p. 557]{Brzezinski2008}.

Brzezi{\'n}ski in his paper noted that, on the algebraic side, this definition of connection seems to be only a half of a more general picture. In the first place, a noncommutative connection is defined by using the tensor functor, and as is well-known, this functor has a right adjoint, the {\em hom-functor}, so it is natural to ask whether it is possible to introduce connection-like objects defined with the use of the hom-functor. In the second place, the vector space dual to $M$ is a right $A$-module and a left connection in the above sense does not induce a right connection on the dual of $M$, so having in mind the adjointness properties between tensor and hom functors, the induced map necessarily involves the hom-functor. 

Motivated by all these facts, Brzezi{\'n}ski \cite{Brzezinski2008} showed that there is a natural and potentially rich theory of connnection-like objects defined as maps on the spaces of morphisms of modules. Due to the role of spaces of homomorphisms, these objects are termed {\em hom-connections} (also are called {\em divergences} due to that if $A$ is an algebra of functions on the Euclidean space $\mathbb{R}^n$ and $\Omega^{1}(A)$ is the standard module of one-form, then we obtain the classical divergence of the elementary vector calculus \cite[p. 892]{Brzezinski2011}). As a matter of fact, he proved that hom-connections arise naturally from (strong) connections in {\em noncommutative principal bundles}, and that every left connection on a bimodule (in the sense of Cuntz and Quillen \cite{CuntzQuillen1995}) gives rise to a hom-connection. 
Brzezi{\'n}ski also studied the induction procedure of hom-connections via differentiable bimodules (and hence, via maps of differential graded algebras), and proved that any hom-connection can be extended to {\em higher forms}. He introduced the notion of {\em curvature} and showed that a consecutive application of hom-connections can be expressed in terms of the curvature, which leads to a chain complex associated to a {\em flat} (i.e. curvature-zero) {\em hom-connection} (this chain complex and its homology can be considered as dual complements of the cochain complex associated to a connection and the twisted cohomology, which is crucial in the theory of noncommutative differential fibrations \cite{BeggsBrzezinski2005}).

Two years later, Brzezi{\'n}ski et al. \cite{BrzezinskiElKaoutitLomp2010} presented a construction of {\em differential calculi} which admits hom-connections. This construction is based on the use of {\em twisted multi-derivations}, where the constructed first-order calculus $\Omega^{1}(A)$ is free as a left and right $A$-module; $\Omega^{1}(A)$ should be understood as a module of sections on the cotangent bundle over a manifold represented by $A$, and hence their construction corresponds to parallelizable manifolds or to an algebra of functions on a local chart. One year later, Brzezi{\'n}ski asserted that \textquotedblleft one should expect $\Omega^1(A)$ to be a finitely generated and projective module over $A$ (thus corresponding to sections of a non-trivial vector bundle by the Serre-Swan theorem)\textquotedblright\ \cite[p. 885]{Brzezinski2011}. In his paper, he extended the construction in \cite{BrzezinskiElKaoutitLomp2010} to finitely generated and projective modules.

Related to differential calculi, we have the {\em smoothness of algebras}. Briefly, and as Brzezi\'nski and Lomp said \cite[Section 1]{BrzezinskiLomp2018}, the study of this smoothness goes back at least to Grothendieck's EGA \cite{Grothendieck1964}. The concept of a {\em formally smooth commutative} ({\em topological}) {\em algebra} introduced by him was extended to the noncommutative setting by Schelter \cite{Schelter1986}. An algebra is {\em formally smooth} if and only if the kernel of the multiplication map is projective as a bimodule. This notion arose as a replacement of a far too general definition based on the finiteness of the global dimension; Cuntz and Quillen \cite{CuntzQuillen1995} called these algebras {\em quasi-free}. Precisely, the notion of smoothness based on the finiteness of this dimension was refined by Stafford and Zhang \cite{StaffordZhang1994}, where a Noetherian algebra is said to be {\em smooth} provided that it has a finite global dimension equal to the homological dimension of all its simple modules. In the homological setting, Van den Bergh \cite{VandenBergh1998} called an algebra {\em homologically smooth} if it admits a finite resolution by finitely generated projective bimodules. The characterization of this kind of smoothness for the noncommutative pillow, the quantum teardrops, and quantum homogeneous spaces was made by Brzezi{\'n}ski \cite{Brzezinski2008, Brzezinski2014} and Kr\"ahmer \cite{Krahmer2012}, respectively.

Brzezi{\'n}ski and Sitarz \cite{BrzezinskiSitarz2017} defined other notion of smoothness of algebras, termed {\em differential smoothness} due to the use of differential graded algebras of a specified dimension that admits a noncommutative version of the Hodge star isomorphism, which considers the existence of a top form in a differential calculus over an algebra together with a string version of the Poincar\'e duality realized as an isomorphism between complexes of differential and integral forms. This new notion of smoothness is different and more constructive than the homological smoothness mentioned above. \textquotedblleft The idea behind the {\em differential smoothness} of algebras is rooted in the observation that a classical smooth orientable manifold, in addition to de Rham complex of differential forms, admits also the complex of {\em integral forms} isomorphic to the de Rham complex \cite[Section 4.5]{Manin1997}. The de Rham differential can be understood as a special left connection, while the boundary operator in the complex of integral forms is an example of a {\em right connection}\textquotedblright\ \cite[p. 413]{BrzezinskiSitarz2017}.

Several authors (e.g. \cite{Brzezinski2015, Brzezinski2016, BrzezinskiElKaoutitLomp2010, BrzezinskiLomp2018, BrzezinskiSitarz2017, DuboisVioletteKernerMadore1990, Karacuha2015, KaracuhaLomp2014, ReyesSarmiento2022}) have characterized the differential smoothness of algebras such as the quantum two - and three - spheres, disc, plane, the noncommutative torus, the coordinate algebras of the quantum group $SU_q(2)$, the noncommutative pillow algebra, the quantum cone algebras, the quantum polynomial algebras, Hopf algebra domains of Gelfand-Kirillov dimension two that are not PI, families of Ore extensions, some 3-dimensional skew polynomial algebras, diffusion algebras in three generators, and noncommutative coordinate algebras of deformations of several examples of classical orbifolds such as the pillow orbifold, singular cones and lens spaces. An interesting fact is that some of these algebras are also homologically smooth in the Van den Bergh's sense.

Considering the active research on differential smoothness of noncommutative algebras, and having in mind that ring-theoretical and geometrical properties of SPBW extensions (and hence of PBW extensions) have been investigated by different authors \cite{AbdiTalebi2023, Artamonov2015, GomezTorrecillas2014, Fajardoetal2020, Hamidizadehetal2020, HashemiKhalilnezhadAlhevaz2017, HigueraReyes2023, LezamaGomez2019, NinoRamirezReyes2020, ReyesRodriguez2021, ReyesSuarez2020, SuarezReyesSuarez2023, Tumwesigyeetal2020}, our purpose in this paper is to investigate this smoothness for the SPBW extensions over the commutative polynomial rings $\Bbbk[t]$ and $\Bbbk[t_1, t_2]$ (in a sequel paper \cite{RubianoReyes2024DSSPBWKt3n} we study the differential smoothness in the case of commutative polynomial rings generated on three and more indeterminates). Since these extensions are more general than 3-dimensional skew polynomial algebras \cite{BellSmith1990}, diffusion algebras \cite{IsaevPyatovRittenberg2001}, and skew bi-quadratic algebras \cite{Bavula2023} (see also double Ore extensions \cite{ZhangZhang2008, ZhangZhang2009}), and that the differential smoothness of all these families of algebras has been investigated in \cite{ReyesSarmiento2022, RubianoReyes2024DSBiquadraticAlgebras, RubianoReyes2024DSDoubleOreExtensions}, this paper is a sequel of the research of the smooth geometry of SPBW extensions from Brzezi{\'n}ski and Sitarz's point of view. In this way, we contribute to the study of the noncommutative geometry (algebraic and differential) of SPBW extensions that has been carried out by Lezama \cite{Lezama2020, Lezama2021, LezamaGomez2019, LezamaLatorre2017} and other people \cite{ChaconReyes2024, NinoReyes2024, ReyesSuarez2016, SuarezReyesSuarez2023}. 
 
The article is organized as follows. In Section \ref{sec2} we recall the definitions and preliminaries on SPBW extensions and differential smoothness of algebras in order to set up notation and render this paper self-contained. Next, Section \ref{DICSPBWKt} contains the first original results on the paper. We extend Brzezi{\'n}ski's ideas developed for skew polynomial rings of the commutative polynomial ring $\Bbbk[t]$ \cite{Brzezinski2015} (Example \ref{Brzezinski2015DSOEbiquadratic}) to the setting of SPBW extensions over $\Bbbk[t]$. Due to the length of the non-trivial computations, first we take as toy models the SPBW extensions generated by two and three indeterminates (Sections \ref{SPBWTMTwoI} and \ref{SPBWTMThreeI}, respectively), while the general case is presented in Section \ref{SPBWTMGeneralCase}. Theorems \ref{smoothPBW2}, \ref{smoothPBW3} and \ref{smoothPBWn} are the key results that establish sufficient conditions to assert that a SPBW extension over $\Bbbk[t]$ is differentially smooth. In Section \ref{DICSPBWKt1t2}, we study the differential smoothness of SPBW extensions over $\Bbbk[t_1, t_2]$. As it can be seen, the computations are highly non-trivial. Just as we did in Section \ref{DICSPBWKt}, we divide our treatment in the case of two, three and $n$ indeterminates (Sections \ref{SPBWTMTwoI1}, \ref{SPBWTMTwoI3}, and \ref{SPBWTMTwoIn}, respectively). The important results in this section are Theorems \ref{smoothPBW22}, \ref{smoothPBW23} and \ref{smoothPBW2n}. Finally, in Section \ref{FutureworkDSSPBW} we say a few words about a future work related to the sequel paper.

Throughout the paper, the word ring means an associative ring with identity not necessarily commutative. $\mathbb{N}$ denotes the set of natural numbers including zero, $K$ and $\Bbbk$ denote a commutative ring with identity and a field, respectively. ${\rm Aut}(R)$ denotes the set of automorphisms of the ring $R$.

\section{Definitions and preliminaries}\label{sec2}

\subsection{Skew Poincar\'e-Birkhoff-Witt extensions}\label{SPBWdefinitionspreliminaries}

\begin{definition}[{\cite[Definition 1]{GallegoLezama2010}}]\label{defpbwextension} 
Let $R$ and $A$ be rings. We say that $A$ is a {\it SPBW extension} over $R$ if the following conditions hold:
\begin{itemize}
\item [(i)] $R$ is a subring of $A$ sharing the same identity element.

\item [(ii)] There exist elements $x_1, \ldots, x_n \in A\ \backslash\ R$ such that $A$ is a left free $R$-module with basis given by the set $\text{Mon}(A):= \{ x^{\alpha} = x_{1}^{\alpha_1} \cdots x_{n}^{\alpha_n}\mid \alpha =(\alpha_1, \ldots, \alpha_n) \in \mathbb{N}^n\}$.

\item[(iii)] For each $1 \leq i \leq n$ and any $r \in R\  \backslash\ \{0\}$, there exists an element $c_{i,r} \in R \ \backslash\ \{0\}$ such that $x_ir - c_{i,r}x_i \in R$.

\item[\rm (iv)]For $1\leq i, j\leq n$, there exists an element $d_{i,j}\in R\ \backslash\ \{0\}$ such that
\[
x_jx_i-d_{i,j}x_ix_j\in R+Rx_1+\cdots +Rx_n,
\]

i.e., there exist elements $r_0^{(i,j)}, r_1^{(i,j)}, \dotsc, r_n^{(i,j)} \in R$ with
\begin{center}\label{relSPBW}
$x_jx_i - d_{i,j}x_ix_j = r_0^{(i,j)} + \sum_{k=1}^{n} r_k^{(i,j)}x_k$.    
\end{center}
\end{itemize}
\end{definition}

We use freely the notation $A = \sigma(R)\langle x_1,\dotsc, x_n\rangle$ to denote a SPBW extension $A$ over a ring $R$ in the indeterminates $x_1, \dotsc, x_n$. $R$ is called the {\em ring of coefficients} of the extension $A$. 

Since $\text{Mon}(A)$ is a left $R$-basis of $A$, the elements $c_{i,r}$ and $d_{i,j}$ in Definition \ref{defpbwextension} are unique. Every element $f \in A\  \backslash\ \{0\}$ has a unique representation as $f = \sum_{i=0}^tr_iX_i$, with $r_i \in R\ \backslash\ \{0\}$ and $X_i \in \text{Mon}(A)$ for $0 \leq i \leq t$ with $X_0=1$. When necessary, we use the notation $f = \sum_{i=0}^tr_iY_i$. For $X=x^{\alpha}\in\text{Mon}(A)$, ${\rm exp}(X):=\alpha$ and $\deg(X):=|\alpha|$. Let $\deg(f):=\max\{\deg(X_i)\}_{i=1}^t$ \cite[Remark 2 and Definition 6]{GallegoLezama2010}. 

If $A = \sigma(R)\langle x_1,\dotsc, x_n\rangle$ is a SPBW extension over $R$, then for each $1 \leq i \leq n$, there exist an injective endomorphism $\sigma_i : R \to R$ and a $\sigma_i$-derivation $\delta_i: R \to R$ such that $x_ir = \sigma_i(r)x_i + \delta_i(r)$, for each $r\in R$ \cite[Proposition 3]{GallegoLezama2010}. We use the notation $\Sigma:=\{\sigma_1,\dots,\sigma_n\}$ and $\Delta:=\{\delta_1,\dots,\delta_n\}$, and say that the pair $(\Sigma, \Delta)$ is a \textit{system of endomorphisms and $\Sigma$-derivations} of $R$ with respect to $A$. For $\alpha = (\alpha_1, \dots , \alpha_n) \in \mathbb{N}^n$, $\sigma^{\alpha}:= \sigma_1^{\alpha_1}\circ \cdots \circ \sigma_n^{\alpha_n}$, $\delta^{\alpha} := \delta_1^{\alpha_1} \circ \cdots \circ \delta_n^{\alpha_n}$, where $\circ$ denotes the classical composition of functions. 

\begin{definition}[{\cite[Definition 4]{GallegoLezama2010}}, {\cite[Definition 2.3 (ii)]{LezamaAcostaReyes2015}}]\label{quasicommutativebijective}
Consider a SPBW extension $A = \sigma(R)\langle x_1,\dotsc, x_n\rangle$ over $R$.
\begin{itemize}
\item [(i)] $A$ is called {\em quasi-commutative} if the conditions (iii) - (iv) in Definition (\ref{defpbwextension}) are replaced by the following:
\begin{itemize}
\item For every $1 \leq i \leq n$ and $r \in R \ \backslash\ \{0\}$ there exists $c_{i,j} \in R\ \backslash\ \{0\}$ such that $x_ir = c_{i,r}x_i$.
        
\item For every $1 \leq i,j \leq n$, there exists $d_{i,j} \in R\ \backslash\ \{0\}$ such that $x_jx_i = d_{i,j}x_ix_j$.
\end{itemize}
    
\item [(ii)] $A$ is {\it bijective} if $\sigma_i$ is bijective, for every $1 \leq i \leq n$, and $d_{i,j}$ is invertible, for any $1 \leq i < j \leq n$.
    
\item [(iii)] If $\sigma_i$ is the identity map of $R$ for each $i = 1, \dotsc, n$, then we say that $A$ is of {\em derivation type}. Similarly, if $\delta_i$ is zero, for every $i$, then $A$ is called of {\em endomorphism type}.
\item [(iv)] $A$ is said to be {\em semi-commutative} if it is quasi-commutative and $x_ir = rx_i$, for each $i$ and every $r\in R$.
\end{itemize}
\end{definition}

Next, we consider some interesting families examples of SPBW extensions.

\begin{example}\label{ExamplesSPBWextensionsDS}
\begin{enumerate}
\item [\rm (i)] SPBW extensions of endomorphism type over a ring are more general than iterated Ore extensions of endomorphism type of the same ring. Let us illustrate the situation with two and three indeterminates.
		
For the iterated Ore extension of endomorphism type $R[x;\sigma_x][y;\sigma_y]$, if $r\in R$ then we have the following relations: $xr = \sigma_x(r)x$, $yr = \sigma_y(r)y$, and $yx = \sigma_y(x)y$. Now, if we have $\sigma(R)\langle x, y\rangle$ a SPBW extension of endomorphism type over $R$, then for any $r\in R$, Definition \ref{defpbwextension} establishes that $xr=\sigma_1(r)x$, $yr=\sigma_2(r)y$, and $yx = d_{1,2}xy + r_0 + r_1x + r_2y$, for some elements $d_{1,2}, r_0, r_1$ and $r_2$ belong to $R$. 
	
If we have the iterated Ore extension $R[x;\sigma_x][y;\sigma_y][z;\sigma_z]$, then for any $r\in R$, $xr = \sigma_x(r)x$, $yr = \sigma_y(r)y$, $zr = \sigma_z(r)z$, $yx = \sigma_y(x)y$, $zx = \sigma_z(x)z$, $zy = \sigma_z(y)z$. For the SPBW extension of endomorphism type $\sigma(R)\langle x, y, z\rangle$, $xr=\sigma_1(r)x$, $yr=\sigma_2(r)y$, $zr = \sigma_3(r)z$, $yx = d_{1,2}xy + r_0 + r_1x + r_2y + r_3z$, $zx = d_{1,3}xz + r_0' + r_1'x + r_2'y + r_3'z$, and $zy = d_{2,3}yz + r_0'' + r_1''x + r_2''y + r_3''z$, for some elements $d_{1,2}, d_{1,3}, d_{2,3}, r_0, r_0', r_0'', r_1, r_1', r_1'', r_2, r_2', r_2'', r_3$, $r_3', r_3''$ of $R$. As the number of indeterminates increases, the differences between both algebraic structures are more remarkable.

\item [\rm (ii)] From Definition \ref{defpbwextension} (iv), it is clear that SPBW extensions are more general than iterated skew polynomial rings. For example, universal enveloping algebras of finite dimensional Lie algebras and some 3-dimensional skew polynomial algebras in the sense of Bell and Smith \cite{BellSmith1990} cannot be expressed as iterated skew polynomial rings but are SPBW extensions. Quasi-commutative SPBW extensions are isomorphic to iterated Ore extensions of endomorphism type \cite[ Theorem 2.3]{LezamaReyes2014}. 

\item [\rm (iii)] PBW extensions introduced by Bell and Goodearl \cite{BellGoodearl1988} are particular examples of SPBW extensions. More exactly, the first objects satisfy the relation $x_ir = rx_i + \delta_i(r)$ for every $i = 1,\dotsc, n$ and each $r\in R$, and the elements $d_{ij}$ in Definition \ref{defpbwextension}  (iv) are equal to the identity of $R$. As examples of PBW extensions, we mention the following:  the enveloping algebra of a finite-dimensional Lie algebra; any differential operator ring $R[\theta_1,\dotsc, \theta_1;\delta_1,\dotsc, \delta_n]$ formed from commuting derivations $\delta_1,\dotsc, \delta_n$; differential operators introduced by Rinehart; twisted or smash product differential operator rings, and others \cite[p. 27]{BellGoodearl1988}. 

\item [\rm (iv)] {\em 3-dimensional skew polynomial algebras} were defined by Bell and Smith \cite{BellSmith1990}. Briefly, a $3$-{\em dimensional algebra} $A$ is a $\Bbbk$-algebra generated by the indeterminates $x, y, z$ subject to the relations 
\[
yz-\alpha zy = \lambda, \quad zx - \beta xz = \mu, \quad {\rm and} \quad xy - \gamma yx = \nu, 
\]

where $\lambda,\mu,\nu \in \Bbbk x+\Bbbk y+\Bbbk z+\Bbbk$, and $\alpha, \beta, \gamma \in \Bbbk^{*}$. $A$ is called a \textit{3-dimensional skew polynomial $\Bbbk$-algebra} if the set $\left\{x^iy^jz^k\mid i,j,k\geq 0\right\}$ forms a $\Bbbk$-basis of the algebra. Up to isomorphism, there are fifteen 3-dimensional skew polynomial $\Bbbk$-algebras \cite{Rosenberg1995}, Theorem C4.3.1] (see also \cite{RedmanPhDThesis1996, Redman1999, ReyesSuarez20173D}).

\item [\rm (v)] {\em Diffusion algebras} were introduced from the physicist point of view by Isaev et al. \cite{IsaevPyatovRittenberg2001} as quadratic algebras that appear as algebras of operators that model the stochastic flow of motion of particles in a one dimensional discrete lattice, while Pyatov and Twarock \cite{PyatovTwarock2002} presented a construction formalism for these algebras and to use the latter to prove the results in \cite{IsaevPyatovRittenberg2001}: \textquotedblleft Diffusion algebras play a key role in the understanding of one-dimensional stochastic processes. In the case of $N$ species of particles with only nearest-neighbor interactions with exclusion on a one-dimensional lattice, diffusion algebras are useful tools in finding expressions for the probability distribution of the stationary state of these processes. Following the idea of matrix product states, the latter are given in terms of monomials built from the generators of a quadratic algebra\textquotedblright\ \cite[p. 3268]{PyatovTwarock2002}.

Following Pyatov and Twarock's notation and let $\alpha, \beta$ be two elements belonging to the set $I_N := \{1, \dotsc, n\}$ with $\alpha < \beta$. Consider quadratic relations of the form
\begin{equation}\label{PyatovTwarock2002(1)}
    g_{\alpha \beta} D_{\alpha} D_{\beta} - g_{\beta \alpha} D_{\beta} D_{\alpha} = x_{\beta} D_{\alpha} - x_{\alpha} D_{\beta},
\end{equation}

with $g_{\alpha \beta} \in \mathbb{R} \ \backslash \ \{0\}, \ g_{\beta \alpha} \in \mathbb{R}$, and $x_{\alpha}, x_{\beta} \in \mathbb{C}$.

From \cite[Definition 1.1]{PyatovTwarock2002}, an algebra with set of generators given by $\left\{ D_{\alpha} \mid \alpha \in I_N\right\}$ and relations of type {\rm (}\ref{PyatovTwarock2002(1)}{\rm )} is called {\em diffusion algebra}, if it admits a linear PBW-basis of ordered monomials of the form
\begin{equation}\label{PyatovTwarock2002(2)}
    D_{\alpha_1}^{k_1} D_{\alpha_2}^{k_2} \dotsb D_{\alpha_n}^{k_n},\quad {\rm with} \quad k_j \in \mathbb{N} \quad {\rm and} \quad \alpha_1 > \alpha_2 > \dotsb > \alpha_n.
\end{equation}

Due to physical reasons only relations with positive coefficients $g_{\alpha \beta} \in \mathbb{R}_{>0}$ and $g_{\beta \alpha} \in \mathbb{R}_{\ge 0}$ ($\alpha < \beta$) are relevant because they are interpreted as hopping rates in stochastic models \cite[p. 3268]{PyatovTwarock2002}.

\item [\rm (vi)] Let $n\ge 2$ be a natural number. A family $M = (m_{ij})_{i > j}$ of elements $m_{ij}$ belonging to $R$ ($1\le j < i \le n$) is called a {\em lower triangular half-matrix} with coefficients in $R$. The set of all such matrices is denoted by $L_n(R)$.

Bavula \cite[Section 1]{Bavula2023} defined for $\sigma = (\sigma_1, \dotsc, \sigma_n)$ an $n$-tuple of commuting endomorphisms of $R$, $\delta = (\delta_1, \dotsc, \delta_n)$ an $n$-tuple of $\sigma$-endomorphisms of $R$ (that is, $\delta_i$ is a $\sigma_i$-derivation of $R$ for $i=1,\dotsc, n$), $Q = (q_{ij})\in L_n(Z(R))$, $\mathbb{A}:= (a_{ij, k})$ where $a_{ij, k}\in R$, $1\le j < i \le n$ and $k = 1,\dotsc, n$, and $\mathbb{B}:= (b_{ij})\in L_n(R)$, the {\em skew bi-quadratic algebra} ({\em SBQA}) $A = R[x_1,\dotsc, x_n;\sigma, \delta, Q, \mathbb{A}, \mathbb{B}]$ as a ring generated by the ring $R$ and elements $x_1, \dotsc, x_n$ subject to the defining relations
\begin{align}
    x_ir = &\ \sigma_i(r)x_i + \delta_i(r),\quad {\rm for}\ i = 1, \dotsc, n,\ {\rm and\ every}\ r\in R, \label{Bavula2023(1)} \\
    x_ix_j - q_{ij}x_jx_i = &\ \sum_{k=1}^{n} a_{ij, k}x_k + b_{ij},\quad {\rm for\ all}\ j < i.\label{Bavula2023(2)}
\end{align}

If $\sigma_i = {\rm id}_R$ and $\delta_i = 0$ for $i = 1,\dotsc, n$, the ring $A$ is called the {\em bi-quadratic algebra} ({\em BQA}) and is denoted by $A = R[x_1, \dotsc, x_n; Q, \mathbb{A}, \mathbb{B}]$. $A$ has {\em PBW basis} if $A = \bigoplus\limits_{\alpha \in \mathbb{N}^{n}} Rx^{\alpha}$ where $x^{\alpha} = x_1^{\alpha_1}\dotsb x_n^{\alpha_n}$.

It is clear from the definition that bi-quadratic algebras having PBW basis are particular examples of SPBW extensions.

\end{enumerate}
\end{example}

\subsection{Differential smoothness}\label{DefinitionsandpreliminariesDSA}

We follow Brzezi\'nski and Sitarz's presentation on differential smoothness carried out in \cite[Section 2]{BrzezinskiSitarz2017} (c.f. \cite{Brzezinski2008, Brzezinski2014}).

\begin{definition}[{\cite[Section 2.1]{BrzezinskiSitarz2017}}]
\begin{enumerate}
    \item [\rm (i)] A {\em differential graded algebra} is a non-negatively graded algebra $\Omega$ with the product denoted by $\wedge$ together with a degree-one linear map $d:\Omega^{\bullet} \to \Omega^{\bullet +1}$ that satisfies the graded Leibniz's rule and is such that $d \circ d = 0$. 
    
    \item [\rm (ii)] A differential graded algebra $(\Omega, d)$ is a {\em calculus over an algebra} $A$ if $\Omega^0 A = A$ and $\Omega^n A = A\ dA \wedge dA \wedge \dotsb \wedge dA$ ($dA$ appears $n$-times) for all $n\in \mathbb{N}$ (this last is called the {\em density condition}). We write $(\Omega A, d)$ with $\Omega A = \bigoplus_{n\in \mathbb{N}} \Omega^{n}A$. By using the Leibniz's rule, it follows that $\Omega^n A = dA \wedge dA \wedge \dotsb \wedge dA\ A$. A differential calculus $\Omega A$ is said to be {\em connected} if ${\rm ker}(d\mid_{\Omega^0 A}) = \Bbbk$.
    
    \item [\rm (iii)] A calculus $(\Omega A, d)$ is said to have {\em dimension} $n$ if $\Omega^n A\neq 0$ and $\Omega^m A = 0$ for all $m > n$. An $n$-dimensional calculus $\Omega A$ {\em admits a volume form} if $\Omega^n A$ is isomorphic to $A$ as a left and right $A$-module. 
\end{enumerate}
\end{definition}

The existence of a right $A$-module isomorphism means that there is a free generator, say $\omega$, of $\Omega^n A$ (as a right $A$-module), i.e. $\omega \in \Omega^n A$, such that all elements of $\Omega^n A$ can be uniquely expressed as $\omega a$ with $a \in A$. If $\omega$ is also a free generator of $\Omega^n A$ as a left $A$-module, this is said to be a {\em volume form} on $\Omega A$.

The right $A$-module isomorphism $\Omega^n A \to A$ corresponding to a volume form $\omega$ is denoted by $\pi_{\omega}$, i.e.
\begin{equation}\label{BrzezinskiSitarz2017(2.1)}
\pi_{\omega} (\omega a) = a, \quad {\rm for\ all}\ a\in A.
\end{equation}

By using that $\Omega^n A$ is also isomorphic to $A$ as a left $A$-module, any free generator $\omega $ induces an algebra endomorphism $\nu_{\omega}$ of $A$ by the formula
\begin{equation}\label{BrzezinskiSitarz2017(2.2)}
    a \omega = \omega \nu_{\omega} (a).
\end{equation}

Note that if $\omega$ is a volume form, then $\nu_{\omega}$ is an algebra automorphism.

Now, we proceed to recall the key ingredients of the {\em integral calculus} on $A$ as dual to its differential calculus. For more details, see Brzezinski et al. \cite{Brzezinski2008, BrzezinskiElKaoutitLomp2010}.

Let $(\Omega A, d)$ be a differential calculus on $A$. The space of $n$-forms $\Omega^n A$ is an $A$-bimodule. Consider $\mathcal{I}_{n}A$ the right dual of $\Omega^{n}A$, the space of all right $A$-linear maps $\Omega^{n}A\rightarrow A$, that is, $\mathcal{I}_{n}A := {\rm Hom}_{A}(\Omega^{n}(A),A)$. Notice that each of the $\mathcal{I}_{n}A$ is an $A$-bimodule with the actions
\begin{align*}
    (a\cdot\phi\cdot b)(\omega)=a\phi(b\omega),\quad {\rm for\ all}\ \phi \in \mathcal{I}_{n}A,\ \omega \in \Omega^{n}A\ {\rm and}\ a,b \in A.
\end{align*}

The direct sum of all the $\mathcal{I}_{n}A$, that is, $\mathcal{I}A = \bigoplus\limits_{n} \mathcal{I}_n A$, is a right $\Omega A$-module with action given by
\begin{align}\label{BrzezinskiSitarz2017(2.3)}
    (\phi\cdot\omega)(\omega')=\phi(\omega\wedge\omega'),\quad {\rm for\ all}\ \phi\in\mathcal{I}_{n + m}A, \ \omega\in \Omega^{n}A \ {\rm and} \ \omega' \in \Omega^{m}A.
\end{align}

\begin{definition}[{\cite[Definition 2.1]{Brzezinski2008}}]
A {\em divergence} (also called {\em hom-connection}) on $A$ is a linear map $\nabla: \mathcal{I}_1 A \to A$ such that
\begin{equation}\label{BrzezinskiSitarz2017(2.4)}
    \nabla(\phi \cdot a) = \nabla(\phi) a + \phi(da), \quad {\rm for\ all}\ \phi \in \mathcal{I}_1 A \ {\rm and} \ a \in A.
\end{equation}  
\end{definition}

Note that a divergence can be extended to the whole of $\mathcal{I}A$, 
\[
\nabla_n: \mathcal{I}_{n+1} A \to \mathcal{I}_{n} A,
\]

by considering
\begin{equation}\label{BrzezinskiSitarz2017(2.5)}
\nabla_n(\phi)(\omega) = \nabla(\phi \cdot \omega) + (-1)^{n+1} \phi(d \omega), \quad {\rm for\ all}\ \phi \in \mathcal{I}_{n+1}(A)\ {\rm and} \ \omega \in \Omega^n A.
\end{equation}

By putting together (\ref{BrzezinskiSitarz2017(2.4)}) and (\ref{BrzezinskiSitarz2017(2.5)}), we get the Leibniz's rule 
\begin{equation}
    \nabla_n(\phi \cdot \omega) = \nabla_{m + n}(\phi) \cdot \omega + (-1)^{m + n} \phi \cdot d\omega,
\end{equation}

for all elements $\phi \in \mathcal{I}_{m + n + 1} A$ and $\omega \in \Omega^m A$ \cite[Lemma 3.2]{Brzezinski2008}. In the case $n = 0$, if ${\rm Hom}_A(A, M)$ is canonically identified with $M$, then $\nabla_0$ reduces to the classical Leibniz's rule.

\begin{definition}[{\cite[Definition 3.4]{Brzezinski2008}}]
The right $A$-module map 
$$
F = \nabla_0 \circ \nabla_1: {\rm Hom}_A(\Omega^{2} A, M) \to M
$$ is called a {\em curvature} of a hom-connection $(M, \nabla_0)$. $(M, \nabla_0)$ is said to be {\em flat} if its curvature is the zero map, that is, if $\nabla \circ \nabla_1 = 0$. This condition implies that $\nabla_n \circ \nabla_{n+1} = 0$ for all $n\in \mathbb{N}$.
\end{definition}

$\mathcal{I} A$ together with the $\nabla_n$ form a chain complex called the {\em complex of integral forms} over $A$. The cokernel map of $\nabla$, that is, $\Lambda: A \to {\rm Coker} \nabla = A / {\rm Im} \nabla$ is said to be the {\em integral on $A$ associated to} $\mathcal{I}A$.

Given a left $A$-module $X$ with action $a\cdot x$, for all $a\in A,\ x \in X$, and an algebra automorphism $\nu$ of $A$, the notation $^{\nu}X$ stands for $X$ with the $A$-module structure twisted by $\nu$, i.e. with the $A$-action $a\otimes x \mapsto \nu(a)\cdot x $.

The following definition of an \textit{integrable differential calculus} seeks to portray a version of Hodge star isomorphisms between the complex of differential forms of a differentiable manifold and a complex of dual modules of it \cite[p. 112]{Brzezinski2015}. 

\begin{definition}[{\cite[Definition 2.1]{BrzezinskiSitarz2017}}]
An $n$-dimensional differential calculus $(\Omega A, d)$ is said to be {\em integrable} if $(\Omega A, d)$ admits a complex of integral forms $(\mathcal{I}A, \nabla)$ for which there exist an algebra automorphism $\nu$ of $A$ and $A$-bimodule isomorphisms \linebreak $\Theta_k: \Omega^{k} A \to ^{\nu} \mathcal{I}_{n-k}A$, $k = 0, \dotsc, n$, rendering commmutative the following diagram:
\[
{\large{
\begin{tikzcd}
A \arrow{r}{d} \arrow{d}{\Theta_0} & \Omega^{1} A \arrow{d}{\Theta_1} \arrow{r}{d} & \Omega^2 A  \arrow{d}{\Theta_2} \arrow{r}{d} & \dotsb \arrow{r}{d} & \Omega^{n-1} A \arrow{d}{\Theta_{n-1}} \arrow{r}{d} & \Omega^n A  \arrow{d}{\Theta_n} \\ ^{\nu} \mathcal{I}_n A \arrow[swap]{r}{\nabla_{n-1}} & ^{\nu} \mathcal{I}_{n-1} A \arrow[swap]{r}{\nabla_{n-2}} & ^{\nu} \mathcal{I}_{n-2} A \arrow[swap]{r}{\nabla_{n-3}} & \dotsb \arrow[swap]{r}{\nabla_{1}} & ^{\nu} \mathcal{I}_{1} A \arrow[swap]{r}{\nabla} & ^{\nu} A
\end{tikzcd}
}}
\]

The $n$-form $\omega:= \Theta_n^{-1}(1)\in \Omega^n A$ is called an {\em integrating volume form}. 
\end{definition}

The algebra of complex matrices $M_n(\mathbb{C})$ with the $n$-dimensional calculus generated by derivations presented by Dubois-Violette et al. \cite{DuboisViolette1988, DuboisVioletteKernerMadore1990}, the quantum group $SU_q(2)$ with the three-dimensional left covariant calculus developed by Woronowicz \cite{Woronowicz1987} and the quantum standard sphere with the restriction of the above calculus, are examples of algebras admitting integrable calculi. For more details on the subject, see Brzezi\'nski et al. \cite{BrzezinskiElKaoutitLomp2010}. 

The following proposition shows that the integrability of a differential calculus can be defined without explicit reference to integral forms. This allows us to guarantee the integrability by considering the existence of finitely generator elements that allow to determine left and right components of any homogeneous element of $\Omega(A)$.

\begin{proposition}[{\cite[Theorem 2.2]{BrzezinskiSitarz2017}}]\label{integrableequiva} 
Let $(\Omega A, d)$ be an $n$-dimensional differential calculus over an algebra $A$. The following assertions are equivalent:
\begin{enumerate}
    \item [\rm (1)] $(\Omega A, d)$ is an integrable differential calculus.
    
    \item [\rm (2)] There exists an algebra automorphism $\nu$ of $A$ and $A$-bimodule isomorphisms $\Theta_k : \Omega^k A \rightarrow \ ^{\nu}\mathcal{I}_{n-k}A$, $k =0, \ldots, n$, such that, for all $\omega'\in \Omega^k A$ and $\omega''\in \Omega^mA$,
    \begin{align*}
        \Theta_{k+m}(\omega'\wedge\omega'')=(-1)^{(n-1)m}\Theta_k(\omega')\cdot\omega''.
    \end{align*}
    
    \item [\rm (3)] There exists an algebra automorphism $\nu$ of $A$ and an $A$-bimodule map $\vartheta:\Omega^nA\rightarrow\ ^{\nu}A$ such that all left multiplication maps
    \begin{align*}
    \ell_{\vartheta}^{k}:\Omega^k A &\ \rightarrow \mathcal{I}_{n-k}A, \\
    \omega' &\ \mapsto \vartheta\cdot\omega', \quad k = 0, 1, \dotsc, n,
    \end{align*}
    where the actions $\cdot$ are defined by {\rm (}\ref{BrzezinskiSitarz2017(2.3)}{\rm )}, are bijective.
    
    \item [\rm (4)] $(\Omega A, d)$ has a volume form $\omega$ such that all left multiplication maps
    \begin{align*}
        \ell_{\pi_{\omega}}^{k}:\Omega^k A &\ \rightarrow \mathcal{I}_{n-k}A, \\
        \omega' &\ \mapsto \pi_{\omega} \cdot \omega', \quad k=0,1, \dotsc, n-1,
    \end{align*}
    
    where $\pi_{\omega}$ is defined by {\rm (}\ref{BrzezinskiSitarz2017(2.1)}{\rm )}, are bijective.
\end{enumerate}
\end{proposition}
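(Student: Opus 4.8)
The plan is to establish the cyclic chain of implications $(1)\Rightarrow(2)\Rightarrow(3)\Rightarrow(4)\Rightarrow(1)$, after which all four assertions are equivalent. For $(1)\Rightarrow(2)$, I would first argue that integrability forces the bimodule isomorphisms into the normal form $\Theta_k(\omega')=(-1)^{(n-1)k}\,\Theta_0(1)\cdot\omega'$, where $\cdot$ is the right $\Omega A$-action of \eqref{BrzezinskiSitarz2017(2.3)}. This is an induction on $k$: the density condition gives $\Omega^{k}A=d(\Omega^{k-1}A)\cdot A$, so a right $A$-linear map on $\Omega^k A$ is determined by its restriction to $d\Omega^{k-1}A$; on such elements, commutativity of the $(k-1)$-st square of the integrability diagram, the Leibniz rule for $\nabla$, the inductive hypothesis, and $d\circ d=0$ (which kills $\nabla_{n-1}(\Theta_0(1))=\Theta_1(d1)$) reduce $\Theta_k(d\eta)$ to $(-1)^{(n-1)k}\,\Theta_0(1)\cdot d\eta$. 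Granting the normal form, the identity $\Theta_{k+m}(\omega'\wedge\omega'')=(-1)^{(n-1)m}\Theta_k(\omega')\cdot\omega''$ is immediate from associativity of the $\Omega A$-action on $\mathcal I A$ together with additivity of the sign exponent.

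For $(2)\Rightarrow(3)$ I would take $\vartheta:=\Theta_n$, which under the canonical identification $\mathcal I_0 A\cong A$ is a bimodule map $\Omega^n A\to{}^{\nu}A$; specializing the multiplicativity identity of $(2)$ to $\omega''\in\Omega^{n-k}A$ gives $\ell^{k}_{\vartheta}=(-1)^{(n-1)(n-k)}\Theta_k$, hence bijective. For $(3)\Rightarrow(4)$, observe that $\ell^{n}_{\vartheta}$ is just $\vartheta$ under $\mathcal I_0 A\cong A$, so $\vartheta\colon\Omega^n A\to A$ is a right $A$-module isomorphism; set $\omega:=\vartheta^{-1}(1)$. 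Then $\vartheta(\omega a)=\vartheta(\omega)a=a$ shows that $\omega$ freely generates $\Omega^n A$ on the right and that $\pi_\omega=\vartheta$, while $\vartheta(a\omega)=\nu(a)$ forces $a\omega=\omega\,\nu(a)$, so $\omega$ freely generates on the left as well; thus $\omega$ is a volume form with $\nu_\omega=\nu$ (in the sense of \eqref{BrzezinskiSitarz2017(2.2)}), and $\ell^{k}_{\pi_\omega}=\ell^{k}_{\vartheta}$ is bijective for $k=0,\dots,n-1$.

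The substantial step is $(4)\Rightarrow(1)$. Using the volume form $\omega$, I would set $\Theta_k:=(-1)^{(n-1)k}\ell^{k}_{\pi_\omega}\colon\Omega^k A\to\mathcal I_{n-k}A$; these are bijective (for $k\le n-1$ by hypothesis, and for $k=n$ because $\ell^{n}_{\pi_\omega}=\pi_\omega$), and a short check using $a\omega=\omega\,\nu_\omega(a)$ and the right-freeness of $\omega$ shows each $\Theta_k$ is an $A$-bimodule isomorphism onto ${}^{\nu_\omega}\mathcal I_{n-k}A$. I then transport the de Rham differential along these isomorphisms: put $\nabla_{n-k-1}:=\Theta_{k+1}\circ d\circ\Theta_k^{-1}\colon\mathcal I_{n-k}A\to\mathcal I_{n-k-1}A$ for $k=0,\dots,n-1$, and in particular $\nabla:=\Theta_n\circ d\circ\Theta_{n-1}^{-1}\colon\mathcal I_1 A\to\mathcal I_0 A\cong A$. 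Since $d\circ d=0$ these maps form a chain complex and the integrability diagram commutes by construction, so what remains is to verify that $\nabla$ is a divergence, i.e.\ that it satisfies \eqref{BrzezinskiSitarz2017(2.4)}, and that the $\nabla_j$ just defined coincide with those produced from $\nabla$ by \eqref{BrzezinskiSitarz2017(2.5)}. Both are checked by writing the argument as $\Theta_{\bullet}(\eta)$, expanding $d(\eta\wedge a)$ (resp.\ $d(\eta\wedge\omega'')$) with the graded Leibniz rule, and using right $A$-linearity of $\Theta$ together with the identity $(\pi_\omega\cdot\eta)(\zeta)=\pi_\omega(\eta\wedge\zeta)$; the factor $(-1)^{(n-1)k}$ in $\Theta_k$ is exactly what makes all the signs cancel.

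The main obstacle is precisely this verification inside $(4)\Rightarrow(1)$ that the transported map $\nabla$ obeys the hom-connection Leibniz rule and that \eqref{BrzezinskiSitarz2017(2.5)} regenerates the remaining $\nabla_j$: it is the one place where the density condition, the bimodule structure of $\mathcal I A$, the graded Leibniz rule for $d$, and the sign conventions all have to be combined at once. Everything else reduces to bookkeeping with the canonical identifications $\mathcal I_0 A\cong A$ and ${\rm Hom}_A(A,M)\cong M$ and with the additive structure of the sign exponents.
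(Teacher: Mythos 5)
The paper does not prove this statement: it is quoted verbatim as \cite[Theorem 2.2]{BrzezinskiSitarz2017}, so there is no internal proof to compare against. Your cyclic argument $(1)\Rightarrow(2)\Rightarrow(3)\Rightarrow(4)\Rightarrow(1)$ is sound and follows the same strategy as the proof in the cited source: the normal form $\Theta_k=(-1)^{(n-1)k}\,\Theta_0(1)\cdot(-)$ in step $(1)\Rightarrow(2)$, the identifications $\vartheta=\Theta_n$ and $\pi_\omega=\vartheta$, and the sign bookkeeping in $(4)\Rightarrow(1)$ all check out (the only nitpick is that $\Theta_1(d1)=0$ follows from $d1=0$, i.e.\ from the Leibniz rule, rather than from $d\circ d=0$; the latter is what kills $d\xi$ for $\xi$ a wedge of exact one-forms in the higher inductive steps).
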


A volume form $\omega\in \Omega^nA$ is an {\em integrating form} if and only if it satisfies Proposition \ref{integrableequiva} (4) \cite[Remark 2.3]{BrzezinskiSitarz2017}.

The most interesting cases of differential calculi are those where $\Omega^k A$ are finitely generated and projective right or left (or both) $A$-modules \cite{Brzezinski2011}.

\begin{proposition}\label{BrzezinskiSitarz2017Lemmas2.6and2.7}
\begin{enumerate}
\item [\rm (1)] \cite[Lemma 2.6]{BrzezinskiSitarz2017} Consider $(\Omega A, d)$ an integrable and $n$-dimensional calculus over $A$ with integrating form $\omega$. Then $\Omega^{k} A$ is a finitely generated projective right $A$-module if there exist a finite number of forms $\omega_i \in \Omega^{k} A$ and $\overline{\omega}_i \in \Omega^{n-k} A$ such that, for all $\omega' \in \Omega^{k} A$, we have that 
\begin{equation*}
\omega' = \sum_{i} \omega_i \pi_{\omega} (\overline{\omega}_i \wedge \omega').
\end{equation*}

\item [\rm (2)] \cite[Lemma 2.7]{BrzezinskiSitarz2017} Let $(\Omega A, d)$ be an $n$-dimensional calculus over $A$ admitting a volume form $\omega$. Assume that for all $k = 1, \ldots, n-1$, there exists a finite number of forms $\omega_{i}^{k},\overline{\omega}_{i}^{k} \in \Omega^{k}(A)$ such that for all $\omega'\in \Omega^kA$, we have that
\begin{equation*}
\omega'=\displaystyle\sum_i\omega_{i}^{k}\pi_\omega(\overline{\omega}_{i}^{n-k}\wedge\omega')=\displaystyle\sum_i\nu_{\omega}^{-1}(\pi_\omega(\omega'\wedge\omega_{i}^{n-k}))\overline{\omega}_{i}^{k},
\end{equation*}

where $\pi_{\omega}$ and $\nu_{\omega}$ are defined by {\rm (}\ref{BrzezinskiSitarz2017(2.1)}{\rm )} and {\rm (}\ref{BrzezinskiSitarz2017(2.2)}{\rm )}, respectively. Then $\omega$ is an integral form and all the $\Omega^{k}A$ are finitely generated and projective as left and right $A$-modules.
\end{enumerate}
\end{proposition}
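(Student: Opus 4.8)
The plan is to derive both statements from the dual basis lemma: a right (resp.\ left) $A$-module $P$ is finitely generated and projective whenever there are finitely many elements $p_i\in P$ and right (resp.\ left) $A$-linear maps $\phi_i\in {\rm Hom}_A(P,A)$ with $p=\sum_i p_i\,\phi_i(p)$ (resp.\ $p=\sum_i \phi_i(p)\,p_i$) for every $p\in P$, since then $p\mapsto(\phi_i(p))_i$ and $(a_i)_i\mapsto\sum_i p_ia_i$ exhibit $P$ as a direct summand of a finitely generated free $A$-module. For part (1), the integrating form $\omega$ is in particular a volume form, so $\pi_\omega\colon\Omega^nA\to A$ is a right $A$-module isomorphism. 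First I would put $\phi_i\colon\Omega^kA\to A$, $\phi_i(\omega')=\pi_\omega(\overline{\omega}_i\wedge\omega')$, and note that each $\phi_i$ is right $A$-linear, because left wedging by the fixed form $\overline{\omega}_i$ and $\pi_\omega$ both are: $\phi_i(\omega'a)=\pi_\omega(\overline{\omega}_i\wedge\omega'a)=\pi_\omega((\overline{\omega}_i\wedge\omega')a)=\phi_i(\omega')a$. The hypothesis $\omega'=\sum_i\omega_i\,\pi_\omega(\overline{\omega}_i\wedge\omega')=\sum_i\omega_i\,\phi_i(\omega')$ is precisely the dual basis identity, so $\Omega^kA$ is a finitely generated projective right $A$-module.

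For part (2), I would first record the auxiliary identity $\pi_\omega(a\eta)=\nu_\omega(a)\,\pi_\omega(\eta)$ for $a\in A$ and $\eta\in\Omega^nA$, obtained by writing $\eta=\omega\,\pi_\omega(\eta)$ and invoking $a\omega=\omega\,\nu_\omega(a)$ from (\ref{BrzezinskiSitarz2017(2.2)}); here $\nu_\omega$ is an algebra automorphism since $\omega$ is a volume form. The first hypothesized identity $\omega'=\sum_i\omega_i^k\,\pi_\omega(\overline{\omega}_i^{n-k}\wedge\omega')$ is the dual basis identity for the right module structure with functionals as in part (1), so $\Omega^kA$ is a finitely generated projective right $A$-module for $k=1,\dots,n-1$ (the cases $k=0$ and $k=n$ being immediate, as $\Omega^0A=A$ is free and $\Omega^nA\cong A$ via $\pi_\omega$). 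For the left structure I would set $\psi_i\colon\Omega^kA\to A$, $\psi_i(\omega')=\nu_\omega^{-1}(\pi_\omega(\omega'\wedge\omega_i^{n-k}))$; the auxiliary identity then gives $\psi_i(a\omega')=\nu_\omega^{-1}\bigl(\nu_\omega(a)\,\pi_\omega(\omega'\wedge\omega_i^{n-k})\bigr)=a\,\psi_i(\omega')$, so $\psi_i$ is left $A$-linear, and the second hypothesized identity $\omega'=\sum_i\psi_i(\omega')\,\overline{\omega}_i^k$ is the dual basis identity, giving that $\Omega^kA$ is a finitely generated projective left $A$-module (again with $k=0$ and $k=n$ immediate).

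It then remains to show $\omega$ is an integral form, which by Remark 2.3 recalled above amounts to verifying condition (4) of Proposition \ref{integrableequiva}: that each left multiplication map $\ell_{\pi_\omega}^{k}\colon\Omega^kA\to\mathcal{I}_{n-k}A$, $\omega'\mapsto\pi_\omega\cdot\omega'$, with $(\pi_\omega\cdot\omega')(\omega'')=\pi_\omega(\omega'\wedge\omega'')$ for $\omega''\in\Omega^{n-k}A$ by (\ref{BrzezinskiSitarz2017(2.3)}), is bijective for $k=0,1,\dots,n-1$. For $k=0$, since $\Omega^nA$ is free of rank one over $A$ with generator $\omega$, an element of $\mathcal{I}_nA$ is determined by its value at $\omega$, and $\ell_{\pi_\omega}^{0}(a)(\omega)=\pi_\omega(a\omega)=\nu_\omega(a)$, so $\ell_{\pi_\omega}^{0}$ is a bijection with inverse determined by $\nu_\omega^{-1}$. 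For $1\le k\le n-1$, injectivity follows because $\pi_\omega\cdot\omega'=0$ forces $\pi_\omega(\omega'\wedge\omega_i^{n-k})=0$ for all $i$, whence $\omega'=\sum_i\nu_\omega^{-1}(0)\,\overline{\omega}_i^k=0$ by the second identity; for surjectivity, given $\phi\in\mathcal{I}_{n-k}A$ I would take $\omega':=\sum_i\nu_\omega^{-1}(\phi(\omega_i^{n-k}))\,\overline{\omega}_i^k\in\Omega^kA$ and compute, for $\omega''\in\Omega^{n-k}A$, using the auxiliary identity, right $A$-linearity of $\phi$ and the first hypothesized identity applied in degree $n-k$, that $\pi_\omega(\omega'\wedge\omega'')=\sum_i\phi(\omega_i^{n-k})\,\pi_\omega(\overline{\omega}_i^k\wedge\omega'')=\phi\bigl(\sum_i\omega_i^{n-k}\,\pi_\omega(\overline{\omega}_i^k\wedge\omega'')\bigr)=\phi(\omega'')$, so $\ell_{\pi_\omega}^{k}(\omega')=\phi$.

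I expect the surjectivity of $\ell_{\pi_\omega}^{k}$ to be the main obstacle: it requires guessing the correct preimage and then carefully tracking the twist $\nu_\omega$ through both dual basis identities and the bimodule actions on $\mathcal{I}A$ from (\ref{BrzezinskiSitarz2017(2.3)}). A secondary point is that the hypothesis of part (2) is stated only in degrees $1,\dots,n-1$, whereas Proposition \ref{integrableequiva}(4) also requires the boundary degree $k=0$, which must be handled separately using only that $\omega$ is a volume form. Everything else reduces to routine checks of $A$-linearity and a direct application of the dual basis lemma.
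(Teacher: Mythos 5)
Your argument is correct. Note first that the paper does not prove this proposition at all: it is quoted verbatim from Brzezi\'nski--Sitarz \cite[Lemmas 2.6 and 2.7]{BrzezinskiSitarz2017} and used as a black box, so there is no in-paper proof to compare against. Your reconstruction follows the standard route (and essentially the original one): the two hypothesized identities are exactly dual basis relations, with $\phi_i(\omega')=\pi_{\omega}(\overline{\omega}_i\wedge\omega')$ right $A$-linear because $\pi_{\omega}$ and left wedging are, and $\psi_i(\omega')=\nu_{\omega}^{-1}(\pi_{\omega}(\omega'\wedge\omega_i^{n-k}))$ left $A$-linear via the twist identity $\pi_{\omega}(a\eta)=\nu_{\omega}(a)\pi_{\omega}(\eta)$; this gives finite generation and projectivity on both sides. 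Your verification of condition (4) of Proposition \ref{integrableequiva} is also sound: injectivity of $\ell_{\pi_{\omega}}^{k}$ comes from the second identity, surjectivity from the explicit preimage $\sum_i\nu_{\omega}^{-1}(\phi(\omega_i^{n-k}))\overline{\omega}_i^{k}$ together with the first identity applied in the complementary degree $n-k$ (which is legitimate since $n-k$ again lies in $\{1,\dots,n-1\}$), and the boundary case $k=0$ is handled correctly by the freeness of $\Omega^{n}A$ and bijectivity of $\nu_{\omega}$. The only cosmetic imprecision is the remark that $\Omega^{n}A\cong A$ ``via $\pi_{\omega}$'' for the left structure; $\pi_{\omega}$ is only right $A$-linear, and the left isomorphism is $a\mapsto a\omega$, but this does not affect the conclusion since $\omega$ is a volume form and hence a free generator on both sides.
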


Brzezi\'nski and Sitarz \cite[p. 421]{BrzezinskiSitarz2017} asserted that to connect the integrability of the differential graded algebra $(\Omega A, d)$ with the algebra $A$, it is necessary to relate the dimension of the differential calculus $\Omega A$ with that of $A$, and since we are dealing with algebras that are deformations of coordinate algebras of affine varieties, the {\em Gelfand-Kirillov dimension} introduced by Gelfand and Kirillov \cite{GelfandKirillov1966, GelfandKirillov1966b} seems to be the best suited. Briefly, given an affine $\Bbbk$-algebra $A$, the {\em Gelfand-Kirillov dimension of} $A$, denoted by ${\rm GKdim}(A)$, is given by
\[
{\rm GKdim}(A) := \underset{n\to \infty}{\rm lim\ sup} \frac{{\rm log}({\rm dim}\ V^{n})}{{\rm log}\ n},
\]

where $V$ is a finite-dimensional subspace of $A$ that generates $A$ as an algebra. This definition is independent of choice of $V$. If $A$ is not affine, then its Gelfand-Kirillov dimension is defined to be the supremum of the Gelfand-Kirillov dimensions of all affine subalgebras of $A$. An affine domain of Gelfand-Kirillov dimension zero is precisely a division ring that is finite-dimensional over its center. In the case of an affine domain of Gelfand-Kirillov dimension one over $\Bbbk$, this is precisely a finite module over its center, and thus polynomial identity. In some sense, this dimensions measures the deviation of the algebra $A$ from finite dimensionality. For more details about this dimension, see the excellent treatment developed by Krause and Lenagan \cite{KrauseLenagan2000}.

After preliminaries above, we arrive to the key notion of this paper.

\begin{definition}[{\cite[Definition 2.4]{BrzezinskiSitarz2017}}]\label{BrzezinskiSitarz2017Definition2.4}
An affine algebra $A$ with integer Gelfand-Kirillov dimension $n$ is said to be {\em differentially smooth} if it admits an $n$-dimensional connected integrable differential calculus $(\Omega A, d)$.
\end{definition}

From Definition \ref{BrzezinskiSitarz2017Definition2.4} a differentially smooth algebra comes equipped with a well-behaved differential structure and with the precise concept of integration \cite[p. 2414]{BrzezinskiLomp2018}.

\begin{example}\label{Brzezinski2015DSOEbiquadratic}
\begin{enumerate}
    \item [\rm (i)] The polynomial algebra $\Bbbk[x_1, \dotsc, x_n]$ has Gelfand-Kirillov dimension $n$ and the usual exterior algebra is an $n$-dimensional integrable calculus, whence $\Bbbk[x_1, \dotsc, x_n]$ is differentially smooth.

    \item [\rm (ii)] If $\sigma$ is an endomorphism of $R$, then a map $\delta : R \rightarrow R$ is called a {\em $\sigma$-derivation} on $R$, if it is additive and satisfies that $\displaystyle \delta(r s) = \sigma(r )\delta(s)+\delta(r )s$, for every $r,s \in R$ (strictly speaking, this is the definition of {\em left} $\sigma$-{\em derivation}). The pair $(\sigma, \delta)$ is called a {\em quasi-derivation on} $R$ \cite[Definition 3.1]{BuesoTorrecillasVerschoren2003}. According to Ore \cite{Ore1931, Ore1933}, the {\em skew polynomial ring} of $R$ is defined as the ring $R[x;\sigma,\delta]$ generated by $R$ and an indeterminate $x$ subject to the relation $xr := \sigma(r)x + \delta(r)$, for every $r \in R$, such that $R[x;\sigma,\delta]$ is a free left $R$-module with basis $\left \{x^k\ | \ k \in \mathbb{N} \right \}$. In the literature, $R[x;\sigma, \delta]$ is called a {\em skew polynomial ring over} $R$ {\em of mixed type}. If $\sigma$ is an injective map of $R$, then we call it an {\em Ore extension of injective type}, while if $\sigma$ is the identity of $R$, then we write $R[x;\delta]$ and call it a {\em ring of derivation type}. On the other hand, if $\delta$ is the zero map, then we write $R[x;\sigma]$ which is known as a {\em ring of endomorphism type}. 

Brzezi{\'n}ski \cite{Brzezinski2015} characterized the differential smoothness of skew polynomial rings of the form $\Bbbk[t][x; \sigma_{q, r}, \delta_{p(t)}]$ where $\sigma_{q, r}(t) = qt + r$, with $q, r \in \Bbbk,\ q\neq 0$, and the $\sigma_{q, r}-$derivation $\delta_{p(t)}$ is defined as
\begin{equation}\label{deltap}
\delta_{p(t)} (f(t)) = \frac{f(\sigma_{q, r}(t)) - f(t)}{\sigma_{q, r}(t) - t} p(t),
\end{equation}

for an element $p(t) \in \Bbbk[t]$. $\delta_{p(t)}(f(t))$ is a suitable limit when $q = 1$ and $r = 0$, that is, when $\sigma_{q, r}$ is the identity map of $\Bbbk[t]$.

For the maps
\begin{equation}\label{Brzezinski2015(3.4)}
\nu_t(t) = t,\quad \nu_t(x) = qx + p'(t)\quad {\rm and}\quad \nu_x(t) = \sigma_{q, r}^{-1}(t),\quad \nu_x(x) = x,
\end{equation}

where $p'(t)$ is the classical $t$-derivative of $p(t)$, Brzezi{\'n}ski \cite[Lemma 3.1]{Brzezinski2015} showed that all of them simultaneously extend to algebra automorphisms $\nu_t$ and $\nu_x$ of $\Bbbk[t][x; \sigma_{q, r}, \delta_{p(t)}]$ only in the following three cases:
    \begin{enumerate}
        \item [\rm (a)] $q = 1, r = 0$ with no restriction on $p(t)$;
        
        \item [\rm (b)] $q = 1, r\neq 0$ and $p(t) = c$, $c\in \Bbbk$;
        
        \item [\rm (c)] $q\neq 1, p(t) = c\left( t + \frac{r}{q-1} \right)$, $c\in \Bbbk$ with no restriction on $r$.
    \end{enumerate}
    
In any of the cases {\rm (a) - (c)} we have that $\nu_x \circ \nu_t = \nu_t \circ \nu_x$. If the Ore extension $\Bbbk[t][x; \sigma_{q, r}, \delta_{p(t)}]$ satisfies one of these three conditions, Brzezi{\'n}ski proved that it is differentially smooth \cite[Proposition 3.3]{Brzezinski2015}.

From Brzezi{\'n}ski's result we get that the algebras
\begin{itemize}
 \item The {\em polynomial algebra} $\Bbbk[x_1, x_2]$;
        
        \item The {\em Weyl algebra} $A_1(\Bbbk) = \Bbbk\{x_1, x_2\} / \langle x_1x_2 - x_2x_1 - 1\rangle$;
        
        \item The {\em universal enveloping algebra of the Lie algebra} $\mathfrak{n}_2 = \langle x_1, x_2\mid [x_2, x_1] = x_1\rangle$, that is, $U(\mathfrak{n}_2) = \Bbbk\{x_1, x_2\} / \langle x_2x_1 - x_1x_2 - x_1\rangle$, and
        
        \item The {\em quantum plane} ({\em Manin's plane}) $\mathcal{O}_q(\Bbbk) = \Bbbk \{x_1, x_2\} / \langle x_2 x_1 - qx_1 x_2\rangle$, where $q\in \Bbbk\ \backslash\ \{0,1\}$, 
\end{itemize}

are differentially smooth.

\item [\rm (iii)] For the 3-dimensional skew polynomial algebras and diffusion algebras (Example \ref{ExamplesSPBWextensionsDS} (iv) and (v)), its differential smoothness was studied by the second author in \cite{ReyesSarmiento2022}.
\end{enumerate}
\end{example}

\begin{remark}
There are examples of algebras that are not differentially smooth. Consider the commutative algebra $A = \mathbb{C}[x, y] / \langle xy \rangle$. A proof by contradiction shows that for this algebra there are no one-dimensional connected integrable calculi over $A$, so it cannot be differentially smooth \cite[Example 2.5]{BrzezinskiSitarz2017}.
\end{remark}

\section{Differential smoothness of SPBW extensions over \texorpdfstring{$\Bbbk[t]$}{Lg}}\label{DICSPBWKt}

In this section, we investigate the differential smoothness of bijective SPBW extensions over the commutative polynomial ring $\Bbbk[t]$.

\subsection{SPBW extensions in two indeterminates}\label{SPBWTMTwoI}

Consider a SPBW extension of the form $\sigma(\Bbbk[t]) \langle x_1, x_2\rangle$. From Definition \ref{defpbwextension}, we get the relations 
\begin{align*}
    x_1r(t) = &\ \sigma_1(r(t)) x_1 + \delta_1(r(t)),\quad x_2 r(t) =  \sigma_2(r(t)) x_2 + \delta_2(r(t)), \quad {\rm and} \\
    x_2 x_1 = &\ c_{1, 2}(t)x_1 x_2 + q_{1,2}^{(0)}(t) + q_{1,2}^{(1)}(t)x_1 + q_{1,2}^{(2)}(t)x_2, 
\end{align*}

where $r(t), c_{1, 2}(t), q_{1, 2}^{(0)}(t), q_{1, 2}^{(1)}(t), q_{1, 2}^{(2)}(t)$ belong to $\Bbbk[t]$ with $c_{1,2}(t)$ non-zero.

Let $\sigma_1(t) = a_1t+b_1$ and $\sigma_2(t)=a_2t+b_2$ be automorphisms of $\Bbbk[t]$ (this is precisely the form of the elements of ${\rm Aut}(\Bbbk[x])$ \cite{ShestakovUmirbaev2003, VandenEssen2000}) with the corresponding $\sigma_i$-derivations ($i =  1, 2$) expressed as in (\ref{deltap}), that is,
\[
 \delta_1(f(t)) = \frac{f\left(\sigma_{1}(t)\right)-f(t)}{\sigma_{1}(t)-t}p_1(t), \quad {\rm and} \quad \delta_2(f(t)) = \frac{f\left(\sigma_{2}(t)\right)-f(t)}{\sigma_{2}(t)-t}p_2(t),
\]

where $p_1(t), p_2(t)$ are fixed elements of $\Bbbk[t]$. Thus, the relations between the indeterminates $t, x_1$ and $x_2$ can be expressed as
\begin{align}
    x_1t = &\ a_1tx_1 + b_1x_1+ p_1(t), \quad x_2t = a_2tx_2 + b_2x_2+ p_2(t), \quad {\rm and} \label{relPBW3.1} \\
    x_2 x_1 = &\ c_{1, 2}(t)x_1 x_2 + q_{1,2}^{(0)}(t) + q_{1,2}^{(1)}(t)x_1 + q_{1,2}^{(2)}(t)x_2. \label{relPBW3.3}
\end{align}

\begin{proposition}\label{commrel}
    From Equation {\rm (}\ref{relPBW3.1}{\rm )}, we obtain the commutation relations
    \begin{align*}
        x_1t^n &\ =(a_1t+b_1)^nx_1+p_1(t)\displaystyle\sum_{l=0}^{n-1}(a_1t+b_1)^lt^{n-1-l},\quad {\rm and} \\
        x_2t^n &\ =(a_2t+b_2)^nx_2+p_2(t)\displaystyle\sum_{l=0}^{n-1}(a_2t+b_2)^lt^{n-1-l}. 
    \end{align*}
\end{proposition}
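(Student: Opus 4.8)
The plan is to argue by induction on $n$, using only the commutation relation (\ref{relPBW3.1}) together with the commutativity of $\Bbbk[t]$. Since the two displayed identities are symmetric under the substitution $(a_1, b_1, p_1, x_1) \mapsto (a_2, b_2, p_2, x_2)$, it suffices to establish the first one, and the second then follows verbatim. Note that I deliberately work from the already-simplified relation $x_1 t = (a_1 t + b_1) x_1 + p_1(t)$ rather than from the explicit formula (\ref{deltap}) for $\delta_1$; this keeps the argument uniform and avoids any discussion of the limiting case $a_1 = 1$, $b_1 = 0$.

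First I would check the base case $n = 1$: the claimed formula reads $x_1 t = (a_1 t + b_1) x_1 + p_1(t)(a_1 t + b_1)^0 t^0 = (a_1 t + b_1) x_1 + p_1(t)$, which is precisely (\ref{relPBW3.1}). For the inductive step, I assume $x_1 t^n = (a_1 t + b_1)^n x_1 + p_1(t)\sum_{l=0}^{n-1}(a_1 t + b_1)^l t^{n-1-l}$ and multiply on the right by $t$. Every scalar factor lies in the commutative ring $\Bbbk[t]$, so the only non-commutative move is to replace $x_1 t$ by $(a_1 t + b_1) x_1 + p_1(t)$, which I apply once. This yields
\[
x_1 t^{n+1} = (a_1 t + b_1)^{n+1} x_1 + p_1(t)(a_1 t + b_1)^n + p_1(t)\sum_{l=0}^{n-1}(a_1 t + b_1)^l t^{n-l}.
\]
Absorbing the isolated term $p_1(t)(a_1 t + b_1)^n$ as the $l = n$ summand turns the bracketed sum into $\sum_{l=0}^{n}(a_1 t + b_1)^l t^{n-l}$, which is exactly the asserted formula with $n$ replaced by $n+1$. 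This closes the induction, and the same computation with the index $2$ data gives the relation for $x_2 t^n$.

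There is essentially no obstacle here: the step amounts to routine bookkeeping of a finite geometric-type sum, and no positivity, invertibility, or limiting considerations enter. As an alternative I could instead derive the identity in one stroke from the general rule $x_1 f(t) = \sigma_1(f(t)) x_1 + \delta_1(f(t))$ (recalled from \cite[Proposition 3]{GallegoLezama2010}) applied to $f(t) = t^n$, using that $\sigma_1$ is an algebra endomorphism so $\sigma_1(t^n) = (a_1 t + b_1)^n$, together with the definition of $\delta_{p_1(t)}$ in (\ref{deltap}) and the algebraic identity $\frac{u^n - v^n}{u - v} = \sum_{l=0}^{n-1} u^l v^{n-1-l}$ with $u = a_1 t + b_1$ and $v = t$; but the inductive argument above is self-contained and slightly cleaner, so I would present that one.
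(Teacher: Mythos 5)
Your proof is correct and follows essentially the same route as the paper: induction on $n$, right-multiplying the inductive hypothesis by $t$, applying the relation $x_1t=(a_1t+b_1)x_1+p_1(t)$ once, and absorbing the resulting extra term as the $l=n$ summand. (Incidentally, your final display is cleaner than the paper's, whose last line accidentally drops the factor $p_1(t)$ in front of the sum.)
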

\begin{proof} We proceed by induction on $n$. For $n=1$, the assertion is clear. Suppose that the relation holds for $n=k$. Since
\begin{align*}
    x_1t^{k+1}&\ =(x_1t^k)t =\left((a_1t+b_1)^kx_1+p_1(t)\displaystyle\sum_{l=0}^{k-1}(a_1t+b_1)^lt^{k-1-l}\right)t \\
    &\ = (a_1t+b_1)^kx_1t+p_1(t)\displaystyle\sum_{l=0}^{k-1}(a_1t+b_1)^lt^{(k+1)-1-l} \\
    &\ = (a_1t+b_1)^k((a_1t+b_1)x_1+p_1(t))+p_1(t)\displaystyle\sum_{l=0}^{k-1}(a_1t+b_1)^lt^{(k+1)-1-l} \\
    &\ = (a_1t+b_1)^{k+1}x_1+\left((a_1t+b_1)^kp_1(t)+p_1(t)\displaystyle\sum_{l=0}^{k-1}(a_1t+b_1)^lt^{(k+1)-1-l}\right) \\
    &\ = (a_1t+b_1)^{k+1}x_1+\sum_{l=0}^{k}(a_1t+b_1)^lt^{(k+1)-1-l},
\end{align*}

the assertion follows. The proof of the second relation is similar.
\end{proof}

\begin{proposition}\label{autoSkewPBW} 
Let
\begin{align}
   \nu_t(t) = &\ t, & \nu_t(x_1) = &\ a_1x_1 + p_1'(t), & \nu_t(x_2) = &\ a_2x_2+p_2'(t), \label{Autoskew1} \\ 
    \nu_{x_1}(t) = &\ \sigma_{1}^{-1}(t), & \nu_{x_1}(x_1) = &\ x_1, &  \nu_{x_1}(x_2) = &\ c_{1,2}x_2+q_{1,2}^{(1)}, \label{Autoskew2} \\
    \nu_{x_2}(t) = &\ \sigma_{2}^{-1}(t), & \nu_{x_2}(x_1) = &\ c_{1,2}^{-1}x_1-c_{1,2}^{-1}q_{1,2}^{(2)}, & \nu_{x_2}(x_2) = &\ x_2, \label{Autoskew3}
\end{align}

where $p_1'(t)$ and $p'_2(t)$ are the $t$-derivatives of $p_1(t)$ and $p_2(t)$, respectively, and $c_{1,2}, q_{1,2}^{(0)}, q_{1,2}^{(1)}, q_{1,2}^{(2)} \in \Bbbk$, with $c_{1,2}$ non-zero. Then:
\begin{enumerate}
\item [\rm (1)] Leibniz's rule holds in the cases listed in Table \ref{Leibnizsruletwoindeterminates(1)}. The maps defined by {\rm (}\ref{Autoskew1}{\rm )}, {\rm (}\ref{Autoskew2}{\rm )} and {\rm (}\ref{Autoskew3}{\rm )} simultaneously extend to algebra automorphisms $\nu_t, \nu _{x_1}, \nu_{x_2}$ of $\sigma(\Bbbk[t])\langle x_1, x_2\rangle$ only in cases {\rm (a)}, {\rm (b)}, {\rm (c)}, {\rm (d)}, {\rm (e)}, {\rm (g)} and {\rm (i)}.
\end{enumerate}

\begin{table}[h]
\caption{Leibniz's rule}
\label{Leibnizsruletwoindeterminates(1)}
\begin{center}
\resizebox{12.5cm}{!}{
\setlength\extrarowheight{6pt}
\begin{tabular}{ |c|c|c|c| } 
\hline
{\rm Case} & {\rm Possibilities for} $a_1$, $b_1$, $a_2$, $b_2$ & {\em Polynomials} $p_1(t)$ {\em and} $p_2(t)$ & {\em Restrictions} \\
\hline
\multirow{2}{*}{{\rm (a)}} & \multirow{2}{*}{$a_1=1$, $b_1=0$, $a_2=1$, $b_2=0$} & $p_1(t)=p_2(t)=0$ & $q_{1,2}^{(0)}=q_{1,2}^{(1)}=q_{1,2}^{(2)}=0$,  $c_{1,2}\in\Bbbk^{\ast}$ \\ \cline{3-4}
&  & $p_1(t), p_2(t)\in\Bbbk[t]$ & $q_{1,2}^{(1)}=q_{1,2}^{(2)}=0$, $c_{1,2}=1$, $q_{1,2}^{(0)}\in\Bbbk$ \\ 
\hline
\multirow{2}{*}{{\rm (b)}} & \multirow{2}{*}{$a_1=1$, $b_1=0$, $a_2=1$, $b_2\not =0$} & $p_1(t)=p_1$, $p_2(t)=p_2$, $p_1,p_2\in\Bbbk$ & $c_{1,2}=1$, $q_{1,2}^{(1)}=q_{1,2}^{(2)}=0$, $q_{1,2}^{(0)}\in\Bbbk$ \\ \cline{3-4}
&  & $p_1(t)=p_2(t)=0$ & $q_{1,2}^{(0)}=q_{1,2}^{(1)}=q_{1,2}^{(2)}=0$,  $c_{1,2}\in\Bbbk^{\ast}$ \\ 
\hline
\multirow{2}{*}{{\rm (c)}} & \multirow{2}{*}{$a_1=1$, $b_1\not=0$, $a_2=1$, $b_2=0$} & $p_1(t)=p_1$, $p_2(t)=p_2$, $p_1,p_2\in\Bbbk$ & $c_{1,2}=1$, $q_{1,2}^{(1)}=q_{1,2}^{(2)}=0$, $q_{1,2}^{(0)}\in\Bbbk$ \\ \cline{3-4}
&  & $p_1(t)=p_2(t)=0$ & $q_{1,2}^{(0)}=q_{1,2}^{(1)}=q_{1,2}^{(2)}=0$,  $c_{1,2}\in\Bbbk^{\ast}$ \\  
\hline
\multirow{2}{*}{{\rm (d)}} & \multirow{2}{*}{$a_1=1$, $b_1\not=0$, $a_2=1$, $b_2\not =0$} & $p_1(t)=p_1$, $p_2(t)=p_2$, $p_1,p_2\in\Bbbk$ & $c_{1,2}=1$, $q_{1,2}^{(1)}=q_{1,2}^{(2)}=0$, $q_{1,2}^{(0)}\in\Bbbk$ \\ \cline{3-4}
&  & $p_1(t)=p_2(t)=0$ & $q_{1,2}^{(0)}=q_{1,2}^{(1)}=q_{1,2}^{(2)}=0$,  $c_{1,2}\in\Bbbk^{\ast}$ \\   
\hline
\multirow{3}{*}{{\rm (e)}} & \multirow{3}{*}{$a_1=1$, $b_1=0$, $a_2\not=1$, $b_2\in\Bbbk$} & $p_1(t)=0$, $p_2(t)=p_2\left(t+\frac{b_2}{a_2-1}\right)$, $p_2\in\Bbbk$ & $c_{1,2}=1$, $q_{1,2}^{(0)}=q_{1,2}^{(1)}=q_{1,2}^{(2)}=0$ \\ \cline{3-4}
&  & $p_1(t)=p_1$ $p_2(t)=0$,$p_1\in\Bbbk$  & $q_{1,2}^{(0)}=q_{1,2}^{(1)}=q_{1,2}^{(2)}=0$,  $c_{1,2}=a_{2}^{-1}$ \\ \cline{3-4}
&  & $p_1(t)=p_2(t)=0$ & $q_{1,2}^{(0)}=q_{1,2}^{(1)}=q_{1,2}^{(2)}=0$,  $c_{1,2}\not\in\{1,a_2^{-1}\}$ \\ 
\hline
{{\rm (f)}} & $a_1=1$, $b_1\not=0$, $a_2\not=1$ & $p_1(t)=p_1$,  $p_2(t)=p_2\left(t+\frac{b_2}{a_2-1}\right), p_1,p_2\in\Bbbk$ & There is not solution for all relations \\ 
\hline
\multirow{3}{*}{{\rm (g)}} & \multirow{3}{*}{$a_1\not=1$,  $a_2=1$, $b_2=0$} & $p_1(t)=p_1\left(t+\frac{b_1}{a_1-1}\right)$, $p_2(t)=0$, $p_1\in\Bbbk$ & $c_{1,2}=1$, $q_{1,2}^{(0)}=q_{1,2}^{(1)}=q_{1,2}^{(2)}=0$ \\ \cline{3-4}
&  & $p_1(t)=0$ $p_2(t)=p_2$,$p_2\in\Bbbk$  & $q_{1,2}^{(0)}=q_{1,2}^{(1)}=q_{1,2}^{(2)}=0$,  $c_{1,2}=a_{1}^{-1}$ \\ \cline{3-4}
&  & $p_1(t)=p_2(t)=0$ & $q_{1,2}^{(0)}=q_{1,2}^{(1)}=q_{1,2}^{(2)}=0$,  $c_{1,2}\not\in\{1,a_1^{-1}\}$ \\ 
\hline
{{\rm (h)}} & $a_1\not=1$, $b_1\in\Bbbk$,  $a_2=1$, $b_2\not=0$ & $p_1(t)=p_1\left(t+\frac{b_1}{a_1-1}\right)$,  $p_2(t)=p_2, p_1,p_2\in\Bbbk$ & There is not solution for all relations \\ 
\hline
{{\rm (i)}} & $a_1\not=1$,  $a_2\not=1$, $b_1=0$ $b_2=0$ & $p_1(t)=p_1t$,  $p_2(t)=p_2t$, $p_1,p_2\in\Bbbk$ & $c_{1,2}=1$, $q_{1,2}^{(0)}=q_{1,2}^{(1)}=q_{1,2}^{(2)}=0$ \\ 
\hline
\end{tabular}
}
\end{center}
\end{table}

\begin{enumerate}
\item [\rm (2)] In cases {\rm (a)}, {\rm (b)}, {\rm (c)}, {\rm (d)}, {\rm (e)}, {\rm (g)} and {\rm (i)}, we get that 
    \begin{equation}\label{Eq1skew}
   \nu_t \circ \nu_{x_1} = \nu_{x_1} \circ \nu_t, \quad \nu_t \circ \nu_{x_2} = \nu_{x_2} \circ \nu_t, \quad \nu_{x_2} \circ \nu_{x_1} = \nu_{x_1} \circ \nu_{x_2}.
    \end{equation}
\end{enumerate}
\end{proposition}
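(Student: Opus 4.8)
The plan is to analyze the three candidate maps $\nu_t,\nu_{x_1},\nu_{x_2}$ one at a time, determine for which values of the structure constants each descends from the free algebra $\Bbbk\langle t,x_1,x_2\rangle$ to a well-defined endomorphism of $A:=\sigma(\Bbbk[t])\langle x_1,x_2\rangle$, and then obtain Table \ref{Leibnizsruletwoindeterminates(1)} by intersecting the constraints coming from all three maps simultaneously. Since $A$ is presented by the generators $t,x_1,x_2$ subject to (\ref{relPBW3.1})--(\ref{relPBW3.3}) — with the coefficients $c_{1,2},q_{1,2}^{(0)},q_{1,2}^{(1)},q_{1,2}^{(2)}$ now scalars by hypothesis — a $\Bbbk$-linear assignment on generators extends to an algebra endomorphism precisely when it preserves each defining relation, i.e. when applying it to both sides of
\[
x_i t - a_i t x_i - b_i x_i - p_i(t) = 0 \quad (i=1,2), \qquad
x_2 x_1 - c_{1,2} x_1 x_2 - q_{1,2}^{(0)} - q_{1,2}^{(1)} x_1 - q_{1,2}^{(2)} x_2 = 0
\]
and rewriting the result in the left $R$-basis $\{1,x_1,x_2,x_1x_2,\dots\}$ (using Proposition \ref{commrel} and (\ref{relPBW3.1})--(\ref{relPBW3.3}) as the normalization tool) yields a term-by-term identity.

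For $\nu_t$ the first two relations involve only $t$ and $x_i$, so preserving each of them is exactly the one-variable computation of Brzezi\'nski \cite[Lemma 3.1]{Brzezinski2015}: the restriction of $\nu_t$ to $\langle t,x_i\rangle$ extends to an automorphism of the Ore subextension $\Bbbk[t][x_i;\sigma_i,\delta_i]$ if and only if $(a_i,b_i,p_i(t))$ has one of the three shapes (a)--(c) of Example \ref{Brzezinski2015DSOEbiquadratic}(ii); likewise the restriction of $\nu_{x_i}$ to $\langle t,x_i\rangle$ always extends because $\sigma_i$ is an automorphism. Running the index $i=1$ and $i=2$ independently and recombining the admissible shapes produces the partition into rows (a)--(i) according to whether $a_1=1$ or not and $b_1=0$ or not, and similarly for the second index.

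The genuinely new input is imposing preservation of the cross relation (\ref{relPBW3.3}). For $\nu_t$ the left side becomes $(a_2 x_2 + p_2'(t))(a_1 x_1 + p_1'(t))$, and normalizing it via $x_2 x_1 = c_{1,2}x_1x_2+\dots$ and $x_i t = \dots$ must match the normalization of $c_{1,2}(a_1 x_1 + p_1'(t))(a_2 x_2 + p_2'(t)) + q_{1,2}^{(0)} + q_{1,2}^{(1)}(a_1 x_1 + p_1'(t)) + q_{1,2}^{(2)}(a_2 x_2 + p_2'(t))$; equating the coefficients of $x_1x_2$, $x_1$, $x_2$ and $1$ gives the entries of the ``Restrictions'' column, including the forced $c_{1,2}=1$, $q_{1,2}^{(1)}=q_{1,2}^{(2)}=0$ when some $p_i$ is nonzero, and the exceptional eigenvalue values $c_{1,2}=a_2^{-1}$ in row (e) and $c_{1,2}=a_1^{-1}$ in row (g). Performing the analogous substitution into (\ref{relPBW3.3}) for $\nu_{x_1}$ and $\nu_{x_2}$ constrains $q_{1,2}^{(0)}$ through the occurrences of $\sigma_i^{-1}(t)$. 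In rows (f) and (h) — where $a_1=1\neq a_2$ with $b_1\neq 0$, or its mirror image — the $t$-degree bookkeeping forces a constant $p$-contribution of the form $p_i(t+b_i/(a_i-1))$ to coexist with a nonconstant one, which is impossible, so no simultaneous extension exists; this yields the ``There is not solution'' entries and pins down the list in part (1). Once the maps are known to be endomorphisms, bijectivity is immediate: their action on generators is invertible up to lower-order corrections ($\nu_t^{-1}(x_i)=a_i^{-1}(x_i-p_i'(t))$, $\nu_{x_1}^{-1}$ sends $t\mapsto\sigma_1(t)$ and $x_2\mapsto c_{1,2}^{-1}(x_2-q_{1,2}^{(1)})$, and symmetrically for $\nu_{x_2}$), and one checks the inverse assignment preserves the defining relations by the same computation, so it is a two-sided inverse endomorphism. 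Finally, for part (2) I would verify (\ref{Eq1skew}) by evaluating each composite on $t$, $x_1$, $x_2$ and using that in cases (a)--(e),(g),(i) the scalars satisfy the relations forced above (e.g. $c_{1,2}=1$ or the exceptional value, and $q_{1,2}^{(1)}=q_{1,2}^{(2)}=0$ where required), which makes the two orders of composition agree termwise.

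The main obstacle will be this cross-relation step: it is where the structure constants of the two Ore pieces become coupled, and disentangling which combinations of $(a_i,b_i,p_i)$ are compatible with a scalar cross relation — in particular detecting the exceptional cases $c_{1,2}\in\{a_1^{-1},a_2^{-1}\}$ and the genuine obstructions in rows (f) and (h) — is the part that requires the full case analysis rather than a direct appeal to \cite{Brzezinski2015}.
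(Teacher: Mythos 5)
Your proposal follows essentially the same route as the paper: impose that each of $\nu_t,\nu_{x_1},\nu_{x_2}$ preserves the defining relations (\ref{relPBW3.1})--(\ref{relPBW3.3}), observe that the constraints from the $x_it$-relations reduce to Brzezi\'nski's one-variable analysis while the cross relation (\ref{relPBW3.3}) couples the two Ore pieces and forces the ``Restrictions'' column (including the exceptional values $c_{1,2}\in\{a_1^{-1},a_2^{-1}\}$ and the obstructions in rows (f), (h)), and then verify (\ref{Eq1skew}) on generators. The only slight imprecision is the claim that the restriction of $\nu_{x_i}$ ``always extends'': the condition $a_i^{-1}p_i(t)=p_i(a_i^{-1}(t-b_i))$ is a genuine constraint, though it is subsumed by the three admissible shapes of $(a_i,b_i,p_i)$ you already invoke.
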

\begin{proof}
For the first assertion, the map $\nu_t$ can be extended to an algebra homomorphism if and only if the definitions of $\nu_t(t)$, $\nu_t(x_1)$ and $\nu_t(x_2)$ respect relations {\rm (}\ref{relPBW3.1}{\rm )}, and {\rm (}\ref{relPBW3.3}{\rm )}, i.e.
\begin{align*}
   \nu_t(x_1)\nu_t(t)-\nu_t(a_1t+b_1)\nu_t(x_1) = &\ \nu_t(p_1(t)), \\
   \nu_t(x_2)\nu_t(t)-\nu_t(a_2t+b_2)\nu_t(x_2) = &\ \nu_t(p_2(t)), \quad {\rm and} \\
   \nu_t(x_2)\nu_t(x_1)-c_{1,2}\nu_t(x_1)\nu_t(x_2) = &\ q_{1,2}^{(0)}+q_{1,2}^{(1)}\nu_t(x_1)+q_{1,2}^{(2)}\nu_t(x_2).
\end{align*}

In this way, we obtain the equations 
\begin{align}
((a_1-1)t+b_1)p_1'(t) = &\ (a_1-1)p_1(t), \notag \\
((a_2-1)t+b_2)p_2'(t) = &\ (a_2-1)p_2(t), \label{firstsecondrel}
\end{align}

and
\begin{align}
   &\ (a_1a_2-1)q_{1,2}^{(0)} + (a_2-1)a_1q_{1,2}^{(1)}x_1 + a_1(p_2'(t)x_1-c_{1,2}x_1p_2'(t)) + (a_1-1)a_2q_{1,2}^{(2)}x_2 \notag \\ 
   &\ + a_2(x_2p_1'(t)-c_{1,2}p_1'(t)x_2) + (1-c_{1,2})p_1'(t)p_2'(t)-q_{1,2}^{(1)}p_1'(t)-q_{1,2}^{(2)}p_2'(t)=0. \label{thirdrel}
\end{align}

Note that the map $\nu_{x_1}$ can be extended to an algebra automorphism if and only if the definitions of $\nu_{x_1}(t)$, $\nu_{x_1}(x_1)$ and $\nu_{x_1}(x_2)$ respect relations {\rm (}\ref{relPBW3.1}{\rm )}, and {\rm (}\ref{relPBW3.3}{\rm )}, that is, 
\begin{align*}
   \nu_{x_1}(x_1)\nu_{x_1}(t)-\nu_{x_1}(a_1t+b_1)\nu_{x_1}(x_1) = &\ \nu_{x_1}(p_1(t)), \\
   \nu_{x_1}(x_2)\nu_{x_1}(t)-\nu_{x_1}(a_2t+b_2)\nu_{x_1}(x_2) = &\ \nu_{x_1}(p_2(t)),\quad {\rm and} \\
   \nu_{x_1}(x_2)\nu_{x_1}(x_1)-c_{1,2}\nu_{x_1}(x_1)\nu_{x_1}(x_2) = &\ q_{1,2}^{(0)}+q_{1,2}^{(1)}\nu_{x_1}(x_1)+q_{1,2}^{(2)}\nu_{x_1}(x_2).
\end{align*}

Therefore, 
\begin{align}
   a_{1}^{-1}p_1(t) = &\ p_1(a_{1}^{-1}(t-b_1)), \label{Firstnux1} \\
   c_{1,2}(a_1^{-1}(a_2b_1+b_2-b_1)-b_2)x_2\ + &\ a_1^{-1}(c_{1,2}p_2(t)-(1+a_2)q_{1,2}^{(1)}t) \notag \\
   +\ q_{1,2}^{(1)}(a_1^{-1}b_1(a_2-1)-b_2) = &\ p_2(a_1^{-1}(t-b_1)), \quad {\rm and} \label{Secondnux1}\\
   (c_{1,2}-1)q_{1,2}^{(0)}-q_{1,2}^{(1)}q_{1,2}^{(2)} = &\ 0. \label{Thirnux1}
\end{align}

The map $\nu_{x_2}$ can be extended to an algebra automorphism if and only if the definitions of $\nu_{x_2}(t)$, $\nu_{x_2}(x_1)$ and $\nu_{x_2}(x_2)$ respect relations {\rm (}\ref{relPBW3.1}{\rm )}, and {\rm (}\ref{relPBW3.3}{\rm )}, i.e.
\begin{align*}
   \nu_{x_2}(x_1)\nu_{x_2}(t)-\nu_{x_2}(a_1t+b_1)\nu_{x_2}(x_1) = &\ \nu_{x_2}(p_1(t)), \\
   \nu_{x_2}(x_2)\nu_{x_2}(t)-\nu_{x_2}(a_2t+b_2)\nu_{x_2}(x_2) = &\ \nu_{x_2}(p_2(t)),\quad {\rm and} \\
   \nu_{x_2}(x_2)\nu_{x_2}(x_1)-c_{1,2}\nu_{x_2}(x_1)\nu_{x_2}(x_2) = &\ q_{1,2}^{(0)}+q_{1,2}^{(1)}\nu_{x_2}(x_1)+q_{1,2}^{(2)}\nu_{x_2}(x_2).
\end{align*}

In other words,
\begin{align}
   c_{1,2}^{-1}(a_2^{-1}(b_1+a_1b_2-b_2)-b_1)x_1\ + &\ c_{1,2}^{-1}a_2^{-1}(p_1(t)-(1+a_1)q_{1,2}^{(2)}t) \notag\\
   +\ q_{1,2}^{(2)}c_{1,2}^{-1}(a_2^{-1}b_2(1+a_1)+b_1) = &\ p_1(a_2^{-1}(t-b_2)), \label{Firstnux2}\\
   a_{2}^{-1}p_2(t) = &\ p_2(a_{2}^{-1}(t-b_2)), \quad{\rm and} \label{Secondnux2} \\
(c_{1,2}^{-1}-1)q_{1,2}^{(0)}+c_{1,2}^{-1}q_{1,2}^{(1)}q_{1,2}^{(2)} = &\ 0. \label{Thirnux2}
\end{align}

Notice that expressions (\ref{firstsecondrel}) are the same as in \cite[Lemma 3.1]{Brzezinski2015}, and that these equations are independent of each other, so we have nine possible combinations for the values of $a_1, b_1, a_2$ and $b_2$. For each of these combinations, equations (\ref{Thirnux1}) and (\ref{Thirnux2}) will be used to determine the possible values for $c_{1,2}$, $p_1(t)$, $p_2(t)$ $q_{1,2}^{(i)}$, $i=0, 1, 2$. Let us see.

Consider $p_1(t) := \sum\limits_{j=0}^n m_jt^j$ and $p_2(t) := \sum\limits_{j=0}^n k_jt^j$.

\begin{enumerate}
    \item [\rm (a)] Equation (\ref{thirdrel}) leads to the equalities 
    \begin{align*}
        &\ \sum_{j=1}^n [jk_jt^{j-1}x_1-c_{1,2}x_1jk_jt^{j-1}+x_2jm_jt^{j-1}-c_{1,2}jm_jt^{j-1}x_2 \\
        &\ \ \ +\sum_{s=1}^{n}(1-c_{1,2})jk_jsm_st^{s+j-2} -q_{1,2}^{(1)}jm_jt^{j-1}-q_{1,2}^{(2)}jk_jt^{j-1} ] = 0, 
    \end{align*}
    \begin{align*}
        &\ \sum_{j=1}^n [jk_jt^{j-1}x_1-c_{1,2}jk_j\left(t^{j-1}x_1+(j-1)p_1(t)t^{n-1}\right)  \\
        &\ \ \ + jm_j\left(t^{j-1}x_2+(j-1)p_2(t)t^{n-1}\right)-c_{1,2}jm_jt^{j-1}x_2 \\
        &\ \ \ +\sum_{s=1}^{n}(1-c_{1,2})jk_jsm_st^{s+j-2} -q_{1,2}^{(1)}jm_jt^{j-1}-q_{1,2}^{(2)}jk_jt^{j-1} ] = 0, 
    \end{align*}

    and 
     \begin{align*}
        &\ \sum_{j=1}^nt^{j-1} [jk_j(1-c_{1,2})x_1+jm_j(1-c_{1,2})x_2 \\
        &\ \ + j(j-1)\sum_{i=0}^n\left(m_jk_i-c_{1,2}k_jm_i\right)t^{i+n-j} \\
        &\ \ + \sum_{s=1}^{n}(1-c_{1,2})jk_jsm_st^{s-1}-j(q_{1,2}^{(1)}m_j+q_{1,2}^{(2)}k_j) ]=0.
    \end{align*}
    
    If we focus on the coefficients of $x_1$ and $x_2$, these must be zero, that is, $k_j(1-c_{1,2}) = 0$ and $m_j(1-c_{1,2}) = 0$. This implies that $m_j=k_j=0$, for $1 \leq j \leq n$ and so the polynomials $p_1(t)$ and $p_2(t)$ are constants or $c_{1,2}=1$. From relations {\rm (}\ref{Secondnux1}{\rm )}, {\rm (}\ref{Thirnux1}{\rm )}, {\rm (}\ref{Firstnux2}{\rm )} and  {\rm (}\ref{Thirnux2}{\rm )}, we get that
    \begin{align*}
        (c_{1,2}-1)p_2(t)-2q_{1,2}^{(1)}t = &\ 0,\\(c_{1,2}^{-1}-1)p_1(t)-2c_{1,2}^{-1}q_{1,2}^{(2)}t = &\ 0, \\
        (c_{1,2}-1)q_{1,2}^{(0)} = &\ q_{1,2}^{(1)}q_{1,2}^{(2)}, \quad{\rm and} \\ (c_{1,2}^{-1}-1)q_{1,2}^{(0)} = &\ -c_{1,2}^{-1}q_{1,2}^{(1)}q_{1,2}^{(2)}.
    \end{align*}

    If $p_1(t) = p_1, p_2(t)=p_2 \in \Bbbk$, then $q_{1,2}^{(1)}=q_{1,2}^{(2)}=0$  and we obtain the following options:
    \begin{itemize}
        \item $c_{1,2}=1$, $q_{1,2}^{(0)}$ has no restrictions.
        
        \item $p_2=0$, $q_{1,2}^{(0)}=0$ and $p_1=0$, with no restriction over $c_{1,2}$.
    \end{itemize}
    
    Finally, if $c_{1,2}=1$ it is necessary that $q_{1,2}^{(1)}m_j+q_{1,2}^{(2)}k_j=0$ for all $1 \leq i \leq n$. One possibility is precisely when $m_j=k_j=0$, which means that $p_1(t)$ and $p_2(t)$ are constants (as in the previous case). The other option is that $q_{1,2}^{(1)}=q_{1,2}^{(2)}=0$, with no restrictions on the polynomials $p_1(t)$ and $p_2(t)$. We have considered all possible options.
    
    \item [\rm (b)] Equation (\ref{thirdrel}) leads to the following way of relating the coefficients
    {\small{
    \begin{align*}
        &\ \sum_{j=1}^n\left[jm_j\left(\left(t+b_2\right)^{j-1}x_2-c_{1,2}t^{j-1}x_2 +\sum_{l=0}^{j-2}p_2(t+b_2)^{l}t^{j-2-l}\right)-q_{1,2}^{(1)}jm_jt^{j-1}\right] = 0.
    \end{align*}
    }}
    
The coefficient of $x_2$ must be zero, that is, $jm_j((t+b_2)^{j-1}-c_{1,2}t^{j-1}) = 0$. This implies that $m_j=0$ for $1 \leq i \leq n$ whence the polynomial $p_1(t)$ is constant. From relations {\rm (}\ref{Secondnux1}{\rm )}, {\rm (}\ref{Thirnux1}{\rm )}, {\rm (}\ref{Firstnux2}{\rm )} and {\rm (}\ref{Thirnux2}{\rm )}, it follows that
\begin{align*}
     (c_{1,2}-1)p_2-b_2q_{1,2}^{(1)}-2q_{1,2}^{(1)}t = &\ 0, \\
(c_{1,2}^{-1}-1)p_1+2b_2c_{1,2}^{-1}q_{1,2}^{(2)}-2c_{1,2}^{-1}q_{1,2}^{(2)}t = &\ 0, \\
(c_{1,2}-1)q_{1,2}^{(0)} = &\ q_{1,2}^{(1)}q_{1,2}^{(2)}, \quad {\rm and} \\
(c_{1,2}^{-1}-1)q_{1,2}^{(0)} = &\ -c_{1,2}^{-1}q_{1,2}^{(1)}q_{1,2}^{(2)},
\end{align*}

and thus $q_{1,2}^{(1)}=q_{1,2}^{(2)}=0$. If $c_{1,2}=1$, then there are no restrictions over $q_{1,2}^{(0)}$. If $c_{1,2}\not=1$, then $p_1=p_2=q_{1,2}^{(0)}=0$. Again, all possible options are covered.

    \item [\rm (c)] Note that in this case the conditions are the same as in (b) by considering $x_2$ instead of $x_1$.
    
    \item [\rm (d)] It is clear that (\ref{thirdrel}) holds. By using the relations {\rm (}\ref{Secondnux1}{\rm )}, {\rm (}\ref{Thirnux1}{\rm )}, {\rm (}\ref{Firstnux2}{\rm )} and  {\rm (}\ref{Thirnux2}{\rm )} we obtain that
    \begin{align*}
        (c_{1,2}-1)p_2-b_2q_{1,2}^{(1)}-2q_{1,2}^{(1)}t = &\ (c_{1,2}^{-1}-1)p_1+(2b_2+b_1)c_{1,2}^{-1}q_{1,2}^{(2)}-2c_{1,2}^{-1}q_{1,2}^{(2)}t = 0, \\
        (c_{1,2}-1)q_{1,2}^{(0)} = &\ q_{1,2}^{(1)}q_{1,2}^{(2)}, \quad{\rm and} \\
        (c_{1,2}^{-1}-1)q_{1,2}^{(0)} = &\ - c_{1,2}^{-1}q_{1,2}^{(1)}q_{1,2}^{(2)}.
    \end{align*}
    
These equalities are satisfied when $q_{1,2}^{(1)}=q_{1,2}^{(2)}=0$. If $c_{1,2}=1$ then there are no restrictions on $q_{1,2}^{(0)}$; in other case, then $p_1=p_2=q_{1,2}^{(0)}=0$.
    \item [\rm (e)] From expression (\ref{thirdrel}) we have that 
    \begin{align*}
        &\ ((a_2-1)q_{1,2}^{(1)}+(1-c_{1,2})p_2)x_1 + (a_2-1)q_{1,2}^{(0)}-q_{1,2}^{(2)}p_2 \\
        &\ \ \ \ +\sum_{j=1}^n [ a_2jm_j\left(\left(a_2t+b_2\right)^{j-1}x_2-c_{1,2}t^{j-1}x_2 +p_2(t)\sum_{l=0}^{j-2}(a_2t+b_2)^{l}t^{j-2-l}\right) \\
        &\ \ \ \ - q_{1,2}^{(1)}jm_jt^{j-1}] = 0.
    \end{align*}

Again, necessarily the coefficient of $x_2$ is zero, that is, $jm_j((a_2t+b_2)^{j-1}-c_{1,2}t^{j-1}) = 0$, and hence necessarily $m_j=0$, for $1 \leq i \leq n$, which shows that the polynomial $p_1(t)$ is constant.

With respect to the coefficient of $x_1$ and the constant term, both must be zero, and so
    \begin{equation*}
       (a_2-1)q_{1,2}^{(1)}+(1-c_{1,2})p_2 = 0 \quad {\rm and} \quad (a_2-1)q_{1,2}^{(0)}-p_2q_{1,2}^{(2)}=0,
    \end{equation*}

or equivalently,
    \begin{equation*}
        q_{1,2}^{(0)}=\frac{q_{1,2}^{(2)}}{a_2-1}p_2 \quad {\rm and} \quad q_{1,2}^{(1)}=\frac{c_{1,2}-1}{a_2-1}p_2. 
    \end{equation*}
    
Expressions {\rm (}\ref{Secondnux1}{\rm )}, {\rm (}\ref{Thirnux1}{\rm )}, {\rm (}\ref{Firstnux2}{\rm )} and {\rm (}\ref{Thirnux2}{\rm )} imply that 
    \begin{align*}
     ((c_{1,2}-1)p_2-(a_2+1)q_{1,2}^{(1)})t+(c_{1,2}-1)\frac{b_2}{a_2-1}-b_2q_{1,2}^{(1)} = &\ 0, \\
 -2c_{1,2}^{-1}a_2^{-1}q_{1,2}^{(2)}t+(c_{1,2}^{-1}a_2^{-1}-1)p_1+2c_{1,2}^{-1}a_2^{-1}b_2q_{1,2}^{(2)} = &\ 0, \\
(c_{1,2}-1)q_{1,2}^{(0)} = &\ q_{1,2}^{(1)}q_{1,2}^{(2)}, \quad {\rm and} \\
(c_{1,2}^{-1}-1)q_{1,2}^{(0)} = &\ -c_{1,2}^{-1}q_{1,2}^{(1)}q_{1,2}^{(2)}.
    \end{align*}
    
In this way,
    \begin{align*}
         q_{1,2}^{(1)}&\ =\frac{c_{1,2}-1}{a_2-1}p_2, \\
        (c_{1,2}-1)p_2&\ =0, \\
         (c_{1,2}^{-1}a_2^{-1}-1)p_1&\ =0,  \quad {\rm and} \\
         q_{1,2}^{(2)}&\ =0,
    \end{align*}
    
so we get the restrictions $q_{1,2}^{(0)}=q_{1,2}^{(1)}=q_{1,2}^{(2)}=0$. Note that if $c_{1,2}=1$, then $p_2\in\Bbbk$ and $p_1=0$; or $c_{1,2}=a_2^{-1}$ with $p_1\in \Bbbk$ and $p_2=0$; or in other value of $c_{1,2}$, $p_1=p_2=0$.
    \item [\rm (f)] Equation (\ref{thirdrel}) becomes
    \begin{align*}
        (a_2-1)q_{1,2}^{(0)}+(a_2-1)q_{1,2}^{(1)}x_1+p_2x_1-c_{1,2}p_2x_1-q_{1,2}^{(2)}p_2 = &\ 0, \quad {\rm and} \\
     ((a_2-1)q_{1,2}^{(1)}+p_2-c_{1,2}p_2)x_1+(a_2-1)q_{1,2}^{(0)}-q_{1,2}^{(2)}p_2 = &\ 0,
    \end{align*}
    
whence 
     \begin{align*}
        q_{1,2}^{(1)}=\frac{c_{1,2}-1}{a_2-1}p_2 \quad {\rm and}\quad  q_{1,2}^{(0)}=\frac{q_{1,2}^{(2)}}{a_2-1}p_2.
    \end{align*}

From expression {\rm (}\ref{Secondnux1}{\rm )} we have that  $c_{1,2}b_1(a_2-1)x_2=0$, where the only options are $c_{1,2}=0$, $b_1=0$ or $a_2=1$. However, as it is clear none of these are possible.

    \item [\rm (g)] The conditions corresponding to this case are the same as \rm(e) since the hypotheses are completely analogous but replacing the indeterminate $x_1$ with $x_2$.
    
    \item [\rm (h)] This case is the same as \rm(f) by replacing the indeterminate $x_1$ with $x_2$.
    
    \item [\rm (i)] Equation (\ref{thirdrel}) leads to the following way of relating the coefficients:
    \begin{align*}
     &\ (a_1a_2-1)q_{1,2}^{(0)} + (a_2-1)a_1q_{1,2}^{(1)}x_1 + a_1(p_2x_1-c_{1,2}x_1p_2) + (a_1-1)a_2q_{1,2}^{(2)}x_2 \\
     &\ \ \ + a_2(x_2p_1-c_{1,2}p_1x_2) + (1-c_{1,2})p_1p_2-q_{1,2}^{(1)}p_1-q_{1,2}^{(2)}p_2=0.
    \end{align*}
    
After some computations, we get that $c_{1,2}=1$, $q_{1,2}^{(0)}=q_{1,2}^{(1)}=q_{1,2}^{(2)}=0$. Thus, expression {\rm (}\ref{Secondnux1}{\rm )} becomes
    \begin{align*}
     &\ (a_1^{-1}(a_2-1)b_1+(a_1^{-1}-1)b_2)x_1 +a_1^{-1}((1-a_1^{-1})p_2-(1+a_2)q_{1,2}^{(1)})t\\ 
    &\ \ \ + q_{1,2}^{(1)}(a_1^{-1}b_1a_2-a_1^{-1}b_1-b_2)+a_1^{-1}b_1p_2-\frac{b_2}{a_2-1}p_2 = 0.
    \end{align*}
    
By replacing the values found previously, we obtain that $b_1 = b_2 = 0$. Finally, note that relations {\rm (}\ref{Thirnux1}{\rm )}, {\rm (}\ref{Firstnux2}{\rm )} and  {\rm (}\ref{Thirnux2}{\rm )} are trivially satisfied.
\end{enumerate}

For the second assertion, it is enough to prove it for the generators $t$, $x_1$ and $x_2$:
\begin{align}
    (\nu_t \circ \nu_{x_1})(t) = &\ \nu_t(\sigma_1^{-1}(t))=a_1^{-1}(t-b_1), \label{comp11.} \\
    (\nu_{x_1} \circ \nu_t)(t) = &\ \nu_{x_1}(t) = a_1^{-1}(t-b_1), \label{comp11} \\
    (\nu_t \circ \nu_{x_1})(x_1) = &\ \nu_t(x_1)=a_1x_1+p_1'(t), \label{comp12.}\\ 
    (\nu_{x_1} \circ \nu_t)(x_1) = &\  a_1x_1+p_1'(a_1^{-1}(t-b_1)),\label{comp12} \\
    (\nu_t \circ \nu_{x_1})(x_2) = &\ c_{1,2}a_2x_2+c_{1,2}p_2'(t)+q_{1,2}^{(1)}, \quad {\rm and} \label{comp13.} \\  
 (\nu_{x_1} \circ \nu_t)(x_2) = &\ a_2c_{1,2}x_2+a_2q_{1,2}^{(1)} + p_2'(a_{1}^{-1}(t-b_1)). \label{comp13}
\end{align}

In any case, the two compositions shown in {\rm (}\ref{comp11.}{\rm )} and {\rm (}\ref{comp11}{\rm )} are the same. Relation {\rm (}\ref{comp12}{\rm )} was used to find the conditions of the polynomial $p_1(t)$ to be equal to the expression {\rm (}\ref{comp12.}{\rm )}. Thus, all of them are satisfied in every possible case.  As it is clear, relation {\rm (}\ref{comp13}{\rm )} holds in all cases to be equal to {\rm (}\ref{comp13.}{\rm )}. So, $\nu_{t}\circ\nu_{x_1}=\nu_{x_1}\circ\nu_{t}$.

Next, 
\begin{align}
    \nu_t \circ \nu_{x_2}(t) = &\ \nu_t(\sigma_2^{-1}(t))=a_2^{-1}(t-b_2), \notag \\
    \nu_{x_2} \circ \nu_t(t) = &\ \nu_{x_2}(t) = a_2^{-1}(t-b_2), \label{comp21} \\ 
     \nu_t \circ \nu_{x_2}(x_1) = &\ c_{1,2}^{-1}a_1x_1+c_{1,2}^{-1}p_1'(t)-c_{1,2}^{-1}q_{1,2}^{(2)}, \notag \\
     \nu_{x_2} \circ \nu_t(x_1) = &\ a_1c_{1,2}^{-1}x_1-a_1c_{1,2}^{-1}q_{1,2}^{(2)}+p_1'(a_{2}^{-1}(t-b_2)), \label{comp22} \\
     \nu_t \circ \nu_{x_2}(x_2) = &\ a_2x_2+p_2'(t), \quad {\rm and} \notag \\
     \nu_{x_2} \circ \nu_t(x_2) = &\ a_2x_2+p_2'(a_2^{-1}(t-b_2)). \label{comp23}
\end{align}

In any case, the two compositions shown in {\rm (}\ref{comp21}{\rm )} are the same. Relation {\rm (}\ref{comp23}{\rm )} was similarly used to find the conditions of the polynomial $p_1(t)$, whence they are satisfied in all cases. Note that relation {\rm (}\ref{comp22}{\rm )} works in all cases but case {\rm (}g{\rm )} only works when $c_{1,2}=1$. In this way, $\nu_{t}\circ\nu_{x_2}=\nu_{x_2}\circ\nu_{t}$.

Finally, note that 
\begin{align}
    \nu_{x_1} \circ \nu_{x_2}(t) = &\ a_2^{-1}(a_1^{-1}(t-b_1))-a_2^{-1}b_2, \label{comp31.}\\ 
    \nu_{x_2} \circ \nu_{x_1}(t) = &\ a_1^{-1}(a_2^{-1}(t-b_2))-a_1^{-1}b_1, \label{comp31} \\
    \nu_{x_1} \circ \nu_{x_2}(x_1) = &\ c_{1,2}^{-1}x_1-c_{1,2}^{-1}q_{1,2}^{(2)}, \label{comp32.}\\
    \nu_{x_2} \circ \nu_{x_1}(x_1) = &\ c_{1,2}^{-1}x_1-c_{1,2}^{-1}q_{1,2}^{(2)}, \label{comp32} \\
    \nu_{x_1} \circ \nu_{x_2}(x_2) = &\ c_{1,2}x_2+q_{1,2}^{(1)}, \label{comp33.} \quad {\rm and} \\
    \nu_{x_2} \circ \nu_{x_1}(x_2) = &\ c_{1,2}x_2 + q_{1,2}^{(1)}.\label{comp33}
\end{align}

In any case, relations {\rm (}\ref{comp32.}{\rm )}, {\rm (}\ref{comp32}{\rm )}, {\rm (}\ref{comp33.}{\rm )} and {\rm (}\ref{comp33}{\rm )} hold. Expressions {\rm (}\ref{comp31.}{\rm )} and  {\rm (}\ref{comp31}{\rm )} coincide when $b_2 = \frac{a_2-1}{a_1-1}b_1$. 
\end{proof}

Next, we formulate the first important result of the paper.

\begin{theorem}\label{smoothPBW2}
If a SPBW extension $\sigma(\Bbbk[t])\langle x_1,x_2\rangle$ satisfies one of the conditions {\rm (a)}-{\rm (i)}, except {\rm (f)} and {\rm (h)}, in Proposition  \ref{autoSkewPBW}, then it is differentially smooth.
\end{theorem}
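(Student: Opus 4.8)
The plan is to exhibit, for each admissible case, a three-dimensional connected integrable differential calculus on $A=\sigma(\Bbbk[t])\langle x_1,x_2\rangle$ and then apply Definition \ref{BrzezinskiSitarz2017Definition2.4}. First I would record that $\mathrm{GKdim}(A)=3$: since $A$ is a bijective SPBW extension over $\Bbbk[t]$ in two indeterminates, the set $\{t^{\gamma}x_1^{\alpha_1}x_2^{\alpha_2}\}$ is a $\Bbbk$-basis of $A$ and the filtration by total degree has polynomial growth of degree $1+2=3$; this is the integer $n$ required in Definition \ref{BrzezinskiSitarz2017Definition2.4}.

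Next I would build the calculus, following Brzezi\'nski's construction for skew polynomial rings (Example \ref{Brzezinski2015DSOEbiquadratic}(ii)) and using the commuting automorphisms $\nu_t,\nu_{x_1},\nu_{x_2}$ of $A$ provided by Proposition \ref{autoSkewPBW}. Set $\Omega^1A$ to be the free right $A$-module on generators $dt,dx_1,dx_2$, with left module structure $a\,dt=dt\,\nu_t(a)$, $a\,dx_1=dx_1\,\nu_{x_1}(a)$, $a\,dx_2=dx_2\,\nu_{x_2}(a)$, and define $d:A\to\Omega^1A$ on generators by $t\mapsto dt$, $x_i\mapsto dx_i$, extended by the Leibniz rule. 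One then checks that $d$ is well defined, i.e.\ that it respects the relations {\rm (}\ref{relPBW3.1}{\rm )} and {\rm (}\ref{relPBW3.3}{\rm )}; this is exactly where the restrictions of Table \ref{Leibnizsruletwoindeterminates(1)} on $p_1,p_2,c_{1,2}$ and the $q_{1,2}^{(i)}$ enter, and it is the same computation that produced those restrictions in the proof of Proposition \ref{autoSkewPBW}. Take $\Omega^{\bullet}A$ to be the quotient of the tensor algebra of $\Omega^1A$ over $A$ by the anticommutation relations $dt\wedge dt=dx_1\wedge dx_1=dx_2\wedge dx_2=0$ and $\xi\wedge\eta=-\eta\wedge\xi$ for distinct generators, so that $\Omega^3A$ is free of rank one on $\omega:=dt\wedge dx_1\wedge dx_2$ and $\Omega^mA=0$ for $m>3$; extend $d$ to a degree-one map with $d\circ d=0$. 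The density condition holds by construction, and connectedness $\ker(d\vert_{\Omega^0A})=\Bbbk$ follows because $d$ annihilates precisely the constants.

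For integrability I would apply Proposition \ref{integrableequiva}(4) (equivalently Proposition \ref{BrzezinskiSitarz2017Lemmas2.6and2.7}(2)) with the volume form $\omega=dt\wedge dx_1\wedge dx_2$, noting that $\nu_{\omega}=\nu_t\circ\nu_{x_1}\circ\nu_{x_2}$ is an algebra automorphism since the three factors commute by {\rm (}\ref{Eq1skew}{\rm )}. Because each $\Omega^kA$ is free of finite rank as a left and as a right $A$-module, one has explicit right bases (the $k$-fold wedges of $dt,dx_1,dx_2$) together with their complementary $(3-k)$-fold wedges; writing an arbitrary $\omega'\in\Omega^kA$ in the right basis and pairing with $\pi_{\omega}$ against the complementary forms yields the two identities of Proposition \ref{BrzezinskiSitarz2017Lemmas2.6and2.7}(2). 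Hence $\omega$ is an integrating form, the left multiplication maps $\ell^k_{\pi_{\omega}}$ are bijective, and $(\Omega A,d)$ is integrable; together with the GK-dimension computation and connectedness this gives that $A$ is differentially smooth.

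The main obstacle is the middle step: verifying that $d$ extends consistently to all of $A$ and that $d\circ d=0$ while $\Omega^3A\neq0$ and $\Omega^4A=0$, i.e.\ that the twisted bimodule relations determined by $\nu_t,\nu_{x_1},\nu_{x_2}$ are simultaneously compatible with the defining relations {\rm (}\ref{relPBW3.1}{\rm )}--{\rm (}\ref{relPBW3.3}{\rm )} of $A$ and with the graded-commutativity of $\Omega^{\bullet}A$. This compatibility is precisely what fails in cases {\rm (f)} and {\rm (h)}, where Proposition \ref{autoSkewPBW} shows that no such commuting automorphisms exist, which is why those cases are excluded; in the remaining cases {\rm (a)}--{\rm (e)}, {\rm (g)}, {\rm (i)} it reduces to a lengthy but routine case-by-case check over the sub-cases of Table \ref{Leibnizsruletwoindeterminates(1)}.
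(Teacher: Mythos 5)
Your overall strategy coincides with the paper's: compute ${\rm GKdim}=3$, build $\Omega^1 A$ as the free right $A$-module on $dt,dx_1,dx_2$ with left action twisted by the commuting automorphisms $\nu_t,\nu_{x_1},\nu_{x_2}$ of Proposition \ref{autoSkewPBW}, check connectedness, and verify integrability via Proposition \ref{BrzezinskiSitarz2017Lemmas2.6and2.7}(2) with the volume form $\omega=dt\wedge dx_1\wedge dx_2$. That is exactly the route taken in the text.

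There is, however, one concrete error in your construction of the higher forms. You define $\Omega^{\bullet}A$ by imposing the \emph{untwisted} anticommutation relations $\xi\wedge\eta=-\eta\wedge\xi$ on distinct generators. But the degree-two relations are not free to choose: they are forced by $d\circ d=0$ applied to the degree-one bimodule relations. For instance, applying $d$ to $t\,dx_1=dx_1\,a_1^{-1}(t-b_1)$ gives $dt\wedge dx_1=-a_1^{-1}\,dx_1\wedge dt$, i.e. $dx_1\wedge dt=-a_1\,dt\wedge dx_1$, and similarly $dx_2\wedge dt=-a_2\,dt\wedge dx_2$ and $dx_2\wedge dx_1=-c_{1,2}\,dx_1\wedge dx_2$ (these are the relations (\ref{relsecond}) in the paper). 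If you additionally impose $dx_1\wedge dt=-dt\wedge dx_1$, then in any case with $a_1\neq 1$ you obtain $(1-a_1)\,dt\wedge dx_1=0$, hence $dt\wedge dx_1=0$ and $\Omega^3A=0$; the same collapse occurs when $a_2\neq1$ or $c_{1,2}\neq1$. This destroys the volume form precisely in cases (e), (g), (i) and in the sub-cases of (a)--(d) with $c_{1,2}\neq 1$, so your calculus as literally described is not the right one there. The fix is simply to use the twisted relations above (which still give $\Omega^kA$ free of rank $\binom{3}{k}$ and $\Omega^3A\cong A$ on $\omega$); with that correction, and with the dual forms $\bar\omega_i$ carrying the compensating coefficients $a_1^{-1}$, $a_2^{-1}$, $c_{1,2}^{-1}$ as in the paper, the rest of your argument goes through as stated.
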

\begin{proof}
We know that SPBW extensions of the form $\sigma(\Bbbk[t])\langle x_1,x_2\rangle$ have Gelfand-Kirillov dimension three \cite[Theorems 14 and 18]{Reyes2013}, so we are able to formulate a three-dimensional integrable calculus. With this aim, consider $\Omega^{1}(\sigma(\Bbbk[t])\langle x_1,x_2\rangle)$ a free right $\sigma(\Bbbk[t])\langle x_1,x_2\rangle$-module of rank three with generators $dt$, $dx_1$ and $dx_2$. Define a left $\sigma(\Bbbk[t])\langle x_1,x_2\rangle$-module structure by
\begin{equation}\label{relrightmod}
    f dt = dt \nu_t(f), \quad f dx_1 = dx_1\nu_{x_1}(f) \quad {\rm and} \quad  fdx_2 = dx_2\nu_{x_2}(f), 
\end{equation}

for all $f\in \sigma(\Bbbk[t])\langle x_1,x_2\rangle$, where $\nu_t$, $\nu_{x_1}$ and $\nu_{x_2}$ are the algebra automorphisms established in Proposition \ref{autoSkewPBW}. Notice that the relations in $\Omega^{1}(\sigma(\Bbbk[t])\langle x_1,x_2\rangle)$ are given by
{\footnotesize{
    \begin{align}
        tdt  = &\ dt t, & tdx_1 = &\ a_1^{-1}dx_1t - a_1^{-1}b_1dx_1, & tdx_2 = &\ a_2^{-1}dx_2t - a_2^{-1}b_2dx_2, \label{rel1} \\
        x_1dt = &\ a_1dtx_1+dtp_1'(t), & x_1dx_1 = &\ dx_1x_1, & x_1dx_2 = &\ dx_2c_{1,2}^{-1}x_1-dx_2c_{1,2}^{-1}q_{1,2}^{(2)}, \label{rel2} \\
        x_2dt = &\ a_2dtx_2+dtp_2'(t), & x_2dx_1 = &\ dx_1c_{1,2}x_2+dx_1q_{1,2}^{(1)}, &  x_2dx_2 = &\ dx_2x_2. \label{rel3} 
    \end{align}
}}

We want to extend $t\mapsto dt$, $x_1\mapsto dx_1$ and $x_2\mapsto dx_2$ to a map $d: \sigma(\Bbbk[t])\langle x_1,x_2\rangle \to \Omega^{1}(\sigma(\Bbbk[t])\langle x_1,x_2\rangle)$ satisfying Leibniz's rule. As expected, this is possible if Leibniz's rule is compatible with the non-trivial relations {\rm (}\ref{relPBW3.1}{\rm )} and {\rm (}\ref{relPBW3.3}{\rm )}, i.e. if the equalities
    \begin{align*}
        dx_1t+x_1dt &\ = a_1dtx_1+a_1tdx_1+b_1dx_1+dp_1(t), \\
        dx_2t+x_2dt &\ = a_2dtx_2+a_2tdx_2+b_2dx_2+dp_2(t), \quad {\rm and} \\
        dx_2x_1+x_2dx_1 &\ = c_{1,2}dx_1x_2+c_{1,2}x_1dx_2 + q_{1,2}^{(1)}dx_1+q_{1,2}^{(2)}dx_2, 
    \end{align*}

hold. In view of $tdt=dtt$ which defines the usual commutative calculus on the polynomial ring $\Bbbk[t]$, it follows that $dp_1(t) = dtp_1'(t)$ and $dp_2(t) = dtp_2'(t)$.

Now, we define $\Bbbk$-linear maps 
$$
\partial_t, \partial_{x_1}, \partial_{x_2}: \sigma(\Bbbk[t])\langle x_1,x_2\rangle \rightarrow \sigma(\Bbbk[t])\langle x_1,x_2\rangle
$$ 

such that
\begin{align*}
d(f) = dt\partial_t(f)+dx_1\partial_{x_1}(f) + dx_2\partial_{x_2}(f), \quad {\rm for\ all}\ f \in \sigma(\Bbbk[t])\langle x_1,x_2\rangle.
\end{align*}

Since $dt$, $dx_1$ and $dx_2$ are free generators of the right $\sigma(\Bbbk[t])\langle x_1,x_2\rangle$-module $\Omega^1(\sigma(\Bbbk[t])\langle x_1,x_2\rangle)$, these maps are well-defined. Then $d(a) = 0$ if and only if $\partial_t(a) = \partial_{x_1}(a) = \partial_{x_2}(a)=0$. Using relations {\rm (}\ref{relrightmod}{\rm )} and the definitions of the maps $\nu_t$, $\nu_{x_1}$ and $\nu_{x_2}$, we get that 
\begin{align*}
    \partial_t(t^kx_1^lx_2^s) = &\ kt^{k-1}x_1^lx_2^s, \\
    \partial_{x_1}(t^kx_1^lx_2^s) = &\ la_1^{-k}(t-b_1)^kx_1^{l-1}x_2^s, \quad {\rm and} \\
    \partial_{x_2}(t^kx_1^lx_2^s) = &\ a_2^{-k}c_{1,2}^{-l}s(t-b_2)^k(x_1-q_{1,2}^{(2)})^lx_2^{s-1}.
\end{align*}

Thus $d(f)=0$ if and only if $f$ is a scalar multiple of the identity. This shows that $(\Omega(\sigma(\Bbbk[t])\langle x_1,x_2\rangle), d)$ is connected with $\Omega(\sigma(\Bbbk[t])\langle x_1,x_2\rangle) = \bigoplus\limits_{i=0}^{3}\Omega^i (\sigma(\Bbbk[t])\langle x_1,x_2\rangle)$.

The universal extension of $d$ to higher forms compatible with {\rm (}\ref{rel1}{\rm )}, {\rm (}\ref{rel2}{\rm )} and {\rm (}\ref{rel3}{\rm )} gives the following rules for $\Omega^2 (\sigma(\Bbbk[t])\langle x_1,x_2\rangle)$:
\begin{align}
    dx_1\wedge dt = &\ -a_1dt\wedge dx_1, \notag \\
    dx_2\wedge dt = &\ -a_2dt\wedge dx_2, \quad {\rm and} \notag \\ 
    dx_2\wedge dx_1 = &\ -c_{1,2}dx_1\wedge dx_2. \label{relsecond}
\end{align}

Since the automorphisms $\nu_{t}$, $\nu_{x_1}$ and $\nu_{x_2}$ commute with each other, there are no additional relationships to the previous ones, so we can write
\begin{align*}
    \Omega^2 (\sigma(\Bbbk[t])\langle x_1,x_2\rangle) = &\ dt\wedge dx_1\sigma(\Bbbk[t])\langle x_1,x_2\rangle\\
    &\ \oplus dt\wedge dx_2\sigma(\Bbbk[t])\langle x_1,x_2\rangle\oplus dx_1\wedge dx_2\sigma(\Bbbk[t])\langle x_1,x_2\rangle.
\end{align*}

Note that 
\[
\Omega^3(\sigma(\Bbbk[t])\langle x_1,x_2\rangle) = \omega\sigma(\Bbbk[t])\langle x_1,x_2\rangle\cong \sigma(\Bbbk[t])\langle x_1,x_2\rangle
\]

as a right and left $\sigma(\Bbbk[t])\langle x_1,x_2\rangle$-module, with $\omega = dt\wedge dx_1 \wedge dx_2$, where $\nu_{\omega}=\nu_t\circ\nu_{x_1}\circ\nu_{x_2}$. This means that $\omega$ is a volume form of $\sigma(\Bbbk[t])\langle x_1,x_2\rangle$. From Proposition \ref{BrzezinskiSitarz2017Lemmas2.6and2.7} (2), $\omega$ is an integral form by setting
\begin{align*}
    \omega_1^1  = &\ dt,    & \omega_2^1 = &\ dx_1, & \omega_3^1  = &\ dx_2, \\
    \omega_1^2 = &\ dx_1\wedge dx_2, & \omega_2^2 = &\ dt\wedge dx_2, & \omega_3^2 = &\ dt\wedge dx_1, \\
    \bar{\omega}_1^1  = &\ dt,   &  \bar{\omega}_2^1= &\ -a_1^{-1}dx_1, &  \bar{\omega}_3^1= &\ a_2^{-1}c_{1,2}^{-1}dx_2, \\
    \bar{\omega}_1^2 = &\ a_1^{-1}a_2^{-1}dx_1\wedge dx_2 , & \bar{\omega}_2^2 = &\ -c_{1,2}^{-1}dt\wedge dx_2, & \bar{\omega}_3^2 = &\ dt\wedge dx_1.
\end{align*}

Indeed, let $\omega' = dta+dx_1b+dx_2c$ with $a,b,c \in \Bbbk$. Then 
\begin{align*}
\sum_{i=1}^{3}\omega_{i}^{1}\pi_{\omega}(\bar{\omega}_i^{2}\wedge \omega') = &\ dt\pi_{\omega}(a_1^{-1}a_2^{-1}adx_1\wedge dx_2\wedge dt)\\
&\ + dx_1\pi_{\omega}(-c_{1,2}^{-1}bdt\wedge dx_2\wedge dx_1)  + dx_2\pi_{\omega}(cdt\wedge dx_1\wedge dx_2) \\
= &\ dt a+dx_1 b + dx_2 c=\omega',
\end{align*}

and let $\omega'' = dt\wedge dx_1a+dt\wedge dx_2 b+dx_1\wedge dx_2 c$, with $a, b, c \in \Bbbk$. We obtain that
\begin{align*}
\sum_{i=1}^{3}\omega_{i}^{2}\pi_{\omega}(\bar{\omega}_i^{1}\wedge \omega'') = &\ dx_1\wedge dx_2\pi_{\omega}(cdt\wedge dx_1 \wedge dx_2) \\
    &\ +dt\wedge dx_2\pi_{\omega}(-a_{1}^{-1}bdx_1\wedge dt\wedge dx_2) \\
    &\ + dt\wedge dx_1\pi_{\omega}(a_{2}^{-1}c_{1,2}^{-1}adx_2 \wedge dt \wedge dx_1) \\
    = &\ dt\wedge dx_1a+dt\wedge dx_2b+dx_1\wedge dx_2= \omega''.
\end{align*}

Therefore, we have proved that $\sigma(\Bbbk[t])\langle x_1,x_2\rangle$ is differentially smooth.
\end{proof}

\subsection{SPBW extensions in three indeterminates}\label{SPBWTMThreeI}

In this section we develop a similar treatment to the presented in Section \ref{SPBWTMTwoI} but now we consider a SPBW extension of the form $\sigma(\Bbbk[t]) \langle x_1, x_2, x_3\rangle$ satisfying the defining relations
\begin{align*}
    x_1r(t) = &\ \sigma_1(r(t)) x_1 + \delta_1(r(t)), \\
    x_2 r(t) = &\ \sigma_2(r(t)) x_2 + \delta_2(r(t)), \\
    x_3 r(t) = &\ \sigma_3(r(t)) x_2 + \delta_3(r(t)), \\
    x_2 x_1 = &\ c_{1, 2}(t)x_1 x_2 + q_{1,2}^{(0)}(t) + q_{1,2}^{(1)}(t)x_1 + q_{1,2}^{(2)}(t)x_2+ q_{1,2}^{(3)}(t)x_3, \\
    x_3 x_1 = &\ c_{1, 3}(t)x_1 x_3 + q_{1,3}^{(0)}(t) + q_{1,3}^{(1)}(t)x_1 + q_{1,3}^{(2)}(t)x_2+q_{1,3}^{(3)}(t)x_3, \quad {\rm and} \\
    x_3 x_2 = &\ c_{2, 3}(t)x_2 x_3 + q_{2,3}^{(0)}(t) + q_{2,3}^{(1)}(t)x_1 + q_{2,3}^{(2)}(t)x_2+q_{2,3}^{(3)}(t)x_3, 
\end{align*}

where the elements $r(t)$, $c(t)$'s and $q(t)$'s belong to $\Bbbk[t]$ with $c_{1,2}(t), c_{1, 3}(t)$ and $c_{2, 3}(t)$ non-zero. Consider the automorphisms of $\Bbbk[t]$ given by $\sigma_i(t) = a_it+b_i$, for $a_i, b_i, \in \Bbbk$, with $a_i\not=0$, $i=1, 2, 3$, with the corresponding $\sigma_i$-derivations expressed as in (\ref{deltap}), that is, 
\begin{equation}
\delta_i(f)=\frac{f\left(\sigma_{i}(t)\right)-f(t)}{\sigma_{i}(t)-t}p_i(t), \text{ for } i=1,2,3, 
\end{equation}

where $p_i(t)$ is a fixed element of $\Bbbk[t]$ for each $i$. The relations between $t, x_1, x_2, x_3$ can be expressed as
\begin{align}
    x_it = &\ a_itx_i+b_ix_i+p_i(t), \quad {\rm for\ every}\ i, \quad{\rm and} \label{twithxi3} \\
    x_j x_i = &\ c_{i, j}(t) x_i x_j + q_{i,j}^{(0)} + q_{i,j}^{(1)}x_1 + q_{i,j}^{(2)}x_2+q_{i,j}^{(3)}x_3, \quad {\rm for}\ i < j. \label{xiandxj3} 
\end{align}

The following result is the natural extension of Proposition \ref{autoSkewPBW}.

\begin{proposition}\label{autoSkewPBWdim3} 
Let
\begin{align}
   \nu_t(t) = &\ t, & \nu_t(x_i) = &\ a_ix_i + p_i'(t),\quad i = 1, 2, 3, \label{Autoskew31} \\ 
   \nu_{x_i}(t) = &\ \sigma_{i}^{-1}(t), & \nu_{x_i}(x_i) = &\ x_i, \quad i = 1, 2, 3, \label{Autoskew32} \\ 
   \nu_{x_i}(x_j) = &\ c_{i,j}x_j+q_{i,j}^{(i)}, & \nu_{x_j}(x_i) = &\ c_{j,i}^{-1}x_i-c_{j,i}^{-1}q_{j,i}^{(j)}, \quad i < j, \label{Autoskew33}
\end{align}

where $p_i'(t)$ are the $t$-derivatives of $p_i(t)$ for $i=1, 2, 3$, and $c_{i,j}, q_{i,j}^{(k)} \in \Bbbk$, $c_{i,j}\not =0$, for all $1\leq i,j \leq 3$ and $0\leq k \leq 3$.
\begin{enumerate}
\item [\rm (1)] Leibniz's rule holds in the cases listed in Table \ref{Leibnizsruletwoindeterminates(2)}. The maps defined by {\rm (}\ref{Autoskew31}{\rm )}, {\rm (}\ref{Autoskew32}{\rm )} and {\rm (}\ref{Autoskew33}{\rm )} simultaneously extend to algebra automorphisms $\nu_t, \nu_{x_i}$, $i=1, 2, 3$, of $\sigma(\Bbbk[t])\langle x_1, x_2, x_3\rangle$ in cases {\rm (a) - (d)}.

\begin{table}[h]
\caption{Leibniz's rule}
\label{Leibnizsruletwoindeterminates(2)}
\begin{center}
\resizebox{12.5cm}{!}{
\setlength\extrarowheight{6pt}
\begin{tabular}{ |c|c|c|c| } 
\hline
{\rm Case} & {\rm Possibilities for} $a_i$, $b_i$, $i=1, 2, 3$ & {\em Polynomials} $p_i(t)$, $i=1, 2, 3$ & {\em Restrictions} \\
\hline
\multirow{2}{*}{{\rm (a)}} & \multirow{2}{*}{$a_i=1$, $b_i=0$, \text{ for all } $i=1, 2, 3$} & $p_i(t)=0$ for all $i=1, 2, 3$ & $q_{i,j}^{(k)}=0$,  $c_{i,j}\in\Bbbk^{\ast}$ for all $i, j=1, 2, 3$, $k \geq 0$ \\ \cline{3-4}
&  & $p_i(t)\in\Bbbk[t], \text{ for all } i=1, 2, 3$ & $q_{i,j}^{(k)}=0$,  $c_{i,j}=1$, $q_{i,j}^{(0)} \in\Bbbk$, \text{ for all } $i, j=1, 2, 3$, $k>0$ \\
\hline
\multirow{2}{*}{{\rm (b)}} & \multirow{2}{*}{$a_i=1$, for all $i=1, 2, 3$,  $b_l \not=0$, for some $l=1, 2, 3$} & $p_i(t)=p_i$, $p_i\in\Bbbk$, for all $i=1, 2, 3$ & $c_{i,j}=1$, $q_{i,j}^{(k)}=0$, $q_{i,j}^{(0)}\in\Bbbk$, \text{ for all } $i, j=1, 2, 3$, $k>0$ \\ \cline{3-4}
&  & $p_i(t)=0$, for all $i=1, 2, 3$ & $q_{i,j}^{(k)}=0$,  $c_{i,j}\in\Bbbk^{\ast}$, \text{ for all } $i, j=1, 2, 3$, $k\geq 0$ \\
\hline
\rm (c) & $a_r\not=1$, $a_s=1$, $b_s=0$, for $r\in S\subsetneq \{1, 2, 3\}$ and $s\in S^c$& $p_s(t)=0$, $p_r(t)=p_r\left(t+\frac{b_r}{a_r-1}\right)$, for $r\in S\subsetneq \{1, 2, 3\}$ and $s\in S^c$, $p_r\in \Bbbk$ & $q_{i,j}^{(k)}=0$,  $c_{i,j}=1$, \text{ for all } $i, j=1, 2, 3$, $k \geq 0$ \\ 
\hline
\rm (d) & $a_i\not =1$, $b_i=0$, \text{ for all } $i=1, 2, 3$  & $p_i(t)=p_it$, $p_i\in\Bbbk$, \text{ for all } $i=1, 2, 3$ & $q_{i,j}^{(k)}=0$,  $c_{i,j}=1$, \text{ for all } $i, j=1, 2, 3$, $k \geq 0$ \\   
\hline
\end{tabular}
}
\end{center}
\end{table}
    
\item [\rm (2)] Precisely, in cases {\rm (a) - (d)}, we have that 
    \begin{equation}\label{Eq1skew3}
   \nu_t \circ \nu_{x_i} = \nu_{x_i} \circ \nu_t \quad {\rm and} \quad \nu_{x_i} \circ \nu_{x_j} = \nu_{x_j} \circ \nu_{x_i}, \quad {\rm for}\ i = 1, 2, 3.
    \end{equation}
\end{enumerate}
\end{proposition}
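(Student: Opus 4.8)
The plan is to repeat the strategy of the proof of Proposition~\ref{autoSkewPBW}, now for the six defining relations (\ref{twithxi3})--(\ref{xiandxj3}) and the four candidate maps $\nu_t,\nu_{x_1},\nu_{x_2},\nu_{x_3}$. For each of these maps I would first record the conditions under which it respects every relation: applying $\nu_t$ to $x_it=a_itx_i+b_ix_i+p_i(t)$ gives, for each $i$, precisely the equation $((a_i-1)t+b_i)p_i'(t)=(a_i-1)p_i(t)$ of \cite[Lemma 3.1]{Brzezinski2015}, and these three equations are mutually independent; applying $\nu_t,\nu_{x_1},\nu_{x_2},\nu_{x_3}$ to each relation (\ref{xiandxj3}) produces equations in the constants $c_{i,j},q_{i,j}^{(k)}$ analogous to (\ref{thirdrel}) and (\ref{Firstnux1})--(\ref{Thirnux2}). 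The genuinely new ingredient compared with the two-indeterminate case is that $\nu_{x_i}$ must also be compatible with the cross relation (\ref{xiandxj3}) for the pair $\{x_j,x_k\}$ with $i\notin\{j,k\}$; this adds compatibility constraints linking $c_{i,j},c_{i,k}$ and $c_{j,k}$ (and the corresponding $q$'s). In the quasi-commutative sub-cases, where $x_kx_j=c_{j,k}x_jx_k$ holds identically, one checks that the $\nu_{x_i}$-image of this relation is automatically satisfied, so that the $c_{i,j}$ may remain arbitrary nonzero scalars.

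Next I would solve the system. Because the $x_it$-equations decouple index by index, each pair $(a_i,b_i)$ independently satisfies one of the three alternatives of \cite[Lemma 3.1]{Brzezinski2015} ($a_i=1,b_i=0$ with $p_i$ unrestricted; $a_i=1,b_i\neq0$ with $p_i$ constant; $a_i\neq1$ with $p_i(t)=p_i\big(t+\tfrac{b_i}{a_i-1}\big)$). Inspecting the mixed patterns --- some index with $a_s=1,b_s\neq0$ occurring alongside some index with $a_r\neq1$ --- one finds, exactly as in cases (f) and (h) of Proposition~\ref{autoSkewPBW}, that the (\ref{Secondnux1})-type equation for that pair forces an expression of the shape $c\,b(a-1)=0$, which is impossible; discarding these patterns leaves precisely the four cases (a)--(d) of Table~\ref{Leibnizsruletwoindeterminates(2)}. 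Feeding the admissible $a_i,b_i,p_i$ into the $x_jx_i$-equations, the coefficient of each $x_k$ must vanish; as for (\ref{thirdrel}) this annihilates the nonconstant part of the relevant $p_i$ whenever $c_{i,j}\neq1$ (and, when two indices both have $a\neq1$, forces the corresponding $b$'s to be zero, mirroring case (i) of Proposition~\ref{autoSkewPBW}), while the surviving constant and linear terms pin down $c_{i,j}$ and force $q_{i,j}^{(k)}=0$ for $k>0$, with $q_{i,j}^{(0)}$ free exactly when all $c_{i,j}=1$; this reproduces the ``Restrictions'' column.

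For part (2) I would verify $\nu_t\circ\nu_{x_i}=\nu_{x_i}\circ\nu_t$ and $\nu_{x_i}\circ\nu_{x_j}=\nu_{x_j}\circ\nu_{x_i}$ by evaluating both sides on the generators $t,x_1,x_2,x_3$, just as in (\ref{comp11.})--(\ref{comp33}). The only potentially obstructive identity, coming from the action on $t$ in $\nu_{x_i}\circ\nu_{x_j}$, is $b_i(a_j-1)=b_j(a_i-1)$ (the three-variable analogue of the condition $b_2=\tfrac{a_2-1}{a_1-1}b_1$ found in Proposition~\ref{autoSkewPBW}), and it holds in each of (a)--(d): whenever $a_i$ and $a_j$ are both $\neq1$ the relation (\ref{xiandxj3}) has already forced $b_i=b_j=0$, and otherwise one of the factors $a_i-1,a_j-1$ vanishes together with its partner's $b$. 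I expect the main obstacle to be bookkeeping rather than conceptual: keeping track of the index-combinations, and of the extra cross-relation equations that have no counterpart in the two-indeterminate setting, and confirming that they impose nothing beyond what is recorded in Table~\ref{Leibnizsruletwoindeterminates(2)}; this is a routine but lengthy propagation of the analysis of Proposition~\ref{autoSkewPBW}.
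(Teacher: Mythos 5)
Your proposal follows essentially the same route as the paper's own proof: impose that $\nu_t$ and each $\nu_{x_i}$ respect the relations (\ref{twithxi3}) and (\ref{xiandxj3}) (including the cross-relations linking $c_{i,j}$, $c_{i,k}$, $c_{j,k}$ and the $q$'s, which the paper records in (\ref{Thirnux13}) and (\ref{Thirnux23})), solve the resulting system index by index using the independence of the three $x_it$-equations, discard the mixed patterns that fail as in cases (f) and (h) of Proposition~\ref{autoSkewPBW}, and check part (2) on generators, with the commutativity of $\nu_{x_i}\circ\nu_{x_j}$ on $t$ reducing to $b_i(a_j-1)=b_j(a_i-1)$, which holds in each of (a)--(d). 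The argument is correct and matches the paper's in both structure and substance.
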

\begin{proof}
For the first assertion, the map $\nu_t$ can be extended to an
algebra homomorphism if and only if the definitions of $\nu_t(t)$ and $\nu_t(x_i)$, $i=1, 2, 3$ respect relations {\rm (}\ref{twithxi3}{\rm )}, and {\rm (}\ref{xiandxj3}{\rm )}, i.e.
\begin{align*}
   \nu_t(x_i)\nu_t(t)-\nu_t(a_it+b_i)\nu_t(x_i) = &\ \nu_t(p_i(t)), \\
   \nu_t(x_j)\nu_t(x_i)-c_{i,j}\nu_t(x_i)\nu_t(x_j) = &\ q_{i,j}^{(0)}+q_{i,j}^{(1)}\nu_t(x_1)+q_{i,j}^{(2)}\nu_t(x_2)+q_{i,j}^{(3)}\nu_t(x_3), 
\end{align*}

for $i<j$. This yields the equalities
\begin{equation} \label{firstsecondrel3}
    ((a_i-1)t+b_i)p_i'(t)=(a_i-1)p_i(t), \quad i = 1, 2, 3,
\end{equation}

and
{\small{
\begin{align}
  &\ (a_ia_j-1)q_{i,j}^{(0)} + a_i(p_j'(t)x_i-c_{i,j}x_ip_j'(t)) + a_j(x_jp_i'(t)-c_{i,j}p_i'(t)x_j) + (a_ja_i-1)q_{i,j}^{(0)} \notag \\ 
   &\ +\sum_{r=1}^{3}q_{i,j}^{(r)}(a_ia_j-a_r)x_r +(1-c_{i,j})p_i'(t)p_j'(t) - q_{i,j}^{(1)}p_1'(t)-q_{i,j}^{(2)}p_2'(t)-q_{i,j}^{(3)}p_3'(t)=0. \label{thirdrel3}
\end{align}
}}

The map $\nu_{x_i}$ can be extended to an algebra homomorphism if and only if the definitions of $\nu_{x_i}(t)$ and $\nu_{x_i}(x_j)$, for each $i$, respect relations {\rm (}\ref{twithxi3}{\rm )}, and {\rm (}\ref{xiandxj3}{\rm )}, i.e. 
{\small{
\begin{align*}
   \nu_{x_i}(x_i)\nu_{x_i}(t)-\nu_{x_i}(a_it+b_i)\nu_{x_i}(x_i) = &\ \nu_{x_i}(p_i(t)), \\
   \nu_{x_i}(x_j)\nu_{x_i}(t)-\nu_{x_i}(a_jt+b_j)\nu_{x_i}(x_j) = &\ \nu_{x_i}(p_j(t)), \quad {\rm and} \\
   \nu_{x_i}(x_j)\nu_{x_i}(x_i)-c_{i,j}\nu_{x_i}(x_i)\nu_{x_i}(x_j) = &\ q_{i,j}^{(0)}+q_{i,j}^{(1)}\nu_{x_i}(x_1)+q_{i,j}^{(2)}\nu_{x_i}(x_2)+q_{i,j}^{(3)}\nu_{x_i}(x_3),
\end{align*}
}}

for $i < j$. In this way, 
\begin{align}
   &\ a_{i}^{-1}p_i(t)=p_i(a_{i}^{-1}(t-b_i)), \label{Firstnux13} \\
   &\ c_{i,j}(a_i^{-1}(a_jb_i+b_j-b_i)-b_j)x_j+a_i^{-1}(c_{i,j}p_j(t)-(1+a_j)q_{i,j}^{(i)}t), \notag \\
   &\ \ +q_{i,j}^{(i)}(a_i^{-1}b_i(a_j-1)-b_j)=p_j(a_i^{-1}(t-b_i)), \quad {\rm and} \label{Secondnux13}\\
   &\ (c_{1,2}-1)q_{1,2}^{(0)}+q_{1,2}^{(3)}(c_{1,2}-c_{1,3})x_3-q_{1,2}^{(2)}q_{1,2}^{(1)}-q_{1,2}^{(3)}q_{1,3}^{(1)} = 0, \notag \\
   &\ (c_{1,3}-1)q_{1,3}^{(0)}+q_{1,3}^{(2)}(c_{1,3}-c_{1,2})x_2-q_{1,3}^{(3)}q_{1,3}^{(1)}-q_{1,3}^{(2)}q_{1,2}^{(1)} = 0, \notag \\
   &\ (c_{2,3}-1)q_{2,3}^{(0)}+q_{2,3}^{(1)}(c_{2,3}-c_{1,2}^{-1})x_1-q_{2,3}^{(3)}q_{2,3}^{(2)}+c_{1,2}^{-1}q_{1,2}^{(2)}q_{2,3}^{(1)} = 0 .\label{Thirnux13}
\end{align}

Then, the map $\nu_{x_j}$ can be extended to an algebra homomorphism if and only if the definitions of $\nu_{x_j}(t)$ and $\nu_{x_j}(x_i)$, $i=1, 2, 3$ respect relations {\rm (}\ref{twithxi3}{\rm )}, and {\rm (}\ref{xiandxj3}{\rm )}, and so
{\small{
\begin{align*}
   \nu_{x_j}(x_i)\nu_{x_j}(t)-\nu_{x_j}(a_it+b_i)\nu_{x_j}(x_i) = &\ \nu_{x_j}(p_i(t)), \\
   \nu_{x_j}(x_j)\nu_{x_j}(t)-\nu_{x_j}(a_jt+b_j)\nu_{x_j}(x_j) = &\ \nu_{x_j}(p_j(t)), \quad {\rm and} \\
   \nu_{x_j}(x_j)\nu_{x_j}(x_i)-c_{i,j}\nu_{x_j}(x_i)\nu_{x_j}(x_j) = &\ q_{i,j}^{(0)}+q_{i,j}^{(1)}\nu_{x_j}(x_1)+q_{i,j}^{(2)}\nu_{x_j}(x_2)+q_{i,j}^{(3)}\nu_{x_j}(x_3),
\end{align*}
}}

for $i < j$. We obtain the expressions given by
\begin{align}
   &\  c_{i,j}^{-1}(a_j^{-1}(b_i+a_ib_j-b_j)-b_i)x_i+c_{i,j}^{-1}a_j^{-1}(p_i(t)-(1+a_i)q_{i,j}^{(j)}t) \notag\\
   &\ \ +q_{i,j}^{(j)}c_{i,j}^{-1}(a_j^{-1}b_j(1+a_i)+b_i)=p_i(a_j^{-1}(t-b_j)) \label{Firstnux23}\\
   &\  a_{j}^{-1}p_j(t)=p_j(a_{j}^{-1}(t-b_j)) \label{Secondnux23} \\
   &\ (c_{1,2}^{-1}-1)q_{1,2}^{(0)}+q_{1,2}^{(3)}(c_{1,2}^{-1}-c_{2,3}^{-1})x_3+c_{1,2}^{-1}q_{1,2}^{(2)}q_{1,2}^{(1)}-q_{1,2}^{(3)}q_{2,3}^{(2)} = 0, \notag \\
   &\ (c_{1,3}^{-1}-1)q_{1,3}^{(0)}+q_{1,3}^{(2)}(c_{1,3}^{-1}-c_{2,3}^{-1})x_2+c_{1,3}^{-1}q_{1,3}^{(3)}q_{1,3}^{(1)}+c_{2,3}^{-1}q_{1,3}^{(2)}q_{2,3}^{(3)} = 0, \notag \\
   &\ (c_{2,3}^{-1}-1)q_{2,3}^{(0)}+q_{2,3}^{(1)}(c_{2,3}^{-1}-c_{1,3}^{-1})x_1+c_{2,3}^{-1}q_{2,3}^{(3)}q_{2,3}^{(2)}+c_{1,3}^{-1}q_{1,3}^{(3)}q_{2,3}^{(1)} = 0 .\label{Thirnux23}
\end{align}

Expressions (\ref{firstsecondrel3}) are the same as in \cite[Lemma 3.1]{Brzezinski2015}. It should be noted that these equations are independent of each other, which means that there are different combinations considering the values of $a_i$, $b_i$ for $i=1,2,3$. Let us see.

We consider the expression $p_i(t) = \sum\limits_{j=0}^n m_{i,j}t^j$, for every $i = 1, 2,3 $.

\begin{enumerate}
    \item [\rm (a)] Equation (\ref{thirdrel3}) leads to the coefficients that accompany $x_i$, so these must be zero, $ m_{i,j}(1-c_{i,k})=0$. This implies that $m_{i,j}=0$, for $1 \leq j \leq n$, and so the polynomials $p_i(t)$ are constants or $c_{i,k}=1$, for $i = 1, 2, 3$ and $i < k$.
    From relations {\rm (}\ref{Secondnux13}{\rm )}, {\rm (}\ref{Thirnux13}{\rm )}, {\rm (}\ref{Firstnux23}{\rm )} and  {\rm (}\ref{Thirnux23}{\rm )}, we get that if $p_i(t)=p_i\in \Bbbk$ then $q_{i,j}^{(k)}=0$ with $k>0$, whence $c_{i,j}=1$ and $q_{i,j}^{(0)}$ has no restrictions. Also, it is necessary that $q_{i,k}^{(i)}m_{i,j}+q_{i,k}^{(k)}m_{k,j}=0$ for all $1 \leq i \leq 3$, which shows that $q_{i,j}^{(k)}=0$ with $k>0$ and there is not restrictions over polynomials $p_i(t)$. 

    In this way, we have considered all possibilities.
    \item [\rm (b)] Equation (\ref{thirdrel3}) leads that the coefficient that accompany $x_l$ must be zero, that is
    \begin{equation*}
       jm_{i,j}((t+b_l)^{j-1}-c_{i,l}t^{j-1})=0.
    \end{equation*}
    This implies that all the coefficients $m_{i,j}$ are zero, whence the polynomial $p_i(t)$ is constant.
    
    From relations {\rm (}\ref{Secondnux13}{\rm )}, {\rm (}\ref{Thirnux13}{\rm )}, {\rm (}\ref{Firstnux23}{\rm )} and  {\rm (}\ref{Thirnux23}{\rm )}, we obtain that $q_{i,j}^{(k)}=0$ with $k>0$, $c_{i,j} = 1$ and there are no restrictions on $q_{i,j}^{(0)}$.

    Again, all options are covered.
    
    \item [\rm (c)] Equation (\ref{thirdrel}) implies that the coefficient of $x_r$ is zero,
    \begin{equation*}
       jm_{i,j}((a_rt+b_r)^{j-1}-c_{i,r}t^{j-1})=0. 
    \end{equation*}
    
    Thus, $m_{i,j}=0$ for $1 \leq i \leq 3$, whence the polynomial $p_i(t)$ is constant. Also, if we focus on the coefficient that accompany $x_s$ and the constant element, both must be zero, 
    \begin{equation*}
        q_{i,r}^{(0)}=\frac{q_{i,r}^{(r)}}{a_r-1}p_r \quad {\rm and} \quad q_{i,r}^{(s)}=\frac{c_{i,r}-1}{a_r-1}p_r.
    \end{equation*}
    
By using expressions {\rm (}\ref{Secondnux13}{\rm )}, {\rm (}\ref{Thirnux13}{\rm )}, {\rm (}\ref{Firstnux23}{\rm )} and  {\rm (}\ref{Thirnux23}{\rm )} we obtain the restrictions $q_{i,j}^{(k)}=0$ for $1\leq i,j \leq 3$ and $k\geq 0$. Also, note that $c_{s,j}=1$ and $p_s=0$, for $s \in S$.
    
\item [\rm (d)] Equation (\ref{thirdrel3}) shows that $c_{i,j}=1$, $q_{i,j}^{(k)}=0$ for $1\leq i,j \leq 3$ and $k\geq 0$. If we consider the expression {\rm (}\ref{Secondnux13}{\rm )} then we get the condition $b_i = 0$ for each $i$. It is clear that relations {\rm (}\ref{Thirnux13}{\rm )}, {\rm (}\ref{Firstnux23}{\rm )} and {\rm (}\ref{Thirnux23}{\rm )} hold.
\end{enumerate}

For the second assertion, it is enough to prove it for the generators $t$, $x_1$ and $x_2$. Note that 
\begin{align}
    \nu_t \circ \nu_{x_i}(t) = &\ \nu_t(\sigma_i^{-1}(t))=a_i^{-1}(t-b_i), \label{comp113.}\\
    \nu_{x_i} \circ \nu_t(t) = &\ \nu_{x_i}(t) = a_i^{-1}(t-b_i), \label{comp113} \\
    \nu_t \circ \nu_{x_i}(x_i) = &\ \nu_t(x_i)=a_ix_i+p_i'(t), \label{comp123.} \\ \nu_{x_i} \circ \nu_t(x_i) = &\ a_ix_i+p_i'(a_i^{-1}(t-b_i)), \label{comp123} \\
    \nu_t \circ \nu_{x_i}(x_j) = &\ c_{i,j}a_jx_j+c_{i,j}p_j'(t) + q_{i,j}^{(i)}, \label{comp133.} \\ 
    \nu_{x_i} \circ \nu_t(x_j) = &\ a_jc_{i,j}x_j+a_jq_{i,j}^{(i)}+p_j'(a_{i}^{-1}(t-b_i)), \quad i < j, \label{comp133} \\
    \nu_t \circ \nu_{x_i}(x_k) = &\ c_{k,i}^{-1}a_kx_k+c_{k,i}^{-1}p_k'(t)-c_{k,i}^{-1}q_{k,i}^{(i)}, \quad {\rm and} \label{comp143.} \\
    \nu_{x_i} \circ \nu_t(x_k) = &\ a_kc_{k,i}^{-1}x_k-a_kc_{k,i}^{-1}q_{k,i}^{(i)}+p_k'(a_{i}^{-1}(t-b_i)), \quad i > k.\label{comp143}
\end{align}

In any case, the two compositions shown in {\rm (}\ref{comp113}{\rm )} are the same. Relation {\rm (}\ref{comp123}{\rm )} was similarly used to find the conditions of the polynomial $p_i(t)$. Thus, they hold in all cases, and relations {\rm (}\ref{comp133}{\rm )} and {\rm (}\ref{comp143}{\rm )} are correct. Then, $\nu_{t}\circ\nu_{x_i}=\nu_{x_i}\circ\nu_{t}$.

Finally, we have that 
\begin{align}
    \nu_{x_i} \circ \nu_{x_j}(t) = &\ a_j^{-1}(a_i^{-1}(t-b_i))-a_j^{-1}b_j, \label{comp313.} \\ 
    \nu_{x_j} \circ \nu_{x_i}(t) = &\ a_i^{-1}(a_j^{-1}(t-b_j))-a_i^{-1}b_i, \label{comp313} \\
    \nu_{x_i} \circ \nu_{x_j}(x_i) = &\ c_{i,j}^{-1}x_i-c_{i,j}^{-1}q_{i,j}^{(j)}, \label{comp323.} \\
    \nu_{x_j} \circ \nu_{x_i}(x_i) = &\ c_{i,j}^{-1}x_i-c_{i,j}^{-1}q_{i,j}^{(j)}, \label{comp323} \\
    \nu_{x_i} \circ \nu_{x_j}(x_j) = &\ c_{i,j}x_j+q_{i,j}^{(i)}, \quad {\rm and} \label{comp333.} \\
    \nu_{x_j} \circ \nu_{x_i}(x_j) = &\ c_{i,j}x_j+q_{i,j}^{(i)}. \label{comp333}
\end{align}

In any case, relations {\rm (}\ref{comp323}{\rm )} and {\rm (}\ref{comp333}{\rm )} hold. Relation {\rm (}\ref{comp313}{\rm )} works in all cases. Then, $\nu_{x_i}\circ\nu_{x_j}=\nu_{x_j}\circ\nu_{x_i}$.
\end{proof}

\begin{theorem}\label{smoothPBW3}
If a SPBW extension $\sigma(\Bbbk[t])\langle x_1, x_2, x_3\rangle$ satisfies one of the conditions {\rm (a)-(d)} in Proposition \ref{autoSkewPBWdim3}, then it is differentially smooth.
\end{theorem}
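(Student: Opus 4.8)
The plan is to mimic closely the proof of Theorem \ref{smoothPBW2}, since the three‑indeterminate case differs only in bookkeeping. First I would recall from \cite[Theorems 14 and 18]{Reyes2013} that a bijective SPBW extension $\sigma(\Bbbk[t])\langle x_1,x_2,x_3\rangle$ has Gelfand–Kirillov dimension $4$, so the target is a $4$‑dimensional connected integrable differential calculus. I would build $\Omega^1$ as the free right module on generators $dt, dx_1, dx_2, dx_3$, and impose the left module structure $f\,dt = dt\,\nu_t(f)$, $f\,dx_i = dx_i\,\nu_{x_i}(f)$ using the commuting automorphisms $\nu_t,\nu_{x_1},\nu_{x_2},\nu_{x_3}$ produced in Proposition \ref{autoSkewPBWdim3}. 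The point of invoking that proposition is exactly that in cases (a)–(d) these maps are genuine algebra automorphisms and they pairwise commute (equation \eqref{Eq1skew3}); this commutativity is what guarantees there are no further relations among the $dx_i$ beyond the obvious antisymmetrized ones, so that $\Omega^k$ is free of rank $\binom{4}{k}$ on the wedge monomials.

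Next I would define $d$ on generators by $t\mapsto dt$, $x_i\mapsto dx_i$ and check that the Leibniz rule is compatible with relations \eqref{twithxi3} and \eqref{xiandxj3}; this is where one uses that $dp_i(t)=dt\,p_i'(t)$, forced by the commutative calculus on $\Bbbk[t]$. Writing $d(f)=dt\,\partial_t(f)+\sum_i dx_i\,\partial_{x_i}(f)$ with $\Bbbk$‑linear partials, I would compute their action on the PBW monomials $t^k x_1^{l_1}x_2^{l_2}x_3^{l_3}$ (exactly as the displayed formulas for $\partial_t,\partial_{x_1},\partial_{x_2}$ in the two‑variable proof, now with a third partial involving $c_{2,3}^{-1}$ and $c_{1,3}^{-1}$ shifts), and observe that $d(f)=0$ forces $f\in\Bbbk$, giving connectedness. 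Then the universal extension to higher forms yields the relations $dx_i\wedge dt=-a_i\,dt\wedge dx_i$ and $dx_j\wedge dx_i=-c_{i,j}\,dx_i\wedge dx_j$ for $i<j$, and $\Omega^4=\omega\,A\cong A$ with volume form $\omega=dt\wedge dx_1\wedge dx_2\wedge dx_3$ and $\nu_\omega=\nu_t\circ\nu_{x_1}\circ\nu_{x_2}\circ\nu_{x_3}$.

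Finally, to prove integrability I would invoke Proposition \ref{BrzezinskiSitarz2017Lemmas2.6and2.7}(2): I must exhibit, for each $k=1,2,3$, finitely many forms $\omega_i^k,\overline\omega_i^k\in\Omega^k(A)$ (here one needs $\binom{4}{k}$ of them, namely all wedge monomials of degree $k$, with the $\overline\omega_i^k$ of degree $k$ whose Hodge‑dual complementary monomial pairs with $\omega_i^k$) such that $\omega'=\sum_i \omega_i^k\,\pi_\omega(\overline\omega_i^{4-k}\wedge\omega')=\sum_i\nu_\omega^{-1}(\pi_\omega(\omega'\wedge\omega_i^{4-k}))\,\overline\omega_i^k$ for all $\omega'\in\Omega^kA$. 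The $\overline\omega_i^k$ carry scalar coefficients that are products of the $a_j^{\pm1}$ and $c_{i,j}^{\pm1}$, chosen precisely so that reordering each wedge product back to the canonical order $dt\wedge dx_1\wedge dx_2\wedge dx_3$ produces the factor $+1$; since in cases (a)–(d) all the $\nu$'s are \emph{diagonal} on generators (no off‑diagonal $q$‑terms survive), $\pi_\omega$ behaves multiplicatively and these scalars suffice — it is enough to verify the identities on $\omega'$ a single wedge monomial with scalar coefficient. I expect the only real work, hence the main obstacle, to be the correct choice of the complementary forms and their normalizing scalars for $k=2$ (where there are six pairs and the sign/automorphism bookkeeping in moving $dx_i\wedge dx_j$ past $dx_k\wedge dx_l$ is most delicate); once that is pinned down, the computation is a routine check entirely parallel to the displayed verification in the proof of Theorem \ref{smoothPBW2}. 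Thus $\sigma(\Bbbk[t])\langle x_1,x_2,x_3\rangle$ admits a $4$‑dimensional connected integrable calculus and is differentially smooth.
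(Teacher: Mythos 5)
Your proposal follows essentially the same route as the paper's proof: GK dimension $4$, the free rank-$4$ module $\Omega^1$ twisted by the commuting automorphisms of Proposition \ref{autoSkewPBWdim3}, the partial-derivative computation on PBW monomials for connectedness, the volume form $\omega=dt\wedge dx_1\wedge dx_2\wedge dx_3$, and the explicit complementary forms with scalar normalizations fed into Proposition \ref{BrzezinskiSitarz2017Lemmas2.6and2.7}(2). The details you flag as the "only real work" (the six degree-$2$ pairs and their signs/coefficients) are exactly what the paper writes out, so the plan is sound and matches.
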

\begin{proof}
Since the SPBW extension $\sigma(\Bbbk[t])\langle x_1, x_2, x_3\rangle$ has Gelfand-Kirillov dimension $4$, a $4$-dimensional integrable calculus can be constructed. We know that we have to consider $\Omega^{1}(\sigma(\Bbbk[t])\langle x_1, x_2, x_3\rangle)$, a free right $\sigma(\Bbbk[t])\langle x_1, x_2, x_3\rangle$-module of rank $4$ with generators $dt$, $dx_1, dx_2, dx_3$. Define a left $\sigma(\Bbbk[t])\langle x_1, x_2, x_3\rangle$-module structure by
    \begin{equation}\label{relrightmod3}
        adt = dt \nu_t(a), \quad adx_i = dx_i\nu_{x_i}(a),  \quad {\rm for\ all}\ 1\leq i \leq 3, a\in \sigma(\Bbbk[t])\langle x_1, x_2, x_3\rangle,
    \end{equation}
    
where $\nu_t$, $\nu_{x_i}$, $1 \leq i \leq 3$ are the algebra automorphisms established in Proposition \ref{autoSkewPBWdim3}. Notice that the relations in $\Omega^{1}(\sigma(\Bbbk[t])\langle x_1, x_2, x_3\rangle)$ are given by
    \begin{align}
        tdt = &\ dt t & tdx_i = &\ dx_ia_i^{-1}(t-b_i), \quad \text{ for all } 1\leq i \leq 3, \label{rel13}  \\
        x_idx_i = &\ dx_ix_i,  &  x_idt = &\ dt(a_ix_i+p_i'(t)), \quad \text{ for all } 1\leq i \leq 3, \label{rel23} 
         \end{align}

and 
        \begin{align}
        x_idx_j = &\ dx_j(c_{i,j}^{-1}x_i-c_{i,j}^{-1}q_{i,j}^{(j)}), \text{ for } i<j, \\   x_idx_j = &\ dx_j(c_{j,i}x_i+q_{j,i}^{(j)}), {\rm for}\ i > j. \label{rel33} 
    \end{align}

We want to extend $t\mapsto dt$, $x_i\mapsto dx_i$, $1\leq i \leq 3$ to a map $d: \sigma(\Bbbk[t])\langle x_1, x_2, x_3\rangle \to \Omega^{1}(\sigma(\Bbbk[t])\langle x_1, x_2, x_3\rangle)$ satisfying the Leibniz's rule. This is possible if the Leibniz's rule is compatible with the non-trivial relations {\rm (}\ref{twithxi3}{\rm )} and {\rm (}\ref{xiandxj3}{\rm )}, i.e.
    \begin{align*}
        dx_it+x_idt &\ = a_idtx_i+a_itdx_i+b_idx_i+dp_i(t), \quad \text{for}\ 1\leq i \leq 3 \\
        dx_jx_i+x_jdx_i &\ = c_{i,j}dx_ix_j+c_{i,j}x_idx_j +\sum_{k=1}^{3}q_{i,j}^{(k)}dx_k, \text{ for } i<j.
    \end{align*}
    
Due to that $t dt=dt t$, which defines the usual commutative calculus on the polynomial ring $\Bbbk[t]$, $dp_i(t) = dtp_i'(t)$, $1 \leq i \leq 3$.

Define $\Bbbk$-linear maps $$
\partial_t, \partial_{x_i}: \sigma(\Bbbk[t])\langle x_1, x_2, x_3\rangle \rightarrow \sigma(\Bbbk[t])\langle x_1, x_2, x_3\rangle
$$ 

such that
\begin{align*}
d(a)=dt\partial_t(a)+\sum_{i=1}^{3}dx_i\partial_{x_i}(a), \text{ for all } a \in \sigma(\Bbbk[t])\langle x_1, x_2, x_3\rangle.
\end{align*}

These maps are well-defined since $dt$, $dx_i$, $1\leq i \leq 3$ are free generators of the right $\sigma(\Bbbk[t])\langle x_1, x_2, x_3\rangle$-module $\Omega^1(\sigma(\Bbbk[t])\langle x_1, x_2, x_3\rangle)$. With that, $d(a)=0$ if and only if $\partial_t(a)=\partial_{x_i}(a)=0$, $1 \leq i \leq 3$. Using relations {\rm (}\ref{relrightmod3}{\rm )} and definitions of the maps $\nu_t$, $\nu_{x_i}$, $1 \leq i \leq 3$, we obtain that
\begin{align}
\partial_t(t^kx_1^{l_1}x_2^{l_2}x_3^{l_3}) = &\ kt^{k-1}x_1^{l_1}x_2^{l_2}x_3^{l_3}, \\
\partial_{x_1}(t^kx_1^{l_1}x_2^{l_2}x_3^{l_3}) = &\ l_{1}a_1^{-k}(t-b_1)^kx_1^{l_1-1}x_{2}^{l_{2}}x_3^{l_3}, \notag \\
\partial_{x_2}(t^kx_1^{l_1}x_2^{l_2}x_3^{l_3}) = &\ l_{2}a_2^{-k}(t-b_2)^kc_{1,2}^{-l_1}(x_1-q_{1,2}^{(2)})^{l_1}x_2^{l_2-1}x_{3}^{l_{3}}, \quad {\rm and} \notag \\
\partial_{x_3}(t^kx_1^{l_1}x_2^{l_2}x_3^{l_3}) = &\ l_{3}a_3^{-k}(t-b_3)^kc_{1,3}^{-l_1}(x_1-q_{1,3}^{(3)})^{l_1}c_{2,3}^{-l_2}(x_2-q_{2,3}^{(3)})^{l_2} x_3^{l_3-1}. \notag
\end{align}

Then, $d(a)=0$ if and only if $a$ is a scalar multiple of the identity. This shows that $\Omega(\sigma(\Bbbk[t])\langle x_1, x_2, x_3\rangle,d)$ is connected, where 
\[
\Omega(\sigma(\Bbbk[t])\langle x_1, x_2, x_3\rangle) = \bigoplus_{i=0}^{4}\Omega^i(\sigma(\Bbbk[t])\langle x_1, x_2, x_3\rangle).
\]

The universal extension of $d$ to higher forms compatible with {\rm (}\ref{rel13}{\rm )}, {\rm (}\ref{rel23}{\rm )} and {\rm (}\ref{rel33}{\rm )} gives the following rules for $\Omega^l(\sigma(\Bbbk[t])\langle x_1, x_2, x_3\rangle)$ $(l = 2, 3)$:
\begin{align}\label{rel3wedge}
 dx_i\wedge dt = &\ -a_idt\wedge dx_i, \quad \text{ for } 1\leq i \leq 3, \notag \\
 dx_j\wedge dx_i = &\ -c_{i,j}dx_i\wedge dx_j, \quad \text{ for } 1\leq i < j \leq 3, \notag \\
dx_2\wedge dx_1\wedge dt = &\ -c_{1,2}a_1a_2dt\wedge dx_1\wedge dx_2,  \\
dx_3\wedge dx_2\wedge dt = &\ -c_{2,3}a_2a_3dt\wedge dx_2\wedge dx_3,  \notag \\
dx_3\wedge dx_2\wedge dx_1 = &\ -c_{1,2}c_{1,3}c_{2,3}dx_1\wedge dx_2\wedge dx_3, \quad {\rm and} \notag \\
dx_3\wedge dx_1\wedge dt = &\ -a_{1}a_{3}c_{1,3}dt\wedge dx_1\wedge dx_3. \notag
\end{align}

Since the automorphisms $\nu_{t}$, $\nu_{x_i}$, $1\leq i \leq 3$ commute with each other, there are no additional relationships to the previous ones, so
\begin{align*}
\Omega^{3}(\sigma(\Bbbk[t])\langle x_1, x_2, x_3\rangle) = &\ [dt\wedge dx_1\wedge dx_2 \oplus dt\wedge dx_2\wedge dx_3 \oplus dt\wedge dx_1\wedge dx_3 \\
&\ \oplus dx_1\wedge dx_2\wedge dx_3] \sigma(\Bbbk[t])\langle x_1, x_2, x_3\rangle.
\end{align*}

Now, 
$$
\Omega^4(\sigma(\Bbbk[t])\langle x_1, x_2, x_3\rangle) = \omega\sigma(\Bbbk[t])\langle x_1, x_2, x_3\rangle\cong \sigma(\Bbbk[t])\langle x_1, x_2, x_3\rangle
$$ 

as a right and left $\sigma(\Bbbk[t])\langle x_1, x_2, x_3\rangle$-module, with $\omega=dt\wedge dx_1 \wedge dx_2 \wedge dx_3$, where $\nu_{\omega}=\nu_t\circ\nu_{x_1}\circ\nu_{x_2}\circ\nu_{x_3}$, this means that $\omega$ is a volume form of $\sigma(\Bbbk[t])\langle x_1, x_2, x_3\rangle$. From Proposition \ref{BrzezinskiSitarz2017Lemmas2.6and2.7} (2), it follows that $\omega$ is an integral form by setting
\begin{align*}
    &\ \omega_1^1= dt, \quad \omega_j^1= dx_{j-1}, \text{ for } 2\leq j \leq 4, \\
    &\ \omega_1^2=dt\wedge dx_1, \quad \omega_2^2=dt\wedge dx_2, \quad \omega_3^2= dt\wedge dx_3, \quad \omega_4^2= dx_1\wedge dx_2, \\
    &\ \omega_5^2= dx_1\wedge dx_3, \quad  \omega_6^2= dx_2\wedge dx_3,   \\
    &\ \omega_1^3=dt\wedge dx_1\wedge dx_2, \quad \omega_2^3= dt\wedge dx_2 \wedge dx_3, \quad \omega_3^3= dx_1\wedge dx_2 \wedge dx_3, \\
    &\ \omega_4^3= dt\wedge dx_1 \wedge dx_3, \\
     &\ \bar{\omega}_1^1=-a_3^{-1}c_{1,3}^{-1}c_{2,3}^{-1}dx_3, \quad \bar{\omega}_2^1=-a_1^{-1}dx_1,\quad \bar{\omega}_3^1=dt, \quad \bar{\omega}_4^1=c_{1,2}^{-1}a_2^{-1}dx_2 \\
    &\ \bar{\omega}_1^2=a_2^{-1}a_{3}^{-1}c_{1,3}^{-1}c_{1,2}^{-1}dx_2\wedge dx_3, \\
    &\ \bar{\omega}_2^2=-a_1^{-1}a_3^{-1}c_{2,3}^{-1}dx_1\wedge dx_3, \quad \bar{\omega}_3^2 = a_1^{-1}a_2^{-1}dx_1\wedge dx_2, \quad \bar{\omega}_4^2 = c_{1,3}^{-1}c_{2,3}^{-1}dt\wedge dx_3, \\
    &\ \bar{\omega}_5^2 = -c_{1,2}^{-1}dt\wedge dx_2, \quad \bar{\omega}_6^2 = dt\wedge dx_1,\\
    &\ \bar{\omega}_1^3=-a_1^{-1}a_2^{-1}a_3^{-1}dx_1\wedge dx_2\wedge dx_3, \quad \bar{\omega}_2^3=c_{1,3}^{-1}c_{1,2}^{-1}dt\wedge dx_2 \wedge dx_3, \\
    &\ \bar{\omega}_3^3=-c_{2,3}^{-1}dt\wedge dx_1\wedge dx_3, \quad \bar{\omega}_4^3=dt \wedge dx_1 \wedge dx_2. 
\end{align*}

Let $\omega' = dta+dx_1b+dx_2c+dx_3d$, $a,b,c,d \in \Bbbk$. Then
\begin{align*}
\sum_{i=1}^{4}\omega_{i}^{1}\pi_{\omega}(\bar{\omega}_i^{3}\wedge \omega') &\ =dt\pi_{\omega}(-aa_1^{-1}a_2^{-1}a_3^{-1}dx_1\wedge dx_2\wedge dx_3 \wedge dt)\\
    &\ \quad + dx_1\pi_{\omega}(bc_{1,3}^{-1}c_{1,2}^{-1}dt\wedge dx_2\wedge dx_3\wedge dx_1 ) \\
    &\ \quad + dx_2\pi_{\omega}(-cc_{2,3}^{-1}dt\wedge dx_1\wedge dx_3 \wedge dx_2)\\
    &\ \quad + dx_3\pi_{\omega}(ddt\wedge dx_1\wedge dx_2 \wedge dx_3)\\
    &\ = dta + dx_1b + dx_2c + dx_3d \\
    &\ = \omega'.
\end{align*}

On the other hand, if 
\[
\omega'' = dt\wedge dx_1a+dt\wedge dx_2 b+dt\wedge dx_3 c+dx_1\wedge dx_2d+dx_1\wedge dx_3e+dx_2\wedge dx_3f 
\]

with $a,b,c,d,e,f \in \Bbbk$, it yields that 
\begin{align*}
\sum_{i=1}^{6}\omega_{i}^{2}\pi_{\omega}(\bar{\omega}_i^{2}\wedge \omega'') = &\ dt\wedge dx_1\pi_{\omega}(aa_2^{-1}a_{3}^{-1}c_{1,3}^{-1}c_{1,2}^{-1}dx_2\wedge dx_3\wedge dt\wedge dx_1) \\
    &\ + dt\wedge dx_2\pi_{\omega}(-ba_1^{-1}a_3^{-1}c_{2,3}^{-1}dx_1\wedge dx_3\wedge dt\wedge dx_2) \\
    &\ +dt\wedge dx_3\pi_{\omega}(ca_1^{-1}a_2^{-1}dx_1\wedge dx_2\wedge dt\wedge dx_3) \\
    &\ +dx_1\wedge dx_2\pi_{\omega}(dc_{1,3}^{-1}c_{2,3}^{-1}dt\wedge dx_3\wedge dx_1\wedge dx_2) \\
    &\ +dx_1\wedge dx_3\pi_{\omega}(-ec_{1,2}^{-1}dt\wedge dx_2\wedge dx_1\wedge dx_3) \\
    &\ +dx_2\wedge dx_3\pi_{\omega}(fdt\wedge dx_1\wedge dx_2\wedge dx_3) \\
    = &\ dt\wedge dx_1a+dt\wedge dx_2 b+dt\wedge dx_3 c \\
    &\ + dx_1\wedge dx_2d+dx_1\wedge dx_3e+dx_2\wedge dx_3f \\
    = &\ \omega''.
\end{align*}

Finally, let 
\[
\omega''' = dt\wedge dx_1\wedge dx_2 a+dt\wedge dx_2 \wedge dx_3 b+dx_1\wedge dx_2 \wedge dx_3 c+dt\wedge dx_1 \wedge dx_3 d,
\]

with $a, b, c, d \in \Bbbk$. Since that 
\begin{align*}
\sum_{i=1}^{3}\omega_{i}^{3}\pi_{\omega}(\bar{\omega}_i^{1}\wedge \omega''') = &\ dt\wedge dx_1\wedge dx_2 \pi_{\omega}(-aa_{3}^{-1}c_{1,3}^{-1}c_{2,3}^{-1}dx_3\wedge dt\wedge dx_1\wedge dx_2) \\
    &\ + dt\wedge dx_2 \wedge dx_3\pi_{\omega}(-ba_1^{-1}dx_1\wedge dt\wedge dx_2 \wedge dx_3 )\\
    &\ +dx_1\wedge dx_2 \wedge dx_3\pi_{\omega}(cdt\wedge dx_1\wedge dx_2 \wedge dx_3) \\
    &\ +dt\wedge dx_1 \wedge dx_3\pi_{\omega}(dc_{1,2}^{-1}a_2^{-1}dx_2\wedge dt\wedge dx_1 \wedge dx_2) \\
    = &\ dt\wedge dx_1\wedge dx_2 a+dt\wedge dx_2 \wedge dx_3 b+dx_1\wedge dx_2 \wedge dx_3 c \\
    &\ +dt\wedge dx_1 \wedge dx_3 d = \omega''', 
\end{align*}

we conclude that $\sigma(\Bbbk[t])\langle x_1, x_2, x_3\rangle$ is differentially smooth.
\end{proof}

\subsection{SPBW extensions in \texorpdfstring{$n$}{Lg} indeterminates}\label{SPBWTMGeneralCase}

The noncommutative differential geometry of SPBW extensions of the form $\sigma(\Bbbk[t]) \langle x_1, \ldots, x_n\rangle$ satisfying the defining relations
\begin{align*}
    x_ir(t) = &\ \sigma_i(r(t)) x_i + \delta_i(r(t)), \quad {\rm and} \\
    x_j x_i = &\ c_{i, j}(t)x_i x_j + q_{i,j}^{(0)}(t) + \sum_{k=1}^{n}q_{i,j}^{(k)}(t)x_k, 
\end{align*}

In this case, we consider $\sigma_i(t)=a_it+b_i$, for $a_i, b_i, \in \Bbbk$,  and $a_i\not=0$, $1\leq i \leq n$,  and the derivations $\delta_i$ are motivated by (\ref{deltap}), that is,
\begin{equation}
    \delta_i(f)=\frac{f\left(\sigma_{i}(t)\right)-f(t)}{\sigma_{i}(t)-t}p_i(t), \text{ for } 1 \leq i \leq n, 
\end{equation}

where $p_i(t) \in\Bbbk[t]$, $1 \leq i \leq n$. The relations between $t, x_1, \ldots, x_n$ can be expressed as
\begin{align}
    x_it = &\ a_itx_i+b_ix_i+p_i(t), \quad {\rm and} \label{twithxi} \\
    x_j x_i = &\ c_{i, j}x_i x_j + q_{i,j}^{(0)} + \sum_{k=1}^{n}q_{i,j}^{(k)}x_k, \quad\ {\rm for}\ 1 \leq i,j \leq n. \label{xiandxj} 
\end{align}

\begin{lemma}\label{autoSkewPBWdimn} 
Let
\begin{align}
   \nu_t(t) = t,\quad &\ \nu_t(x_i) = a_ix_i + p_i'(t), \quad {\rm for}\ 1 \leq i \leq n \label{Autoskewn1} \\ \nu_{x_i}(t) = \sigma_{i}^{-1}(t),\quad &\ \nu_{x_i}(x_i) = x_i, \quad {\rm for}\ 1 \leq i \leq n \label{Autoskewn2} \\ 
   \nu_{x_i}(x_j) = c_{i,j}x_j+q_{i,j}^{(i)}, \quad {\rm for}\ i < j \quad {\rm and} \quad &\ \nu_{x_i}(x_j) = c_{j,i}^{-1}x_j-c_{j,i}^{-1}q_{j,i}^{(i)},  \quad {\rm for}\ i > j, \label{Autoskewn3}
\end{align}

where $p_i'(t)$ are the $t$-derivatives of $p_i(t)$ for $1 \leq i \leq n$, and $c_{i,j}, q_{i,j}^{(k)} \in \Bbbk$, $c_{i,j}\not =0$, for all $1\leq i,j,k \leq n$.
\begin{enumerate}
\item [\rm (1)] Leibniz's rule holds in the cases listed in Table \ref{Leibnizsruletwoindeterminates(3)}.
\begin{table}[h]
\caption{Leibniz's rule}
\label{Leibnizsruletwoindeterminates(3)}
\begin{center}
\resizebox{12.7cm}{!}{
\setlength\extrarowheight{6pt}
\begin{tabular}{ |c|c|c|c| } 
\hline
{\rm Case} & {\rm Possibilities for} $a_i$, $b_i$, $i=1, 2, 3$ & {\em Polynomials} $p_i(t)$, $i=1, 2, 3$ & {\em Restrictions} \\
\hline
\multirow{2}{*}{{\rm (a)}} & \multirow{2}{*}{$a_i=1$, $b_i=0$, \text{ for all } $1\leq i \leq n$} & $p_i(t)=0$ for all $1 \leq i \leq n$ & $q_{i,j}^{(k)}=0$,  $c_{i,j}\in\Bbbk^{\ast}$ for all $1\leq i,j \leq n$, $k \geq 0$ \\ \cline{3-4}
&  & $p_i(t)\in\Bbbk[t], \text{ for all } 1\leq i \leq n$ & $q_{i,j}^{(k)}=0$,  $c_{i,j}=1$, $q_{i,j}^{(0)} \in\Bbbk$, \text{ for all } $1\leq i,j \leq n$, $k>0$ \\
\hline
\multirow{2}{*}{{\rm (b)}} & \multirow{2}{*}{$a_i=1$, for all $1\leq i \leq n$,  $b_l \not=0$, for some $1\leq l \leq n$} & $p_i(t)=p_i$, $p_i\in\Bbbk$, for all $1\leq i \leq n$ & $c_{i,j}=1$, $q_{i,j}^{(k)}=0$, $q_{i,j}^{(0)}\in\Bbbk$, \text{ for all } $1\leq i,j \leq n$, $k>0$ \\ \cline{3-4}
&  & $p_i(t)=0$, for all $1\leq i \leq n$ & $q_{i,j}^{(k)}=0$,  $c_{i,j}\in\Bbbk^{\ast}$, \text{ for all } $1\leq i,j \leq n$, $k\geq 0$ \\
\hline
\rm (c) & $a_r\not=1$, $a_s=1$, $b_s=0$, for $r\in S\subsetneq \{1,\ldots, n\}$ and $s\in S^c$& $p_s(t)=0$, $p_r(t)=p_r\left(t+\frac{b_r}{a_r-1}\right)$, for $r\in S\subsetneq \{1, \ldots, n\}$ and $s\in S^c$, $p_r\in \Bbbk$ & $q_{i,j}^{(k)}=0$,  $c_{i,j}=1$, \text{ for all } $1\leq i,j\leq n$, $k \geq 0$ \\ 
\hline
\rm (d) & $a_i\not =1$, $b_i=0$, \text{ for all } $1\leq i \leq n$  & $p_i(t)=p_it$, $p_i\in\Bbbk$, \text{ for all } $1\leq i \leq n$ & $q_{i,j}^{(k)}=0$,  $c_{i,j}=1$, \text{ for all } $1\leq i,j \leq n$, $k \geq 0$ \\   
\hline
\end{tabular}
}
\end{center}
\end{table}
    
The symbols defined by {\rm (}\ref{Autoskewn1}{\rm )}, {\rm (}\ref{Autoskewn2}{\rm )} and {\rm (}\ref{Autoskewn3}{\rm )} simultaneously extend to algebra automorphisms $\nu_t, \nu_{x_i}$, $1 \leq i \leq n$ of $\sigma(\Bbbk[t])\langle x_1,\ldots, x_n\rangle$ in cases {\rm (a)} - {\rm (d)}.
    
\item [\rm (2)] In cases {\rm (a)} - {\rm (d)}, we have that
\begin{equation}\label{Eq1skewn}
   \nu_t \circ \nu_{x_i} = \nu_{x_i} \circ \nu_t \quad {\rm and} \quad \nu_{x_i} \circ \nu_{x_j} = \nu_{x_j} \circ \nu_{x_i}, \quad \text{for}\ 1\leq i,j \leq n.
\end{equation}
\end{enumerate}
\end{lemma}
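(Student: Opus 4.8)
The plan is to follow the strategy of Propositions \ref{autoSkewPBW} and \ref{autoSkewPBWdim3}, now carrying all indices $1\le i<j\le n$ in full generality. First I would impose that $\nu_t$ respects the defining relations (\ref{twithxi}) and (\ref{xiandxj}): evaluating $\nu_t(x_i)\nu_t(t)-\nu_t(a_it+b_i)\nu_t(x_i)=\nu_t(p_i(t))$ reproduces, for each $i$, the scalar equation $((a_i-1)t+b_i)p_i'(t)=(a_i-1)p_i(t)$ of \cite[Lemma 3.1]{Brzezinski2015}, whose solutions are exactly $p_i(t)=p_i\in\Bbbk$ when $a_i=1,\,b_i\neq0$; $p_i(t)$ arbitrary when $a_i=1,\,b_i=0$; and $p_i(t)=p_i\bigl(t+\tfrac{b_i}{a_i-1}\bigr)$ when $a_i\neq1$. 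Evaluating the $x_jx_i$-relation under $\nu_t$ gives, for each pair $i<j$, the $n$-variable analogue of (\ref{thirdrel3}); reading off the coefficient of each generator $x_r$ then forces $p_i'(t)$ to be compatible with $c_{i,j}$.

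Next I would repeat this with $\nu_{x_i}$ and $\nu_{x_j}$ for every pair $i<j$, obtaining the $n$-variable analogues of (\ref{Firstnux13})--(\ref{Thirnux13}) and (\ref{Firstnux23})--(\ref{Thirnux23}). The key structural point is that the ``first'' and ``second'' families of equations involve only the single pair $(i,j)$ and coincide with the two-indeterminate computation, whereas the ``third'' family couples triples $\{i,j,k\}$ through cross terms of the shape $q_{i,j}^{(k)}q_{i,k}^{(\cdot)}$, exactly as in (\ref{Thirnux13}) and (\ref{Thirnux23}). I would then run the case analysis (a)--(d): in each case the coefficient of the relevant $x_r$ in the $\nu_t$-relation is of the form $j\,m_{i,j}\bigl((a_rt+b_r)^{j-1}-c_{i,r}t^{j-1}\bigr)$ and must vanish, which — as in the $n=2,3$ proofs — forces all higher coefficients $m_{i,j}$ ($j\ge1$) to be zero (so each unfrozen $p_i$ is constant, or the corresponding $c_{i,r}=1$), and the remaining first/second/third equations collapse to $q_{i,j}^{(k)}=0$ for $k>0$ and $c_{i,j}=1$ with $q_{i,j}^{(0)}$ unrestricted, except in the degenerate subcase where all $\sigma_i=\mathrm{id}$, all $\delta_i=0$ and all $q_{i,j}^{(k)}=0$, in which $c_{i,j}\in\Bbbk^{\ast}$ is free. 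Once $q_{i,j}^{(k)}=0$ for $k>0$, the cross terms in the third family disappear, so no new obstruction arises from having many generators; this is exactly where the entries of Table \ref{Leibnizsruletwoindeterminates(3)} come from.

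For part (2), I would verify $\nu_t\circ\nu_{x_i}=\nu_{x_i}\circ\nu_t$ and $\nu_{x_i}\circ\nu_{x_j}=\nu_{x_j}\circ\nu_{x_i}$ on the generators $t,x_1,\dots,x_n$, exactly as in (\ref{comp113.})--(\ref{comp143}) and (\ref{comp313.})--(\ref{comp333}); the computations are identical pairwise, and the constraints needed for the two $t$-components to agree (such as $b_j=\tfrac{a_j-1}{a_i-1}b_i$) are automatically satisfied in cases (a)--(d) because there either all $b_i=0$ or all $a_i=1$, while the $x$-components agree as soon as the $q_{i,j}^{(k)}$ and $c_{i,j}$ meet the constraints just found.

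The main obstacle is purely organizational rather than conceptual: one must keep track of the coupled ``third'' equations over all triples $\{i,j,k\}$ and check carefully that, after the case reductions above, these equations impose nothing beyond what the pairwise ($n=2$-type) analysis already yields; once $q_{i,j}^{(k)}=0$ for $k>0$ is established in each branch, the coupling is harmless, so the argument for general $n$ is genuinely the $n=3$ argument with heavier indexing.
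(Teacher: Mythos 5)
The paper states Lemma \ref{autoSkewPBWdimn} without any proof, so there is no argument of the authors to compare against; your proposal supplies precisely the argument they evidently intend, namely the direct generalization of the proofs of Propositions \ref{autoSkewPBW} and \ref{autoSkewPBWdim3} (extension conditions for $\nu_t$ and for each $\nu_{x_s}$, reduction via the vanishing of the coefficients of the generators, the case analysis (a)--(d), and verification of the commutation relations on generators). The overall structure is sound, and your observation that the triple-coupling terms $q_{i,j}^{(k)}q_{k,s}^{(\cdot)}$ become harmless once $q_{i,j}^{(k)}=0$ for $k>0$ is exactly the point that makes the passage from $n=3$ to general $n$ a matter of bookkeeping.

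Two justifications need repair, though neither is fatal. First, in part (2) you claim the compatibility $b_j=\tfrac{a_j-1}{a_i-1}b_i$ needed for $\nu_{x_i}\circ\nu_{x_j}(t)=\nu_{x_j}\circ\nu_{x_i}(t)$ holds ``because in cases (a)--(d) either all $b_i=0$ or all $a_i=1$''; this is false in case (c), where every $r\in S$ has $a_r\neq 1$ while $b_r$ is left unconstrained by Table \ref{Leibnizsruletwoindeterminates(3)}. The correct route is to note that the vanishing of the coefficient of $x_j$ in the $n$-variable analogue of (\ref{Secondnux13}) already forces $b_i(a_j-1)=b_j(a_i-1)$ for all $i<j$ as part of the conditions for $\nu_{x_i}$ to extend to an algebra map, so the compatibility is built into the extension hypotheses rather than into the case hypotheses (strictly speaking it is an extra restriction on the $b_r$, $r\in S$, that the table leaves implicit, an imprecision already present in the paper's Tables \ref{Leibnizsruletwoindeterminates(2)} and \ref{Leibnizsruletwoindeterminates(3)}). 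Second, the branch with $c_{i,j}\in\Bbbk^{\ast}$ free is not confined to the subcase where all $\sigma_i=\mathrm{id}$ and all $\delta_i=0$: it also occurs in case (b) with some $b_l\neq 0$, provided all $p_i=0$ and all $q_{i,j}^{(k)}=0$; your summary sentence should be adjusted so that the case analysis actually reproduces every row of the table.
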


\begin{theorem}\label{smoothPBWn}
If a SPBW extension $\sigma(\Bbbk[t])\langle x_1,\ldots, x_n\rangle$ satisfies one of the conditions {\rm (a)}-{\rm (d)} in Lemma \ref{autoSkewPBWdimn}, then it is differentially smooth.
\end{theorem}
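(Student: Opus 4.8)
The plan is to run, uniformly in $n$, the three-part argument already carried out for $n=2$ and $n=3$ in Theorems \ref{smoothPBW2} and \ref{smoothPBW3}. Put $A:=\sigma(\Bbbk[t])\langle x_1,\ldots,x_n\rangle$. First, recall from \cite[Theorems 14 and 18]{Reyes2013} that a bijective SPBW extension of $\Bbbk[t]$ in $n$ indeterminates has ${\rm GKdim}(A)=n+1$, which is an integer, so by Definition \ref{BrzezinskiSitarz2017Definition2.4} it is enough to exhibit an $(n+1)$-dimensional connected integrable differential calculus on $A$. I would take $\Omega^1(A)$ to be the free right $A$-module of rank $n+1$ on symbols $dt,dx_1,\ldots,dx_n$, with left $A$-action $f\,dt=dt\,\nu_t(f)$ and $f\,dx_i=dx_i\,\nu_{x_i}(f)$, where $\nu_t,\nu_{x_1},\ldots,\nu_{x_n}$ are the algebra automorphisms provided by Lemma \ref{autoSkewPBWdimn} in each of the cases {\rm (a)}--{\rm (d)}. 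Spelling out these actions on $t,x_1,\ldots,x_n$ produces the commutation rules in $\Omega^1(A)$ (the evident $n$-variable analogues of {\rm (}\ref{rel13}{\rm )}--{\rm (}\ref{rel33}{\rm )}); since Lemma \ref{autoSkewPBWdimn}{\rm (1)} guarantees these $\nu$'s respect the defining relations {\rm (}\ref{twithxi}{\rm )}--{\rm (}\ref{xiandxj}{\rm )}, the map $t\mapsto dt$, $x_i\mapsto dx_i$ extends to a derivation $d\colon A\to\Omega^1(A)$, with $dp_i(t)=dt\,p_i'(t)$ forced by the usual commutative calculus on $\Bbbk[t]$. Writing $d=dt\,\partial_t+\sum_i dx_i\,\partial_{x_i}$ and computing $\partial_t,\partial_{x_i}$ on the PBW monomials $t^{k}x_1^{l_1}\cdots x_n^{l_n}$ (the formulas are the verbatim extension of those in the proof of Theorem \ref{smoothPBW3}) shows $d(a)=0$ forces $a\in\Bbbk$, so $(\Omega A,d)$ is connected.

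Next, extend $d$ universally to higher forms. Differentiating the relations of $\Omega^1(A)$ gives $dx_i\wedge dt=-a_i\,dt\wedge dx_i$, $dx_j\wedge dx_i=-c_{i,j}\,dx_i\wedge dx_j$ ($i<j$), and $dx_i\wedge dx_i=0$; because Lemma \ref{autoSkewPBWdimn}{\rm (2)} says $\nu_t,\nu_{x_1},\ldots,\nu_{x_n}$ pairwise commute, the universal extension imposes no further relations. Hence each $\Omega^k(A)$ is free as a left and as a right $A$-module on the $\binom{n+1}{k}$ ordered wedges of $dt,dx_1,\ldots,dx_n$, one has $\Omega^m(A)=0$ for $m>n+1$, and $\omega:=dt\wedge dx_1\wedge\cdots\wedge dx_n$ generates $\Omega^{n+1}(A)\cong A$ on both sides, with $\nu_\omega=\nu_t\circ\nu_{x_1}\circ\cdots\circ\nu_{x_n}$; so $\omega$ is a volume form and the calculus has dimension $n+1$.

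For integrability I would apply the criterion of Proposition \ref{BrzezinskiSitarz2017Lemmas2.6and2.7}{\rm (2)}. For $S\subseteq\{0,1,\ldots,n\}$ write $\eta_S$ for the increasingly ordered wedge of the symbols indexed by $S$ (with the convention $dx_0:=dt$); the anticommutation rules above reorder $\eta_S\wedge\eta_{S^c}$ into $\gamma_S\,\omega$ for a unique $\gamma_S\in\Bbbk^{\ast}$, namely a signed product of the units $a_i$ and $c_{i,j}$, which is legitimate because all $a_i$ and all $c_{i,j}$ arising in cases {\rm (a)}--{\rm (d)} are nonzero. Indexing the degree-$k$ data by $k$-subsets $I$, set $\omega_I^k:=\eta_I$ and $\bar\omega_I^k:=\gamma_I^{-1}\eta_I$, so that the degree-$(n+1-k)$ partner of $I$ under complementation is $I^c$. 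Then both identities
\[
\omega'=\sum_{|I|=k}\omega_I^k\,\pi_\omega\bigl(\bar\omega_{I^c}^{\,n+1-k}\wedge\omega'\bigr)=\sum_{|I|=k}\nu_\omega^{-1}\bigl(\pi_\omega(\omega'\wedge\omega_{I^c}^{\,n+1-k})\bigr)\,\bar\omega_I^k
\]
hold: testing on $\omega'=\eta_J b$, every term with $I\neq J$ vanishes because $\eta_{I^c}\wedge\eta_J$ repeats a symbol, and the surviving term $I=J$ reproduces $\eta_J b$ by the normalization $\gamma_S\gamma_S^{-1}=1$; for the second identity one additionally uses $\nu_\omega^{-1}$ together with $\bigl(\prod_{i\in J}\nu_{x_i}\bigr)\circ\bigl(\prod_{i\in J^c}\nu_{x_i}\bigr)=\nu_\omega$, which is precisely where the commutativity from Lemma \ref{autoSkewPBWdimn}{\rm (2)} is essential. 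By Proposition \ref{BrzezinskiSitarz2017Lemmas2.6and2.7}{\rm (2)}, $\omega$ is an integral form and $(\Omega A,d)$ is integrable, so $A$ is differentially smooth.

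The step I expect to demand the most care is the last one: recording the reordering scalars $\gamma_S$ by a single Levi-Civita-type closed formula in the $a_i$ and $c_{i,j}$, and then checking uniformly in $n$ and $k$ that the $\binom{n+1}{k}$ mixed terms vanish and that the automorphism-composition identity goes through. Everything else is a routine generalization of the $n\le 3$ computations already recorded in the paper.
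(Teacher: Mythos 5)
Your proposal is correct and follows essentially the same route as the paper's proof: the same twisted free module $\Omega^1(A)$, the same connectedness check via $\partial_t,\partial_{x_i}$ on PBW monomials, the same volume form $\omega=dt\wedge dx_1\wedge\cdots\wedge dx_n$, and the same subset-indexed generators $\omega_I^k,\bar\omega_{I^c}^{n+1-k}$ with reordering scalars for Proposition \ref{BrzezinskiSitarz2017Lemmas2.6and2.7}(2) (your $\gamma_S$ is the paper's $(-1)^{\sharp_{i,j}}\prod c_{r,s}^{-1}$). If anything, you are slightly more careful than the paper in explicitly noting that the second (right-module) identity of that proposition also needs checking.
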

\begin{proof}
Since $\sigma(\Bbbk[t])\langle x_1,\ldots, x_n\rangle$ has Gelfand-Kirillov dimension $n+1$, we can construct an $n+1$-dimensional integrable. Consider $\Omega^{1}(\sigma(\Bbbk[t])\langle x_1,\ldots, x_n\rangle)$ a free right $\sigma(\Bbbk[t])\langle x_1,\ldots, x_n\rangle$-module of rank $n+1$ with generators $dt$, $dx_1, \ldots, dx_n$. Define a left $\sigma(\Bbbk[t])\langle x_1,\ldots, x_n\rangle$-module structure by
\begin{equation}\label{relrightmodn}
        adt = dt \nu_t(a) \quad {\rm and} \quad adx_i = dx_i\nu_{x_i}(a),  \ {\rm for\ all}\ 1\leq i \leq n, \ a\in \sigma(\Bbbk[t])\langle x_1,\ldots, x_n\rangle,
\end{equation}
    
where $\nu_t$ and $\nu_{x_i}$ with $i = 1, \dotsc, n$ are the algebra automorphisms established in Lemma \ref{autoSkewPBWdimn}. The relations in $\Omega^{1}(\sigma(\Bbbk[t])\langle x_1,\ldots, x_n\rangle)$ are given by
\begin{align}
    tdt = &\ dt t, & tdx_i = &\ dx_ia_i^{-1}(t-b_i), \quad \text{for all}\ 1\leq i \leq n, \label{rel1n}  \\
    x_idx_i = &\ dx_ix_i, & x_idt = &\ dt(a_ix_i+p_i'(t)), \quad \text{for all}\ 1\leq i \leq n,  \label{rel2n}  
\end{align}

and 
\begin{align}
x_idx_j = &\ dx_j(c_{i,j}^{-1}x_i-c_{i,j}^{-1}q_{i,j}^{(j)}), \quad \text{for}\ i < j, \quad {\rm and} \\
x_idx_j = &\ dx_j(c_{j,i}x_i + q_{j,i}^{(j)}), \quad \text{for}\ i > j. \label{rel3n} 
\end{align}

We want to extend the assignments $t\mapsto dt$, $x_i\mapsto dx_i$, $1\leq i \leq n$ to a map 
\[
d: \sigma(\Bbbk[t])\langle x_1,\ldots, x_n\rangle \to \Omega^{1}(\sigma(\Bbbk[t])\langle x_1,\ldots, x_n\rangle)
\]

satisfying the Leibniz's rule, so we need to impose the compatibility between this rule and the non-trivial relations {\rm (}\ref{twithxi}{\rm )} and {\rm (}\ref{xiandxj}{\rm )}. In this way, we have that
\begin{align*}
dx_it+x_idt &\ = a_idtx_i+a_itdx_i+b_idx_i+dp_i(t), \quad \text{for}\ 1\leq i \leq n, \quad {\rm and}\\
dx_jx_i+x_jdx_i &\ = c_{i,j}dx_ix_j+c_{i,j}x_idx_j +\sum_{k=1}^{n}q_{i,j}^{(k)}dx_k, \quad \text{for}\ i<j.
\end{align*}
    
Note that in view of the equality $t dt = dt t$, which defines the usual commutative calculus on the polynomial ring $\Bbbk[t]$, we get that $dp_i(t) = dtp_i'(t)$ for $1 \leq i \leq n$.

Define $\Bbbk$-linear maps 
$$
\partial_t, \partial_{x_i}: \sigma(\Bbbk[t])\langle x_1,\ldots, x_n\rangle \rightarrow \sigma(\Bbbk[t])\langle x_1,\ldots, x_n\rangle, \quad i = 1, \dotsc, n,
$$ 

such that
\begin{align*}
    d(f)=dt\partial_t(f) + \sum_{i=1}^{n}dx_i\partial_{x_i}(f), \quad \text{for all}\ f \in \sigma(\Bbbk[t])\langle x_1,\ldots, x_n\rangle.
\end{align*}

These maps are well-defined since $dt$ and $dx_i$ $(1\leq i \leq n)$ are free generators of the right $\sigma(\Bbbk[t])\langle x_1,\ldots, x_n\rangle$-module $\Omega^1(\sigma(\Bbbk[t])\langle x_1,\ldots, x_n\rangle)$. Thus, $d(a)=0$ if and only if $\partial_t(a)=\partial_{x_i}(a)=0$ for $1 \leq i \leq n$. Using relations appearing in {\rm (}\ref{relrightmodn}{\rm )} and the definitions of the maps $\nu_t$ and $\nu_{x_i}$ $(1 \leq i \leq n)$, we obtain that
\begin{align}
    \partial_t(t^kx_1^{l_1} \dotsb x_n^{l_n}) = &\ kt^{k-1}x_1^{l_1}\cdots x_n^{l_n}, \quad {\rm and} \\
    \partial_{x_i}(t^kx_1^{l_1}\cdots x_n^{l_n}) = &\ l_{i}a_i^{-k}(t-b_i)^k\prod_{s=1}^{i-1}c_{s,i}^{-l_s}(x_s-q_{s,i}^{(i)})^{l_s} x_i^{l_i-1}x_{i+1}^{l_{i+1}}\cdots x_n^{l_n}. \notag
\end{align}

Hence, $d(a)=0$ if and only if $a$ is a scalar multiple of the identity. This fact shows that $(\Omega(\sigma(\Bbbk[t])\langle x_1,\ldots, x_n\rangle,d))$ is connected, where 
$$
\Omega (\sigma(\Bbbk[t])\langle x_1,\ldots, x_n\rangle) = \bigoplus_{i=0}^{n+1}\Omega^i (\sigma(\Bbbk[t])\langle x_1,\ldots, x_n\rangle).
$$

The universal extension of $d$ to higher forms compatible with {\rm (}\ref{rel1n}{\rm )}, {\rm (}\ref{rel2n}{\rm )} and {\rm (}\ref{rel3n}{\rm )} gives the following rules for $\Omega^l (\sigma(\Bbbk[t])\langle x_1,\ldots, x_n\rangle)$ $(2\leq l \leq n)$: 
\begin{align}
    dx_{q(1)}\wedge \dotsb \wedge dx_{q(s)}\wedge dt \wedge dx_{q(s+1)}\wedge \dotsb \wedge dx_{q(l)} = &\ (-1)^s\prod_{r=1}^{s} a_{q(r)}^{-1} dt \wedge \bigwedge_{\substack{k=1,\\k \ne s_1}}^{l}dx_{q(k)}, \label{relnos1}  \\
    \bigwedge_{k=1}^{l}dx_{q(k)} = &\ (-1)^{\sharp}\prod_{r,s\in P}c_{r,s}^{-1}\bigwedge_{k=1}^ldx_{p(k)}, 
\end{align}

where $s_1 \in \{1,\ldots,l\}$ do not appear in Relation {\rm (}\ref{relnos1}{\rm )}, $q:\{1,\ldots,l\}\rightarrow \{1,\ldots,n\}$ is an injective map, $p:\{1,\ldots,l\}\rightarrow \text{Im}(q)$ is an increasing injective map and $\sharp$ is the number of $2$-permutations needed to transform $q$ into $p$, and $P := \{(s, t) \in \{1, \ldots, l\} \times \{1, \ldots, l\} \mid q(s) >  q(t)\}$.

Since the automorphisms $\nu_{t}$, $\nu_{x_i}$, $1\leq i \leq n$ commute with each other, there are no additional relations to the previous ones, so we get that
\begin{align*}
\Omega^{n} (\sigma(\Bbbk[t])\langle x_1,\ldots, x_n\rangle) = &\ \left[\bigoplus_{r=2}^{n-1}dt\wedge dx_1\wedge \cdots dx_{r-1}\wedge dx_{r+1} \wedge \cdots \wedge dx_{n} \right. \\
&\ \oplus dt\wedge dx_2\wedge \cdots \wedge dx_{n} \oplus dt\wedge dx_1\wedge \cdots \wedge dx_{n-1}\\
&\ \left. \oplus dx_1\wedge \cdots \wedge dx_{n}\right]\sigma(\Bbbk[t])\langle x_1,\ldots, x_n\rangle.
\end{align*}

Now, since
\[
\Omega^{n+1} (\sigma(\Bbbk[t])\langle x_1,\ldots, x_n\rangle) = \omega\sigma(\Bbbk[t])\langle x_1,\ldots, x_n\rangle\cong \sigma(\Bbbk[t])\langle x_1,\ldots, x_n\rangle
\]

as a right and left $\sigma(\Bbbk[t])\langle x_1,\ldots, x_n\rangle$-module, with 
\[
\omega=dt\wedge dx_1 \wedge \cdots \wedge dx_n \quad {\rm and} \quad \nu_{\omega}=\nu_t\circ\nu_{x_1}\circ\cdots\circ\nu_{x_n},
\]

it follows that $\omega$ is a volume form of $\sigma(\Bbbk[t])\langle x_1,\ldots, x_n\rangle$. In order to make the calculations easier, we consider the following notation $t=x_0$, $c_{0,i}=a_i$ for $1\leq i \leq n$.

From Proposition \ref{BrzezinskiSitarz2017Lemmas2.6and2.7} (2) we get that $\omega$ is an integral form by setting
\begin{align*}
    \omega_i^j = &\ \bigwedge_{k=0}^{j-1}dx_{p_{i,j}(k)}, \text{ for } 1\leq i \leq \binom{n+1}{j}, \\
    \bar{\omega}_i^{n+1-j} = &\ (-1)^{\sharp_{i,j}}\prod_{r,s\in P_{i,j}}c_{r,s}^{-1}\bigwedge_{k=j}^{n}dx_{\bar{p}_{i,j}(k)}, \text{ for } 1\leq i \leq \binom{n+1}{j},
\end{align*}
for $1\leq j \leq n+1$ and where 
\begin{align*}
    p_{i,j}:\{0,\ldots,j-1\}\rightarrow &\ \{0,\ldots,n\}, \quad {\rm and} \\
\bar{p}_{i,j}:\{j,\ldots,n\}\rightarrow &\ (\text{Im}(p_{i,j}))^c
\end{align*}

(the symbol $\square^c$ denotes the complement of the set $\square$), are increasing injective maps, and $\sharp_{i,j}$ is the number of $2$-permutation needed to transform 
\[
\left\{\bar{p}_{i,j}(j),\ldots, \bar{p}_{i,j}(n), p_{i,j}(0), \ldots, p_{i,j}(j-1)\right\} \quad {\rm into\ the\ set} \quad \{0, \ldots, n\},
\]

and 
\[
P_{i,j} :=\{(s, t) \in \{0, \ldots, j-1\}\times\{j, \ldots, n\} \mid p_{i,j}(s)< \bar{p}_{i,j}(t)\}.
\]

Consider $\omega' \in \Omega^j(\sigma(\Bbbk[t])\langle x_1,\ldots, x_n\rangle)$, that is,  
\begin{align*}
\omega' =\sum_{i=1}^{\binom{n+1}{j}}\bigwedge_{k=0}^{j-1}dx_{p_{i,j}(k)}b_i, \quad {\rm with} \  b_i \in \Bbbk.
\end{align*}

Then
\begin{align*}
 \sum_{i=1}^{\binom{n+1}{j}}\omega_{i}^{j}\pi_{\omega}(\bar{\omega}_i^{n+1-j}\wedge \omega') = &\ \sum_{i=1}^{\binom{n+1}{j}}\left[\bigwedge_{k=0}^{j-1}dx_{p_i(k)}\right] \cdot  \pi_{\omega} \left[(-1)^{\sharp_{i,j}} \square^{*} \wedge \omega'\right] \\
 = &\ \displaystyle \sum_{i=1}^{\binom{n+1}{j}}\bigwedge_{k=0}^{j-1}dx_{p_{i,j}(k)}b_i =  \omega',
\end{align*}

where 
\begin{align*}
    \square^{*} := &\ \prod_{r,s\in P_{i,j}}c_{r,s}^{-1} \bigwedge_{k=j}^{n}dx_{\bar{p}_{i,j}(k)}.
\end{align*}

By Proposition \ref{BrzezinskiSitarz2017Lemmas2.6and2.7} (2), it follows that $\sigma(\Bbbk[t])\langle x_1,\dotsc, x_n\rangle$ is differentially smooth.
\end{proof}

\begin{remark} 
Note that there is no unique way to define $\omega_i^j$ and $\bar{\omega}_i^{n-j}$. Our way of defining them is because it is the simplest.
\end{remark}

\section{Differential smoothness of SPBW extensions over \texorpdfstring{$\Bbbk[t_1, t_2]$}{Lg}}\label{DICSPBWKt1t2}

Finally, we investigate the differential smoothness of bijective SPBW extensions over the commutative polynomial ring $\Bbbk[t_1, t_2]$.

${\rm Aut}(\Bbbk[t_1, t_2])$ are compositions of automorphisms of two types (see McKay and Wang \cite{McKayWang1988}, Shestakov and Umirbaev \cite{ShestakovUmirbaev2003} or Van den Essen \cite{VandenEssen2000} for more details):
\begin{itemize}
    \item First type: 
\begin{equation}
    t_1 \longmapsto a_{11}t_1+a_{12}t_2+a_{13} \quad {\rm and} \quad t_2 \longmapsto a_{21}t_1+a_{22}t_2+a_{23},
\end{equation}

where $a_{i,j}\in \Bbbk$ and $a_{11}a_{22}-a_{12}a_{21}\not = 0$.
    \item Second type:
    \begin{equation}
t_1 \longmapsto t_1, \quad t_2 \longmapsto t_2+h(t_1), 
    \end{equation}

where $h(t_1)\in\Bbbk[t_1]$.
\end{itemize}

With these facts in our hands, we proceed to study the differential smoothness of SPBW extension on two generators.

\subsection{SPBW extensions in two indeterminates}\label{SPBWTMTwoI1}

Let $\sigma(\Bbbk[t_1, t_2]) \langle x_1, x_2\rangle$. From Definition \ref{defpbwextension} we know that
\begin{align*}
    x_1r(t_1) = &\ \sigma_1(r(t_1)) x_1 + \delta_1(r(t_1)),\quad x_2 r(t_1) =  \sigma_2(r(t_1)) x_2 + \delta_2(r(t_1)), \\
    x_1r(t_2) = &\ \sigma_1(r(t_2)) x_1 + \delta_1(r(t_2)),\quad x_2 r(t_2) =  \sigma_2(r(t_2)) x_2 + \delta_2(r(t_2)), \quad {\rm and} \\
    x_2 x_1 = &\ c_{1, 2}(t_1,t_2)x_1 x_2 + q_{1,2}^{(0)}(t_1,t_2) + q_{1,2}^{(1)}(t_1,t_2)x_1 + q_{1,2}^{(2)}(t_1,t_2)x_2, 
\end{align*}

where the polynomials $r(t_1,t_2), c_{1, 2}(t_1,t_2), q_{1, 2}^{(0)}(t_1,t_2), q_{1, 2}^{(1)}(t_1,t_2), q_{1, 2}^{(2)}(t_1,t_2)$ belong to $\Bbbk[t_1, t_2]$, and $c_{1,2}(t_1, t_2)$ is a non-zero element.

Considering the notation above, we write $\sigma_1, \sigma_2 \in {\rm Aut}(\Bbbk[t_1, t_2])$ as follows:
\begin{align*}
    \sigma_1(t_1) = &\ a_{111}t_1+a_{112}t_2+b_{11}, \\
    \sigma_1(t_2) = &\ a_{121}t_1+a_{122}t_2+b_{12}, \\
    \sigma_2(t_1) = &\ a_{211}t_1+a_{212}t_2+b_{21}, \quad {\rm and} \\
    \sigma_2(t_2) = &\ a_{221}t_1+a_{222}t_2+b_{22}.
\end{align*}

As in Section \ref{SPBWTMTwoI}, the polynomials $p_1(t_1, t_2), p_2(t_1, t_2) \in \Bbbk[t_1, t_2]$ are considered in such a way that the following identities 
\begin{align}
    x_1t_1 = &\  a_{111}t_1x_1+a_{112}t_2x_1+b_{11}x_1+ p_1(t_1, t_2),  \notag \\
    x_2t_1  = &\ a_{211}t_1x_2+a_{212}t_2x_2+b_{21}x_2+ p_2(t_1,t_2),  \notag \\
    x_1t_2 = &\ a_{121}t_1x_1+a_{122}t_2x_1+b_{12}x_1+ p_1(t_1,t_2),  \label{DSdosdosSPBW} \\
    x_2t_2  = &\ a_{221}t_1x_2+a_{222}t_2x_2+b_{22}x_2+ p_2(t_1, t_2), \quad {\rm and}  \notag \\
    x_2 x_1 = &\ c_{1, 2}(t_1, t_2)x_1 x_2 + q_{1,2}^{(0)}(t_1, t_2) + q_{1,2}^{(1)}(t_1, t_2)x_1 + q_{1,2}^{(2)}(t_1, t_2)x_2, \notag
\end{align}

hold. Since the map $d:\sigma(\Bbbk[t_1, t_2]) \langle x_1, x_2\rangle\rightarrow \Omega^1(\sigma(\Bbbk[t_1, t_2]))\langle x_1, x_2\rangle$ must satisfy Leibniz's rule, it is straightforward to see that we need to guarantee the conditions
\begin{itemize}
    \item $c_{1,2}, q_{1,2}^{(0)}, q_{1,2}^{(1)}, q_{1,2}^{(2)} \in \Bbbk$, with $c_{1,2}$ non-zero.
    \item $p_1(t_1,t_2)=p_1$ and $p_2(t_1, t_2)=p_2$, where $p_1, p_2 \in \Bbbk$.
    \item $a_{221}=a_{212}=a_{112}=a_{121}=0$.
\end{itemize}
Indeed, the first four relations in {\rm (}\ref{DSdosdosSPBW}{\rm )} can be written as
\begin{equation}\label{relij}
    x_it_j=a_{ij1}t_1x_i+a_{ij2}t_2x_i+b_{ij}x_i+p_i(t_1, t_2), \quad \text{for}\ i,j \in \{1, 2\}.
\end{equation}

By applying $d$ to (\ref{relij}) we get that
\[
0 = -d(x_it_j)+d(a_{ij1}t_1x_i+a_{ij2}t_2x_i+b_{ij}x_i+p_i(t_1, t_2)). 
\]

Since $d$ is $\Bbbk$-linear, the Leibniz's rule implies that 
\begin{align*}
0 = &\ -dx_it_j-x_idt_j+a_{ij1}dt_1x_i+a_{ij1}t_1dx_i+a_{ij2}dt_2x_i+a_{ij2}t_2dx_i \\
    & \ +b_{ij}dx_i+d(p_i(t_1, t_2)). 
\end{align*}

By (\ref{BrzezinskiSitarz2017(2.2)}), the action of the module is written using the automorphisms $\nu_{t_1}$, $\nu_{t_2}$, $\nu_{x_1}$ and $\nu_{x_2}$, that is, 
\begin{align*}
0 = &\ -dx_it_j-dt_j\nu_{t_j}(x_i)+a_{ij1}dt_1x_i+a_{ij1}dx_i\nu_{x_i}(t_1)+a_{ij2}dt_2x_i+a_{ij2}dx_i\nu_{x_i}(t_2) \\
    & \ +b_{ij}dx_i+dt_1\frac{\partial p_i}{\partial t_1}+dt_2\frac{\partial p_i}{\partial t_2}.
\end{align*}

If we put together the terms that multiply the different differentials, then 
\begin{align*}
0 = &\ dx_i(-t_j+a_{ij1}\nu_{x_i}(t_1)+a_{ij2}\nu_{x_i}(t_2)+b_{ij})+dt_1\left(a_{ij1}x_i+\frac{\partial p_i}{\partial t_1}\right) \\
&\ dt_2\left(a_{ij2}x_i+\frac{\partial p_i}{\partial t_2}\right) -dt_j\nu_{t_j}(x_i).
\end{align*}

For $j=1$, we obtain the term
\begin{align*}
    a_{i12}x_i+\frac{\partial p_i}{\partial t_2} = 0,
\end{align*} 

whence $a_{i12} = 0$ and $\frac{\partial p_i}{\partial t_2} = 0$ for $i \in \{1, 2\}$.

Next, when $j=2$,
\begin{align*}
    a_{i21}x_i+\frac{\partial p_i}{\partial t_1} = 0. 
\end{align*}

Once more again, it follows that $a_{i21}=0$ and $\frac{\partial p_i}{\partial t_1} = 0$ for $i \in \{1, 2\}$.

Since the partial derivatives of $p_i$ are zero, we conclude that $p_i$ is a constant element for $i \in \{1, 2\}$.

Finally, by applying $d$ to the last equation in {\rm (}\ref{DSdosdosSPBW}{\rm )} we get that
\begin{align*}
    d(x_2 x_1) = &\ d(c_{1, 2}(t_1, t_2)x_1 x_2 + q_{1,2}^{(0)}(t_1, t_2) + q_{1,2}^{(1)}(t_1, t_2)x_1 + q_{1,2}^{(2)}(t_1, t_2)x_2) \\
    = &\ d(c_{1,2}(t_1, t_2))x_1x_2+c_{1, 2}(t_1, t_2)d(x_1x_2)+d(q_{1,2}^{(0)}(t_1, t_2)) \\
     &\ + d(q_{1,2}^{(1)}(t_1, t_2))x_1 + q_{1,2}^{(1)}(t_1, t_2)dx_1+d(q_{1,2}^{(2)}(t_1, t_2))x_2+q_{1,2}^{(2)}(t_1, t_2)dx_2 \\
     = &\ dt_1\frac{\partial c_{1,2}}{\partial t_1}x_1x_2+dt_2\frac{\partial c_{1,2}}{\partial t_2}x_1x_2+c_{1,2}(t_1, t_2)dx_1x_2+c_{1,2}(t_1, t_2)x_1dx_2 \\
     &\ + dt_1\frac{\partial q_{1,2}^{(0)}}{\partial t_1}+dt_2\frac{\partial q_{1,2}^{(0)}}{\partial t_2}+dt_1\frac{\partial q_{1,2}^{(1)}}{\partial t_1}x_1+dt_2\frac{\partial q_{1,2}^{(1)}}{\partial t_2}x_1+q_{1,2}^{(1)}(t_1, t_2)dx_1 \\
     &\ +dt_1\frac{\partial q_{1,2}^{(2)}}{\partial t_1}x_2+dt_2\frac{\partial q_{1,2}^{(2)}}{\partial t_2}x_2+q_{1,2}^{(2)}(t_1, t_2)dx_2.
\end{align*}

The expression (\ref{BrzezinskiSitarz2017(2.2)}) implies that the action of the module is written using the automorphisms $\nu_{t_1}$, $\nu_{t_2}$, $\nu_{x_1}$ and $\nu_{x_2}$ as follows:
\begin{align*}
    0 & = -dx_2x_1-dx_1\nu_{x_1}(x_2)+dt_1\frac{\partial c_{1,2}}{\partial t_1}x_1x_2+dt_2\frac{\partial c_{1,2}}{\partial t_2}x_1x_2 
     +dx_1\nu_{x_1}(c_{1,2}(t_1, t_2))x_2 \\
     &\ \ \ + dx_2\nu_{x_2}(c_{1,2}(t_1, t_2))\nu_{x_2}(x_1) + dt_1\frac{\partial q_{1,2}^{(0)}}{\partial t_1}+dt_2\frac{\partial q_{1,2}^{(0)}}{\partial t_2}+dt_1\frac{\partial q_{1,2}^{(1)}}{\partial t_1}x_1+dt_2\frac{\partial q_{1,2}^{(1)}}{\partial t_2}x_1 \\
     &\ \ \ + dx_1\nu_{x_1}(q_{1,2}^{(1)}(t_1, t_2))+dt_1\frac{\partial q_{1,2}^{(2)}}{\partial t_1}x_2+dt_2\frac{\partial q_{1,2}^{(2)}}{\partial t_2}x_2+dx_2\nu_{x_2}(q_{1,2}^{(2)}(t_1, t_2)).
\end{align*}

In this way, 
\begin{align*}
    0 = &\ dt_1\left(\frac{\partial c_{1,2}}{\partial t_1}x_1x_2+\frac{\partial q_{1,2}^{(0)}}{\partial t_1}+\frac{\partial q_{1,2}^{(1)}}{\partial t_1}x_1+\frac{\partial q_{1,2}^{(2)}}{\partial t_1}x_2 \right) \\
    &\ + dt_2\left(\frac{\partial c_{1,2}}{\partial t_2}x_1x_2+\frac{\partial q_{1,2}^{(0)}}{\partial t_2}+\frac{\partial q_{1,2}^{(1)}}{\partial t_2}x_1+\frac{\partial q_{1,2}^{(2)}}{\partial t_2}x_2 \right).
\end{align*}

From the reasoning above, it can be seen that all partial derivatives must be equal to zero. Equivalently, $c_{1,2}, q_{1,2}^{(0)}, q_{1,2}^{(1)}, q_{1,2}^{(2)} \in \Bbbk$, with $c_{1,2}$ a non-zero element of the field $\Bbbk$.

The five relations in (\ref{DSdosdosSPBW}) are reduced to
\begin{align}
    x_1t_1 = &\  a_{111}t_1x_1+b_{11}x_1+ p_1, \label{relPBW4.1} \\
    x_2t_1  = &\ a_{211}t_1x_2+b_{21}x_2+ p_2, \label{relPBW4.2} \\
    x_1t_2 = &\ a_{122}t_2x_1+b_{12}x_1+ p_1, \label{relPBW4.3} \\
    x_2t_2  = &\ a_{222}t_2x_2+b_{22}x_2+ p_2, \quad {\rm and} \label{relPBW4.4} \\
    x_2 x_1 = &\ c_{1, 2}x_1 x_2 + q_{1,2}^{(0)} + q_{1,2}^{(1)}x_1 + q_{1,2}^{(2)}x_2. \label{relPBW4.5}
\end{align}

All these facts allow us to formulate the following proposition.

\begin{proposition}\label{autoSkewPBW22} 
Let
\begin{align}
   \nu_{t_1}(t_1) = &\ t_1, & \nu_{t_1}(t_2) = &\ t_2, & \nu_{t_1}(x_1) = &\ a_{111}x_1,  \label{Autoskew4.6} \\ 
   \nu_{t_1}(x_2) = &\ a_{211}x_2, & \nu_{t_2}(t_1) = &\ t_1, & \nu_{t_2}(t_2) = &\ t_2,  \label{Autoskew4.7} \\
   \nu_{t_2}(x_1) = &\ a_{122}x_1, & \nu_{t_2}(x_2) = &\ a_{222}x_2, & \nu_{x_1}(t_1) = &\ a_{111}^{-1}(t_1-b_{11}),  \label{Autoskew4.8} \\
    \nu_{x_1}(t_2) = &\ a_{122}^{-1}(t_2-b_{12}), & \nu_{x_1}(x_1) = &\ x_1, &  \nu_{x_1}(x_2) = &\ c_{1,2}x_2+q_{1,2}^{(1)}, \label{Autoskew4.9} \\
    \nu_{x_2}(t_1) = &\ a_{211}^{-1}(t_1-b_{21}), & \nu_{x_2}(t_2) = &\ a_{222}^{-1}(t_2-b_{22}), & \nu_{x_2}(x_1) = &\ c_{1,2}^{-1}x_1-c_{1,2}^{-1}q_{1,2}^{(2)}, \label{Autoskew4.10} \\
    \nu_{x_2}(x_2) = &\ x_2. \label{Autoskew4.11}
\end{align}

Then:
\begin{enumerate}
\item [\rm (1)] Leibniz's rule holds in the cases listed in Table \ref{Leibnizsruletwoindeterminates(1)2}. The maps defined by {\rm (}\ref{Autoskew4.6}{\rm )}, {\rm (}\ref{Autoskew4.7}{\rm )}, {\rm (}\ref{Autoskew4.8}{\rm )}, {\rm (}\ref{Autoskew4.9}{\rm )}, {\rm (}\ref{Autoskew4.10}{\rm )} and {\rm (}\ref{Autoskew4.11}{\rm )} simultaneously extend to algebra automorphisms $\nu_{t_1}, \nu_{t_2}, \nu _{x_1}, \nu_{x_2}$ of $\sigma(\Bbbk[t_1, t_2])\langle x_1, x_2\rangle$ in cases {\rm (a) - (p)}.
\end{enumerate}

\begin{table}[h]
\caption{Leibniz's rule}
\label{Leibnizsruletwoindeterminates(1)2}
\begin{center}
\resizebox{12.5cm}{!}{
\setlength\extrarowheight{6pt}
\begin{tabular}{ |c|c|c|c| } 
\hline
{\rm Case} & {\rm Possibilities for} $a_{111}$, $a_{122}$, $a_{211}$, $a_{222}$ & {\em Polynomials} $p_1$ {\em and} $p_2$ & {\em Restrictions} \\
\hline
\multirow{7}{*}{{\rm (a)}} & \multirow{7}{*}{$a_{111}=1$, $a_{122}=1$, $a_{211}=1$, $a_{222}=1$} & \multirow{3}{*}{$p_1, p_2 \in \Bbbk$} & $c_{1,2}=1$, $q_{1,2}^{(1)}=q_{1,2}^{(2)}=0$,  $b_{11}, b_{12}, b_{21}, b_{22}, q_{1,2}^{(0)} \in \Bbbk$ \\ \cline{4-4}
&  &  & $c_{1,2}=1$, $q_{1,2}^{(1)}=0, q_{1,2}^{(2)}\not =0$,  $b_{11}=b_{12}=0$, $b_{21}, b_{22}, q_{1,2}^{(0)} \in \Bbbk$ \\ \cline{4-4}
&  &  & $c_{1,2}=1$, $q_{1,2}^{(1)}\not =0, q_{1,2}^{(2)} =0$,  $b_{21}=b_{22}=0$, $b_{11}, b_{12}, q_{1,2}^{(0)} \in \Bbbk$ \\ \cline{3-4}
& & $p_1=p_2=0$ & $c_{1,2}\not = 1$, $q_{1,2}^{(0)}=q_{1,2}^{(1)}=q_{1,2}^{(2)}=0$, $b_{11}, b_{12}, b_{21}, b_{22} \in \Bbbk$ \\ \cline{3-4}
& & $p_1=\frac{b_{11}q_{1,2}^{(2)}}{c_{1,2}-1}, p_2=0$ & $c_{1,2}\not = 1$, $q_{1,2}^{(0)}=q_{1,2}^{(1)}=0$ $q_{1,2}^{(2)}\not =0$, $b_{11}=b_{12}$, $ b_{11}, b_{21}, b_{22} \in \Bbbk$ \\ \cline{3-4}
& & $p_1=0, p_2=\frac{b_{22}q_{1,2}^{(1)}}{c_{1,2}-1}$ & $c_{1,2}\not = 1$, $q_{1,2}^{(0)}=q_{1,2}^{(2)}=0$ $q_{1,2}^{(1)}\not =0$, $b_{22}=b_{21}$, $ b_{11}, b_{12}, b_{22} \in \Bbbk$ \\ \cline{3-4}
& & $p_1=\frac{b_{11}q_{1,2}^{(2)}}{c_{1,2}-1}, p_2=\frac{b_{22}q_{1,2}^{(1)}}{c_{1,2}-1}$ & $c_{1,2}\not = 1$, $q_{1,2}^{(0)}=\frac{q_{1,2}^{(1)}q_{1,2}^{(2)}}{c_{1,2}-1}$ $q_{1,2}^{(1)}\not=0$ $q_{1,2}^{(2)}\not =0$, $b_{11}=b_{12}$, $b_{22}=b_{21}$, $ b_{11}, b_{22} \in \Bbbk$ \\
\hline
\multirow{2}{*}{{\rm (b)}} & \multirow{2}{*}{$a_{111}\not= 1$, $a_{122}=1$, $a_{211}=1$, $a_{222}=1$}  & $p_1=p_2=0$ & $b_{21}=0$, $q_{1,2}^{(0)}=q_{1,2}^{(2)}=0$, $c_{1,2}, b_{11}, b_{12}\in\Bbbk$ {\rm with} $b_{22}=0$ and $q_{1,2}^{(1)} \in \Bbbk$ or $q_{1,2}^{(1)}=0$ and $b_{22}\in\Bbbk$ \\ \cline{3-4}
&  & $p_1=0$, $p_2=\frac{b_{22}q_{1,2}^{(1)}}{c_{1,2}-1}$ & $b_{21}=0$, $q_{1,2}^{(0)}=q_{1,2}^{(2)}=0$, $c_{1,2}=a_{111}$, $b_{11}$, $b_{12}$, $b_{22}, q_{1,2}^{(1)}\in\Bbbk$ \\   
\hline
\multirow{2}{*}{{\rm (c)}} & \multirow{2}{*}{$a_{111}= 1$, $a_{122}\not=1$, $a_{211}=1$, $a_{222}=1$}  & $p_1=p_2=0$ & $b_{22}=0$, $q_{1,2}^{(0)}=q_{1,2}^{(2)}=0$, $c_{1,2}, b_{11}, b_{12}\in\Bbbk$ {\rm with} $b_{21}=0$ and $q_{1,2}^{(1)} \in \Bbbk$ or $q_{1,2}^{(1)}=0$ and $b_{21}\in\Bbbk$ \\ \cline{3-4}
&  & $p_1=0$, $p_2=\frac{b_{21}q_{1,2}^{(1)}}{c_{1,2}-1}$ & $b_{22}=0$, $q_{1,2}^{(0)}=q_{1,2}^{(2)}=0$, $c_{1,2}=a_{122}$, $b_{11}$, $b_{12}$, $b_{21}, q_{1,2}^{(1)}\in\Bbbk$ \\   
\hline
\multirow{2}{*}{{\rm (d)}} & \multirow{2}{*}{$a_{111}= 1$, $a_{122}=1$, $a_{211}\not=1$, $a_{222}=1$}  & $p_1=p_2=0$ & $b_{11}=0$, $q_{1,2}^{(0)}=q_{1,2}^{(1)}=0$, $c_{1,2}, b_{22}, b_{21}\in\Bbbk$ {\rm with} $b_{12}=0$ and $q_{1,2}^{(2)} \in \Bbbk$ or $q_{1,2}^{(2)}=0$ and $b_{12}\in\Bbbk$ \\ \cline{3-4}
&  & $p_1=\frac{b_{12}q_{1,2}^{(2)}}{c_{1,2}-1}$, $p_2=0$ & $b_{11}=0$, $q_{1,2}^{(0)}=q_{1,2}^{(1)}=0$, $c_{1,2}=a_{211}$, $b_{12}, b_{21}, b_{22}, q_{1,2}^{(2)}\in\Bbbk$ \\   
\hline
\multirow{2}{*}{{\rm (e)}} & \multirow{2}{*}{$a_{111}= 1$, $a_{122}=1$, $a_{211}=1$, $a_{222}\not=1$}  & $p_1=p_2=0$ & $b_{12}=0$, $q_{1,2}^{(0)}=q_{1,2}^{(1)}=0$, $c_{1,2}, b_{22}, b_{21}\in\Bbbk$ {\rm with} $b_{11}=0$ and $q_{1,2}^{(2)} \in \Bbbk$ or $q_{1,2}^{(2)}=0$ and $b_{11}\in\Bbbk$ \\ \cline{3-4}
&  & $p_1=\frac{b_{11}q_{1,2}^{(2)}}{c_{1,2}-1}$, $p_2=0$ & $b_{12}=0$, $q_{1,2}^{(0)}=q_{1,2}^{(1)}=0$, $c_{1,2}=a_{211}$, $b_{11}, b_{21}, b_{22}, q_{1,2}^{(2)}\in\Bbbk$ \\   
\hline
\multirow{2}{*}{{\rm (f)}} & \multirow{2}{*}{$a_{111}\not =1$, $a_{122}\not=1$, $a_{211}=1$, $a_{222}=1$} & $p_1=p_2=0$ & $b_{21}=b_{22}=0$, $q_{1,2}^{(0)}=q_{1,2}^{(2)}=0$, $c_{1,2}, q_{1,2}^{(1)}, b_{11}, b_{12}\in\Bbbk$ \\ \cline{3-4}
&  & $p_1=0$, $p_2\in \Bbbk$ & $b_{21}=b_{22}=0$, $q_{1,2}^{(0)}=q_{1,2}^{(2)}=0$, $a_{111}=a_{122}$ $c_{1,2}=a_{111}$, $q_{1,2}^{(1)}, b_{11}, b_{12}\in\Bbbk$ \\   
\hline
\multirow{2}{*}{{\rm (g)}} & \multirow{2}{*}{$a_{111}\not =1$, $a_{122}=1$, $a_{211}\not=1$, $a_{222}=1$} & \multirow{2}{*}{$p_1=p_2=0$} & $q_{1,2}^{(0)}=q_{1,2}^{(1)}=q_{1,2}^{(2)}=0$, $b_{21}=\frac{b_{11}(a_{211}-1)}{a_{111}-1}$ $c_{1,2}, b_{11}, b_{12}, b_{22}\in\Bbbk$ \\ \cline{4-4}
&  &  & $a_{111}=a_{211}^{-1}$ $q_{1,2}^{(1)}=q_{1,2}^{(2)}=0$, $b_{21}=-a_{211}^{-1}b_{11}$, $b_{11}, b_{12}, b_{22}\in\Bbbk$ {\rm with} $q_{1,2}^{(0)}=0$ and $c_{1,2}\in\Bbbk$ or $q_{1,2}^{(0)}\in\Bbbk$ and $c_{1,2}=0$ \\   
\hline
{\rm (h)} & $a_{111}\not =1$, $a_{122}=1$, $a_{211}=1$, $a_{222}\not=1$ & $p_1=p_2=0$ & $q_{1,2}^{(0)}=q_{1,2}^{(1)}=q_{1,2}^{(2)}=0$, $b_{21}=b_{12}=0$, $c_{1,2}, b_{11}, b_{22}\in\Bbbk$ \\
\hline
{\rm (i)} & $a_{111} =1$, $a_{122}\not=1$, $a_{211}\not=1$, $a_{222}=1$ & $p_1=p_2=0$ & $q_{1,2}^{(0)}=q_{1,2}^{(1)}=q_{1,2}^{(2)}=0$, $b_{11}=b_{22}=0$, $c_{1,2}, b_{12}, b_{21}\in\Bbbk$ \\
\hline
\multirow{2}{*}{{\rm (j)}} & \multirow{2}{*}{$a_{111} =1$, $a_{122}\not=1$, $a_{211}=1$, $a_{222}\not=1$} & \multirow{2}{*}{$p_1=p_2=0$} & $q_{1,2}^{(0)}=q_{1,2}^{(1)}=q_{1,2}^{(2)}=0$, $b_{12}=\frac{b_{22}(a_{122}-1)}{a_{222}-1}$ $c_{1,2}, b_{11}, b_{21}, b_{22}\in\Bbbk$ \\ \cline{4-4}
&  &  & $a_{222}=a_{122}^{-1}$ $q_{1,2}^{(1)}=q_{1,2}^{(2)}=0$, $b_{12}=-a_{122}^{-1}b_{22}$, $b_{11}, b_{21}, b_{22}\in\Bbbk$ with $q_{1,2}^{(0)}=0$ and $c_{1,2}\in\Bbbk$ or $q_{1,2}^{(0)}\in\Bbbk$ and $c_{1,2}=0$ \\   
\hline
\multirow{2}{*}{{\rm (k)}} & \multirow{2}{*}{$a_{111} =1$, $a_{122}=1$, $a_{211}\not=1$, $a_{222}\not=1$} & $p_1=p_2=0$ & $b_{11}=b_{12}=0$, $q_{1,2}^{(0)}=q_{1,2}^{(1)}=0$, $c_{1,2}, q_{1,2}^{(2)}, b_{21}, b_{22}\in\Bbbk$ \\ \cline{3-4}
&  & $p_1\in \Bbbk$, $p_2=0$ & $b_{11}=b_{12}=0$, $q_{1,2}^{(0)}=q_{1,2}^{(1)}=0$, $a_{211}=a_{222}$ $c_{1,2}=a_{222}$, $q_{1,2}^{(2)}, b_{21}, b_{22}\in\Bbbk$ \\   
\hline
{\rm (l)} & $a_{111} =1$, $a_{122}\not=1$, $a_{211}\not=1$, $a_{222}\not=1$ & \multirow{4}{*}{$p_1=p_2=0$} &  $q_{1,2}^{(0)}=q_{1,2}^{(1)}=q_{1,2}^{(2)}=0$, $b_{11}=0$, $b_{12}=\frac{b_{22}(a_{122}-1)}{a_{222}-1}$, $c_{1,2},  b_{21}, b_{22}\in\Bbbk$ \\ \cline{4-4} \cline{1-2}
{\rm (m)} & $a_{111} \not=1$, $a_{122}=1$, $a_{211}\not=1$, $a_{222}\not=1$ &  & $q_{1,2}^{(0)}=q_{1,2}^{(1)}=q_{1,2}^{(2)}=0$, $b_{12}=0$, $b_{21}=\frac{b_{11}(a_{211}-1)}{a_{111}-1}$, $c_{1,2},  b_{11}, b_{22}\in\Bbbk$ \\ \cline{1-2}  \cline{4-4}
{\rm (n)} & $a_{111} \not=1$, $a_{122}\not=1$, $a_{211}=1$, $a_{222}\not=1$ &  & $q_{1,2}^{(0)}=q_{1,2}^{(1)}=q_{1,2}^{(2)}=0$, $b_{21}=0$, $b_{12}=\frac{b_{22}(a_{122}-1)}{a_{222}-1}$, $c_{1,2},  b_{11}, b_{22}\in\Bbbk$ \\   \cline{4-4} \cline{1-2}
{\rm (o)} & $a_{111} \not=1$, $a_{122}\not=1$, $a_{211}\not=1$, $a_{222}=1$ &  & $q_{1,2}^{(0)}=q_{1,2}^{(1)}=q_{1,2}^{(2)}=0$, $b_{22}=0$, $b_{21}=\frac{b_{11}(a_{211}-1)}{a_{111}-1}$, $c_{1,2},  b_{12}, b_{11}\in\Bbbk$ \\ 
\hline
\multirow{3}{*}{{\rm (p)}} & \multirow{3}{*}{$a_{111} \not=1$, $a_{122}\not=1$, $a_{211}\not=1$, $a_{222}\not=1$} & \multirow{3}{*}{$p_1=p_2=0$} & $q_{1,2}^{(0)}=q_{1,2}^{(1)}=q_{1,2}^{(2)}=0$, $b_{12}=\frac{b_{22}(a_{122}-1)}{a_{222}-1}$, $b_{21}=\frac{b_{11}(a_{211}-1)}{a_{111}-1}$, $c_{1,2},  b_{11}, b_{22}\in\Bbbk$ \\ \cline{4-4}
 &  &  & $q_{1,2}^{(0)}=q_{1,2}^{(1)}=q_{1,2}^{(2)}=0$, $a_{211}=a_{111}^{-1}$, $b_{12}=\frac{b_{22}(a_{122}-1)}{a_{222}-1}$, $b_{21}=-a_{111}^{-1}b_{11}$, $c_{1,2},  b_{11}, b_{22}\in\Bbbk$ \\ \cline{4-4}
 &  &  & $q_{1,2}^{(1)}=q_{1,2}^{(2)}=0$, $a_{211}=a_{111}^{-1}$, $a_{122}=a_{222}^{-1}$, $b_{21}=-a_{111}^{-1}b_{11}$, $b_{12}=-a_{222}^{-1}b_{22}$, $b_{11}, b_{22}\in\Bbbk$ {\rm with} $c_{1,2}=1$ and $q_{1,2}^{(0)}\in\Bbbk$ or $q_{1,2}^{(0)}=0$ and $c_{1,2}\in\Bbbk$\\ 
\hline
\end{tabular}
}
\end{center}
\end{table}

\begin{enumerate}
\item [\rm (2)] In cases {\rm (a) - (p)}, we have that 
    \begin{equation}\label{Eq1skew2}
   \nu_{t_i} \circ \nu_{t_j} = \nu_{t_j} \circ \nu_{t_i}, \quad \nu_{t_i} \circ \nu_{x_j} = \nu_{x_j} \circ \nu_{t_i}, \quad \nu_{x_i} \circ \nu_{x_j} = \nu_{x_j} \circ \nu_{x_i}, \quad {\rm for}\ i,j =1, 2.
    \end{equation}
\end{enumerate}
\end{proposition}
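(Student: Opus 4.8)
The plan is to follow the same strategy as in the proof of Proposition \ref{autoSkewPBW}, now adapted to the two-indeterminate base ring and to the reduced relations (\ref{relPBW4.1})--(\ref{relPBW4.5}). First I would recall that a $\Bbbk$-linear assignment on the generators $t_1, t_2, x_1, x_2$ extends to an algebra endomorphism of $\sigma(\Bbbk[t_1, t_2])\langle x_1, x_2\rangle$ if and only if the prescribed images respect every defining relation, namely $t_1 t_2 = t_2 t_1$, the four relations $x_i t_j = a_{ijj} t_j x_i + b_{ij} x_i + p_i$ for $i,j\in\{1,2\}$, and the relation (\ref{relPBW4.5}). Since each of $\nu_{t_1}, \nu_{t_2}$ fixes $t_1$ and $t_2$ while each of $\nu_{x_1}, \nu_{x_2}$ sends $t_j$ to a degree-one polynomial in $t_j$ and $x_i$ to an affine combination of $x_i$, compatibility with $t_1 t_2 = t_2 t_1$ is automatic for all four maps; so the content lies in the remaining five relations.

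Then I would substitute the formulas (\ref{Autoskew4.6})--(\ref{Autoskew4.11}) into those five relations, one map at a time, and read off the resulting polynomial identities in the parameters $a_{111}, a_{122}, a_{211}, a_{222}$, $b_{11}, b_{12}, b_{21}, b_{22}$, $c_{1,2}$, $q_{1,2}^{(0)}, q_{1,2}^{(1)}, q_{1,2}^{(2)}$, $p_1, p_2$. For $\nu_{t_1}$ and $\nu_{t_2}$ the four $t$-relations collapse to conditions of the form $(a_{ijj}-1)p_i = 0$ together with the requirement that (\ref{relPBW4.5}) be preserved, which couples $c_{1,2}$ with the scalars $a_{ijj}$ and the $q_{1,2}^{(k)}$; for $\nu_{x_1}$ and $\nu_{x_2}$ the $t$-relations reproduce the one-variable identities already encountered in \cite[Lemma 3.1]{Brzezinski2015} (now for each of $t_1$ and $t_2$), while preserving (\ref{relPBW4.5}) forces the analogues of equations (\ref{Thirnux1}) and (\ref{Thirnux2}), i.e. $(c_{1,2}-1)q_{1,2}^{(0)} = q_{1,2}^{(1)}q_{1,2}^{(2)}$ and its $c_{1,2}^{-1}$-counterpart, plus relations linking the $b_{ij}$ to the $q_{1,2}^{(k)}$ and the $p_i$.

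Next I would organize the case analysis according to the $2^4 = 16$ possibilities for which of $a_{111}, a_{122}, a_{211}, a_{222}$ equal $1$; these produce exactly the rows (a)--(p) of Table \ref{Leibnizsruletwoindeterminates(1)2}. In each row I solve the accumulated equations for the admissible values of $p_1, p_2, c_{1,2}, q_{1,2}^{(0)}, q_{1,2}^{(1)}, q_{1,2}^{(2)}$ and for the forced relations among the $b_{ij}$ (e.g. $b_{21} = \frac{b_{11}(a_{211}-1)}{a_{111}-1}$ when $a_{111}\neq 1$ and $a_{211}\neq 1$), thereby obtaining the subcases listed. Invertibility is then immediate: in every case $\nu_{t_1}$ and $\nu_{t_2}$ act invertibly because they fix $t_1, t_2$ and multiply $x_1, x_2$ by nonzero scalars (recall $a_{ijj}\neq 0$), while $\nu_{x_1}$ and $\nu_{x_2}$ are invertible because their action on $(t_1, t_2, x_1, x_2)$ is triangular with invertible diagonal; hence each endomorphism produced is an algebra automorphism, which proves (1).

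Finally, for (2), I would verify the nine commutation identities in (\ref{Eq1skew2}) by evaluating both composites on each of the four generators and comparing, exactly as in the second part of the proof of Proposition \ref{autoSkewPBW}. For pairs involving $\nu_{t_1}, \nu_{t_2}$ this is a short computation since these maps only rescale $x_1, x_2$; the identity $\nu_{t_i}\circ\nu_{x_j} = \nu_{x_j}\circ\nu_{t_i}$ holds on $x_j$ because in each admissible case the relations among the $b_{ij}$ make the two affine reparametrizations of $t_j$ agree, and holds on the other $x$-generator using $q_{1,2}^{(1)} = q_{1,2}^{(2)} = 0$ wherever those are forced; the identity $\nu_{x_1}\circ\nu_{x_2} = \nu_{x_2}\circ\nu_{x_1}$ on $t_j$ reduces to a single scalar relation among the $b_{ij}$ that is part of the data of each case, and on the $x_i$ it reduces to $q_{1,2}^{(1)} = q_{1,2}^{(2)} = 0$ or to the constraint $(c_{1,2}-1)q_{1,2}^{(0)} = q_{1,2}^{(1)}q_{1,2}^{(2)}$. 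The main obstacle throughout is purely organizational: tracking, across all sixteen cases and their subcases, which parameters remain free and which are annihilated by the compatibility equations, together with the lengthy rewriting needed to bring $\nu_{x_1}(x_2)\nu_{x_1}(x_1)$ and $\nu_{x_2}(x_2)\nu_{x_2}(x_1)$ into normal form via (\ref{relPBW4.5}); there is no conceptual difficulty beyond what already appears in Proposition \ref{autoSkewPBW}.
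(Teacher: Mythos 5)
Your proposal is correct and follows essentially the same route as the paper: impose compatibility of each of $\nu_{t_1},\nu_{t_2},\nu_{x_1},\nu_{x_2}$ with the reduced relations (\ref{relPBW4.1})--(\ref{relPBW4.5}) to obtain the parameter equations, split into the sixteen cases according to which of $a_{111},a_{122},a_{211},a_{222}$ equal $1$ to populate the rows of Table \ref{Leibnizsruletwoindeterminates(1)2}, and verify part (2) by evaluating both composites on the four generators. Your explicit remarks on the automatic compatibility with $t_1t_2=t_2t_1$ and on invertibility via the triangular action are minor additions the paper leaves implicit, not a different argument.
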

\begin{proof}
For the first assertion, the map $\nu_{t_1}$ can be extended to an algebra homomorphism if and only if the definitions of $\nu_{t_1}(t_1)$, $\nu_{t_1}(t_2)$,  $\nu_{t_1}(x_1)$ and $\nu_{t_1}(x_2)$ respect relations {\rm (}\ref{relPBW4.1}{\rm )}, {\rm (}\ref{relPBW4.2}{\rm )}, {\rm (}\ref{relPBW4.3}{\rm )}, {\rm (}\ref{relPBW4.4}{\rm )} and {\rm (}\ref{relPBW4.5}{\rm )}, i.e.
\begin{align*}
   \nu_{t_1}(x_1)\nu_{t_1}(t_1)-\nu_{t_1}(a_{111}t_1+b_{11})\nu_{t_1}(x_1) = &\ \nu_{t_1}(p_1), \\
   \nu_{t_1}(x_2)\nu_{t_1}(t_1)-\nu_{t_1}(a_{211}t_1+b_{21})\nu_{t_1}(x_2) = &\ \nu_{t_1}(p_2), \\
   \nu_{t_1}(x_1)\nu_{t_1}(t_2)-\nu_{t_1}(a_{122}t_2+b_{12})\nu_{t_1}(x_1) = &\ \nu_{t_1}(p_1), \\
    \nu_{t_1}(x_2)\nu_{t_1}(t_2)-\nu_{t_1}(a_{222}t_2+b_{22})\nu_{t_1}(x_2) = &\ \nu_{t_1}(p_2), \quad {\rm and} \\
    \nu_{t_1}(x_2)\nu_{t_1}(x_1)-c_{1,2}\nu_{t_1}(x_1)\nu_{t_1}(x_2) = &\ q_{1,2}^{(0)}+q_{1,2}^{(1)}\nu_{t_1}(x_1)+q_{1,2}^{(2)}\nu_{t_1}(x_2).
\end{align*}

We obtain the equations given by
\begin{align}
p_1(a_{111}-1) = &\ 0, \notag \\
p_2(a_{211}-1) = &\ 0, \notag \\
q_{1,2}^{(0)}(a_{211}a_{111}-1) = &\ 0, \label{firstsecondrel2} \\
q_{1,2}^{(1)}(a_{211}-1) = &\ 0, \quad {\rm and} \\
q_{1,2}^{(2)}(a_{111}-1) = &\ 0. \notag 
\end{align}

Again, the map $\nu_{t_2}$ can be extended to an algebra homomorphism if and only if the definitions of $\nu_{t_2}(t_1)$, $\nu_{t_2}(t_2)$,  $\nu_{t_2}(x_1)$ and $\nu_{t_2}(x_2)$ respect relations {\rm (}\ref{relPBW4.1}{\rm )}, {\rm (}\ref{relPBW4.2}{\rm )}, {\rm (}\ref{relPBW4.3}{\rm )}, {\rm (}\ref{relPBW4.4}{\rm )} and {\rm (}\ref{relPBW4.5}{\rm )}, that is, 
\begin{align*}
   \nu_{t_2}(x_1)\nu_{t_2}(t_1)-\nu_{t_2}(a_{111}t_1+b_{11})\nu_{t_2}(x_1) = &\ \nu_{t_2}(p_1), \\
   \nu_{t_2}(x_2)\nu_{t_2}(t_1)-\nu_{t_2}(a_{211}t_1+b_{21})\nu_{t_2}(x_2) = &\ \nu_{t_2}(p_2), \\
   \nu_{t_2}(x_1)\nu_{t_2}(t_2)-\nu_{t_2}(a_{122}t_2+b_{12})\nu_{t_2}(x_1) = &\ \nu_{t_2}(p_1), \\
    \nu_{t_2}(x_2)\nu_{t_2}(t_2)-\nu_{t_2}(a_{222}t_2+b_{22})\nu_{t_2}(x_2) = &\ \nu_{t_2}(p_2), \quad {\rm and} \\
    \nu_{t_2}(x_2)\nu_{t_2}(x_1)-c_{1,2}\nu_{t_2}(x_1)\nu_{t_2}(x_2) = &\ q_{1,2}^{(0)}+q_{1,2}^{(1)}\nu_{t_2}(x_1)+q_{1,2}^{(2)}\nu_{t_2}(x_2).
\end{align*}

Then
\begin{align}
p_1(a_{122}-1) = &\ 0, \notag \\
p_2(a_{222}-1) = &\ 0, \notag \\
q_{1,2}^{(0)}(a_{222}a_{122}-1) = &\ 0, \label{secondsecondrel2} \\
q_{1,2}^{(1)}(a_{222}-1) = &\ 0, \quad {\rm and} \notag \\
q_{1,2}^{(2)}(a_{122}-1) = &\ 0. \notag 
\end{align}

The map $\nu_{x_1}$ can be extended to an algebra homomorphism if and only if the definitions of $\nu_{x_1}(t_1)$, $\nu_{x_1}(t_2)$,  $\nu_{x_1}(x_1)$ and $\nu_{x_1}(x_2)$ respect relations {\rm (}\ref{relPBW4.1}{\rm )}, {\rm (}\ref{relPBW4.2}{\rm )}, {\rm (}\ref{relPBW4.3}{\rm )}, {\rm (}\ref{relPBW4.4}{\rm )} and {\rm (}\ref{relPBW4.5}{\rm )}. This yields that 
\begin{align*}
   \nu_{x_1}(x_1)\nu_{x_1}(t_1)-\nu_{x_1}(a_{111}t_1+b_{11})\nu_{x_1}(x_1) = &\ \nu_{x_1}(p_1), \\
   \nu_{x_1}(x_2)\nu_{x_1}(t_1)-\nu_{x_1}(a_{211}t_1+b_{21})\nu_{x_1}(x_2) = &\ \nu_{x_1}(p_2), \\
   \nu_{x_1}(x_1)\nu_{x_1}(t_2)-\nu_{x_1}(a_{122}t_2+b_{12})\nu_{x_1}(x_1) = &\ \nu_{x_1}(p_1), \\
    \nu_{x_1}(x_2)\nu_{x_1}(t_2)-\nu_{x_1}(a_{222}t_2+b_{22})\nu_{x_1}(x_2) = &\ \nu_{x_1}(p_2), \quad {\rm and} \\
    \nu_{x_1}(x_2)\nu_{x_1}(x_1)-c_{1,2}\nu_{x_1}(x_1)\nu_{x_1}(x_2) = &\ q_{1,2}^{(0)}+q_{1,2}^{(1)}\nu_{x_1}(x_1)+q_{1,2}^{(2)}\nu_{x_1}(x_2).
\end{align*}

Thus, we get that 
\begin{align}
p_1(a_{111}^{-1}-1) = &\ 0, \notag \\
p_1(a_{122}^{-1}-1) = &\ 0, \notag \\
a_{111}^{-1}(b_{21}-b_{11}+a_{211}b_
{11})-b_{21} = &\ 0, \label{thirdsecondrel2} \\
p_2(a_{111}^{-1}c_{1,2}-1) = &\ a_{111}^{-1}b_{21}q_{1,2}^{(1)}, \notag \\
a_{122}^{-1}(b_{22}-b_{12}+a_{222}b_
{12})-b_{22} = &\ 0, \notag \\
p_2(a_{122}^{-1}c_{1,2}-1) = &\ a_{122}^{-1}b_{22}q_{1,2}^{(1)}, \quad {\rm and} \notag \\
q_{1,2}^{(0)}(c_{1,2}-1) = &\ q_{1,2}^{(1)}q_{1,2}^{(2)}. \notag 
\end{align}

As above, the map $\nu_{x_2}$ can be extended to an algebra homomorphism if and only if the definitions of $\nu_{x_2}(t_1)$, $\nu_{x_2}(t_2)$,  $\nu_{x_2}(x_1)$ and $\nu_{x_2}(x_2)$ respect relations {\rm (}\ref{relPBW4.1}{\rm )}, {\rm (}\ref{relPBW4.2}{\rm )}, {\rm (}\ref{relPBW4.3}{\rm )}, {\rm (}\ref{relPBW4.4}{\rm )} and {\rm (}\ref{relPBW4.5}{\rm )}. Then:
\begin{align*}
   \nu_{x_2}(x_1)\nu_{x_2}(t_1)-\nu_{x_2}(a_{111}t_1+b_{11})\nu_{x_2}(x_1) = &\ \nu_{x_2}(p_1), \\
   \nu_{x_2}(x_2)\nu_{x_2}(t_1)-\nu_{x_2}(a_{211}t_1+b_{21})\nu_{x_2}(x_2) = &\ \nu_{x_2}(p_2), \\
   \nu_{x_2}(x_1)\nu_{x_2}(t_2)-\nu_{x_2}(a_{122}t_2+b_{12})\nu_{x_2}(x_1) = &\ \nu_{x_2}(p_1), \\
    \nu_{x_2}(x_2)\nu_{x_2}(t_2)-\nu_{x_2}(a_{222}t_2+b_{22})\nu_{x_2}(x_2) = &\ \nu_{x_2}(p_2), \quad {\rm and} \\
    \nu_{x_2}(x_2)\nu_{x_2}(x_1)-c_{1,2}\nu_{x_2}(x_1)\nu_{x_2}(x_2) = &\ q_{1,2}^{(0)}+q_{1,2}^{(1)}\nu_{x_2}(x_1)+q_{1,2}^{(2)}\nu_{x_2}(x_2).
\end{align*}

Equivalently, 
\begin{align}
p_2(a_{211}^{-1}-1) = &\ 0, \notag \\
p_2(a_{222}^{-1}-1) = &\ 0, \notag \\
a_{211}^{-1}(b_{11}-b_{21}+a_{111}b_
{21})-b_{11} = &\ 0, \label{fourthsecondrel2} \\
p_1(a_{211}^{-1}c_{1,2}^{-1}-1) = &\ -c_{1,2}^{-1}a_{211}^{-1}b_{11}q_{1,2}^{(2)}, \notag \\
a_{222}^{-1}(b_{12}-b_{22}+a_{122}b_
{22})-b_{12} = &\ 0, \notag \\
p_1(a_{122}^{-1}c_{1,2}^{-1}-1) = &\ -c_{1,2}^{-1}a_{222}^{-1}b_{12}q_{1,2}^{(2)}, \quad {\rm and} \notag \\ 
q_{1,2}^{(0)}(c_{1,2}^{-1}-1) = &\ -c_{1,2}^{-1}q_{1,2}^{(1)}q_{1,2}^{(2)}. \notag 
\end{align}

These equations are satisfied by the conditions formulated in the Table \ref{Leibnizsruletwoindeterminates(1)2}.

For the second assertion, it is enough to prove it for the generators $t_1$, $t_2$, $x_1$ and $x_2$:
\begin{align}
    (\nu_{t_1} \circ \nu_{t_2})(t_1) = &\ \nu_{t_1}(t_1)=t_1, \notag \\
    (\nu_{t_2} \circ \nu_{t_1})(t_1) = &\ \nu_{t_2}(t_1) = t_1, \notag \\
    (\nu_{t_1} \circ \nu_{t_2})(t_2) = &\ \nu_{t_1}(t_2)=t_2, \notag \\
    (\nu_{t_2} \circ \nu_{t_1})(t_2) = &\ \nu_{t_2}(t_2) = t_2, \notag \\
    (\nu_{t_1} \circ \nu_{t_2})(x_1) = &\ \nu_{t_1}(a_{122}x_1)=a_{111}a_{122}x_1, \label{comp1221}\\ 
    (\nu_{t_2} \circ \nu_{t_1})(x_1) = &\  \nu_{t_2}(a_{111}x_1)=a_{111}a_{122}x_1, \notag \\
    (\nu_{t_1} \circ \nu_{t_2})(x_2) = &\ \nu_{t_1}(a_{222}x_2)=a_{222}a_{211}x_2, \quad {\rm and} \notag \\  
 (\nu_{t_2} \circ \nu_{t_1})(x_2) = &\ \nu_{t_2}(a_{211}x_2)=a_{222}a_{211}x_2. \notag
\end{align}

In each case, the conditions shown in {\rm (}\ref{comp1221}{\rm )} hold, and so $\nu_{t_1} \circ \nu_{t_2}=\nu_{t_2} \circ \nu_{t_1}$.

Next, 
\begin{align}
    (\nu_{t_1} \circ \nu_{x_1})(t_1) = &\ \nu_{t_1}(a_{111}^{-1}(t_1-b_{11}))=a_{111}^{-1}(t_1-b_{11}), \notag \\
    (\nu_{x_1} \circ \nu_{t_1})(t_1) = &\ \nu_{x_1}(t_1) = a_{111}^{-1}(t_1-b_{11}), \notag \\
    (\nu_{t_1} \circ \nu_{x_1})(t_2) = &\ \nu_{t_1}(a_{122}^{-1}(t_2-b_{12}))=a_{122}^{-1}(t_2-b_{12}), \notag \\
    (\nu_{x_1} \circ \nu_{t_1})(t_2) = &\ \nu_{x_1}(t_2) = a_{122}^{-1}(t_2-b_{12}), \notag \\
    (\nu_{t_1} \circ \nu_{x_1})(x_1) = &\ \nu_{t_1}(x_1)=a_{111}x_1, \label{comp1111}\\ 
    (\nu_{x_1} \circ \nu_{t_1})(x_1) = &\  \nu_{x_1}(a_{111}x_1)=a_{111}x_1, \notag \\
    (\nu_{t_1} \circ \nu_{x_1})(x_2) = &\ \nu_{t_1}(c_{1,2}x_2+q_{1,2}^{(1)})=a_{211}c_{1,2}x_2+q_{1,2}^{(1)}, \quad {\rm and} \notag \\  
 (\nu_{x_1} \circ \nu_{t_1})(x_2) = &\ \nu_{x_1}(a_{211}x_2)=a_{211}(c_{1,2}x_2+q_{1,2}^{(1)}). \notag
\end{align}

Note that the only conditions that appear to be different in {\rm (}\ref{comp1111}{\rm )} are $(\nu_{t_1} \circ \nu_{x_1})(x_2)$ and $(\nu_{x_1} \circ \nu_{t_1})(x_2)$; these are satisfied when $q_{1,2}^{(1)}=a_{211}q_{1,2}^{(1)}$. As we can see, every case in Table   \ref{Leibnizsruletwoindeterminates(1)2} satisfies these conditions. 

Now, 
\begin{align}
    (\nu_{t_1} \circ \nu_{x_2})(t_1) = &\ \nu_{t_1}(a_{211}^{-1}(t_1-b_{21}))=a_{211}^{-1}(t_1-b_{21}), \notag \\
    (\nu_{x_2} \circ \nu_{t_1})(t_1) = &\ \nu_{x_2}(t_1) = a_{211}^{-1}(t_1-b_{21}), \notag \\
    (\nu_{t_1} \circ \nu_{x_2})(t_2) = &\ \nu_{t_1}(a_{222}^{-1}(t_2-b_{22}))=a_{222}^{-1}(t_2-b_{22}), \notag \\
    (\nu_{x_2} \circ \nu_{t_1})(t_2) = &\ \nu_{x_2}(t_2) = a_{222}^{-1}(t_2-b_{22}), \notag \\
    (\nu_{t_1} \circ \nu_{x_2})(x_1) = &\ \nu_{t_1}(c_{1,2}^{-1}(x_1-q_{1,2}^{(2)}))=c_{1,2}^{-1}(a_{111}x_1-q_{1,2}^{(2)})), \label{comp1221.}\\ 
    (\nu_{x_2} \circ \nu_{t_1})(x_1) = &\  \nu_{x_2}(a_{111}x_1)=a_{111}c_{1,2}^{-1}(x_1-q_{1,2}^{(2)}), \notag \\
    (\nu_{t_1} \circ \nu_{x_2})(x_2) = &\ \nu_{t_1}(x_2)=a_{211}x_2, \quad {\rm and} \notag \\  
 (\nu_{x_2} \circ \nu_{t_1})(x_2) = &\ \nu_{x_2}(a_{211}x_2)=a_{211}x_2. \notag
\end{align}

Once more again, note that the only conditions that appear to be different are $(\nu_{t_1} \circ \nu_{x_2})(x_1)$ and $(\nu_{x_2} \circ \nu_{t_1})(x_1)$; these are satisfied if $q_{1,2}^{(2)}=a_{111}q_{1,2}^{(2)}$, and all cases in Table \ref{Leibnizsruletwoindeterminates(1)2} satisfy both conditions. 

Consider 
\begin{align}
    (\nu_{t_2} \circ \nu_{x_1})(t_1) = &\ \nu_{t_2}(a_{111}^{-1}(t_1-b_{11}))=a_{111}^{-1}(t_1-b_{11}), \notag \\
    (\nu_{x_1} \circ \nu_{t_2})(t_1) = &\ \nu_{x_1}(t_1) = a_{111}^{-1}(t_1-b_{11}), \notag \\
    (\nu_{t_2} \circ \nu_{x_1})(t_2) = &\ \nu_{t_2}(a_{122}^{-1}(t_2-b_{12}))=a_{122}^{-1}(t_2-b_{12}), \notag \\
    (\nu_{x_1} \circ \nu_{t_2})(t_2) = &\ \nu_{x_1}(t_2) = a_{122}^{-1}(t_2-b_{12}), \notag \\
    (\nu_{t_2} \circ \nu_{x_1})(x_1) = &\ \nu_{t_2}(x_1)=a_{122}x_1, \label{comp2112.}\\ 
    (\nu_{x_1} \circ \nu_{t_2})(x_1) = &\  \nu_{x_1}(a_{122}x_1)=a_{122}x_1, \notag \\
    (\nu_{t_2} \circ \nu_{x_1})(x_2) = &\ \nu_{t_2}(c_{1,2}x_2+q_{1,2}^{(1)})=c_{1,2}a_{222}x_2+q_{1,2}^{(1)}, \quad {\rm and} \notag \\  
 (\nu_{x_1} \circ \nu_{t_2})(x_2) = &\ \nu_{x_1}(a_{222}x_2)=a_{222}(c_{1,2}x_2+q_{1,2}^{(1)}). \notag
\end{align}

By using a similar reasoning, it can be seen that all cases in Table \ref{Leibnizsruletwoindeterminates(1)2} satisfy these conditions.

We continue with the following compositions:
\begin{align}
    (\nu_{t_2} \circ \nu_{x_2})(t_1) = &\ \nu_{t_2}(a_{211}^{-1}(t_1-b_{21}))=a_{211}^{-1}(t_1-b_{21}), \notag \\
    (\nu_{x_2} \circ \nu_{t_2})(t_1) = &\ \nu_{x_2}(t_1) = a_{211}^{-1}(t_1-b_{21}), \notag \\
    (\nu_{t_2} \circ \nu_{x_2})(t_2) = &\ \nu_{t_2}(a_{222}^{-1}(t_2-b_{22}))=a_{222}^{-1}(t_2-b_{22}), \notag \\
    (\nu_{x_2} \circ \nu_{t_2})(t_2) = &\ \nu_{x_2}(t_2) = a_{222}^{-1}(t_2-b_{22}), \notag \\
    (\nu_{t_2} \circ \nu_{x_2})(x_1) = &\ \nu_{t_2}(c_{1,2}^{-1}(x_1-q_{1,2}^{(2)})=c_{1,2}^{-1}(a_{122}x_1-q_{1,2}^{(2)}), \label{comp2222}\\ 
    (\nu_{x_2} \circ \nu_{t_2})(x_1) = &\  \nu_{x_2}(a_{122}x_1)=a_{122}c_{1,2}^{-1}(x_1-q_{1,2}^{(2)}), \notag \\
    (\nu_{t_2} \circ \nu_{x_2})(x_2) = &\ \nu_{t_2}(x_2)=a_{222}x_2, \quad {\rm and} \notag \\  
 (\nu_{x_2} \circ \nu_{t_2})(x_2) = &\ \nu_{x_2}(a_{222}x_2)=a_{222}x_2. \notag
\end{align}

All cases in Table \ref{Leibnizsruletwoindeterminates(1)2} satisfy conditions in (\ref{comp2222}).

Finally, 
\begin{align}
    (\nu_{x_1} \circ \nu_{x_2})(t_1) = &\ \nu_{x_1}(a_{211}^{-1}(t_1-b_{21}))=a_{211}^{-1}(a_{111}^{-1}(t_1-b_{11})-b_{21}), \notag \\
    (\nu_{x_2} \circ \nu_{x_1})(t_1) = &\ \nu_{x_2}(a_{111}^{-1}(t_1-b_{11})) = a_{111}^{-1}(a_{211}^{-1}(t_1-b_{21})-b_{11}), \notag \\
    (\nu_{x_1} \circ \nu_{x_2})(t_2) = &\ \nu_{x_1}(a_{222}^{-1}(t_2-b_{22}))=a_{222}^{-1}(a_{122}^{-1}(t_2-b_{12})-b_{22}), \notag \\
    (\nu_{x_2} \circ \nu_{x_1})(t_2) = &\ \nu_{x_2}(a_{122}^{-1}(t_2-b_{12})) = a_{122}^{-1}(a_{222}^{-1}(t_2-b_{22})-b_{12}), \notag \\
    (\nu_{x_1} \circ \nu_{x_2})(x_1) = &\ \nu_{x_1}(c_{1,2}^{-1}(x_1-q_{1,2}^{(2)})=c_{1,2}^{-1}(x_1-q_{1,2}^{(2)}), \label{comp1221..}\\ 
    (\nu_{x_2} \circ \nu_{x_1})(x_1) = &\  \nu_{x_2}(x_1)=c_{1,2}^{-1}(x_1-q_{1,2}^{(2)}), \notag \\
    (\nu_{x_1} \circ \nu_{x_2})(x_2) = &\ \nu_{x_1}(x_2)=c_{1,2}x_2+q_{1,2}^{(1)}, \quad {\rm and} \notag \\  
 (\nu_{x_2} \circ \nu_{x_1})(x_2) = &\ \nu_{x_2}(c_{1,2}x_2+q_{1,2}^{(1)})=c_{1,2}x_2+q_{1,2}^{(1)}. \notag
\end{align}

At first glance, it seems that compositions $(\nu_{x_1} \circ \nu_{x_2})(t_1)$ and $(\nu_{x_2} \circ \nu_{x_1})(t_1)$ are different. However, due to the expression {\rm (}\ref{fourthsecondrel2}{\rm )}, they coincide. Similarly, it occurs with the compositions $(\nu_{x_1} \circ \nu_{x_2})(t_2)$ and $(\nu_{x_2} \circ \nu_{x_1})(t_2)$. All are satisfied.
\end{proof}

We arrive to the important result of this section.

\begin{theorem}\label{smoothPBW22}
If a SPBW extension $\sigma(\Bbbk[t_1, t_2])\langle x_1, x_2\rangle$ satisfies one of the conditions {\rm (a)-(p)} in Proposition \ref{autoSkewPBW22}, then it is differentially smooth.
\end{theorem}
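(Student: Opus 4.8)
The plan is to follow the template of the proofs of Theorems \ref{smoothPBW2} and \ref{smoothPBW3}, now with the four generators $t_1, t_2, x_1, x_2$. First I would recall that $\sigma(\Bbbk[t_1,t_2])\langle x_1,x_2\rangle$ has Gelfand--Kirillov dimension $4$ (this follows from ${\rm GKdim}(\Bbbk[t_1,t_2])=2$ together with the results on GK dimension of bijective SPBW extensions cited earlier, e.g.\ \cite{Reyes2013}), so the goal is to build a connected integrable $4$-dimensional differential calculus. I would take $\Omega^{1}(\sigma(\Bbbk[t_1,t_2])\langle x_1,x_2\rangle)$ to be the free right module of rank $4$ on generators $dt_1, dt_2, dx_1, dx_2$, and give it the left module structure $f\,dt_i = dt_i\,\nu_{t_i}(f)$, $f\,dx_i = dx_i\,\nu_{x_i}(f)$ using the four pairwise commuting automorphisms produced in Proposition \ref{autoSkewPBW22}; their commutation (part (2) of that proposition) is exactly what guarantees that no extra relations appear beyond the obvious bimodule ones, just as in the $\Bbbk[t]$ cases.

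Next I would check that the assignments $t_i\mapsto dt_i$, $x_i\mapsto dx_i$ extend to a degree-one derivation $d$ compatible with the reduced defining relations {\rm (}\ref{relPBW4.1}{\rm )}--{\rm (}\ref{relPBW4.5}{\rm )}; this is precisely the content of the Leibniz computations underlying Proposition \ref{autoSkewPBW22}, so under any of the hypotheses {\rm (a)}--{\rm (p)} it holds, and it forces $dp_i=0$ and $dc_{1,2}=dq_{1,2}^{(k)}=0$, which are already built into those relations. Writing $d(f)=dt_1\partial_{t_1}(f)+dt_2\partial_{t_2}(f)+dx_1\partial_{x_1}(f)+dx_2\partial_{x_2}(f)$ and evaluating the four $\Bbbk$-linear operators $\partial_{t_1},\partial_{t_2},\partial_{x_1},\partial_{x_2}$ on a monomial $t_1^{k_1}t_2^{k_2}x_1^{l_1}x_2^{l_2}$ (the explicit formulas being the two-variable analogues of those appearing in the proof of Theorem \ref{smoothPBW2}, with the rescalings coming from $a_{111},a_{122},a_{211},a_{222},c_{1,2}$ and the shifts from $b_{ij}$ and $q_{1,2}^{(k)}$), I would conclude that $d(f)=0$ iff $f$ is a scalar multiple of the identity, so the calculus is connected with $\Omega=\bigoplus_{i=0}^{4}\Omega^{i}$.

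Then I would extend $d$ to higher forms and record the wedge relations on $\Omega^{2}$ and $\Omega^{3}$: $dt_2\wedge dt_1=dt_1\wedge dt_2$, $dx_i\wedge dt_j=-a_{\ast}\,dt_j\wedge dx_i$, $dx_2\wedge dx_1=-c_{1,2}\,dx_1\wedge dx_2$, together with the induced relations on triple products; since the four automorphisms commute there are no further relations, so $\Omega^{4}$ is free of rank one on the top form $\omega=dt_1\wedge dt_2\wedge dx_1\wedge dx_2$ with $\nu_{\omega}=\nu_{t_1}\circ\nu_{t_2}\circ\nu_{x_1}\circ\nu_{x_2}$, hence a volume form. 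The final and most laborious step is to invoke Proposition \ref{BrzezinskiSitarz2017Lemmas2.6and2.7}(2): for each $k=1,2,3$ I must exhibit the $\binom{4}{k}$ forms $\omega_i^{k}\in\Omega^{k}$ and matching complementary forms $\bar{\omega}_i^{k}\in\Omega^{k}$ (the $k=1$ family $dt_1,dt_2,dx_1,dx_2$ with rescaled duals carrying inverse constants; the $k=2$ family the six $2$-forms; the $k=3$ family the four $3$-forms), each paired with the appropriate sign and product of inverse structure constants, so that $\omega'=\sum_i \omega_i^{k}\,\pi_{\omega}(\bar{\omega}_i^{n-k}\wedge\omega')=\sum_i \nu_{\omega}^{-1}\bigl(\pi_{\omega}(\omega'\wedge\omega_i^{n-k})\bigr)\,\bar{\omega}_i^{k}$ for every $\omega'\in\Omega^{k}$.

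I expect the main obstacle to be the bookkeeping of signs and of the products of the inverse constants $a_{111}^{-1},a_{122}^{-1},a_{211}^{-1},a_{222}^{-1},c_{1,2}^{-1}$ in the complementary forms $\bar{\omega}_i^{k}$: it is routine but error-prone, and it is essential that the automorphisms commute, since reordering a wedge product such as $\bar{\omega}_i^{n-k}\wedge\omega'$ into $\pm\,\omega$ must produce exactly the scalar that cancels the one placed in $\bar{\omega}_i^{k}$. Once the two identities of Proposition \ref{BrzezinskiSitarz2017Lemmas2.6and2.7}(2) are checked on the finitely many basis forms (all with scalar coefficients, as in the proofs of Theorems \ref{smoothPBW2} and \ref{smoothPBW3}), $\omega$ is an integral form, all the $\Omega^{k}$ are finitely generated projective on both sides, the calculus is integrable, and by Definition \ref{BrzezinskiSitarz2017Definition2.4}, matching ${\rm GKdim}=4$ with a $4$-dimensional connected integrable calculus, $\sigma(\Bbbk[t_1,t_2])\langle x_1,x_2\rangle$ is differentially smooth.
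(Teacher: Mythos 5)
Your proposal follows essentially the same route as the paper's proof: same GK-dimension count, same free rank-$4$ module $\Omega^1$ twisted by the commuting automorphisms of Proposition \ref{autoSkewPBW22}, same partial-derivative argument for connectedness, the same volume form $\omega=dt_1\wedge dt_2\wedge dx_1\wedge dx_2$ with $\nu_\omega=\nu_{t_1}\circ\nu_{t_2}\circ\nu_{x_1}\circ\nu_{x_2}$, and the same verification of integrability via Proposition \ref{BrzezinskiSitarz2017Lemmas2.6and2.7}(2) on the explicit families $\omega_i^k$, $\bar\omega_i^k$. The only slip is the relation $dt_2\wedge dt_1 = dt_1\wedge dt_2$, which should carry a minus sign as in the paper's list (\ref{rel3wedge22}); everything else is consistent with the paper's argument.
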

\begin{proof}
As ${\rm GKdim}(\sigma(\Bbbk[t_1, t_2])\langle x_1, x_2\rangle) = 4$, we proceed to construct a $4$-dimensional integrable calculus. Consider $\Omega^{1}(\sigma(\Bbbk[t_1, t_2])\langle x_1, x_2\rangle)$, a free right $\sigma(\Bbbk[t_1, t_2])\langle x_1, x_2\rangle$-module of rank $4$ with generators $dt_1$, $dt_2, dx_1, dx_2$. Define a left $\sigma(\Bbbk[t_1, t_2])\langle x_1, x_2\rangle$-module structure by
    \begin{equation}\label{relrightmod322}
        adt_i = dt_i \nu_{t_i}(a), \quad adx_i = dx_i\nu_{x_i}(a),  \quad {\rm for\ all}\ i\in\{1, 2\}, a\in \sigma(\Bbbk[t_1, t_2])\langle x_1, x_2\rangle,
    \end{equation}
    
where $\nu_{t_i}$, $\nu_{x_i}$, $i\in\{1, 2\}$ are the algebra automorphisms established in Proposition \ref{autoSkewPBW22}. Notice that the relations in $\Omega^{1}(\sigma(\Bbbk[t_1, t_2])\langle x_1, x_2\rangle)$ are given by
    \begin{align}
        t_idt_j = &\ dt_j t_i & t_idx_j = &\ dx_ja_{jii}^{-1}(t_i-b_{ij}), \quad \text{ for all } i,j \in \{1, 2\}, \label{reltx4}  \\
        x_idx_i = &\ dx_ix_i,  &  x_idt_j = &\ dt_ja_{ijj}x_i, \quad \text{ for all } i,j \in \{1, 2\}, \label{reltx4.} 
         \end{align}

and 
        \begin{align}
        x_1dx_2 = &\ dx_2(c_{1,2}^{-1}x_1-c_{1,2}^{-1}q_{1,2}^{(2)}),  \\   x_2dx_1 = &\ dx_1(c_{1,2}x_2+q_{1,2}^{(1)}). \label{relxx4} 
    \end{align}

We extend the maps $t_i\mapsto dt_i$, $x_i\mapsto dx_i$ for $i \in \{1, 2\}$ to a map 
$$
d: \sigma(\Bbbk[t_1, t_2])\langle x_1, x_2\rangle \to \Omega^{1}(\sigma(\Bbbk[t_1, t_2])\langle x_1, x_2\rangle)
$$ 

satisfying the Leibniz's rule. From relations {\rm (}\ref{relPBW4.1}{\rm )}, {\rm (}\ref{relPBW4.2}{\rm )}, {\rm (}\ref{relPBW4.3}{\rm )}, {\rm (}\ref{relPBW4.4}{\rm )} and {\rm (}\ref{relPBW4.5}{\rm )} we get that
    \begin{align*}
        dx_it_j+x_idt_j &\ = a_{ijj}dt_jx_i+a_{ijj}t_jdx_i+b_{ij}dx_i, \quad \text{for}\ i,j \in \{1, 2\}, \quad {\rm and} \\
        dx_2x_1+x_2dx_1 &\ = c_{1,2}dx_1x_2+c_{1,2}x_1dx_2 +q_{1,2}^{(1)}dx_1+q_{1,2}^{(2)}dx_2.
    \end{align*}
    
Define $\Bbbk$-linear maps $$
\partial_{t_i}, \partial_{x_i}: \sigma(\Bbbk[t_1, t_2])\langle x_1, x_2\rangle \rightarrow \sigma(\Bbbk[t_1, t_2])\langle x_1, x_2\rangle
$$ 

such that
\begin{align*}
d(a)=dt_1\partial_{t_1}(a)+dt_2\partial_{t_2}(a)+\sum_{i=1}^{2}dx_i\partial_{x_i}(a), \text{ for all } a \in \sigma(\Bbbk[t_1, t_2])\langle x_1, x_2\rangle.
\end{align*}

These maps are well-defined since $dt_i$ and $dx_i$ for $i \in \{1, 2\}$ are free generators of the right $\sigma(\Bbbk[t_1, t_2])\langle x_1, x_2\rangle$-module $\Omega^1(\sigma(\Bbbk[t_1, t_2])\langle x_1, x_2\rangle)$. Hence, $d(a)=0$ if and only if $\partial_{t_i}(a)=\partial_{x_i}(a)=0$ for $i \in \{1, 2\}$. Using relations {\rm (}\ref{relrightmod322}{\rm )} and the definitions of the maps $\nu_{t_i}$, $\nu_{x_i}$, $i \in \{1, 2\}$, we obtain that
\begin{align}
\partial_{t_1}(t_1^kt_2^{s}x_1^{l_1}x_2^{l_2}) = &\ kt_1^{k-1}t_2^{s}x_1^{l_1}x_2^{l_2}, \\
\partial_{t_2}(t_1^kt_2^{s}x_1^{l_1}x_2^{l_2}) = &\ st_1^kt_2^{s-1}x_1^{l_1}x_2^{l_2}, \notag \\
\partial_{x_1}(t_1^kt_2^{s}x_1^{l_1}x_2^{l_2}) = &\ l_1a_{111}^{-k}a_{122}^{-s}(t_1-b_{11})^k(t_2-b_{12})^sx_1^{l_1-1}x_2^{l_2}, \quad {\rm and} \notag \\
\partial_{x_2}(t_1^kt_2^{s}x_1^{l_1}x_2^{l_2}) = &\ l_2c_{1,2}^{-l_1}a_{211}^{-k}a_{222}^{-s}(t_1-b_{21})^k(t_2-b_{22})^s(x_1-q_{1,2}^{(2)})^{l_1}x_2^{l_2-1}. \notag
\end{align}

Then, $d(a)=0$ if and only if $a$ is a scalar multiple of the identity. This shows that $\Omega(\sigma(\Bbbk[t_1, t_2])\langle x_1, x_2\rangle,d)$ is connected, where 
\[
\Omega(\sigma(\Bbbk[t_1, t_2])\langle x_1, x_2\rangle) = \bigoplus_{i=0}^{4}\Omega^i(\sigma(\Bbbk[t_1, t_2])\langle x_1, x_2\rangle).
\]

The universal extension of $d$ to higher forms compatible with {\rm (}\ref{reltx4}{\rm )}, {\rm (}\ref{reltx4.}{\rm )} and {\rm (}\ref{relxx4}{\rm )} gives the following rules for $\Omega^l(\sigma(\Bbbk[t_1, t_2])\langle x_1, x_2\rangle)$ with $l = 2, 3$:
\begin{align}\label{rel3wedge22}
 dt_2 \wedge dt_1  = &\ - dt_1 \wedge dt_2, \\
 dx_i\wedge dt_j = &\ -a_{ijj}dt_j\wedge dx_i, \quad \text{ for } i, j \in \{1, 2\}, \notag \\
 dx_2\wedge dx_1 = &\ -c_{i,j}dx_1\wedge dx_2,  \notag \\
dx_1\wedge dt_2\wedge dt_1 = &\ -a_{111}a_{122}dt_1\wedge dt_2\wedge dx_1,  \\
dx_2\wedge dt_2\wedge dt_1 = &\ -a_{211}a_{222}dt_1\wedge dt_2\wedge dx_2, \notag  \\
dx_2\wedge dx_1\wedge dt_1 = &\ -c_{1,2}a_{111}a_{211}dt_1\wedge dx_1\wedge dx_2, \quad {\rm and} \notag \\
dx_2\wedge dx_1\wedge dt_2 = &\  -c_{1,2}a_{122}a_{222}dt_2\wedge dx_1\wedge dx_2. \notag
\end{align}

Since the automorphisms $\nu_{t_i}$ and $\nu_{x_i}$ for $i \in \{1, 2\}$ commute with each other, there are no additional relationships to the previous ones, so we write
\begin{align*}
\Omega^{3}(\sigma(\Bbbk[t_1, t_2])\langle x_1, x_2\rangle) = &\ [dt_1\wedge dx_1\wedge dx_2 \oplus dt_2\wedge dx_1\wedge dx_2 \oplus dt_1\wedge dt_2\wedge dx_2 \\
&\ \oplus dt_1\wedge dt_2\wedge dx_1] \sigma(\Bbbk[t_1, t_2])\langle x_1, x_2\rangle.
\end{align*}

Now, due to that 
$$
\Omega^4(\sigma(\Bbbk[t_1, t_2])\langle x_1, x_2\rangle) = \omega\sigma(\Bbbk[t_1, t_2])\langle x_1, x_2\rangle\cong \sigma(\Bbbk[t_1, t_2])\langle x_1, x_2\rangle,
$$ 

as a right and left $\sigma(\Bbbk[t_1, t_2])\langle x_1, x_2\rangle$-module, with $\omega=dt_1\wedge dt_2 \wedge dx_1 \wedge dx_2$, where $\nu_{\omega}=\nu_{t_1}\circ\nu_{t_2}\circ\nu_{x_1}\circ\nu_{x_2}$, then $\omega$ is a volume form of $\sigma(\Bbbk[t_1, t_2])\langle x_1, x_2\rangle$. From Proposition \ref{BrzezinskiSitarz2017Lemmas2.6and2.7} (2), it follows that $\omega$ is an integral form by setting
\begin{align*}
    &\ \omega_1^1= dt_1, \quad \omega_2^1= dt_2, \quad \omega_3^1= dx_1, \quad \omega_4^1= dx_2, \\
    &\ \omega_1^2=dt_1\wedge dt_2, \quad \omega_2^2=dt_1\wedge dx_1, \quad \omega_3^2= dt_1\wedge dx_2, \quad \omega_4^2= dt_2\wedge dx_1, \\
    &\ \omega_5^2= dt_2\wedge dx_2, \quad  \omega_6^2= dx_1\wedge dx_2,   \\
    &\ \omega_1^3=dt_1\wedge dt_2\wedge dx_1, \quad \omega_2^3= dt_1\wedge dt_2 \wedge dx_2, \quad \omega_3^3= dt_1\wedge dx_1 \wedge dx_2, \\
    &\ \omega_4^3= dt_2\wedge dx_1 \wedge dx_2, \\
     &\ \bar{\omega}_1^1=-a_{211}^{-1}a_{222}^{-1}c_{1,2}^{-1}dx_2, \quad \bar{\omega}_2^1=a_{122}^{-1}a_{111}^{-1}dx_1,\quad \bar{\omega}_3^1=-dt_2, \quad \bar{\omega}_4^1=dt_1, \\
    &\ \bar{\omega}_1^2=a_{211}^{-1}a_{111}^{-1}a_{122}^{-1}a_{222}^{-1}dx_1\wedge dx_2, \\
    &\ \bar{\omega}_2^2=-a_{211}^{-1}c_{1,2}^{-1}dt_2\wedge dx_2, \quad \bar{\omega}_3^2 = a_{111}^{-1}dt_2\wedge dx_1, \quad \bar{\omega}_4^2 = a_{222}^{-1}c_{1,2}^{-1}dt_1\wedge dx_2, \\
    &\ \bar{\omega}_5^2 = -a_{122}^{-1}dt_1\wedge dx_1, \quad \bar{\omega}_6^2 = dt_1\wedge dt_2,\\
    &\ \bar{\omega}_1^3=-a_{111}^{-1}a_{211}^{-1}dt_2\wedge dx_1\wedge dx_2, \quad \bar{\omega}_2^3=a_{122}^{-1}a_{222}^{-1}dt_1\wedge dx_1 \wedge dx_2, \\
    &\ \bar{\omega}_3^3=-c_{1,2}^{-1}dt_1\wedge dt_2\wedge dx_2, \quad \bar{\omega}_4^3=dt_1 \wedge dt_2 \wedge dx_1
\end{align*}

Let $\omega' = dt_1a+dt_2b+dx_1c+dx_2d$, $a,b,c,d \in \Bbbk$. Then
\begin{align*}
\sum_{i=1}^{4}\omega_{i}^{1}\pi_{\omega}(\bar{\omega}_i^{3}\wedge \omega') &\ =dt_1\pi_{\omega}(-aa_{111}^{-1}a_{211}^{-1}dt_2\wedge dx_1\wedge dx_2 \wedge dt_1)\\
    &\ \quad + dt_2\pi_{\omega}(ba_{122}^{-1}a_{222}^{-1}dt_1\wedge dx_1 \wedge dx_2\wedge dt_2 ) \\
    &\ \quad + dx_1\pi_{\omega}(-cc_{1,2}^{-1}dt_1\wedge dt_2\wedge dx_2 \wedge dx_1)\\
    &\ \quad + dx_2\pi_{\omega}(ddt_1 \wedge dt_2 \wedge dx_1 \wedge dx_2)\\
    &\ = dt_1a + dt_2b + dx_1c + dx_2d \\
    &\ = \omega'.
\end{align*}

On the other hand, if 
\[
\omega'' = dt_1\wedge dt_2a+dt_1\wedge dx_1 b+dt_1\wedge dx_2 c+dt_2\wedge dx_1d+dt_2\wedge dx_2e+dx_1\wedge dx_2f 
\]

with $a,b,c,d,e,f \in \Bbbk$, it yields that 
\begin{align*}
\sum_{i=1}^{6}\omega_{i}^{2}\pi_{\omega}(\bar{\omega}_i^{2}\wedge \omega'') = &\ dt_1\wedge dt_2\pi_{\omega}(aa_{211}^{-1}a_{111}^{-1}a_{122}^{-1}a_{222}^{-1}dx_1\wedge dx_2\wedge dt_1\wedge dt_2) \\
    &\ + dt_1\wedge dx_1\pi_{\omega}(-ba_{211}^{-1}c_{1,2}^{-1}dt_2\wedge dx_2\wedge dt_1\wedge dx_1) \\
    &\ +dt_1\wedge dx_2\pi_{\omega}(ca_{111}^{-1}dt_2\wedge dx_1\wedge dt_1\wedge dx_2) \\
    &\ +dt_2\wedge dx_1\pi_{\omega}(da_{222}^{-1}c_{1,2}^{-1}dt_1\wedge dx_2\wedge dt_2\wedge dx_1) \\
    &\ +dt_2\wedge dx_2\pi_{\omega}(-ea_{122}^{-1}dt_1\wedge dx_1\wedge dt_2\wedge dx_2) \\
    &\ +dx_1\wedge dx_2\pi_{\omega}(fdt_1\wedge dt_2\wedge dx_1\wedge dx_2) \\
    = &\ dt_1\wedge dt_2a+dt_1\wedge dx_1 b+dt_1\wedge dx_2 c \\
    &\ + dt_2\wedge dx_1d+dt_2\wedge dx_2e+dx_1\wedge dx_2f \\
    = &\ \omega''.
\end{align*}

Finally, let 
\begin{align*}
\omega''' = &\ dt_1\wedge dt_2 \wedge dx_1 a+ dt_1\wedge dt_2\wedge dx_2 b dt_1\wedge dx_1 \\
&\ \wedge dx_2 c+dt_2\wedge dx_1 \wedge dx_2 d, \quad {\rm with}\ a, b, c, d \in \Bbbk.
\end{align*}

Since 
\begin{align*}
\sum_{i=1}^{3}\omega_{i}^{3}\pi_{\omega}(\bar{\omega}_i^{1}\wedge \omega''') = &\ dt_1\wedge dt_2\wedge dx_1 \pi_{\omega}(-aa_{211}^{-1}a_{222}^{-1}c_{1,2}^{-1}dx_2\wedge dt_1\wedge dt_2\wedge dx_1) \\
    &\ + dt_1\wedge dt_2 \wedge dx_2\pi_{\omega}(ba_{122}^{-1}a_{111}^{-1}dx_1\wedge dt_1\wedge dt_2 \wedge dx_2 )\\
    &\ + dt_1\wedge dx_1 \wedge dx_2\pi_{\omega}(-cdt_2\wedge dt_1\wedge dx_1 \wedge dx_2 )\\
    &\ +dt_2\wedge dx_1 \wedge dx_2\pi_{\omega}(ddt_1\wedge dt_2\wedge dx_1 \wedge dx_2) \\
    = &\ dt_1\wedge dt_2\wedge dx_1 a+ dt_1\wedge dt_2\wedge dx_2 b dt_1\wedge dx_1 \wedge dx_2 c \\
    &\ +dt_2\wedge dx_1 \wedge dx_2 d =  \omega''', 
\end{align*}

we conclude that $\sigma(\Bbbk[t_1, t_2])\langle x_1, x_2\rangle$ is differentially smooth.
\end{proof}
\begin{remark}
Only automorphisms of the form $t_1 \longmapsto a_{11}t_1+a_{12}t_2+a_{13}$, $t_2 \longmapsto a_{21}t_1+a_{22}t_2+a_{23}$, where $a_{i,j}\in \Bbbk$ and $a_{11}a_{22}-a_{12}a_{21}\not = 0$ are taken. This holds for the following reason. Without loss of generality, suppose that we have a relation with an automorphism of the form $t_1 \longmapsto t_1$, $t_2 \longmapsto t_2+h(t_1)$, where $h(t_1)\in\Bbbk[t_1]$ as follows
\begin{align*}
    x_1t_1 &\ =t_1x_1+p(t_1,t_2), \\
    x_1t_2 &\ =t_2x_1+h(t_1)x_1+p_1(t_1,t_2).
\end{align*}
When we want to take the automorphisms that can work to build the differential calculus, we obtain the following:
\begin{align*}
    d(x_1t_2) &\ =d(t_2x_1)+d(h(t_1)x_1)+d(p_1(t_1,t_2)), \\
    dx_1t_2+x_1dt_2 &\ =dt_2x_1+t_2dx_1+d(h(t_1))x_1+h(t_1)dx_1+d(p_1(t_1,t_2)). 
\end{align*}

Consider $h(t_1)=\sum_{s}a_st_1^{s}$, $dt_1h'(t_1)$ is the usual derivative with respect to $t_1$ and $dt_1\frac{\partial p_1}{\partial t_1}$ is the usual partial derivative with respect to $t_1$. Then
\begin{align*}
    &\ dx_1t_2+dt_2\nu_{t_2}(x_1)  =dt_2x_1+dx_1\nu_{x_1}(t_2)+dt_1h'(t_1)x_1 \\
    &\ \quad \quad \quad \quad \quad \quad \quad \quad \quad + dx_1\sum_{s}a_s\left[\nu_{x_1}(t_1)\right]^{s}+dt_1\frac{\partial p_1}{\partial t_1},
\end{align*}

and 
\begin{align*}
dx_1\left(t_2-\nu_{x_1}(t_2)-\sum_{s}a_s[\nu_{x_1}(t_1)]^{s}\right) &\ + dt_2(\nu_{t_2}(x_1)-x_1) \\
&\ + dt_1\left(h'(t_1)x_1+\frac{\partial p_1}{\partial t_1}\right)=0.
\end{align*}

Since that $dt_1$, $dt_2$, $dx_1$ and $dx_2$ are generators for $\Omega^1(\Bbbk[t_1, t_2])$, the elements that multiply them must be zero. In particular, 
\begin{equation}\label{otherauto}
    h'(t_1)x_1+\frac{\partial p_1}{\partial t_1}=0.
\end{equation}

Now, 
\begin{align*}
    h'(t_1)x_1 &\ =\sum_{s}sa_st_1^{s-1}x_1 = \sum_{s}sa_s\left(x_1t_1^{s-1}-(s-1)t_1^{s-2}p_1(t_1,t_2)\right) \\
    &\ = x_1h'(t_1)-\sum_{s}sa_s(s-1)t_1^{s-2}p_1(t_1,t_2).
\end{align*}

Therefore, expression {\rm (}\ref{otherauto}{\rm )} can be written as
\begin{equation*}
    x_1h'(t_1)-\sum_{s}sa_s(s-1)t_1^{s-2}p_1(t_1,t_2)+\frac{\partial p_1}{\partial t_1}=0.
\end{equation*}

In this way, it is necessary that $h'(t_1)=0$, i.e. $h(t_1)=h\in\Bbbk$, whence the automorphism has the form $t_1 \longmapsto t_1$, $t_2 \longmapsto t_2+h$, where $h\in\Bbbk$. Note that this automorphism coincides with the first type above when $a_{11} = 1,\ a_{12}=0, \ a_{13}=0$, $a_{21}=0, \ a_{22} = 1$ and $a_{23} = h$.
\end{remark}

Sections \ref{SPBWTMTwoI3} and \ref{SPBWTMTwoIn} contain sufficient conditions to guarantee the differential smoothness of SPBW extensions over $\sigma(\Bbbk[t_1, t_2])$ on three and $n$ generators. However, as we saw in the previous sections, the number of cases in which the Leibniz's rule holds increases considerably as we consider more indeterminates on the SPBW extension. Due to this reason, in the next two sections we will not include any table but will simply formulate the adequate conditions to the extension of this rule.

\subsection{SPBW extensions in three indeterminates}\label{SPBWTMTwoI3}

Let $\sigma(\Bbbk[t_1, t_2]) \langle x_1, x_2, x_3\rangle$. Consider the family of automorphisms as in the previous section. Since the conditions in {\rm (}\ref{DSdosdosSPBW}{\rm )} also hold in this case, it yields that
\begin{align}
    x_it_j = &\  a_{ijj}t_jx_i+b_{ij}x_i+ p_i, \quad \text{ for } i=1, 2, 3 \text{ and } j=1, 2, \label{DSdostresSPBW11} \\
    x_j x_i = &\ c_{i, j}x_i x_j + q_{i,j}^{(0)} + \sum_{k=1}^{3}q_{i,j}^{(k)}x_k, \quad\ {\rm for}\ i, j, k \in \{1, 2, 3\}, \ {\rm and} \ i < j, \label{DSdostresSPBW1}
\end{align}

where $c_{i,j}\in\Bbbk^{\ast}$, $q_{i,j}^{(0)}, q_{i,j}^{(k)}\in \Bbbk$,  for $i, j, k \in \{1, 2, 3\}$ with $i < j$, and $ a_{ijj}\in\Bbbk^{\ast}$, $b_{ij}, p_i \in\Bbbk$ for $i\in \{1, 2, 3\}$ and $j\in \{1,2 \}$.

\begin{proposition}\label{autoSkewPBW23}
Let
\begin{align}
   \nu_{t_i}(t_j) = &\ t_j, & \nu_{t_i}(x_k) = &\ a_{kii}x_k, \label{Autoskew4.38} \\
   \nu_{x_k}(t_i) = &\ a_{kii}^{-1}(t_i-b_{ki}),   &  \nu_{x_k}(x_k) = &\ x_k, \label{Autoskew4.39} \\
   \nu_{x_i}(x_j) = &\ c_{i,j}x_j+q_{i,j}^{(i)}, \quad {\rm for}\ i < j, & \nu_{x_i}(x_j) = &\ c_{j,i}^{-1}x_j-c_{j,i}^{-1}q_{j,i}^{(i)},  \quad {\rm for}\ i > j,  \label{Autoskew4.40}   
\end{align}

Then:
\begin{enumerate}
\item [\rm (1)] Leibniz's rule holds if the following relationships hold:
\begin{align}
    b_{sl}(a_{ill}-1) &\ =b_{il}(a_{sll}-1), \label{cond4.563} \\
    q_{s,i}^{(s)}(a_{ill}-1) &\ =0,\label{cond4.573} \\
    p_i(c_{s,i}-a_{sll}) &\ =b_{il}q_{s,i}^{(s)}, \label{cond4.583} \\
    q_{i,j}^{(s)}(c_{s,j}c_{s,i}-1) &\ =0, \label{cond4.593} \\
    q_{s,j}^{(s)}(c_{i,j}-1) &\ = q_{i,j}^{(i)}(c_{s,j}-1), \label{cond4.603} \\
    q_{s,i}^{(s)}(c_{i,j}-1) &\ = q_{i,j}^{(j)}(c_{s,i}-1), \label{cond4.613} \\
    q_{i,j}^{(k)}(c_{s,j}c_{s,i}-c_{k,s}^{-1}) &\ =0, \quad 1\leq k \leq 3, \quad {\rm and}  \label{cond4.623} \\
     \sum_{k=1}^{s-1}c_{k,s}^{-1}q_{i,j}^{(k)}q_{k,s}^{(s)}-\sum_{k=s+1}^{n}q_{i,j}^{(k)}q_{k,s}^{(s)} &\ 
     +q_{s,i}^{(s)}q_{s,j}^{(s)}(1-c_{i,j})+(c_{s,j}c_{s,i}-1)q_{i,j}^{(0)}  = 0. \label{cond4.633}
\end{align}

for $1 \leq i,j,s \leq n$, $i<j$, $l \in \{1, 2\}$, where $c_{r,t}:=c_{t,r}^{-1}$, $q_{r,t}^{(p)}:=-c_{t,r}^{-1}q_{t,r}^{(p)}$, for all $1 \leq r,t,p \leq n$, $t<r$, and  $c_{r,r}=1$ and $q_{r,r}^{(p)}=0$ for all $1 \leq r,p \leq n$.

The maps defined by {\rm (}\ref{Autoskew4.38}{\rm )}, {\rm (}\ref{Autoskew4.39}{\rm )} and {\rm (}\ref{Autoskew4.40}{\rm )} simultaneously extend to algebra automorphisms $\nu_{t_1}, \nu_{t_2}, \nu _{x_1}, \nu_{x_2}, \nu_{x_3}$ of $\sigma(\Bbbk[t_1, t_2])\langle x_1, x_2, x_3\rangle$ when the previous relations are satisfied.
\end{enumerate}
\begin{enumerate}
\item [\rm (2)] If the mentioned conditions in (1) hold, then we have that 
\begin{equation}\label{Eq1skew23}
   \nu_{t_i} \circ \nu_{t_j} = \nu_{t_j} \circ \nu_{t_i}, \quad \nu_{t_j} \circ \nu_{x_k} = \nu_{x_k} \circ \nu_{t_j}, \quad \nu_{x_k} \circ \nu_{x_l} = \nu_{x_l} \circ \nu_{x_k}, 
\end{equation}

for $i, j = 1, 2$ and $1\le k, l \leq 3$.
\end{enumerate}
\end{proposition}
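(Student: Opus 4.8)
The plan is to follow the same strategy used in the proofs of Propositions~\ref{autoSkewPBW} and~\ref{autoSkewPBW22}: a map prescribed on the generators $t_1,t_2,x_1,x_2,x_3$ extends to an algebra endomorphism of $\sigma(\Bbbk[t_1,t_2])\langle x_1,x_2,x_3\rangle$ exactly when it is compatible with the defining relations \eqref{DSdostresSPBW11} and \eqref{DSdostresSPBW1}, and since each of the five candidates is manifestly bijective on the generating set (with the $\sigma_i^{-1}$ already built into the $t$-components), compatibility automatically upgrades ``endomorphism'' to ``automorphism''. So first I would substitute the formulas \eqref{Autoskew4.38} for $\nu_{t_i}$ into the relations $x_kt_l=a_{kll}t_lx_k+b_{kl}x_k+p_k$ and into the quadratic relations and read off the resulting identities; because $\nu_{t_i}$ fixes $t_1,t_2$ and merely rescales each $x_k$, these substitutions produce no constraints beyond those already forced by the normalisation, exactly as happened in Proposition~\ref{autoSkewPBW22}.

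Second I would carry out the analogous substitutions for $\nu_{x_k}$, $k=1,2,3$, using \eqref{Autoskew4.39}--\eqref{Autoskew4.40}. Plugging these into $x_mt_l=a_{mll}t_lx_m+b_{ml}x_m+p_m$ yields the linear identities \eqref{cond4.563}--\eqref{cond4.583} relating the $b$'s, the $a$'s, the $p$'s and the $q_{s,i}^{(s)}$; plugging them into $x_jx_i=c_{i,j}x_ix_j+q_{i,j}^{(0)}+\sum_k q_{i,j}^{(k)}x_k$ (for $i<j$) produces, for the ``internal'' indices $s\in\{i,j\}$, the conditions \eqref{cond4.603}--\eqref{cond4.613}, and for the ``external'' index $s\notin\{i,j\}$ the cross conditions \eqref{cond4.593} and \eqref{cond4.623} (coefficients of the $x_k$) together with the scalar identity \eqref{cond4.633} (the constant term). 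Here the uniform convention $c_{r,t}:=c_{t,r}^{-1}$, $q_{r,t}^{(p)}:=-c_{t,r}^{-1}q_{t,r}^{(p)}$, $c_{r,r}=1$, $q_{r,r}^{(p)}=0$ is precisely what lets one write every instance of the substitution $\nu_{x_s}\bigl(x_jx_i-c_{i,j}x_ix_j-q_{i,j}^{(0)}-\sum_k q_{i,j}^{(k)}x_k\bigr)$ in a single closed form, so that collecting the coefficients of $1,x_1,x_2,x_3$ reproduces exactly \eqref{cond4.593}--\eqref{cond4.633}. Conversely, one checks that these identities are also sufficient, i.e.\ that under them all five maps respect all relations; once the substitutions above are in hand this is pure bookkeeping.

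For assertion~(2) I would evaluate the six types of composition $\nu_{t_i}\circ\nu_{t_j}$, $\nu_{t_j}\circ\nu_{x_k}$ and $\nu_{x_k}\circ\nu_{x_l}$ on each generator, mimicking \eqref{comp1221}, \eqref{comp1111}, \eqref{comp1221.} and \eqref{comp1221..}. On $t_1,t_2$ the maps $\nu_{t_i}$ act trivially and the $\nu_{x_k}$ act affinely, so $\nu_{t_i}\circ\nu_{t_j}=\nu_{t_j}\circ\nu_{t_i}$ and $\nu_{t_j}\circ\nu_{x_k}=\nu_{x_k}\circ\nu_{t_j}$ are immediate, while $\nu_{x_k}\circ\nu_{x_l}$ and $\nu_{x_l}\circ\nu_{x_k}$ agree on the $t$-variables because of the linear relation \eqref{cond4.563} (the three-indeterminate analogue of \eqref{fourthsecondrel2}). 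On $x_1,x_2,x_3$, $\nu_{t_j}$ acts diagonally and $\nu_{x_k}$ acts ``triangularly'' with the same $c$'s and $q$'s, so the two orders produce the same affine expressions once \eqref{cond4.573}--\eqref{cond4.623} are used to reconcile the scalar tails.

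The main obstacle is the external-index case of the quadratic relation: applying $\nu_{x_s}$ with $s\notin\{i,j\}$ to $x_jx_i-c_{i,j}x_ix_j-q_{i,j}^{(0)}-\sum_k q_{i,j}^{(k)}x_k$ mixes all three noncommuting generators and forces one to re-order monomials built from the $\nu_{x_s}$-images of $x_i$ and $x_j$; tracking which $c_{\cdot,\cdot}$ and $q_{\cdot,\cdot}^{(\cdot)}$ occur, and verifying that the coefficient of each $x_k$ and the constant term collapse precisely to \eqref{cond4.593}, \eqref{cond4.623} and \eqref{cond4.633}, is the delicate part. Everything else is a routine, if lengthy, verification of the same kind as in Sections~\ref{SPBWTMTwoI} and~\ref{SPBWTMTwoI1}.
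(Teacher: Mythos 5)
Your overall strategy is the same as the paper's: substitute the prescribed values of the five maps into the defining relations \eqref{DSdostresSPBW11} and \eqref{DSdostresSPBW1}, split the verification for $\nu_{x_s}$ according to the position of $s$ relative to the pair $i<j$, read off the coefficient of each generator and the constant term to obtain \eqref{cond4.563}--\eqref{cond4.633}, and then verify assertion (2) by evaluating the compositions on generators and reconciling them with those identities. Your treatment of the $\nu_{x_k}$ substitutions (including the identification of the external-index case as the delicate one) and of part (2) matches the paper's computations.

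The one genuine defect is your claim that substituting $\nu_{t_i}$ into the relations ``produces no constraints beyond those already forced by the normalisation, exactly as happened in Proposition~\ref{autoSkewPBW22}.'' That is not what happens in Proposition~\ref{autoSkewPBW22}, nor here. Since $\nu_{t_l}$ rescales each $x_k$ by $a_{kll}$, compatibility with $x_it_l=a_{ill}t_lx_i+b_{il}x_i+p_i$ forces $p_i(a_{ill}-1)=0$, and compatibility with $x_jx_i=c_{i,j}x_ix_j+q_{i,j}^{(0)}+\sum_k q_{i,j}^{(k)}x_k$ forces $q_{i,j}^{(0)}(a_{jll}a_{ill}-1)=0$ and $q_{i,j}^{(k)}(a_{jll}a_{ill}-a_{kll})=0$; these are precisely \eqref{firstsecondrel2} in the two-indeterminate case and \eqref{firstsecondrel33} in the proof of the present proposition. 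Of these, only $p_i(a_{ill}-1)=0$ is recoverable from your list, via \eqref{cond4.583} with $s=i$ and the convention $c_{i,i}=1$, $q_{i,i}^{(i)}=0$; the other two involve the $a_{kll}$'s and do not follow from \eqref{cond4.563}--\eqref{cond4.633}. By omitting this step you have not shown that $\nu_{t_1}$ and $\nu_{t_2}$ extend to algebra endomorphisms, which is part of assertion (1). The remedy is simply to carry out the $\nu_{t_l}$ substitution and record the resulting identities alongside the others, as is done in the paper.
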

\begin{proof}
For the first assertion, the map $\nu_{t_l}$, $l=1, 2$ can be extended to an algebra homomorphism if and only if the definitions of $\nu_{t_l}(t_j)$,  and $\nu_{t_l}(x_i)$ respect relations  {\rm (}\ref{DSdostresSPBW1}{\rm )}: 
\begin{equation*}
\nu_{t_l}(x_i)\nu_{t_l}(t_j)-\nu_{t_l}(a_{ijj}t_j+b_{ij})\nu_{t_l}(x_i) = \nu_{t_l}(p_i), \quad \text{for}\ i\in\{1, 2, 3\},\ j, l \in \{1, 2\},
\end{equation*}

and
\begin{equation*}
\nu_{t_l}(x_j)\nu_{t_l}(x_i)-c_{i,j}\nu_{t_l}(x_i)\nu_{t_l}(x_j) = q_{i,j}^{(0)}+\sum_{k=1}^{3}q_{i,j}^{(k)}\nu_{t_l}(x_k), 
\end{equation*}

for $i, j \in \{1, 2, 3\}, \ l \in \{1, 2\}$ with $i < j$. Then
\begin{align}
p_i(a_{ill}-1) = &\ 0, \quad {\rm for}\ i \in\{1, 2, 3\}, \ l \in \{1, 2\}, \notag \\
q_{i,j}^{(0)}(a_{jll}a_{ill}-1) = &\ 0, \quad {\rm and} \label{firstsecondrel33} \\
q_{i,j}^{(l)}(a_{jll}a_{ill}-a_{kll}) = &\ 0, \quad {\rm for} \ i, j,k \in \{1, 2, 3\}, \ l \in \{1, 2\}. \notag 
\end{align}

It is necessary that $\nu_{x_s}$ for $s = 1, 2, 3$ can be extended to $\sigma(\Bbbk[t_1, t_2])\langle x_1, x_2, x_3\rangle$, that is, 
\begin{equation}\label{mapvxt3}
\nu_{x_s}(x_i)\nu_{x_s}(t_j)-\nu_{x_s}(a_{ijj}t_j+b_{ij})\nu_{x_s}(x_i) = \nu_{x_s}(p_i), \quad \text{for}\ i\in\{1, 2, 3\},\ j, l \in \{1, 2\},
\end{equation}

and
\begin{equation}\label{mapvxx33}
\nu_{x_s}(x_j)\nu_{x_s}(x_i)-c_{i,j}\nu_{x_s}(x_i)\nu_{x_s}(x_j) = q_{i,j}^{(0)}+\sum_{k=1}^{3}q_{i,j}^{(k)}\nu_{x_s}(x_k).
\end{equation}

From Equation {\rm (}\ref{mapvxt3}{\rm )} we get the following three cases to be considered:
\begin{itemize}
    \item $s < i$:
    \begin{align*}
     \nu_{x_s}(x_i)\nu_{x_s}(t_j) &\ - \nu_{x_s}(a_{ijj}t_j+b_{ij})\nu_{x_s}(x_i) = \nu_{x_s}(p_i)\\
     c_{s,i}(x_i+q_{s,i}^{(s)})a_{sjj}^{-1}(t_j-b_{sj}) &\ - a_{ijj}a_{sjj}^{-1}(t_j-b_{sj})c_{s,i}(x_i+q_{s,i}^{(s)}) \\
        &\ - b_{ij}c_{s,i}(x_i+q_{s,i}^{(s)}= p_i.
    \end{align*}
    
   In this way, 
    \begin{align*}
        b_{sj}(a_{ijj}-1) &\ =b_{ij}(a_{sjj}-1), \\
        q_{s,i}^{(s)}(a_{ijj}-1) &\ =0, \quad {\rm and} \\
        p_i(c_{s,i}-a_{sjj}) &\ =b_{ij}q_{s,i}^{(s)}.
    \end{align*}
    
    \item $s = i$: It is straightforward that no new conditions are obtained.
    
    \item $s > i$: 

    \begin{align*}
          \nu_{x_s}(x_i)\nu_{x_s}(t_j) &\ - \nu_{x_s}(a_{ijj}t_j+b_{ij})\nu_{x_s}(x_i) = \nu_{x_s}(p_i) \\
           c_{i,s}^{-1}(x_i-q_{i,s}^{(s)})a_{sjj}^{-1}(t_j-b_{sj}) &\ -a_{ijj}a_{sjj}^{-1}(t_j-b_{sj})c_{i,s}^{-1}(x_i-q_{i,s}^{(s)}) \\
           &\ - b_{ij}c_{i,s}^{-1}(x_i-q_{i,s}^{(s)}) = p_i.
    \end{align*}
  
Then, 
\[
q_{i,s}^{(s)}(a_{ijj}-1) = 0 \quad {\rm and} \quad p_i(c_{i,s}a_{sjj}-1) = a_{sjj}b_{ij}q_{i,s}^{(s)}. 
\]
\end{itemize}

Due to expression {\rm (}\ref{mapvxx33}{\rm )} we have to consider the following cases:
\begin{itemize}
    \item $s < i$:
    \begin{equation*}
        \nu_{x_s}(x_j)\nu_{x_s}(x_i)-c_{i,j}\nu_{x_s}(x_i)\nu_{x_s}(x_j) = q_{i,j}^{(0)}+\sum_{k=1}^{3}q_{i,j}^{(k)}\nu_{x_s}(x_k)
    \end{equation*}
    From this equation, we obtain the following new conditions
    \begin{align*}
        q_{i,j}^{(s)}(c_{s,j}c_{s,i}-1) &\ =0, \\
        q_{s,j}^{(s)}(c_{i,j}-1) &\ = q_{i,j}^{i}(c_{s,j}-1), \\
        q_{s,i}^{(s)}(c_{i,j}-1) &\ = q_{i,j}^{j}(c_{s,i}-1), \\
        q_{i,j}^{(k)}(c_{s,j}c_{s,i}-c_{k,s}^{-1}) &\ =0, \quad 1\leq k \leq s-1, \\
        q_{i,j}^{(k)}(c_{s,j}c_{s,i}-c_{s,k}) &\ =0, \quad s+1\leq k \leq 3, k\ne i, j, \quad {\rm and}  \\
        \sum_{k=1}^{s-1}c_{k,s}^{-1}q_{i,j}^{(k)}q_{k,s}^{(s)}-\sum_{k=s+1}^{3}q_{i,j}^{(k)}q_{k,s}^{(s)} &\ +q_{s,i}^{(s)}q_{s,j}^{(s)}(1-c_{i,j})+(c_{s,j}c_{s,i}-1)q_{i,j}^{(0)}  = 0.
    \end{align*}
    
     \item $i < s < j$: 
    \begin{equation*}
        \nu_{x_s}(x_j)\nu_{x_s}(x_i)-c_{i,j}\nu_{x_s}(x_i)\nu_{x_s}(x_j) = q_{i,j}^{(0)}+\sum_{k=1}^{3}q_{i,j}^{(k)}\nu_{x_s}(x_k).
    \end{equation*}
    
    Hence, 
    \begin{align*}
        q_{i,j}^{(s)}(c_{s,j}-c_{i,s}) &\ =0, \\
        q_{s,j}^{(s)}(c_{i,j}-1) &\ = q_{i,j}^{(i)}(c_{s,j}-1), \\
        q_{i,s}^{(s)}(c_{i,j}-1) &\ = q_{i,j}^{(j)}(c_{i,s}-1), \\
        q_{i,j}^{(k)}(c_{s,j}c_{i,s}^{-1}-c_{k,s}^{-1}) &\ =0, \quad 1\leq k \leq s-1, k \ne i,  \\
        q_{i,j}^{(k)}(c_{s,j}c_{i,s}^{-1}-c_{s,k}) &\ =0, \quad s+1\leq k \leq n, k\ne j, \quad {\rm and} \\
        \sum_{k=1}^{s-1}c_{k,s}^{-1}q_{i,j}^{(k)}q_{k,s}^{(s)} -  \sum_{k=s+1}^{3}q_{i,j}^{(k)}q_{s,k}^{(s)} &\ + c_{i,s}^{-1}q_{s,j}^{(s)}q_{i,s}^{(s)}(c_{i,j}-1)  +(c_{s,j}c_{i,s}^{-1}-1)q_{i,j}^{(0)}  = 0.
    \end{align*}
    
    \item $s\geq j$: 
    \begin{equation*}
        \nu_{x_s}(x_j)\nu_{x_s}(x_i)-c_{i,j}\nu_{x_s}(x_i)\nu_{x_s}(x_j) = q_{i,j}^{(0)}+\sum_{k=1}^{3}q_{i,j}^{(k)}\nu_{x_s}(x_k).
    \end{equation*}

Then, 
    \begin{align*}
        q_{i,j}^{(s)}(c_{j,s}c_{i,s}-1) &\ =0, \\
        q_{j,s}^{(s)}(c_{i,j}-1) &\ = q_{i,j}^{(i)}(c_{j,s}-1), \\
        q_{i,j}^{(k)}(c_{j,s}^{-1}c_{i,s}^{-1}-c_{k,s}^{-1}) &\ =0, \quad 1\leq k \leq s-1, k \ne i, j\\
        q_{i,j}^{(k)}(c_{j,s}^{-1}c_{i,s}^{-1}-c_{s,k}) &\ =0, \quad s+1\leq k \leq n,  \quad {\rm and} \\
        \sum_{k=1}^{s-1}c_{k,s}^{-1}q_{i,j}^{(k)}q_{k,s}^{(s)} - \sum_{k=s+1}^{3}q_{i,j}^{(k)}q_{s,k}^{(s)}  &\ + c_{j,s}^{-1}c_{i,s}^{-1}q_{j,s}^{(s)}q_{i,s}^{(s)}(1-c_{i,j})  +(c_{j,s}^{-1}c_{i,s}^{-1}-1)q_{i,j}^{(0)}  = 0.
    \end{align*}
\end{itemize}

If we put together all the conditions for $s \in \{1, 2, 3\}$, then we obtain the restrictions for the extension of the automorphisms.

For the second assertion, it is enough to prove it for the generators $t_j$, $x_i$, $1\leq i \leq 3$ and $j \in \{1, 2\}$. Since
\begin{align}
    \nu_{t_k} \circ \nu_{t_m}(t_j) = &\ \nu_{t_k}(t_j)=t_j, \label{comptttj3}\\
    \nu_{t_m} \circ \nu_{t_k}(t_j) = &\ \nu_{t_m}(t_j)=t_j, \label{comptttj3.} \\
    \nu_{t_k} \circ \nu_{t_m}(x_i) = &\ \nu_{t_k}(a_{imm}x_i)=a_{imm}a_{ikk}x_i, \quad {\rm and}  \label{compttxi3}\\
    \nu_{t_m} \circ \nu_{t_k}(x_i) = &\ \nu_{t_m}(a_{ikk}x_i)=a_{imm}a_{ikk}x_i. \label{compttxi3.}
\end{align}

for all $1\leq i \leq 3$ and $k,m,j \in \{1, 2\}$, then all relations are satisfied, and hence $\nu_{t_k} \circ \nu_{t_m} = \nu_{t_m} \circ \nu_{t_k}$ for $k,m \in \{1, 2\}$.

Now, 
\begin{align}
    \nu_{t_k} \circ \nu_{x_m}(t_j) = &\ \nu_{t_k}(a_{mjj}^{-1}(t_j-b_{mj}))=a_{mjj}^{-1}(t_j-b_{mj}), \label{comptxtj3}\\
    \nu_{x_m} \circ \nu_{t_k}(t_j) = &\ \nu_{x_m}(t_j)=a_{mjj}^{-1}(t_j-b_{mj}), \label{comptxtj3.} \\
    \nu_{t_k} \circ \nu_{x_m}(x_i) = &\ \nu_{t_k}(c_{m,i}x_i+q_{m,i}^{(m)})=c_{m,i}a_{ikk}x_i+q_{m,i}^{(m)}, \label{comptxxi3} \quad {\rm and} \\
    \nu_{x_m} \circ \nu_{t_k}(x_i) = &\ \nu_{x_m}(a_{ikk}x_i)=a_{ikk}(c_{m,i}x_i+q_{m,i}^{(m)}), \label{comptxxi3.}
\end{align}

for all $1\leq i,m \leq 3$ and $k,j \in \{1, 2\}$. As it is clear, expressions {\rm (}\ref{comptxtj3}{\rm )} and {\rm (}\ref{comptxtj3.}{\rm )} hold, while relations {\rm (}\ref{comptxxi3}{\rm )} and {\rm (}\ref{comptxxi3.}{\rm )} are also satisfied due to expression {\rm (}\ref{cond4.593}{\rm )}. In this way, $\nu_{t_k} \circ \nu_{x_m} = \nu_{x_m} \circ \nu_{t_k}$, for  $1\leq m \leq 3$ and $k \in \{1, 2\}$.

Finally, 
\begin{align}
    \nu_{x_k} \circ \nu_{x_m}(t_j) = &\ \nu_{x_k}(a_{mjj}^{-1}(t_j-b_{mj}))=a_{mjj}^{-1}a_{kjj}^{-1}(t_j-b_{kj})-a_{mjj}^{-1}b_{jm}, \label{compxxtj3}\\
    \nu_{x_m} \circ \nu_{x_k}(t_j) = &\ \nu_{x_m}(a_{kjj}^{-1}(t_j-b_{kj}))=a_{kjj}^{-1}a_{mjj}^{-1}(t_j-b_{mj})-a_{kjj}^{-1}b_{kj}, \label{compxxtj3.} \\
    \nu_{x_k} \circ \nu_{x_m}(x_i) = &\ \nu_{x_k}(c_{m,i}x_i+q_{m,i}^{(m)})=c_{m,i}(c_{k,i}x_i+q_{k,i}^{(k)})+q_{m,i}^{(m)}, \quad {\rm and} \label{compxxxi3}\\
    \nu_{x_m} \circ \nu_{x_k}(x_i) = &\ \nu_{x_m}(c_{k,i}x_i+q_{k,i}^{(k)})=c_{k,i}(c_{m,i}x_i+q_{m,i}^{(m)})+q_{k,i}^{(k)}, \label{compxxxi3.}
\end{align}

for all $1\leq i,m,k  \leq 3$ and $j \in \{1, 2\}$. Then relations {\rm (}\ref{compxxtj3}{\rm )} and {\rm (}\ref{compxxtj3.}{\rm )} hold because relation {\rm (}\ref{cond4.563}{\rm )} is satisfied. With respect to relations {\rm (}\ref{compxxtj3}{\rm )} and {\rm (}\ref{compxxtj3.}{\rm )}, both are satisfied due to the expressions {\rm (}\ref{cond4.603}{\rm )} and {\rm (}\ref{cond4.613}{\rm )}. This yields that $\nu_{x_k} \circ \nu_{x_m} = \nu_{x_m} \circ \nu_{x_k}$, for  $1\leq k, m \leq 3$.
\end{proof}

We formulate the important result of this section.

\begin{theorem}\label{smoothPBW23}
If a SPBW extension $\sigma(\Bbbk[t_1, t_2])\langle x_1, x_2, x_3\rangle$ satisfies the conditions in Proposition \ref{autoSkewPBW23}, then it is differentially smooth.
\end{theorem}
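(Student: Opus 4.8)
The plan is to mimic the structure of the proofs of Theorems \ref{smoothPBW2}, \ref{smoothPBW3} and especially \ref{smoothPBW22}, since $\sigma(\Bbbk[t_1,t_2])\langle x_1,x_2,x_3\rangle$ has Gelfand--Kirillov dimension $5$ (two central-type indeterminates $t_1,t_2$ plus three $x_i$'s), so we must construct a $5$-dimensional connected integrable differential calculus. First I would take $\Omega^1$ to be the free right $A$-module on generators $dt_1,dt_2,dx_1,dx_2,dx_3$, and define the left module structure via the automorphisms $\nu_{t_1},\nu_{t_2},\nu_{x_1},\nu_{x_2},\nu_{x_3}$ of Proposition \ref{autoSkewPBW23}, obtaining commutation relations analogous to (\ref{reltx4}), (\ref{reltx4.}), (\ref{relxx4}) but now with three $x$-generators and the relevant $c_{i,j}, q_{i,j}^{(k)}$ terms. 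I would then check that $d\colon t_j\mapsto dt_j$, $x_i\mapsto dx_i$ extends to a derivation: compatibility with (\ref{DSdostresSPBW11}) and (\ref{DSdostresSPBW1}) is exactly guaranteed by conditions (\ref{cond4.563})--(\ref{cond4.633}) and by the fact (part (2) of Proposition \ref{autoSkewPBW23}) that all five automorphisms pairwise commute; as in the earlier proofs, $t_1dt_1=dt_1t_1$ forces $dp_i=0$ since $p_i\in\Bbbk$.

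Next I would introduce the partial derivative maps $\partial_{t_1},\partial_{t_2},\partial_{x_1},\partial_{x_2},\partial_{x_3}$ by $d(a)=\sum dt_j\partial_{t_j}(a)+\sum dx_i\partial_{x_i}(a)$, which are well defined because the differentials freely generate $\Omega^1$ as a right module. Evaluating them on a PBW monomial $t_1^k t_2^s x_1^{l_1}x_2^{l_2}x_3^{l_3}$ gives the expected formulas (the $\partial_{x_i}$ picking up products of $c_{s,i}$'s and shifts by $q_{s,i}^{(i)}$ exactly as in Theorem \ref{smoothPBW3} and Theorem \ref{smoothPBW22}), from which $d(a)=0\iff a\in\Bbbk$, establishing connectedness. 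I would then push $d$ to higher forms: the commuting of the $\nu$'s ensures no relations beyond the skew-commutativity rules $dx_i\wedge dt_j=-a_{ijj}dt_j\wedge dx_i$, $dx_j\wedge dx_i=-c_{i,j}dx_i\wedge dx_j$, $dt_2\wedge dt_1=-dt_1\wedge dt_2$ and their iterated consequences, so $\Omega^k$ is free of rank $\binom{5}{k}$ and $\Omega^5A=\omega A\cong A$ with $\omega=dt_1\wedge dt_2\wedge dx_1\wedge dx_2\wedge dx_3$ and $\nu_\omega=\nu_{t_1}\circ\nu_{t_2}\circ\nu_{x_1}\circ\nu_{x_2}\circ\nu_{x_3}$; thus $\omega$ is a volume form.

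Finally I would verify that $\omega$ is an integrating form via Proposition \ref{BrzezinskiSitarz2017Lemmas2.6and2.7}(2), by exhibiting, for each $1\le k\le 4$, finite families $\omega_i^k,\bar\omega_i^{5-k}$ of basis monomials (wedges of a $k$-subset, resp.\ the complementary $(5-k)$-subset of $\{dt_1,dt_2,dx_1,dx_2,dx_3\}$) with the appropriate scalar coefficients---products of $a_{ijj}^{-1}$ and $c_{i,j}^{-1}$ and a sign---so that $\omega'=\sum_i\omega_i^k\pi_\omega(\bar\omega_i^{5-k}\wedge\omega')$ for all $\omega'\in\Omega^kA$; the general recipe for these coefficients and signs is precisely the one spelled out in the proof of Theorem \ref{smoothPBWn} (the permutation sign $\sharp_{i,j}$ and the product over the inversion set $P_{i,j}$), adapted here with $t=x_0$, $c_{0,i}$ standing for the relevant $a_{ijj}$'s and the extra $t_2$ slot. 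Checking $\omega'=\sum_i\nu_\omega^{-1}(\pi_\omega(\omega'\wedge\omega_i^{5-k}))\bar\omega_i^k$ then follows the same bookkeeping. The one genuinely delicate point---and the main obstacle---is the combinatorial verification that these scalar coefficients are mutually consistent for all $k$ simultaneously: one must be careful that the factors $c_{i,j}^{-1}$ and $a_{ijj}^{-1}$ introduced when reordering a wedge to the standard order $dt_1\wedge dt_2\wedge dx_1\wedge dx_2\wedge dx_3$ cancel correctly against those absorbed into $\pi_\omega$, exactly as in the $n$-indeterminate argument; once the pattern from Theorem \ref{smoothPBWn} is invoked this is routine but tedious, so I would state the explicit $\omega_i^k,\bar\omega_i^{5-k}$ (as in the proof of Theorem \ref{smoothPBW22}) and leave the two displayed summation checks to the reader, concluding that $\sigma(\Bbbk[t_1,t_2])\langle x_1,x_2,x_3\rangle$ is differentially smooth.
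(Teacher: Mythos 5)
Your proposal follows essentially the same route as the paper's proof: a rank-$5$ free bimodule $\Omega^1$ twisted by the five commuting automorphisms of Proposition \ref{autoSkewPBW23}, the same partial-derivative argument for connectedness, the same volume form $\omega=dt_1\wedge dt_2\wedge dx_1\wedge dx_2\wedge dx_3$ with $\nu_\omega$ the composite of the five automorphisms, and the same verification of integrability via Proposition \ref{BrzezinskiSitarz2017Lemmas2.6and2.7}(2) by exhibiting the complementary families $\omega_i^k,\bar\omega_i^{5-k}$ with coefficients that are signed products of $a_{ijj}^{-1}$ and $c_{i,j}^{-1}$. The paper writes out all of these forms explicitly and then, like you, leaves the final summation identities as a routine check, so your appeal to the general pattern of Theorem \ref{smoothPBWn} is a faithful match.
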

\begin{proof}
We know that ${\rm GKdim}(\sigma(\Bbbk[t_1, t_2])\langle x_1, x_2, x_3\rangle) =5$ and that we have to consider $\Omega^{1}(\sigma(\Bbbk[t_1, t_2])\langle x_1, x_2, x_3\rangle)$, a free right $\sigma(\Bbbk[t_1, t_2])\langle x_1, x_2, x_3\rangle$-module of rank $5$ with generators $dt_1$, $dt_2, dx_1, dx_2, dx_3$. 

Define a left $\sigma(\Bbbk[t_1, t_2])\langle x_1, x_2, x_3\rangle$-module structure by
    \begin{equation}\label{relrightmod323}
        f dt_i = dt_i \nu_{t_i}(f), \quad f dx_j = dx_j\nu_{x_j}(f),
    \end{equation}

for all $i\in\{1, 2\},\ j \in\{1, 2, 3\}, f \in \sigma(\Bbbk[t_1, t_2])\langle x_1, x_2, x_3\rangle$, where $\nu_{t_i}$, $\nu_{x_j}$ are the algebra automorphisms for $i\in\{1, 2\}, \ j \in\{1, 2, 3\}$  established in Proposition \ref{autoSkewPBW23}. 

The relations in $\Omega^{1}(\sigma(\Bbbk[t_1, t_2])\langle x_1, x_2, x_3\rangle)$ are given by 
    \begin{align}
        t_idt_j = &\ dt_j t_i & t_idx_j = &\ dx_ja_{jii}^{-1}(t_i-b_{ij}), \quad \text{ for all } i\in\{1, 2\},\ j \in\{1, 2, 3\}, \label{reltx5}  \\
        x_idx_i = &\ dx_ix_i,  &  x_idt_j = &\ dt_ja_{ijj}x_i, \quad \text{ for all } i\in\{1, 2\},\ j \in\{1, 2, 3\}, \label{reltx5.} 
         \end{align}

and 
        \begin{align}
        x_idx_j = &\ dx_j(c_{i,j}^{-1}x_i-c_{i,j}^{-1}q_{i,j}^{(j)}), \quad \text{for}\ i < j, \quad {\rm and} \\
        x_idx_j = &\ dx_j(c_{j,i}x_i + q_{j,i}^{(j)}), \quad \text{for}\ i > j.  \label{relxx5} 
    \end{align}

We extend $t_i\mapsto dt_i$ and $x_j\mapsto dx_j$ for $i \in \{1, 2\}$ and $j \in \{1, 2, 3\}$ to a map 
$$
d: \sigma(\Bbbk[t_1, t_2])\langle x_1, x_2, x_3\rangle \to \Omega^{1}(\sigma(\Bbbk[t_1, t_2])\langle x_1, x_2, x_3\rangle)
$$ 

satisfying the Leibniz's rule. This must satisfy the relations given by
    \begin{align*}
        dx_it_j+x_idt_j &\ = a_{ijj}dt_jx_i+a_{ijj}t_jdx_i+b_{ij}dx_i, \quad \text{for}\ i\in\{1, 2\},\ j \in\{1, 2, 3\} \\
         dx_j x_i+x_jdx_1 = &\ c_{i, j}dx_i x_j+c_{i,j}x_jdx_1 + \sum_{k=1}^{3}q_{i,j}^{(k)}dx_k, \quad\ {\rm for}\ i, j, k \in \{1, 2, 3\},\ i<j.
    \end{align*}
    
Define $\Bbbk$-linear maps $$
\partial_{t_i}, \partial_{x_i}: \sigma(\Bbbk[t_1, t_2])\langle x_1, x_2, x_3\rangle \rightarrow \sigma(\Bbbk[t_1, t_2])\langle x_1, x_2, x_3\rangle
$$ 

such that
\begin{align*}
d(a)=dt_1\partial_{t_1}(a)+dt_2\partial_{t_2}(a)+\sum_{i=1}^{3}dx_i\partial_{x_i}(a), \text{ for all } a \in \sigma(\Bbbk[t_1, t_2])\langle x_1, x_2, x_3\rangle.
\end{align*}

These maps are well-defined since $dt_i$, $dx_j$, $i\in\{1, 2\}, j \in\{1, 2, 3\}$ are free generators of the right $\sigma(\Bbbk[t_1, t_2])\langle x_1, x_2, x_3\rangle$-module $\Omega^1(\sigma(\Bbbk[t_1, t_2])\langle x_1, x_2, x_3\rangle)$. With that, $d(a)=0$ if and only if $\partial_{t_i}(a)=\partial_{x_j}(a)=0$, $i\in\{1, 2\}$ and $j \in\{1, 2, 3\}$. Using relations {\rm (}\ref{relrightmod323}{\rm )} and definitions of the maps $\nu_{t_i}$, $\nu_{x_j}$, $i\in\{1, 2\}$ and $j \in\{1, 2, 3\}$, we obtain that
\begin{align}
\partial_{t_1}(t_1^kt_2^{s}x_1^{l_1}x_2^{l_2}x_3^{l_3}) = &\ kt_1^{k-1}t_2^{s}x_1^{l_1}x_2^{l_2}x_3^{l_3}, \\
\partial_{t_2}(t_1^kt_2^{s}x_1^{l_1}x_2^{l_2}x_3^{l_3}) = &\ st_1^kt_2^{s-1}x_1^{l_1}x_2^{l_2}x_3^{l_3}, \notag \\
\partial_{x_1}(t_1^kt_2^{s}x_1^{l_1}x_2^{l_2}x_3^{l_3}) = &\ l_1a_{111}^{-k}a_{122}^{-s}(t_1-b_{11})^k(t_2-b_{12})^sx_1^{l_1-1}x_2^{l_2}x_3^{l_3},  \notag \\
\partial_{x_2}(t_1^kt_2^{s}x_1^{l_1}x_2^{l_2}x_3^{l_3}) = &\ l_2c_{1,2}^{-l_1}a_{211}^{-k}a_{222}^{-s}(t_1-b_{21})^k(t_2-b_{22})^s(x_1-q_{1,2}^{(2)})^{l_1}x_2^{l_2-1}x_3^{l_3}, \quad {\rm and} \notag \\
\partial_{x_3}(t_1^kt_2^{s}x_1^{l_1}x_2^{l_2}x_3^{l_3}) = &\ 
l_3c_{2,3}^{-l_2}c_{1,3}^{-l_1}a_{311}^{-k}a_{322}^{-s}(t_1-b_{31})^k(t_2-b_{32})^s(x_1-q_{1,3}^{(3)})^{l_1} \notag \\
&\ (x_2-q_{2,3}^{(3)})^{l_2}x_3^{l_3-1}. \notag
\end{align}

Then, $d(a)=0$ if and only if $a$ is a scalar multiple of the identity. This shows that $\Omega(\sigma(\Bbbk[t_1, t_2])\langle x_1, x_2, x_3\rangle,d)$ is connected, where 
\[
\Omega(\sigma(\Bbbk[t_1, t_2])\langle x_1, x_2, x_3\rangle) = \bigoplus_{i=0}^{4}\Omega^i(\sigma(\Bbbk[t_1, t_2])\langle x_1, x_2, x_3\rangle).
\]

The universal extension of $d$ to higher forms compatible with {\rm (}\ref{reltx5}{\rm )}, {\rm (}\ref{reltx5.}{\rm )} and {\rm (}\ref{relxx5}{\rm )} gives the following rules for $\Omega^l(\sigma(\Bbbk[t_1, t_2])\langle x_1, x_2, x_3\rangle)$ with $l = 2, 3, 4$:
{\footnotesize{
\begin{align}\label{rel3wedge23}
 dt_2 \wedge dt_1  = &\ - dt_1 \wedge dt_2, \\
 dx_i\wedge dt_j = &\ -a_{ijj}dt_j\wedge dx_i, \quad \text{ for } i\in\{1, 2\}, j \in\{1, 2, 3\}, \notag \\
 dx_k\wedge dx_j = &\ -c_{j,k}dx_j\wedge dx_k, \quad \text{ for } j, k \in\{1, 2, 3\}, j<k \notag \\
dx_j\wedge dt_2\wedge dt_1 = &\ -a_{j11}a_{j22}dt_1\wedge dt_2\wedge dx_j, \text{ for } j \in\{1, 2, 3\}, \notag  \\
dx_k\wedge dx_j\wedge dt_i = &\ -c_{j,k}a_{jii}a_{kii}dt_i\wedge dx_j\wedge dx_k, \text{ for } i\in\{1, 2\}, j,k \in\{1, 2, 3\}, j <k, \notag  \\
dx_3\wedge dx_2\wedge dx_1 = &\ -c_{2,3}c_{1,2}c_{1,3}dx_1\wedge dx_2\wedge dx_3, \quad {\rm and} \notag \\
dx_k\wedge dx_j\wedge dt_2\wedge dt_1 = &\  c_{j, k}a_{1jj}a_{1kk}a_{2jj}a_{2kk}dt_1\wedge dt_2\wedge dx_j \wedge dx_k, \text{ for } j,k \in\{1, 2, 3\}, j <k,\notag \\
dx_3\wedge dx_2\wedge dx_1\wedge dt_i = &\  c_{1, 2}c_{1, 3}c_{2, 3}a_{1ii}a_{2ii}a_{3ii}dt_i\wedge dx_1\wedge dx_2 \wedge dx_3, \text{ for } i \in\{1, 2\},\notag 
\end{align}
}}

Since the automorphisms $\nu_{t_i}$, $\nu_{x_j}$, $i\in\{1, 2\}, j \in\{1, 2, 3\}$ commute with each other, there are no additional relationships to the previous ones, so we write
\begin{align*}
\Omega^{4}(\sigma(\Bbbk[t_1, t_2])\langle x_1, x_2, x_3\rangle) = &\ [dt_1\wedge dt_2\wedge dx_1 \wedge dx_2 \oplus dt_1\wedge dt_2\wedge dx_1 \wedge dx_3 \\
&\ \oplus dt_1\wedge dt_2\wedge dx_2 \wedge dx_3  \oplus dt_1\wedge dx_1\wedge dx_2 \wedge dx_3 \\
&\ \oplus dt_2\wedge dx_1\wedge dx_2 \wedge dx_3] \sigma(\Bbbk[t_1, t_2])\langle x_1, x_2, x_3\rangle.
\end{align*}

Now, 
$$
\Omega^5(\sigma(\Bbbk[t_1, t_2])\langle x_1, x_2, x_3\rangle) = \omega\sigma(\Bbbk[t_1, t_2])\langle x_1, x_2, x_3\rangle\cong \sigma(\Bbbk[t_1, t_2])\langle x_1, x_2, x_3\rangle
$$ 

as a right and left $\sigma(\Bbbk[t_1, t_2])\langle x_1, x_2, x_3\rangle$-module, with 
$$
\omega=dt_1\wedge dt_2 \wedge dx_1 \wedge dx_2 \wedge x_3, \quad {\rm where}\  \nu_{\omega}=\nu_{t_1}\circ\nu_{t_2}\circ\nu_{x_1}\circ\nu_{x_2}\circ\nu_{x_3}.
$$

Then $\omega$ is a volume form of $\sigma(\Bbbk[t_1, t_2])\langle x_1, x_2, x_3\rangle$. From Proposition \ref{BrzezinskiSitarz2017Lemmas2.6and2.7} (2), it follows that $\omega$ is an integral form by setting
\begin{align*}
    &\ \omega_1^1= dt_1, \quad \omega_2^1= dt_2, \quad \omega_3^1= dx_1, \quad \omega_4^1= dx_2, \quad \omega_5^1= dx_3 \\
    &\ \omega_1^2=dt_1\wedge dt_2, \quad \omega_2^2=dt_1\wedge dx_1, \quad \omega_3^2= dt_1\wedge dx_2, \quad  \omega_4^2= dt_1\wedge dx_3, \\
    &\ \omega_5^2= dt_2\wedge dx_1, \quad \omega_6^2= dt_2\wedge dx_2, \quad  \omega_7^2= dt_2\wedge dx_3, \quad  \omega_8^2= dx_1\wedge dx_2,  \\
    &\ \omega_9^2= dx_1\wedge dx_3, \quad \omega_{10}^2= dx_2\wedge dx_3,\\
    &\ \omega_1^3=dt_1\wedge dt_2\wedge dx_1, \quad \omega_2^3= dt_1\wedge dt_2 \wedge dx_2, \quad \omega_3^3= dt_1\wedge dt_2 \wedge dx_3, \\
    &\ \omega_4^3= dt_1\wedge dx_1 \wedge dx_2,  \quad \omega_5^3= dt_1\wedge dx_1 \wedge dx_3,  \quad \omega_6^3= dt_1\wedge dx_2 \wedge dx_3, \\
    &\ \omega_7^3= dt_2\wedge dx_1 \wedge dx_2,  \quad \omega_8^3= dt_2\wedge dx_1 \wedge dx_3,  \quad \omega_9^3= dt_2\wedge dx_2 \wedge dx_3, \\
     &\ \omega_{10}^3= dx_1\wedge dx_2 \wedge dx_3, \\
     &\ \omega_1^4=dt_1\wedge dt_2\wedge dx_1\wedge dx_2, \quad \omega_2^4= dt_1\wedge dt_2\wedge dx_1\wedge dx_3,\\
     &\ \omega_3^4= dt_1\wedge dt_2\wedge dx_2\wedge dx_3, \quad \omega_4^4= dt_1\wedge dx_1\wedge dx_2\wedge dx_3,  \\
     &\ \omega_5^4= dt_2\wedge dx_1\wedge dx_2\wedge dx_3,\\
     &\ \bar{\omega}_1^1=a_{311}^{-1}a_{322}^{-1}c_{1,3}^{-1}c_{2,3}^{-1}dx_3, \quad \bar{\omega}_2^1=-c_{1,2}^{-1}a_{222}^{-1}a_{211}^{-1}dx_2,\quad \bar{\omega}_3^1=a_{122}^{-1}a_{111}^{-1}dx_1, \\
     &\ \bar{\omega}_4^1=-dt_2, \quad \bar{\omega}_5^1=dt_1 \\
    &\ \bar{\omega}_1^2=c_{1,2}^{-1}c_{1,3}^{-1}a_{211}^{-1}a_{311}^{-1}a_{222}^{-1}a_{322}^{-1}dx_2\wedge dx_3, \quad  \bar{\omega}_2^2=-c_{2,3}^{-1}a_{111}^{-1}a_{311}^{-1}a_{122}^{-1}a_{322}^{-1}dx_1\wedge dx_3, \\
    &\ \bar{\omega}_3^2 = a_{111}^{-1}a_{211}^{-1}a_{122}^{-1}a_{222}^{-1}a_{111}^{-1}dx_1\wedge dx_2, \quad \bar{\omega}_4^2 = c_{1,3}^{-1}c_{2,3}^{-1}a_{311}^{-1}dt_2\wedge dx_3, \\
    &\ \bar{\omega}_5^2 = -c_{1,2}^{-1}a_{211}^{-1}dt_2\wedge dx_2, \quad \bar{\omega}_6^2 = -a_{111}^{-1}dt_2\wedge dx_1, \quad \bar{\omega}_7^2 = -c_{1,3}^{-1}c_{2,3}^{-1}a_{322}^{-1}dt_1\wedge dx_3,\\
    &\ \bar{\omega}_8^2 = c_{1,2}^{-1}a_{222}^{-1}dt_1\wedge dx_2, \quad \bar{\omega}_9^2 = -a_{122}^{-1}dt_1\wedge dx_1, \quad \bar{\omega}_{10}^2 = dt_1\wedge dt_2,\\
    &\ \bar{\omega}_1^3=-a_{111}^{-1}a_{211}^{-1}a_{311}^{-1}a_{122}^{-1}a_{222}^{-1}a_{322}^{-1}dx_1\wedge dx_2\wedge dx_3, \\
    &\ \bar{\omega}_2^3=-c_{1,2}^{-1}c_{1,3}^{-1}a_{211}^{-1}a_{311}^{-1}dt_2\wedge dx_2 \wedge dx_3, \quad   \bar{\omega}_3^3=c_{2,3}^{-1}a_{311}^{-1}a_{111}^{-1}dt_2\wedge dx_1\wedge dx_3, \\
   &\ \bar{\omega}_4^3=-a_{211}^{-1}a_{111}^{-1}dt_2 \wedge dx_1 \wedge dx_2, \quad \bar{\omega}_5^3=c_{1,2}^{-1}c_{1,3}^{-1}a_{222}^{-1}a_{322}^{-1}dt_1 \wedge dx_2 \wedge dx_3, \\
   &\ \bar{\omega}_6^3=-c_{2,3}^{-1}a_{122}^{-1}a_{322}^{-1}dt_1 \wedge dx_1 \wedge dx_3, \quad \bar{\omega}_7^3=a_{122}^{-1}a_{222}^{-1}dt_1 \wedge dx_1 \wedge dx_2, \\
   &\ \bar{\omega}_8^3=c_{2,3}^{-1}c_{1,3}^{-1}dt_1 \wedge dt_2 \wedge dx_3, \quad \bar{\omega}_9^3=-c_{1,2}^{-1}dt_1 \wedge dt_2 \wedge dx_2, \quad \bar{\omega}_{10}^3=dt_1 \wedge dt_2 \wedge dx_1, \\
   &\ \bar{\omega}_1^4=a_{311}^{-1}a_{211}^{-1}a_{111}^{-1}dt_2\wedge dx_1 \wedge dx_2 \wedge dx_3, \\
   &\ \bar{\omega}_2^4=-a_{322}^{-1}a_{222}^{-1}a_{122}^{-1}dt_1\wedge dx_1 \wedge dx_2 \wedge dx_3, \\
   &\ \bar{\omega}_3^4=c_{1,2}^{-1}c_{1,3}^{-1}dt_1\wedge dt_2 \wedge dx_2 \wedge dx_3, \quad \bar{\omega}_4^4=-c_{2,3}^{-1}dt_1\wedge dt_2 \wedge dx_1 \wedge dx_3, \\
   &\ \bar{\omega}_5^4=dt_1\wedge dt_2 \wedge dx_1 \wedge dx_2.
\end{align*}

It can be seen that any element $\omega' \in \Omega^{l}(\sigma(\Bbbk[t_1, t_2])\langle x_1, x_2, x_3\rangle)$, $l\in \{1, 2, 3, 4 \}$ can be generated by $\omega_{i}^{l}$, $\bar{\omega}_{i}^{5-l}$, $1\leq i \leq \binom{5}{l}$. Hence, $\sigma(\Bbbk[t_1, t_2])\langle x_1, x_2, x_3 \rangle$ is differentially smooth.
\end{proof}

\subsection{SPBW extensions in \texorpdfstring{$n$}{Lg} indeterminates}\label{SPBWTMTwoIn}

Let $\sigma(\Bbbk[t_1, t_2]) \langle x_1, \ldots, x_n\rangle$. Consider once more again the family of automorphisms of the previous section. We get that
\begin{align}
    x_it_j = &\  a_{ijj}t_jx_i+b_{ij}x_i+ p_i, \quad \text{ for } 1\leq i \leq n \text{ and } j=1, 2, \notag \\
    x_j x_i = &\ c_{i, j}x_i x_j + q_{i,j}^{(0)} + \sum_{k=1}^{n}q_{i,j}^{(k)}x_k, \quad\ {\rm for}\ 1\leq i, j, k \leq n,\ i<j, \label{DSdosnSPBW1}
\end{align}
where $c_{i,j}\in\Bbbk^{\ast}$, $q_{i,j}^{(0)}, q_{i,j}^{(k)}\in \Bbbk$,  for $1\leq i, j, k \leq n$, $i<j$ and $ a_{ijj}\in\Bbbk^{\ast}$, $b_{ij}, p_i \in\Bbbk$ where $1\leq i \leq n$ and $j\in \{1,2 \}$.
\begin{proposition}\label{autoSkewPBW2n} 
Let
\begin{align}
   \nu_{t_i}(t_j) = &\ t_j, & \nu_{t_i}(x_k) &\ = a_{kii}x_k, \label{Autoskew4.42} \\
   \nu_{x_k}(t_i) = &\ a_{kii}^{-1}(t_i-b_{ki}),   &  \nu_{x_k}(x_k) = &\ x_k, \label{Autoskew4.43} \\
   \nu_{x_i}(x_j) = &\ c_{i,j}x_j+q_{i,j}^{(i)}, \quad {\rm for}\ i < j, & \nu_{x_i}(x_j) = &\ c_{j,i}^{-1}x_j-c_{j,i}^{-1}q_{j,i}^{(i)},  \quad {\rm for}\ i > j,  \label{Autoskew4.44}   
\end{align}
Then:
\begin{enumerate}
\item [\rm (1)] Leibniz's rule holds if the following relationships are satisfied:
\begin{align}
    b_{sl}(a_{ill}-1) &\ =b_{il}(a_{sll}-1), \label{cond4.56} \\
    q_{s,i}^{(s)}(a_{ill}-1) &\ =0,\label{cond4.57} \\
    p_i(c_{s,i}-a_{sll}) &\ =b_{il}q_{s,i}^{(s)}, \label{cond4.58} \\
    q_{i,j}^{(s)}(c_{s,j}c_{s,i}-1) &\ =0, \label{cond4.59} \\
    q_{s,j}^{(s)}(c_{i,j}-1) &\ = q_{i,j}^{(i)}(c_{s,j}-1), \label{cond4.60} \\
    q_{s,i}^{(s)}(c_{i,j}-1) &\ = q_{i,j}^{(j)}(c_{s,i}-1), \label{cond4.61} \\
    q_{i,j}^{(k)}(c_{s,j}c_{s,i}-c_{k,s}^{-1}) &\ =0, \quad 1\leq k \leq n, \label{cond4.62} \\
     \sum_{k=1}^{s-1}c_{k,s}^{-1}q_{i,j}^{(k)}q_{k,s}^{(s)}-\sum_{k=s+1}^{n}q_{i,j}^{(k)}q_{k,s}^{(s)} &\ 
     +q_{s,i}^{(s)}q_{s,j}^{(s)}(1-c_{i,j})+(c_{s,j}c_{s,i}-1)q_{i,j}^{(0)}  = 0. \label{cond4.63}
\end{align}
for $1 \leq i,j,s \leq n$, $i<j$, $l \in \{1, 2\}$, having the convention of $c_{r,t}:=c_{t,r}^{-1}$, $q_{r,t}^{(p)}:=-c_{t,r}^{-1}q_{t,r}^{(p)}$, for all $1 \leq r,t,p \leq n$, $t<r$ and  $c_{r,r}=1$ and $q_{r,r}^{(p)}=0$ for all $1 \leq r,p \leq n$.

The maps defined by {\rm (}\ref{Autoskew4.42}{\rm )}, {\rm (}\ref{Autoskew4.43}{\rm )} and {\rm (}\ref{Autoskew4.44}{\rm )} simultaneously extend to algebra automorphisms $\nu_{t_1}, \nu_{t_2}, \nu _{x_i},1 \leq i \leq n$, of $\sigma(\Bbbk[t_1, t_2])\langle x_1, \ldots, x_n\rangle$ when the previous relations are satisfied.
\end{enumerate}
\begin{enumerate}
\item [\rm (2)] If the relations in {\rm (1)} hold, then  
\begin{equation}\label{Eq1skew2n}
   \nu_{t_i} \circ \nu_{t_j} = \nu_{t_j} \circ \nu_{t_i}, \quad \nu_{t_j} \circ \nu_{x_k} = \nu_{x_k} \circ \nu_{t_j}, \quad \nu_{x_k} \circ \nu_{x_l} = \nu_{x_l} \circ \nu_{x_k}, 
\end{equation}

for $i, j = 1, 2$ and $1\le k, l \leq n$.
\end{enumerate}
\end{proposition}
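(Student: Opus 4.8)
The plan is to carry out, for an arbitrary number $n$ of generators, the same verification performed for three generators in the proof of Proposition~\ref{autoSkewPBW23}. Each of the assignments \eqref{Autoskew4.42}, \eqref{Autoskew4.43}, \eqref{Autoskew4.44} is an affine, invertible transformation of the generating set $\{t_1,t_2,x_1,\dots,x_n\}$ of $A:=\sigma(\Bbbk[t_1,t_2])\langle x_1,\dots,x_n\rangle$, so it is enough to show that each of $\nu_{t_1},\nu_{t_2},\nu_{x_1},\dots,\nu_{x_n}$ respects the defining relations \eqref{DSdosnSPBW1}; once this is known, each assignment extends uniquely to an algebra endomorphism of $A$, and since the inverse assignments have the same affine shape these endomorphisms are automorphisms. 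This turns part~(1) into a finite family of coefficient comparisons in the PBW basis $\mathrm{Mon}(A)$.

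First I would treat $\nu_{t_l}$, $l\in\{1,2\}$: applying $\nu_{t_l}$ to $x_it_j=a_{ijj}t_jx_i+b_{ij}x_i+p_i$ and to $x_jx_i=c_{i,j}x_ix_j+q_{i,j}^{(0)}+\sum_{k=1}^n q_{i,j}^{(k)}x_k$, and reading off the coefficients of $1$ and of each $x_k$, yields identities of the form $p_i(a_{ill}-1)=0$, $q_{i,j}^{(0)}(a_{jll}a_{ill}-1)=0$, $q_{i,j}^{(k)}(a_{jll}a_{ill}-a_{kll})=0$, which are subsumed by the hypotheses \eqref{cond4.56}--\eqref{cond4.63} (using also that the $p_i$ and $b_{ij}$ are scalars, as established at the start of Section~\ref{SPBWTMTwoIn}). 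Then I would treat $\nu_{x_s}$, $1\le s\le n$. For the relation $x_it_j=\cdots$ one distinguishes the cases $s<i$, $s=i$, $s>i$, substituting the corresponding affine form of $\nu_{x_s}(x_i)$ from \eqref{Autoskew4.43}--\eqref{Autoskew4.44}; this produces \eqref{cond4.56}, \eqref{cond4.57}, \eqref{cond4.58}. For the quadratic relation $x_jx_i=\cdots$ one distinguishes $s<i$, $i<s<j$, $s\ge j$, expands $\nu_{x_s}$ on every $x_k$ occurring in $\sum_{k=1}^n q_{i,j}^{(k)}x_k$ via the conventions $c_{r,t}:=c_{t,r}^{-1}$, $q_{r,t}^{(p)}:=-c_{t,r}^{-1}q_{t,r}^{(p)}$, $c_{r,r}=1$, $q_{r,r}^{(p)}=0$, and reorganises the result into the normal form $q_{i,j}^{(0)}+\sum_k q_{i,j}^{(k)}x_k$; matching coefficients gives exactly \eqref{cond4.59}--\eqref{cond4.63}. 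Conversely, if \eqref{cond4.56}--\eqref{cond4.63} all hold then every comparison above is satisfied, so all $n+2$ maps extend to automorphisms of $A$, proving part~(1).

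For part~(2) I would check the three families of identities \eqref{Eq1skew2n} only on the generators $t_1,t_2,x_1,\dots,x_n$, since a $\Bbbk$-algebra endomorphism of $A$ is determined by its values there. On the $t$'s: $\nu_{t_1},\nu_{t_2}$ act trivially and $\nu_{x_m}$ acts by $t_j\mapsto a_{mjj}^{-1}(t_j-b_{mj})$, so $\nu_{t_i}\circ\nu_{t_j}=\nu_{t_j}\circ\nu_{t_i}$ and $\nu_{t_j}\circ\nu_{x_m}=\nu_{x_m}\circ\nu_{t_j}$ are immediate, while $\nu_{x_k}\circ\nu_{x_m}=\nu_{x_m}\circ\nu_{x_k}$ evaluated at $t_j$ reduces to $b_{mj}(a_{kjj}-1)=b_{kj}(a_{mjj}-1)$, which is \eqref{cond4.56}. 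On the $x$'s: $\nu_{t_j}$ acts by scaling and $\nu_{x_m}$ sends $x_i$ to an affine expression in $x_i$ with leading coefficient $c_{m,i}$ (read via the conventions above); then $\nu_{t_j}\circ\nu_{x_m}=\nu_{x_m}\circ\nu_{t_j}$ at $x_i$ amounts to $q_{m,i}^{(m)}(a_{ijj}-1)=0$, which is \eqref{cond4.57}, and $\nu_{x_k}\circ\nu_{x_m}=\nu_{x_m}\circ\nu_{x_k}$ at $x_i$ amounts to the symmetric identity $q_{k,i}^{(k)}(c_{m,i}-1)=q_{m,i}^{(m)}(c_{k,i}-1)$, which follows from \eqref{cond4.60}--\eqref{cond4.61}. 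This establishes \eqref{Eq1skew2n}.

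The hard part will be the case bookkeeping for $\nu_{x_s}$ applied to the quadratic relations: for every triple $(i,j,s)$ and every summand $q_{i,j}^{(k)}x_k$ one must determine which of the three affine formulas for $\nu_{x_s}(x_k)$ is in force (this depends on the position of $k$ relative to $s$), and then collapse the resulting linear combination back to normal form. It is precisely this step that dictates the slightly unwieldy conventions $c_{r,t}:=c_{t,r}^{-1}$ and $q_{r,t}^{(p)}:=-c_{t,r}^{-1}q_{t,r}^{(p)}$, which are what allow the outcome to be recorded uniformly as \eqref{cond4.62}--\eqref{cond4.63}; checking that they do so is the one delicate point, and it is an $n$-index transcription of the $n=3$ computation already displayed in the proof of Proposition~\ref{autoSkewPBW23}.
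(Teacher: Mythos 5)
Your proposal follows essentially the same route as the paper's own proof: first reduce extendability of each $\nu_{t_l}$ and $\nu_{x_s}$ to coefficient comparisons against the defining relations in the PBW basis, split into the cases $s<i$, $s=i$, $s>i$ (resp. $s\le i$, $i<s<j$, $s\ge j$ for the quadratic relations) to obtain exactly \eqref{cond4.56}--\eqref{cond4.63}, and then verify the commutation identities \eqref{Eq1skew2n} on generators using \eqref{cond4.56}, \eqref{cond4.59}, \eqref{cond4.60}--\eqref{cond4.61}. The case bookkeeping and the role of the conventions $c_{r,t}:=c_{t,r}^{-1}$, $q_{r,t}^{(p)}:=-c_{t,r}^{-1}q_{t,r}^{(p)}$ that you flag as the delicate point are precisely what the paper carries out.
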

\begin{proof}
For the first assertion, the map $\nu_{t_l}$, $l=1, 2$ can be extended to an algebra homomorphism if and only if the definitions of $\nu_{t_l}(t_j)$,  and $\nu_{t_l}(x_i)$ respect relations  {\rm (}\ref{DSdosnSPBW1}{\rm )}, i.e. 
\begin{equation*}
\nu_{t_l}(x_i)\nu_{t_l}(t_j) - \nu_{t_l}(a_{ijj}t_j+b_{ij})\nu_{t_l}(x_i) = \nu_{t_l}(p_i), \quad \text{for}\ 1\le i \le n,\  j, l \in \{1, 2\},
\end{equation*}

and 
\begin{equation*}
\nu_{t_l}(x_j)\nu_{t_l}(x_i)-c_{i,j}\nu_{t_l}(x_i)\nu_{t_l}(x_j) = q_{i,j}^{(0)}+\sum_{k=1}^{3}q_{i,j}^{(k)}\nu_{t_l}(x_k),
\end{equation*}

for $1\le i, j \le n$, $l \in \{1, 2\}$, and $i < j$. Then
\begin{align}
p_i(a_{ill}-1) = &\ 0, \quad {\rm for}\ 1\le i \le n, \ l \in \{1, 2\} \notag \\
q_{i,j}^{(0)}(a_{jll}a_{ill}-1) = &\ 0, \quad {\rm and} \label{firstsecondrel33n} \\
q_{i,j}^{(k)}(a_{jll}a_{ill}-a_{kll}) = &\ 0, \quad {\rm for}\  1\leq i, j, k \leq n, \ l \in \{1, 2\}. \notag 
\end{align}

Now, it is necessary that $\nu_{x_s}$, $1\leq s \leq n$,  can also be extended to the entire SPBW extension $\sigma(\Bbbk[t_1, t_2])\langle x_1, \ldots, x_n\rangle$. Thus,
\begin{equation}\label{mapvxtn}
\nu_{x_s}(x_i)\nu_{x_s}(t_j)-\nu_{x_s}(a_{ijj}t_j+b_{ij})\nu_{x_s}(x_i) = \nu_{x_s}(p_i), \quad \text{for}\ i\in\{1, \ldots, n\},\ j, l \in \{1, 2\},
\end{equation}

and
\begin{equation}\label{mapvxxnn}
\nu_{x_s}(x_j)\nu_{x_s}(x_i)-c_{i,j}\nu_{x_s}(x_i)\nu_{x_s}(x_j) = q_{i,j}^{(0)}+\sum_{k=1}^{n}q_{i,j}^{(k)}\nu_{x_s}(x_k), 
\end{equation}
For Equation {\rm (}\ref{mapvxtn}{\rm )}, the following three cases are considered:
\begin{itemize}
    \item $s<i$. We obtain 
    \begin{equation*}
        \nu_{x_s}(x_i)\nu_{x_s}(t_j)-\nu_{x_s}(a_{ijj}t_j+b_{ij})\nu_{x_s}(x_i) = \nu_{x_s}(p_i)
    \end{equation*}
    \begin{equation*}
        c_{s,i}(x_i+q_{s,i}^{(s)})a_{sjj}^{-1}(t_j-b_{sj})-a_{ijj}a_{sjj}^{-1}(t_j-b_{sj})c_{s,i}(x_i+q_{s,i}^{(s)})-b_{ij}c_{s,i}(x_i+q_{s,i}^{(s)})=p_i
    \end{equation*}
    From this equation, we obtain the following conditions
    \begin{align*}
        b_{sj}(a_{ijj}-1) &\ =b_{ij}(a_{sjj}-1), \\
        q_{s,i}^{(s)}(a_{ijj}-1) &\ =0, \\
        p_i(c_{s,i}-a_{sjj}) &\ =b_{ij}q_{s,i}^{(s)}.
    \end{align*}
    \item $s=i$. In this case, when the calculations are made, no new conditions are obtained.
    \item $s>i$. We obtain 
    \begin{equation*}
        \nu_{x_s}(x_i)\nu_{x_s}(t_j)-\nu_{x_s}(a_{ijj}t_j+b_{ij})\nu_{x_s}(x_i) = \nu_{x_s}(p_i)
    \end{equation*}
    \begin{equation*}
        c_{i,s}^{-1}(x_i-q_{i,s}^{(s)})a_{sjj}^{-1}(t_j-b_{sj})-a_{ijj}a_{sjj}^{-1}(t_j-b_{sj})c_{i,s}^{-1}(x_i-q_{i,s}^{(s)})-b_{ij}c_{i,s}^{-1}(x_i-q_{i,s}^{(s)})=p_i
    \end{equation*}
    From this equation, we obtain the following new conditions
    \begin{align*}
        q_{i,s}^{(s)}(a_{ijj}-1) &\ =0, \\
        p_i(c_{i,s}a_{sjj}-1) &\ =b_{ij}q_{i,s}^{(s)}.
    \end{align*}
\end{itemize}
By Equation {\rm (}\ref{mapvxxnn}{\rm )}, we have the following three cases:
\begin{itemize}
    \item $s\leq i$. We obtain
    \begin{equation*}
        \nu_{x_s}(x_j)\nu_{x_s}(x_i)-c_{i,j}\nu_{x_s}(x_i)\nu_{x_s}(x_j) = q_{i,j}^{(0)}+\sum_{k=1}^{n}q_{i,j}^{(k)}\nu_{x_s}(x_k)
    \end{equation*}
    From this equation, we obtain the following new conditions
    \begin{align*}
        q_{i,j}^{(s)}(c_{s,j}c_{s,i}-1) &\ =0, \\
        q_{s,j}^{(s)}(c_{i,j}-1) &\ = q_{i,j}^{(i)}(c_{s,j}-1), \\
        q_{s,i}^{(s)}(c_{i,j}-1) &\ = q_{i,j}^{(j)}(c_{s,i}-1), \\
        q_{i,j}^{(k)}(c_{s,j}c_{s,i}-c_{k,s}^{-1}) &\ =0, \quad 1\leq k \leq s-1, \\
        q_{i,j}^{(k)}(c_{s,j}c_{s,i}-c_{s,k}) &\ =0, \quad s+1\leq k \leq n, k\ne i, j, \\
        \sum_{k=1}^{s-1}c_{k,s}^{-1}q_{i,j}^{(k)}q_{k,s}^{(s)}-\sum_{k=s+1}^{n}q_{i,j}^{(k)}q_{k,s}^{(s)} &\ +q_{s,i}^{(s)}q_{s,j}^{(s)}(1-c_{i,j})+(c_{s,j}c_{s,i}-1)q_{i,j}^{(0)}  = 0.
    \end{align*}
    \item $i < s < j$. We obtain
    \begin{equation*}
        \nu_{x_s}(x_j)\nu_{x_s}(x_i)-c_{i,j}\nu_{x_s}(x_i)\nu_{x_s}(x_j) = q_{i,j}^{(0)}+\sum_{k=1}^{n}q_{i,j}^{(k)}\nu_{x_s}(x_k)
    \end{equation*}
    From this equation, we obtain the following new conditions
    \begin{align*}
        q_{i,j}^{(s)}(c_{s,j}-c_{i,s}) &\ =0, \\
        q_{s,j}^{(s)}(c_{i,j}-1) &\ = q_{i,j}^{(i)}(c_{s,j}-1), \\
        q_{i,s}^{(s)}(c_{i,j}-1) &\ = q_{i,j}^{(j)}(c_{i,s}-1), \\
        q_{i,j}^{(k)}(c_{s,j}c_{i,s}^{-1}-c_{k,s}^{-1}) &\ =0, \quad 1\leq k \leq s-1, k \ne i\\
        q_{i,j}^{(k)}(c_{s,j}c_{i,s}^{-1}-c_{s,k}) &\ =0, \quad s+1\leq k \leq n, k\ne j, \\
        \sum_{k=1}^{s-1}c_{k,s}^{-1}q_{i,j}^{(k)}q_{k,s}^{(s)} -  \sum_{k=s+1}^{n}q_{i,j}^{(k)}q_{s,k}^{(s)} &\ + c_{i,s}^{-1}q_{s,j}^{(s)}q_{i,s}^{(s)}(c_{i,j}-1)  +(c_{s,j}c_{i,s}^{-1}-1)q_{i,j}^{(0)}  = 0.
    \end{align*}
    \item $s\geq j$. We obtain
    \begin{equation*}
        \nu_{x_s}(x_j)\nu_{x_s}(x_i)-c_{i,j}\nu_{x_s}(x_i)\nu_{x_s}(x_j) = q_{i,j}^{(0)}+\sum_{k=1}^{n}q_{i,j}^{(k)}\nu_{x_s}(x_k)
    \end{equation*}
    From this equation, we obtain the following new conditions
    \begin{align*}
        q_{i,j}^{(s)}(c_{j,s}c_{i,s}-1) &\ =0, \\
        q_{j,s}^{(s)}(c_{i,j}-1) &\ = q_{i,j}^{(i)}(c_{j,s}-1), \\
        q_{i,j}^{(k)}(c_{j,s}^{-1}c_{i,s}^{-1}-c_{k,s}^{-1}) &\ =0, \quad 1\leq k \leq s-1, k \ne i, j\\
        q_{i,j}^{(k)}(c_{j,s}^{-1}c_{i,s}^{-1}-c_{s,k}) &\ =0, \quad s+1\leq k \leq n,  \\
        \sum_{k=1}^{s-1}c_{k,s}^{-1}q_{i,j}^{(k)}q_{k,s}^{(s)} - \sum_{k=s+1}^{n}q_{i,j}^{(k)}q_{s,k}^{(s)}  &\ + c_{j,s}^{-1}c_{i,s}^{-1}q_{j,s}^{(s)}q_{i,s}^{(s)}(1-c_{i,j})  +(c_{j,s}^{-1}c_{i,s}^{-1}-1)q_{i,j}^{(0)}  = 0.
    \end{align*}
\end{itemize}
Putting together all the conditions for $s$, we obtain the restrictions for the extension of the automorphisms.

For the second assertion, it is enough to prove it for the generators $t_j$, $x_i$, $1\leq i \leq n$ and $j \in \{1, 2\}$. Note that 
\begin{align}
    \nu_{t_k} \circ \nu_{t_m}(t_j) = &\ \nu_{t_k}(t_j)=t_j, \label{comptttj}\\
    \nu_{t_m} \circ \nu_{t_k}(t_j) = &\ \nu_{t_m}(t_j)=t_j, \label{comptttj.} \\
    \nu_{t_k} \circ \nu_{t_m}(x_i) = &\ \nu_{t_k}(a_{imm}x_i)=a_{imm}a_{ikk}x_i, \label{compttxi}\\
    \nu_{t_m} \circ \nu_{t_k}(x_i) = &\ \nu_{t_m}(a_{ikk}x_i)=a_{imm}a_{ikk}x_i. \label{compttxi.}
\end{align}

for all $1\leq i \leq n$ and $k,m,j \in \{1, 2\}$, so all relations are satisfied. It yields that $\nu_{t_k} \circ \nu_{t_m} = \nu_{t_m} \circ \nu_{t_k}$ for $k,m \in \{1, 2\}$.

Now, 
\begin{align}
    \nu_{t_k} \circ \nu_{x_m}(t_j) = &\ \nu_{t_k}(a_{mjj}^{-1}(t_j-b_{mj}))=a_{mjj}^{-1}(t_j-b_{mj}), \label{comptxtj}\\
    \nu_{x_m} \circ \nu_{t_k}(t_j) = &\ \nu_{x_m}(t_j)=a_{mjj}^{-1}(t_j-b_{mj}), \label{comptxtj.} \\
    \nu_{t_k} \circ \nu_{x_m}(x_i) = &\ \nu_{t_k}(c_{m,i}x_i+q_{m,i}^{(m)})=c_{m,i}a_{ikk}x_i+q_{m,i}^{(m)}, \label{comptxxi}\\
    \nu_{x_m} \circ \nu_{t_k}(x_i) = &\ \nu_{x_m}(a_{ikk}x_i)=a_{ikk}(c_{m,i}x_i+q_{m,i}^{(m)}), \label{comptxxi.}
\end{align}

for all $1\leq i,m \leq n$ and $k,j \in \{1, 2\}$. In this way, expressions {\rm (}\ref{comptxtj}{\rm )} and {\rm (}\ref{comptxtj.}{\rm )} hold. With respect to the relations {\rm (}\ref{comptxxi}{\rm )} and {\rm (}\ref{comptxxi.}{\rm )}, both also hold since {\rm (}\ref{cond4.59}{\rm )} is satisfied. So $\nu_{t_k} \circ \nu_{x_m} = \nu_{x_m} \circ \nu_{t_k}$, for  $1\leq m \leq n$ and $k \in \{1, 2\}$.

Finally, 
\begin{align}
    \nu_{x_k} \circ \nu_{x_m}(t_j) = &\ \nu_{x_k}(a_{mjj}^{-1}(t_j-b_{mj}))=a_{mjj}^{-1}a_{kjj}^{-1}(t_j-b_{kj})-a_{mjj}^{-1}b_{jm}, \label{compxxtj}\\
    \nu_{x_m} \circ \nu_{x_k}(t_j) = &\ \nu_{x_m}(a_{kjj}^{-1}(t_j-b_{kj}))=a_{kjj}^{-1}a_{mjj}^{-1}(t_j-b_{mj})-a_{kjj}^{-1}b_{kj}, \label{compxxtj.} \\
    \nu_{x_k} \circ \nu_{x_m}(x_i) = &\ \nu_{x_k}(c_{m,i}x_i+q_{m,i}^{(m)})=c_{m,i}(c_{k,i}x_i+q_{k,i}^{(k)})+q_{m,i}^{(m)}, \label{compxxxi}\\
    \nu_{x_m} \circ \nu_{x_k}(x_i) = &\ \nu_{x_m}(c_{k,i}x_i+q_{k,i}^{(k)})=c_{k,i}(c_{m,i}x_i+q_{m,i}^{(m)})+q_{k,i}^{(k)}, \label{compxxxi.}
\end{align}

for all $1\leq i,m,k  \leq n$ and $j \in \{1, 2\}$. Relations {\rm (}\ref{compxxtj}{\rm )} and {\rm (}\ref{compxxtj.}{\rm )} hold due to expression {\rm (}\ref{cond4.56}{\rm )}. Relations {\rm (}\ref{compxxtj}{\rm )} and {\rm (}\ref{compxxtj.}{\rm )} work, since we have the expressions {\rm (}\ref{cond4.60}{\rm )} and {\rm (}\ref{cond4.61}{\rm )}. So $\nu_{x_k} \circ \nu_{x_m} = \nu_{x_m} \circ \nu_{x_k}$, for  $1\leq k, m \leq n$.
\end{proof}
\begin{theorem}\label{smoothPBW2n}
If a SPBW extension $\sigma(\Bbbk[t_1, t_2])\langle x_1, \ldots, x_n\rangle$ satisfies the conditions in Proposition \ref{autoSkewPBW2n}, then it is differentially smooth.
\end{theorem}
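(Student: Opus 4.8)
The plan is to reproduce, for the $n+2$ generators $t_1, t_2, x_1, \ldots, x_n$, the construction already carried out in Theorems \ref{smoothPBW22}, \ref{smoothPBW23} and \ref{smoothPBWn}. First I would record that ${\rm GKdim}\bigl(\sigma(\Bbbk[t_1, t_2])\langle x_1, \ldots, x_n\rangle\bigr) = n+2$, since the ring of coefficients $\Bbbk[t_1, t_2]$ contributes $2$ and each bijective indeterminate $x_i$ contributes $1$ to the Gelfand--Kirillov dimension; thus the goal is to exhibit an $(n+2)$-dimensional connected integrable differential calculus. I would take $\Omega^1(\sigma(\Bbbk[t_1, t_2])\langle x_1, \ldots, x_n\rangle)$ to be the free right module of rank $n+2$ on the generators $dt_1, dt_2, dx_1, \ldots, dx_n$ and define a left module structure by $a\,dt_i = dt_i\,\nu_{t_i}(a)$ and $a\,dx_i = dx_i\,\nu_{x_i}(a)$, where $\nu_{t_1}, \nu_{t_2}, \nu_{x_1}, \ldots, \nu_{x_n}$ are the algebra automorphisms of Proposition \ref{autoSkewPBW2n}. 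The conditions \eqref{cond4.56}--\eqref{cond4.63} are precisely what make the assignment $t_i \mapsto dt_i$, $x_i \mapsto dx_i$ compatible with a map $d$ obeying the Leibniz rule on the defining relations \eqref{DSdosnSPBW1}; note that $dp_i = 0$ because each $p_i \in \Bbbk$.

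Next I would introduce $\Bbbk$-linear maps $\partial_{t_1}, \partial_{t_2}, \partial_{x_1}, \ldots, \partial_{x_n}$ through $d(a) = dt_1\partial_{t_1}(a) + dt_2\partial_{t_2}(a) + \sum_{i=1}^{n} dx_i\partial_{x_i}(a)$, which is well defined since the $dt_i, dx_i$ freely generate $\Omega^1$ as a right module, and compute their values on the PBW monomials $t_1^{k_1}t_2^{k_2}x_1^{l_1}\cdots x_n^{l_n}$ exactly as in the proof of Theorem \ref{smoothPBW23} (here $\partial_{x_i}$ returns $l_i$ times a product of scalars built from the $a$'s and $c$'s, a shift of each $t_j$ by the $b$'s, and a shift of each $x_s$ with $s<i$ by the $q$'s). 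From these explicit formulas $d(a) = 0$ forces $a \in \Bbbk$, so $\Omega(\sigma(\Bbbk[t_1, t_2])\langle x_1, \ldots, x_n\rangle) = \bigoplus_{i=0}^{n+2}\Omega^i$ is connected. The universal extension of $d$ to higher forms produces $dt_2 \wedge dt_1 = -dt_1 \wedge dt_2$, $dx_i \wedge dt_j = -a_{ijj}\,dt_j \wedge dx_i$, and $dx_j \wedge dx_i = -c_{i,j}\,dx_i \wedge dx_j$ for $i<j$, together with the induced relations among longer wedge products; since Proposition \ref{autoSkewPBW2n}\,(2) guarantees that all $n+2$ automorphisms commute with one another, no further relations appear, so $\Omega^{n+2}$ is free of rank one with volume form $\omega = dt_1 \wedge dt_2 \wedge dx_1 \wedge \cdots \wedge dx_n$ and associated automorphism $\nu_\omega = \nu_{t_1}\circ\nu_{t_2}\circ\nu_{x_1}\circ\cdots\circ\nu_{x_n}$.

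The last step is to verify integrability via Proposition \ref{BrzezinskiSitarz2017Lemmas2.6and2.7}\,(2). Adopting the uniform notation $x_{-1} := t_1$, $x_0 := t_2$ and setting $c_{-1,i} := a_{i11}$, $c_{0,i} := a_{i22}$ for the two commuting directions, I would define, for each $1 \le j \le n+2$, the families $\omega_i^j = \bigwedge_k dx_{p_{i,j}(k)}$ ranging over all increasing injections $p_{i,j}$ of a $j$-element index set into $\{-1, 0, 1, \ldots, n\}$, together with the dual families $\bar\omega_i^{\,n+2-j} = (-1)^{\sharp_{i,j}}\bigl(\prod_{(r,s) \in P_{i,j}} c_{r,s}^{-1}\bigr)\bigwedge_k dx_{\bar p_{i,j}(k)}$ over the complementary increasing injections, in exact analogy with the proof of Theorem \ref{smoothPBWn}. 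A direct verification, structurally identical to the one there, shows $\omega' = \sum_i \omega_i^j\,\pi_\omega(\bar\omega_i^{\,n+2-j} \wedge \omega')$ for every $\omega' \in \Omega^j$, so $\omega$ is an integral form, the calculus is integrable, and therefore $\sigma(\Bbbk[t_1, t_2])\langle x_1, \ldots, x_n\rangle$ is differentially smooth. The only genuine difficulty is combinatorial bookkeeping: pinning down the signs $\sharp_{i,j}$ and the scalar products $\prod c_{r,s}^{-1}$ attached to each $\bar\omega_i^{\,n+2-j}$ so that the telescoping identity closes up — precisely the computation of Theorem \ref{smoothPBWn}, now with two commuting base coordinates $t_1, t_2$ in place of the single $t$.
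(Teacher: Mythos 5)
Your proposal follows essentially the same route as the paper's proof: the same rank-$(n+2)$ free module $\Omega^1$ with left action twisted by the automorphisms of Proposition \ref{autoSkewPBW2n}, the same partial-derivative argument for connectedness, the same wedge relations and volume form $\omega = dt_1\wedge dt_2\wedge dx_1\wedge\cdots\wedge dx_n$, and even the same uniformizing convention $x_{-1}=t_1$, $x_0=t_2$ with $c_{-1,i}=a_{i11}$, $c_{0,i}=a_{i22}$ for the final verification of the identity in Proposition \ref{BrzezinskiSitarz2017Lemmas2.6and2.7}\,(2). The construction and the combinatorial bookkeeping of the dual families $\omega_i^j$, $\bar\omega_i^{\,n+2-j}$ match the paper's argument.
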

\begin{proof}
It is clear that ${\rm GKdim}(\sigma(\Bbbk[t_1, t_2])\langle x_1, \ldots, x_n\rangle) = n+2$. We know that we have to consider $\Omega^{1}(\sigma(\Bbbk[t_1, t_2])\langle x_1, \ldots, x_n\rangle)$, a free right $\sigma(\Bbbk[t_1, t_2])\langle x_1, \ldots, x_n\rangle$-module of rank $n+2$ with generators $dt_1$, $dt_2, dx_j$, $1\leq j \leq n$. Define a left $\sigma(\Bbbk[t_1, t_2])\langle x_1, \ldots, x_n\rangle$-module structure by
    \begin{equation}\label{relrightmod32n}
        adt_i = dt_i \nu_{t_i}(a), \quad adx_j = dx_j\nu_{x_j}(a),  
    \end{equation}

for all $i\in\{1, 2\},\ j \in\{1, \ldots, n\},  a\in \sigma(\Bbbk[t_1, t_2])\langle x_1, \ldots, x_n\rangle$, where $\nu_{t_i}$, $\nu_{x_j}$, $i\in\{1, 2\}, j \in\{1, \ldots, n\}$ are the algebra automorphisms established in Proposition \ref{autoSkewPBW2n}. Notice that the relations in $\Omega^{1}(\sigma(\Bbbk[t_1, t_2])\langle x_1, \ldots, x_n\rangle)$ are given by
    \begin{align}
        t_idt_j = &\ dt_j t_i & t_idx_j = &\ dx_ja_{jii}^{-1}(t_i-b_{ij}), \quad \text{ for all } i\in\{1, 2\}, \ j \in\{1, \ldots, n\}, \label{reltxn}  \\
        x_idx_i = &\ dx_ix_i,  &  x_idt_j = &\ dt_ja_{ijj}x_i, \quad \text{ for all } i\in\{1, 2\}, \ j \in\{1, \ldots, n\}, \label{reltxn.} 
         \end{align}

and 
        \begin{align}
        x_idx_j = &\ dx_j(c_{i,j}^{-1}x_i-c_{i,j}^{-1}q_{i,j}^{(j)}), \quad \text{for}\ 1\leq i < j \leq n, \quad {\rm and} \\
        x_idx_j = &\ dx_j(c_{j,i}x_i + q_{j,i}^{(j)}), \quad \text{for}\ 1\leq i < j \leq n.  \label{relxxn} 
    \end{align}

We extend $t_i\mapsto dt_i$, $x_j\mapsto dx_j$, $i \in \{1, 2\}$, $j \in \{1, \ldots, n\}$ to a map 
$$
d: \sigma(\Bbbk[t_1, t_2])\langle x_1, \ldots, x_n\rangle \to \Omega^{1}(\sigma(\Bbbk[t_1, t_2])\langle x_1, \ldots, x_n\rangle)
$$ 

satisfying the Leibniz's rule. This is possible if we guarantee its compatibility with the non-trivial relations {\rm (}\ref{DSdosnSPBW1}{\rm )}, i.e.
\begin{align*}
        dx_it_j+x_idt_j &\ = a_{ijj}dt_jx_i+a_{ijj}t_jdx_i+b_{ij}dx_i, \quad \text{for}\ i\in\{1, 2\}, \ j \in\{1, \ldots, n\} \\
         dx_j x_i+x_jdx_i = &\ c_{i, j}dx_i x_j+c_{i,j}x_jdx_i + \sum_{k=1}^{n}q_{i,j}^{(k)}dx_k, \ {\rm for}\ i, j, k \in \{1, \ldots, n\}, \ i < j.
    \end{align*}
    
Define $\Bbbk$-linear maps $$
\partial_{t_i}, \partial_{x_j}: \sigma(\Bbbk[t_1, t_2])\langle x_1, \ldots, x_n\rangle \rightarrow \sigma(\Bbbk[t_1, t_2])\langle x_1, \ldots, x_n\rangle
$$ 

such that
\begin{align*}
d(a)=dt_1\partial_{t_1}(a)+dt_2\partial_{t_2}(a)+\sum_{i=1}^{n}dx_i\partial_{x_i}(a), \text{ for all } a \in \sigma(\Bbbk[t_1, t_2])\langle x_1, \ldots, x_n\rangle.
\end{align*}

These maps are well-defined since $dt_i$, $dx_j$, $i\in\{1, 2\}, j \in\{1, \ldots, n\}$ are free generators of the right $\sigma(\Bbbk[t_1, t_2])\langle x_1, \ldots, x_n\rangle$-module $\Omega^1(\sigma(\Bbbk[t_1, t_2])\langle x_1, \ldots, x_n\rangle)$. Then $d(a)=0$ if and only if $\partial_{t_i}(a)=\partial_{x_j}(a)=0$ for $i\in\{1, 2\}, j \in\{1, \ldots, n\}$. Using relations {\rm (}\ref{relrightmod32n}{\rm )} and definitions of the maps $\nu_{t_i}$, $\nu_{x_j}$, $i\in\{1, 2\}, j \in\{1, \ldots, n\}$, we obtain that
\begin{align}
\partial_{t_1}(t_1^kt_2^{s}x_1^{l_1}\cdots x_n^{l_n}) = &\ kt_1^{k-1}t_2^{s}x_1^{l_1}\cdots x_n^{l_n}, \\
\partial_{t_2}(t_1^kt_2^{s}x_1^{l_1}\cdots x_n^{l_n}) = &\ st_1^kt_2^{s-1}x_1^{l_1}\cdots x_n^{l_n}, \notag \\
\partial_{x_j}(t_1^kt_2^{s}x_1^{l_1}\cdots x_n^{l_n}) = &\ l_ja_{j11}^{-k}a_{j22}^{-s}(t_1-b_{j1})^k(t_2-b_{j2})^s \\ \notag
&\ \prod_{s=1}^{j-1}c_{s,j}^{-l_s}(x_s-q_{s,j}^{(j)})^{l_s} x_j^{l_j-1}x_{j+1}^{l_{j+1}}\cdots x_n^{l_n}, \quad 1 \leq j \leq n. \notag 
\end{align}

Since $d(a)=0$ if and only if $a$ is a scalar multiple of the identity, it follows that $\Omega(\sigma(\Bbbk[t_1, t_2])\langle x_1, \ldots, x_n\rangle,d)$ is connected, where 
\[
\Omega(\sigma(\Bbbk[t_1, t_2])\langle x_1, \ldots, x_n\rangle) = \bigoplus_{i=0}^{n+1}\Omega^i(\sigma(\Bbbk[t_1, t_2])\langle x_1, \ldots, x_n\rangle).
\]

The universal extension of $d$ to higher forms compatible with {\rm (}\ref{reltxn}{\rm )}, {\rm (}\ref{reltxn.}{\rm )} and {\rm (}\ref{relxxn}{\rm )} gives the following rules for $\Omega^l(\sigma(\Bbbk[t_1, t_2])\langle x_1, \ldots, x_n\rangle)$ $(l = 2, \ldots, n+1)$:
\begin{align}
 dx_{q(1)}\wedge \dotsb \wedge &\ dx_{q(s)}\wedge dt_2 \wedge dt_1 \wedge dx_{q(s+1)}\wedge \dotsb \wedge dx_{q(l)} \\
 &\ = (-1)^{s+1}\prod_{\substack{r=1,\\r \ne s_1, s_2}}^{s} a_{q(r)11}^{-1}a_{q(r)22}^{-1} dt_1 \wedge dt_2 \wedge \bigwedge_{\substack{k=1,\\k \ne s_1, s_2}}^{l}dx_{q(k)},\label{relasins1s2n1}  \\
 dx_{q(1)}\wedge \dotsb \wedge &\ dx_{q(s)}\wedge dt_1 \wedge dt_2 \wedge dx_{q(s+1)}\wedge \dotsb \wedge dx_{q(l)} \\
 &\ = (-1)^{s}\prod_{\substack{r=1,\\r \ne s_1, s_2}}^{s} a_{q(r)11}^{-1}a_{q(r)22}^{-1} dt_1 \wedge dt_2 \wedge \bigwedge_{\substack{k=1,\\k \ne s_1, s_2}}^{l}dx_{q(k)}, \label{relasins1s2n2}  \\
 dx_{q(1)}\wedge \dotsb \wedge &\ dx_{q(s)}\wedge dt_i  \wedge dx_{q(s+1)} \wedge \dotsb \wedge dx_{q(l)} \\
 &\ = (-1)^{s}\prod_{\substack{r=1,\\r \ne s_1}}^{s} a_{q(r)ii}^{-1} dt_i \wedge \bigwedge_{\substack{k=1,\\k \ne s_1}}^{l}dx_{q(k)}, \label{relasins1s2n3}  \\
   \bigwedge_{k=1}^{l}dx_{q(k)} &\ = (-1)^{\sharp}\prod_{r,s\in P}c_{r,s}^{-1}\bigwedge_{k=1}^ldx_{p(k)},
\end{align}

where $s_1, s_2 \in \{1,\ldots,l\}$ do not appear in expression {\rm (}\ref{relasins1s2n1}{\rm )}, {\rm (}\ref{relasins1s2n2}{\rm )} or {\rm (}\ref{relasins1s2n3}{\rm )}. Besides, 
$$
q:\{1,\ldots,l\}\rightarrow \{1,\ldots,n\}
$$ 

is an injective map, and 
$$
p:\{1,\ldots,l\}\rightarrow \text{Im}(q)
$$ 

is an increasing injective map, $\sharp$ is the number of $2$-permutations needed to transform $q$ into $p$, and $P := \{(s, t) \in \{1, \ldots, l\} \times \{1, \ldots, l\} \mid q(s) >  q(t)\}$.

Since the automorphisms $\nu_{t_i}$, $\nu_{x_j}$, $i\in\{1, 2\}, j \in\{1, \ldots, n\}$ commute with each other, there are no additional relationships to the previous ones, so 
{\small{
\begin{align*}
\Omega^{n+1} (\sigma(\Bbbk[t_1, t_2])\langle x_1,\ldots, x_n\rangle) = &\ \left[\bigoplus_{r=2}^{n-1}dt_1\wedge dt_2 \wedge dx_1 \wedge \cdots \wedge dx_{r-1}\wedge dx_{r+1} \wedge \cdots \wedge dx_{n} \right. \\
&\   \oplus dt_1 \wedge dx_1 \wedge \cdots\wedge dx_{n} \\
&\ \left. \oplus dt_2 \wedge dx_1 \wedge \cdots \wedge dx_{n}\right]\sigma(\Bbbk[t_1, t_2])\langle x_1,\ldots, x_n\rangle.
\end{align*}
}}

Now, 
$$
\Omega^{n+2}(\sigma(\Bbbk[t_1, t_2])\langle x_1, \ldots, x_n\rangle) = \omega\sigma(\Bbbk[t_1, t_2])\langle x_1, \ldots, x_n\rangle\cong \sigma(\Bbbk[t_1, t_2])\langle x_1, \ldots, x_n\rangle
$$ 

as a right and left $\sigma(\Bbbk[t_1, t_2])\langle x_1, \ldots, x_n\rangle$-module, with $\omega=dt_1\wedge dt_2 \wedge dx_1 \wedge \cdots \wedge x_n$, where $\nu_{\omega}=\nu_{t_1}\circ\nu_{t_2}\circ\nu_{x_1}\circ\cdots\circ\nu_{x_n}$, this means that $\omega$ is a volume form of $\sigma(\Bbbk[t_1, t_2])\langle x_1, \ldots, x_n\rangle$. In order to make the calculations easier, we consider the following notation:
\[
t_1 = x_{-1}, \ t_2= x_0, \ c_{-1,0} = -1, \  c_{-1,i} = a_{i11}, \ c_{0,i} = a_{i22}, \quad {\rm for}\ 1\le i \le n.
\]

From Proposition \ref{BrzezinskiSitarz2017Lemmas2.6and2.7} (2) we get that $\omega$ is an integral form by setting
\begin{align*}
    \omega_i^j = &\ \bigwedge_{k=-1}^{j-2}dx_{p_{i,j}(k)}, \text{ for } 1\leq i \leq \binom{n+2}{j}, \quad {\rm and} \\
    \bar{\omega}_i^{n+2-j} = &\ (-1)^{\sharp_{i,j}}\prod_{r,s\in P_{i,j}}c_{r,s}^{-1}\bigwedge_{k=j-1}^{n}dx_{\bar{p}_{i,j}(k)}, \text{ for } 1\leq i \leq \binom{n+2}{j},
\end{align*}

for $1\leq j \leq n+2$, where 
\begin{align*}
    p_{i,j}:\{-1,\ldots,j-2\}\rightarrow &\ \{-1,\ldots,n\}, \quad {\rm and} \\
\bar{p}_{i,j}:\{j-1,\ldots,n\}\rightarrow &\ (\text{Im}(p_{i,j}))^c
\end{align*}

(the symbol $\square^c$ denotes the complement of the set $\square$), are increasing injective maps, and $\sharp_{i,j}$ is the number of $2$-permutation needed to transform 
\[
\left\{\bar{p}_{i,j}(j-1),\ldots, \bar{p}_{i,j}(n), p_{i,j}(-1), \ldots, p_{i,j}(j-2)\right\} \ {\rm into\ the\ set} \ \{-1, \ldots, n\},
\]

and 
\[
P_{i,j} :=\{(s, t) \in \{-1, \ldots, j-2\}\times\{j-1, \ldots, n\} \mid p_{i,j}(s)< \bar{p}_{i,j}(t)\}.
\]

Consider $\omega' \in \Omega^j(\sigma(\Bbbk[t_1, t_2])\langle x_1,\ldots, x_n\rangle)$, that is,  
\begin{align*}
\omega' =\sum_{i=1}^{\binom{n+2}{j}}\bigwedge_{k=-1}^{j-2}dx_{p_{i,j}(k)}\alpha_i, \quad {\rm with} \ \alpha_i \in \Bbbk.
\end{align*}

Then
\begin{align*}
 \sum_{i=1}^{\binom{n+2}{j}}\omega_{i}^{j}\pi_{\omega}(\bar{\omega}_i^{n+2-j}\wedge \omega') = &\ \sum_{i=1}^{\binom{n+2}{j}}\left[\bigwedge_{k=-1}^{j-2}dx_{p_i(k)}\right] \cdot  \pi_{\omega} \left[(-1)^{\sharp_{i,j}} \square^{*} \wedge \omega'\right] \\
 = &\ \displaystyle \sum_{i=1}^{\binom{n+2}{j}}\bigwedge_{k=-1}^{j-2}dx_{p_{i,j}(k)}\alpha_i =  \omega',
\end{align*}

where 
\begin{align*}
    \square^{*} := &\ \prod_{r,s\in P_{i,j}}c_{r,s}^{-1} \bigwedge_{k=j}^{n}dx_{\bar{p}_{i,j}(k)}.
\end{align*}

Therefore, $\sigma(\Bbbk[t_1, t_2])\langle x_1, \ldots, x_n \rangle$ is differentially smooth.
\end{proof}

\section{Future work}\label{FutureworkDSSPBW}

As expected, a natural task is to investigate the differential smoothness of SPBW extensions over commutative polynomial rings on three and more indeterminates. With this aim, we recall briefly some interesting facts on automorphisms of these polynomial rings. We follow Shestakov and Umirbaev's presentation \cite[p. 197]{ShestakovUmirbaev2003}.

For $\Bbbk[X] = \Bbbk[x_1, \dotsc, x_n]$, an automorphism $\tau \in {\rm Aut}(\Bbbk[X])$ is called {\em elementary} if it has a form
\[
\tau (x_1, \dotsc, x_{i-1}, x_i, x_{i+1}, x_n) \mapsto (x_1, \dotsc, x_{i-1}, ax_i + f, x_{i+1}, \dotsc, x_n),
\]

where $0\neq a\in \Bbbk, f\in \Bbbk[x_1, \dotsc, x_{i-1}, x_{i+1}, \dotsc x_n]$. The subgroup of ${\rm Aut}(\Bbbk[X])$ generated by all the elementary automorphisms is called {\em tame subgroup}, and the elements from this subgroup are called {\em tame automorphisms} of $\Bbbk[X]$. Non-tame automorphisms of $\Bbbk[X]$ are called {\em wild}.

In the literature it has been shown that the automorphisms of polynomial rings and free associative algebras in two indeterminates are tame (e.g. \cite{McKayWang1988, VandenEssen2000}). Nevertheless, in the case of three or more indeterminates the similar question was open and known as \textquotedblleft The generation gap problem\textquotedblright\ \cite{VandenEssen2000} or \textquotedblleft Tame generators problem\textquotedblright\ \cite{VandenEssen2000}. The general belief was that the answer is negative, and the best known counterexample is the following automorphism $\sigma \in {\rm Aut}(\Bbbk[x, y, z])$, constructed by Nagata \cite{Nagata1972}:
\begin{align*}
    \sigma(x) = &\ x + (x^2 - yz)z, \\
    \sigma(y) = &\ y + 2(x^2 - yz)x + (x^2 - yz)^2 z, \quad {\rm and} \\
    \sigma(z) = &\ z.
\end{align*}

Note that Nagata automorphism is {\em stably tame}; that is, it becomes tame after adding new variables. Shestakov and Umirbaev \cite{ShestakovUmirbaev2003} gave a negative answer to the above question; in particular, the Nagata automorphism $\sigma$ is wild.

Since that the study of ${\rm Aut}(\Bbbk[x, y, z])$ and ${\rm Aut}(\Bbbk[x_1, \dotsc, x_n])$ requires greater mathematical techniques that have not been considered at the time when this paper was written, the study of the differential smoothness of SPBW extensions over these polynomial rings will be one of our next tasks.

On the other hand, since Artamonov \cite{Artamonov2015}, Venegas \cite{Venegas2015} and the second author \cite{ReyesSuarez2017} presented some results concerning automorphisms and derivations of SPBW extensions, it is natural to study relationships between this kind of morphisms and those adequate to characterize the smoothness of these extensions. This will also be our topic of interest in the immediate future. 


\section{Declarations}\label{}

This work was supported by Faculty of Science, Universidad Nacional de Colombia - Sede Bogot\'a, Colombia (Grant number 53880).

All authors declare that they have no conflicts of interest.


\begin{thebibliography}{60}

\bibitem{AbdiTalebi2023} M. Abdi and Y. Talebi. On the diameter of the zero-divisor graph over skew PBW extensions. {\em J. Algebra Appl.} {\bf 23}(5) (2024) 2450089.

\bibitem{AkalanMarubayashi2016} E. Akalan and H. Marubayashi. Multiplicative Ideal Theory in Noncommutative Rings. In: S. Chapman, M. Fontana, A.  Geroldinger, B. Olberding, eds. {\em Multiplicative Ideal Theory and Factorization Theory. Commutative and Non-commutative Perspectives}. Springer Proceedings in Mathematics \& Statistics, Vol 170. Springer Cham (2016) pp. 1--22.


\bibitem{Apel1988} J. Apel. Gr\"obnerbasen in nichtkommutativen Algebren und ihre Anwendung. Doctoral Thesis. University of Leipzig, Leipzig, Germany (1988).

\bibitem{Artamonov2015} V. Artamonov. Derivations of skew PBW extensions. {\em Commun. Math. Stat.} {\bf 3}(4) (2015) 449--457.


\bibitem{Bavula2020} V. V. Bavula. Generalized Weyl algebras and diskew polynomial rings. {\em J. Algebra Appl.} {\bf 19}(10) (2020) 2050194.

\bibitem{Bavula2023} V. V. Bavula. Description of bi-quadratic algebras on 3 generators with {P}{B}{W} basis. {\em J. Algebra} {\bf 631} (2023) 695--730.

\bibitem{BeggsBrzezinski2005} E. J. Beggs and T. Brzezi{\'n}ski. The Serre spectral sequence of a noncommutative fibration for de Rham cohomology. {\em Acta Math.} {\bf 195}(2) (2005) 155--196.

\bibitem{BellSmith1990} A. D. Bell and S. P. Smith. Some 3-dimensional skew polynomial ring. University of Wisconsin, Milwaukee. Preprint (1990).

\bibitem{BellGoodearl1988} A. Bell and K. Goodearl. Uniform Rank Over Differential Operator Rings and Poincar{\'e}-Birkhoff-Witt Extensions. {\em Pacific J. Math.} {\bf 131}(1) (1988) 13--37.

\bibitem{BrownGoodearl2002} K. Brown and K. R. Goodearl. Lectures on {A}lgebraic {Q}uantum {G}roups. Advanced Courses in Mathematics - CRM Barcelona. {\em Birk\"auser Basel} (2002).

\bibitem{Brzezinski2008} T. Brzezi{\'n}ski. Non-commutative connections of the second kind. {\em J. Algebra Appl.} {\bf 7}(5) (2008) 557--573.

\bibitem{Brzezinski2011} T. Brzezi{\'n}ski. Divergences on Projective Modules and noncommutative Integrals. {\em Int. J. Geom. Methods Mod. Phys.} {\bf 8}(4) (2011) 885--896.

\bibitem{Brzezinski2014} T. Brzezi{\'n}ski. On the Smoothness of the Noncommutative Pillow and Quantum Teardrops. {\em SIGMA Symmetry Integrability Geom. Methods Appl.} {\bf 10}(015) (2014) 1--8.

\bibitem{Brzezinski2015} T. Brzezi{\'n}ski. Differential smoothness of affine Hopf algebras of Gelfand-Kirillov of dimension two. {\em Colloq. Math.} {\bf 139}(1) (2015) 111--119.

\bibitem{Brzezinski2016} T. Brzezi\'nski. Noncommutative Differential Geometry of Generalized Weyl Algebras. {\em SIGMA Symmetry Integrability Geom. Methods Appl.} {\bf 12}(059) (2016) 1--18.

\bibitem{Brzezinsketal2016Springer} T. Brzezi\'nski, N. Ciccoli, L. Dabrowski, and A. Sitarz. Twisted {R}eality {C}ondition for {D}irac {O}perators. {\em Math. Phys. Anal. Geom.} {\bf 19}(3) (2016) 1--11.

\bibitem{BDR92} T. Brzezi{\'n}ski, H. Dabrowski, and J. Rembieli\'nski. On the quantum differential calculus and the quantum holomorphicity. {\em J. Math. Phys.} {\bf 33}(1) (1992) 19--24.

\bibitem{BrzezinskiElKaoutitLomp2010} T. Brzezi{\'n}ski, L. El. Kaoutit and C. Lomp. Noncommutative integral forms and twisted multi-derivations. {\em J. Noncommut. Geom.} {\bf 4}(2) (2010) 281--312.

\bibitem{BrzezinskiLomp2018} T. Brzezi{\'n}ski and C. Lomp. Differential smoothness of skew polynomial rings. {\em J. Pure Appl. Algebra} {\bf 222}(9) (2017) 2413--2426.

\bibitem{BrzezinskiSitarz2017} T. Brzezi\'nski and A. Sitarz. Smooth geometry of the noncommutative pillow, cones and lens spaces. {\em J. Noncommut. Geom.} {\bf 11}(2) (2017) 413--449.

\bibitem{BrzezinskiSzymanski2016} T. Brzezinski and Szymanski. The $C^{*}$-algebras of quantum lens and weighted projective spaces. {\em J. Noncommut. Geom.} {\bf 12}(1) (2018) 195--215.

\bibitem{BuesoTorrecillasVerschoren2003} J. Bueso, J. G\'omez-Torrecillas and A. Verschoren. {\em Algorithmic Methods in Non-Commutative Algebra: Applications to Quantum Groups}. Mathematical Modelling: Theory and Applications. Springer Dordrecht (2003).

\bibitem{Connes1985} A. Connes. Non-commutative differential geometry. {\em Publ. Math. Inst. Hautes Etudes Sci.} {\bf 62} (1985) 41--144.

\bibitem{Connes1994} A. Connes. Noncommutative Geometry. Academic Press, New York (1994).

\bibitem{ChaconReyes2024} A. Chac\'on and A. Reyes. Noncommutative scheme theory and the Serre-Artin-Zhang-Verevkin theorem for semi-graded rings. {\em J. Noncommut. Geom.} {\bf 19}(2) (2024) 495--532.

\bibitem{CuntzQuillen1995} J. Cuntz and D. Quillen. Algebra extensions and nonsingularity. {\em J. Amer. Math. Soc.} {\bf 8}(2) (1995) 251--289.


\bibitem{DuboisViolette1988} M. Dubois-Violette. D\'erivations et calcul diff\'erentiel non commutatif. {\em C. R. Acad. Sci. Paris, Ser. I} {\bf 307} (1988) 403--408.

\bibitem{DuboisVioletteKernerMadore1990} M. Dubois-Violette, R. Kerner and J. Madore. Noncommutative differential geometry of matrix algebras, {\em J. Math. Phys.} {\bf 31}(2) (1990) 316--322.

\bibitem{Fajardoetal2020} W. Fajardo, C. Gallego, O.  Lezama, A. Reyes, H. Su\'arez, and H. Venegas. {\em Skew PBW Extensions: Ring and Module-theoretic properties, Matrix and Gr\"obner Methods, and Applications}. Algebra and Applications {\bf 28}. Springer Cham (2020).

\bibitem{Fajardoetal2024} W. Fajardo, O. Lezama, C. Payares, A. Reyes and C. Rodr\'iguez. Introduction to Algebraic Analysis on Ore Extensions. In A. Martsinkovsky, editor, {\em Functor Categories, Model Theory, Algebraic Analysis and Constructive Methods FCMTCCT2 2022, Almer\'ia, Spain, July 11--15, Invited and Selected Contributions}, volume 450 of Springer Proceedings in Mathematics \& Statistics, pages 45--116, Springer, Cham, Switzerland.


\bibitem{GallegoLezama2010} C. Gallego and O. Lezama. Gr{\"o}bner {B}ases for {I}deals of $\sigma$-{P}{B}{W} {E}xtensions. {\em Comm. Algebra} {\bf 39}(1) (2010) 50--75.

\bibitem{GelfandKirillov1966} I. M. Gelfand and A. A. Kirillov. On fields connected with the enveloping algebras of {L}ie algebras. {\em Dokl. Akad. Nauk} {\bf 167}(3) (1966) 503--505.

\bibitem{GelfandKirillov1966b} I. M. Gelfand and A. A. Kirillov. Sur les corps li\'es aux algèbres enveloppantes des algèbres de Lie. {\em Publ. Math. IHES} {\bf 31} (1966) 5--19.

\bibitem{Giachettaetal2005} G. Giachetta, L. Mangiarotti and G. Sardanashvily. {\em Geometric and {A}lgebraic {T}opological {M}ethods in {Q}uantum {M}echanics}. World Scientific Publishing, Singapore (2005).

\bibitem{Giesbrechtetal2002} M. W. Giesbrecht, G. J. Reid and Y. Zhang. Non-commutative Gr\"obner bases in Poincar\'e-Birkhoff-Witt extensions, In: V. G. Ganzha, E. W. Mayr and E.V. Vorozhtsov, eds. {\em Proceedings of the Fifth International Workshop on Computer Algebra in Scientific Computing, CASC 2002, Yalta, Ukraine, Technical University of Munich} (2002) pp. 97--106.

\bibitem{Giesbrechtetal2014} M. W. Giesbrecht, G. Labahn and Y. Zhang. Computing {P}opov {F}orms of {M}atrices Over {P}{B}{W} {E}xtensions. In: R. Feng, Ws. Lee and Y. Sato, eds. {\em 9th Asian Symposium} ({\em ASCM2009}), {\em Fukuoka, December 2009, 10th Asian Symposium} ({\em ASCM2012}), {\em Beijing, October 2012, Contributed Papers and Invited Talks}. Computer Mathematics. Springer - Verlag, Berlin, Heidelberg (2014) pp. 61--65.

\bibitem{GomezTorrecillas2014} J. G. G\'omez Torrecillas. Basic Module Theory over Non-commutative Rings with Computational Aspects of Operator Algebras. In: Barkatou, M.,  Cluzeau, T., Regensburger, G., Rosenkranz, M., eds. {\em Algebraic and Algorithmic Aspects of Differential and Integral Operators. AADIOS 2012.} Lecture Notes in Computer Science, Vol. 8372, Springer, Berlin, Heidelberg (2014), pp. 23--82.

\bibitem{Goodearl1990} K. R. Goodearl. Classical {L}ocalizability in {S}olvable {E}nveloping {A}lgebras and {P}oincar\'e-{B}irkhoff-{W}itt {E}xtensions. {\em J. Algebra} {\bf 132}(1) (1990) 243--262.

\bibitem{GoodearlLetzter1994} K. R. Goodearl and E. S. Letzter. {\em Prime {I}deals in {S}kew and $q$-{S}kew {P}olynomial {R}ings}. Mem. Amer. Math. Soc. 521, Vol. 109. Providence, Rhode Island: American Mathematical Society (1994).

\bibitem{GoodearlWarfield2004} K. R. Goodearl and R. B. Warfield Jr. {\em An Introduction to Noncommutative Noetherian Rings}. Algebra and Applications Vol. 61. Cambridge University Press (2004).

\bibitem{Grothendieck1964} A. Grothendieck. \'El\'ements de g\'eom\'etrie alg\'ebrique. {\em Publ. Math. Inst. Hautes Etudes Sci.} {\bf 20} (1964) 5--251.

\bibitem{Hamidizadehetal2020} M. Hamidizadeh, E. Hashemi and A. Reyes. A classification of ring elements in skew PBW extensions over compatible rings. {\em Int. Electron. J. Algebra} {\bf 28}(1) (2020) 75--97.

\bibitem{HashemiKhalilnezhadAlhevaz2017} E. Hashemi, K. Khalilnezhad and A. Alhevaz. $(\Sigma, \Delta)$-Compatible skew PBW extension ring. {\em Kyungpook Math. J.} {\bf 57}(3) (2017) 401--417.

\bibitem{HigueraReyes2023} S. Higuera and A. Reyes. On weak annihilators and nilpotent associated primes of skew PBW extensions. {\em Comm. Algebra} {\bf 51}(11) (2023) 4839--4861.


\bibitem{IsaevPyatovRittenberg2001} A. P. Isaev, P. N. Pyatov and V. Rittenberg. Diffusion algebras. {\em J. Phys. A} {\bf 34}(29) (2001) 5815--5834.



\bibitem{Jordan2000} D. A. Jordan. Down-Up Algebras and Ambiskew Polynomial Rings. {\em J. Algebra} 228(1) (2000) 311--346.

\bibitem{JordanWells1996} D. Jordan and I. Wells. Invariants for automorphisms of certain iterated skew polynomial rings {\em Proc. Edinb. Math. Soc. (2)} {\bf 39}(3) (1996) 461--472.



\bibitem{KandryWeispfenning1990} A. Kandri-Rody and V. Weispfenning. Non-commutative Gr\"obner Bases in Algebras of Solvable Type. {\em J. Symbolic Comput.} {\bf 9}(1) (1990) 1--26.

\bibitem{Karacuha2015} S. Kara\c cuha. Aspects of Noncommutative Differential Geometry. PhD Thesis. Universidade do Porto, Porto, Portugal (2015).

\bibitem{KaracuhaLomp2014} S. Kara\c cuha and C. Lomp, Integral calculus on quantum exterior algebras, {\em Int. J. Geom. Methods Mod. Phys.} {\bf 11}(04) (2014) 1450026.

\bibitem{Krahmer2012} U. Kr\"ahmer. On the Hochschild (co)homology of quantum homogeneous spaces. {\em Israel J. Math.} {\bf 189}(1) (2012) 237--266.

\bibitem{KrauseLenagan2000} G. R. Krause and T. H. Lenagan, {\em Growth of Algebras and Gelfand–Kirillov Dimension. Revised Edition}, Graduate Studies in Mathematics 22 (American Mathematical Society, 2000).





\bibitem{Levandovskyy2005} V. Levandovskyy. Non-commutative computer algebra for polynomial algebras: Gr\"obner bases, applications and implementation. Doctoral Thesis. Universit\"at Kaiserslautern. Kaiserslautern, Germany.

\bibitem{Lezama2020} O. Lezama. Computation of point modules of finitely semi-graded rings. {\em Comm. Algebra} {\bf 48}(2) (2020) 866--878.

\bibitem{Lezama2021} O. Lezama. Some Open Problems in the Context of Skew PBW Extensions and Semi-graded Rings, {\em Commun. Math. Stat.} {\bf 9}(3) (2021) 347--378.

\bibitem{LezamaAcostaReyes2015} O. Lezama, J. P. Acosta and A. Reyes. Prime ideals of skew PBW extensions. {\em Rev. Un. Mat. Argentina} {\bf 56}(2) (2015) 39--55.

\bibitem{LezamaGomez2019} O. Lezama and J. G\'omez. Koszulity and Point Modules of Finitely Semi-Graded Rings and Algebras. {\em Symmetry} {\bf 11}(7) (2019) 881.

\bibitem{LezamaLatorre2017} O. Lezama and E. Latorre. Non-commutative algebraic geometry of semi-graded rings. {\em Internat. J. Algebra Comput.} {\bf 27}(4) (2017) 361--389.

\bibitem{LezamaReyes2014} O. Lezama and A. Reyes. Some Homological Properties of Skew PBW Extensions. {\em Comm. Algebra} {\bf 42}(3) (2014) 1200--1230.

\bibitem{Li2002} H. Li. Noncommutative Gr\"obner Bases and Filtered-Graded Transfer. Lecture Notes in Math, Vol. 1795. Heidelberg: Springer.

\bibitem{Manin1997} Y. Manin. {\em Gauge Field Theory and Complex Geometry. Second Edition}. Grundlehren der mathematischen Wissenschaften, Vol. 289. Springer Berlin, Heidelberg (1997).

\bibitem{MarubayashiZhang1996} H. Marubayashi and Y. Zhang. Maximality of {P}{B}{W} extensions of orders. {\em Comm. Algebra} {\bf 24}(4) (1996) 1377--1388.

\bibitem{Matczuk1988} J. Matczuk. The Gelfand-Kirillov Dimension of Poincar\'e-Birkhoff-Witt Extensions. In: F. Van Oystaeyen and L. Le Bruyn, eds. {\em Perspectives in Ring Theory}. Nato Science Series C ASIC, Vol 233. Springer Dordrecht (1988) pp. 221 -- 226.

\bibitem{McConnellRobson2001} J. C. McConnell and J. C. Robson. {\em Noncommutative Noetherian Rings}. Graduate Studies in Mathematics Vol. 30. American Mathematical Society (2001).

\bibitem{McKayWang1988} J. McKay and S. Wang. An Elementary proof of the automorphism theorem for the polynomial ring in two variables. {\em J. Pure Appl. Algebra} {\bf 52}(1-2) (1988) 91--102.

\bibitem{Nagata1972} M. Nagata. On the automorphism group of $\Bbbk[x, y]$. {\em Lectures in Math.}, Kyoto Univ., Kinokuniya, Tokyo (1972).

\bibitem{NinoRamirezReyes2020} A. Ni\~no, M. C. Ram\'irez and A. Reyes. Associated prime ideals over skew PBW extensions. {\em Comm. Algebra} {\bf 48}(12) (2020) 5038--5055.

\bibitem{NinoReyes2024} A. Ni\~no and A. Reyes. On centralizers and pseudo-multidegree functions for non-commutative rings having PBW bases. {\em J. Algebra Appl.} (2024). \url{https://doi.org/10.1142/S0219498825501099}


\bibitem{Ore1931} O. Ore. Linear Equations in Non-commutative Fields. {\em Ann. of Math. (2)} {\bf 32}(3) (1931) 463--477.

\bibitem{Ore1933} O. Ore, Theory of Non-Commutative Polynomials. {\em Ann. of Math. (2)} {\bf 34}(3) (1933) 480--508.

\bibitem{PyatovTwarock2002} P. N. Pyatov and R. Twarock. Construction of diffusion algebras. {\em J. Math. Phys.} {\bf 43}(6) (2002) 3268--3279.


\bibitem{RedmanPhDThesis1996} I. T. Redman. The Noncommutative Algebraic Geometry of Some Skew Polynomial Rings. PhD Thesis. University of Wisconsin-Milwaukee, Milwaukee, United States (1996).

\bibitem{Redman1999} I. T. Redman. The homogenization of the three dimensional skew polynomial algebras of type I. {\em Comm. Algebra} {\bf 27}(11) (1999) 5587--5602.

\bibitem{Reyes2013} A. Reyes. Gelfand-Kirillov Dimension of Skew PBW Extensions. {\em Rev. Colombiana Mat.} {\bf 47}(1) (2013) 95--111.

\bibitem{ReyesRodriguez2021} A. Reyes and C. Rodr\'iguez. The McCoy Condition on Skew Poincar\'e-Birkhoff-Witt Extensions. {\em Commun. Math. Stat.} {\bf 9}(1) (2021) 1--21.

\bibitem{ReyesSarmiento2022} A. Reyes and C. Sarmiento. On the differential smoothness of 3-dimensional skew polynomial algebras and diffusion algebras. {\em Internat. J. Algebra and Comput.} {\bf 32}(3) (2022) 529--559.

\bibitem{ReyesSuarez2016} A. Reyes and H. Su\'arez. Some Remarks About the Cyclic Homology of Skew PBW Extensions. {\em Ciencia en Desarrollo} {\bf 7}(2) (2016) 99--107.

\bibitem{ReyesSuarez2017} A. Reyes and H. Su\'arez. $\sigma$-{P}{B}{W} Extensions of Skew {A}rmendariz Rings. {\em Adv. Appl. Clifford Algebr.} {\bf 27}(4) (2017) 3197--3224.

\bibitem{ReyesSuarez20173D} A. Reyes and H. Su\'arez. PBW Bases for Some 3-Dimensional Skew Polynomial Algebras. {\em Far East J. Math. Sci.} {\bf 101}(6) (2017) 1207--1228.

\bibitem{ReyesSuarez2020} A. Reyes and H. Su\'arez. Skew Poincar\'e-Birkhoff–Witt extensions over weak compatible rings. {\em J. Algebra Appl.} {\bf 19}(12) (2020) 2050225.

\bibitem{Rosenberg1995} A. Rosenberg. {\em Noncommutative Algebraic Geometry and Representations of Quantized Algebras}. Mathematics and Its Applications, Vol. 330. Kluwer Academic Publishers (1995).

\bibitem{RubianoReyes2024DSBiquadraticAlgebras} A. Rubiano and A. Reyes. Smooth geometry of bi-quadratic algebras on three generators with PBW basis. (2024). \url{https://arxiv.org/abs/2408.16648}

\bibitem{RubianoReyes2024DSDoubleOreExtensions} A. Rubiano and A. Reyes. Smooth geometry of double extension regular algebras of type (14641). (2024). \url{https://arxiv.org/abs/2409.10264}

\bibitem{RubianoReyes2024DSSPBWKt3n} A. Rubiano and A. Reyes. On the differential smoothness of skew PBW extensions over commutative polynomial rings II. Preprint (2024).

\bibitem{Schelter1986} W. F. Schelter. Smooth {A}lgebras. {\em J. Algebra.} {\bf 103}(2) (1986) 677--685.

\bibitem{SeilerBook2010} W. M. Seiler. {\em Involution. {T}he {F}ormal {T}heory of {D}ifferential {E}quations and its {A}pplications in {C}omputer {A}lgebra}. Algorithms and Computation in Mathematics (AACIM) Vol. 24 (Springer Berlin, Heidelberg 2010).

\bibitem{ShestakovUmirbaev2003} I. P. Shestakov and U. U. Umirbaev. The Tame and the Wild Automorphisms of Polynomial Rings in Three Variables. {\em J. Amer. Math. Soc.} {\bf 17} (1) (2003) 197--227.





\bibitem{SuarezReyesSuarez2023} H. Su\'arez, A. Reyes and Y. Su\'arez. Homogenized skew PBW extensions. {\em Arab. J. Math.} {\bf 12}(1) (2023) 247--263.

\bibitem{StaffordZhang1994} J. T. Stafford and J. J. Zhang. Homological Properties of (Graded) Noetherian PI Rings. {\em J. Algebra} {\bf 168}(3) (1994) 988--1026.

\bibitem{Tumwesigyeetal2020} A. B. Tumwesigye, J. Richter and S. Silvestrov. Centralizers in PBW extensions. In: S. Silvestrov, A. Malyarenko and M. Ran\v{c}i\'c, eds. {\em Algebraic Structures and Applications. SPAS 2017, V\"aster\aa s and Stockholm, Sweden, October 4--6}. Springer, Proceedings in Mathematics \& Statistics, Vol. 317, Springer Cham (2020) pp. 469--490.

\bibitem{VandenBergh1998} M. Van den Bergh. A Relation Between Hochschild Homology and Cohomology for Gorenstein Rings. {\em Proc. Amer. Math. Soc.} {\bf 126}(5) (1998) 1345--1348.

\bibitem{VandenEssen2000} A. Van den Essen. {\em Polynomial {A}utomorphisms and the {J}acobian {C}onjecture}. Progress in Mathematics, Vol. 190. Birk\"auser-Verlag, Basel (2000).

\bibitem{Venegas2015} C. Venegas. Automorphisms for Skew PBW Extensions and Skew Quantum Polynomial Rings. {\em Comm. Algebra} {\bf 43}(5) (2015) 1877--1897.

\bibitem{Woronowicz1987} S. L. Woronowicz. Twisted $SU(2)$ {G}roup. An {E}xample of a {N}oncommutative {D}ifferential {C}alculus. {\em Publ. Res. Inst. Math. Sci.} {\bf 23}(1) (1987) 117--181.

\bibitem{ZhangZhang2008} J. J. Zhang and J. Zhang. Double Ore extensions. {\em J. Pure Appl. Algebra} {\bf 212}(12) (2008) 2668--2690.

\bibitem{ZhangZhang2009} J. J. Zhang and J. Zhang. Double extension regular algebras of type (14641) {\em J. Algebra} {\bf 322}(2) (2009) 373--409.

\end{thebibliography}
\end{document}